\documentclass[12pt,oneside,a4]{amsart}
\usepackage{typearea}
\usepackage{amsfonts}
\usepackage{amsmath}
\usepackage{amssymb}
\usepackage{amsthm}
\usepackage{hyperref}
\usepackage{mathrsfs}
\usepackage{comment}
\usepackage{color}
\usepackage{xypic}
\usepackage{catchfile}
\usepackage{xstring}
\usepackage{gitinfo2}
\usepackage{lmodern}

\newcommand{\IC}{\mathbb{C}}

\newcommand{\IN}{\mathbb{N}}

\newcommand{\IQ}{\mathbb{Q}}
\newcommand{\IQbar}{\overline{\mathbb{Q}}}
\newcommand{\IR}{\mathbb{R}}

\newcommand{\IZ}{\mathbb{Z}}
\newcommand{\IF}{\mathbb{F}}

\newcommand{\mat}[1]{\mathrm{Mat}_{#1}}

\newcommand{\trans}[1]{{#1}^{t}}

\renewcommand{\cD}{\mathcal{D}}
\newcommand{\cV}{\mathcal{V}}
\newcommand{\cT}{\mathcal{T}}

\newcommand{\cO}{\mathcal{O}}

\newcounter{maincounter}

\numberwithin{maincounter}{section}
\numberwithin{equation}{section}

\newtheorem{thm}[maincounter]{Theorem}
\newtheorem{theorem}[maincounter]{Theorem}
\newtheorem{lemma}[maincounter]{Lemma}
\newtheorem{remark}[maincounter]{Remark}
\newtheorem{corollary}[maincounter]{Corollary}

\newtheorem{defi}[maincounter]{Definition}
\newtheorem{propo}[maincounter]{Proposition}
\newtheorem{proposition}[maincounter]{Proposition}
\newtheorem{example}[maincounter]{Example}

\renewcommand{\subset}{\subseteq}
\renewcommand{\supset}{\supseteq}

\newcommand{\ssm}{\smallsetminus}

\renewcommand{\mat}{\mathrm{Mat}}

\makeatletter
\def\imod#1{\allowbreak\mkern10mu({\operator@font mod}\,\,#1)}
\makeatother

\newcommand{\bfz}{\boldsymbol{z}}

\newcommand{\crit}[1]{\mathrm{Crit}({#1})}
\newcommand{\pco}{\mathrm{PCO}}
\newcommand{\ep}{\mathrm{ep}}

\newcommand{\branch}{}

\IfFileExists{.git/HEAD}{%
  
  \StrGobbleRight{\input{.git/HEAD}}{1}[\head]
  \StrBehind[2]{\head}{/}[\branch]
  \IfFileExists{.git/refs/heads/\branch}{%
    }{%
    }
    
}  

\begin{document}

\title{Dynamical Canonical Heights  and Finite Trees}

\author{Philipp Habegger and Harry Schmidt}
\date{\today}

\maketitle

\begin{abstract}
  We establish lower bounds for the canonical height of a wandering
  point that decays like the square of the field degree. Our methods and
  results apply to centered, postcritically finite, hyperbolic
  polynomials of prime power degree whose coefficients are algebraic
  integers. Our approach is ultimately inspired by an
  idea of Dimitrov that led to his proof of the Schinzel--Zassenhaus
  Conjecture.
  The height lower bound is derived from the lower bound of the local
  canonical height at an archimedean place. In previous work, the
  authors used a result of Dubinin on the transfinite diameter of a
  star-shaped tree.
  In this paper,  we develop tools to bound the
  transfinite diameter of more general finite trees in the complex
  plane.
  We construct these trees using the Hubbard tree of a postcritically finite
  polynomial. 
  Thurston's notion of core entropy helps us analyze
  combinatorial properties of the Hubbard tree.
\end{abstract}

\tableofcontents




%
\section{Introduction}

The Dynamical Lehmer Conjecture predicts the asymptotically
optimal lower bound
for the canonical height of a wandering algebraic point in terms of
its degree, see for example
Conjecture~15.2~\cite{BIJMST:currenttrends}. It is linked
to a classical and open problem of Lehmer~\cite{Lehmer:33} regarding the Mahler
measure of integral polynomials.

We provide new evidence towards the Dynamical Lehmer Conjecture for
univariate polynomials. This paper and the authors's earlier
work~\cite{hs:2021lower} were inspired by Dimitrov's
resolution of the Schinzel--Zassenhaus Conjecture~\cite{dimitrov:SZ}.

Before we come to our results we introduce some basic terminology. 
Let $F$ be a field and let $\overline F$ be an algebraic closure of
$F$. Let $f\in F[T]$ be of degree at least $2$. For $f\in F[T]$ and
$n\in\IN=\{1,2,\ldots\}$
we let $f^{(n)} = f\circ\cdots\circ f \in F[T]$ denote the
$n$-th iterate of $f$. It is convenient to set $f^{(0)}=T$. The
\textit{$f$-forward orbit} of $\alpha\in\overline F$ is
$\{f^{(n)}(\alpha) : n\in \IN_0\}$ with
$\IN_0= \{0\}\cup\IN$. We say  $\alpha$ is
 \emph{$f$-wandering} if its $f$-forward orbit is infinite
and \emph{$f$-preperiodic} otherwise. Finally, $\alpha$ is
\emph{$f$-periodic} if $f^{(n)}(\alpha)=\alpha$ for some $n\ge 1$. 
For convenience we sometimes
drop the prefix $f$- in $f$-wandering, $f$-preperiodic, etc.

If $F=\IQ$,
or more generally, if $F$ is a number field,
then the \emph{canonical} or \emph{Call--Silverman height} $\widehat h_f\colon
\overline F\rightarrow [0,\infty)$ is a key tool in arithmetic
dynamics. We recall its definition below. Here we just state the
dynamical version of Kronecker's Theorem. The canonical height
$\widehat h_f$ vanishes precisely on the $f$-preperiodic points in
$\overline F$.

Our first result will be a consequence of a
more general result stated in Theorem~\ref{thm:mainintro} below.

\begin{theorem}
  \label{thm:unicriticalwandering}
  Let $p$ be a prime number and $e\in\IN$.  
  We suppose that $b$ is an algebraic number such that  
  $0$ is periodic under $f = T^{p^e} + b$. Then there
  exists $c=c(p,e,b)>0$ such that all  
  $f$-wandering algebraic numbers $\alpha$ satisfy
  \begin{equation*}
    \hat h_f(\alpha) \ge \frac{c}{[\IQ(\alpha):\IQ]^2}. 
  \end{equation*}
\end{theorem}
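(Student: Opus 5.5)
The plan is to reduce Theorem~\ref{thm:unicriticalwandering} to the general result, Theorem~\ref{thm:mainintro}, announced just above. That theorem is designed for centered, postcritically finite, hyperbolic polynomials of prime power degree with algebraic integer coefficients. So the work consists of checking these hypotheses for $f=T^{p^e}+b$ and then tracking the constant.

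First, $f$ is centered: $f$ is monic of degree $d=p^e\ge 2$, and the coefficient of $T^{d-1}$ vanishes.

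Second, $f$ is postcritically finite. Its only finite critical point is $0$, since $f'=dT^{d-1}$, and $0$ is periodic by assumption. Hence the postcritical orbit is finite.

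Third, $f$ is hyperbolic. Because $0$ is periodic, it lies in a superattracting cycle. Every critical point of $f$ (namely $0$, and $\infty$) therefore lies in the basin of an attracting cycle, which is the standard criterion for hyperbolicity.

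Fourth, $b$ is an algebraic integer. Write $f^{(n)}(0)=0$ with $n\ge1$. The expression $f^{(n)}(0)$ is a polynomial $P_n(b)\in\IZ[b]$. By induction, $f^{(k)}(0)=b^{d^{k-1}}+(\text{lower order terms in }b)$ with integer coefficients, so $P_n$ is monic of degree $d^{n-1}$ in $b$. Thus $b$ is a root of a monic integer polynomial, i.e.\ an algebraic integer. Consequently all coefficients of $f$ are algebraic integers.

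With these hypotheses verified, Theorem~\ref{thm:mainintro} gives a constant $c>0$ depending only on $f$, hence on $p,e,b$, with $\hat h_f(\alpha)\ge c/[\IQ(\alpha):\IQ]^2$ for all wandering $\alpha$.

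Two points deserve care. The lower bound in Theorem~\ref{thm:mainintro} may be stated in terms of $[K(\alpha):K]$ for a number field $K\ni b$, rather than $[\IQ(\alpha):\IQ]$. In that case I will use $[K(\alpha):K]\le[\IQ(\alpha):\IQ]$, which only weakens the bound in the correct direction, and absorb the dependence on $[K:\IQ]$ into $c$. Also, if the general theorem requires $\alpha$ to be wandering with respect to some iterate, or requires replacing $f$ by a conjugate, I will use $\hat h_{f^{(m)}}=d^m\hat h_f$ and the invariance of canonical height under conjugation, which only changes constants.

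The main obstacle is confirming that the precise hypotheses of Theorem~\ref{thm:mainintro} match. In particular, "hyperbolic" may be meant in the sense of having a Julia set disjoint from the postcritical set, and the integrality of $b$ must be shown. The argument above handles both.
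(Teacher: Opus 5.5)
Your reduction to Theorem~\ref{thm:mainintro} with $F=\IQ(b)$ is the paper's route. Your checks of centeredness, postcritical finiteness, hyperbolicity (valid for every embedding $\sigma$, since $0$ is periodic for each $\sigma(f)$) and integrality of $b$ are fine. But you skipped one hypothesis of Theorem~\ref{thm:mainintro}: the prime $p$ must be \emph{unramified} in $F=\IQ(b)$. Without it you cannot invoke that theorem, and this cannot be absorbed into constants.

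In the general result (Theorem~\ref{thm:main}, Corollary~\ref{cor:canhgtlb}) the exponent of the degree is $k+1$, where $(k,l)$ is an $(F,f)$-admissible pair with $k<l$. Condition (i) of Definition~\ref{def:Ffadmissible} with $k=1$ reads $a\equiv a^{d^{l-1}}\imod{p\cO_{F,p}}$. Iterating this shows $\cO_{F,p}/p\cO_{F,p}$ has no nonzero nilpotents, which is exactly unramifiedness above $p$. In the ramified case Lemma~\ref{lem:conglemma_new_cor} only gives $k=1+\lceil\log\mathfrak e/\log d\rceil\ge 2$, so exponent at least $3$. Unramifiedness is precisely what yields the exponent $2$.

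The paper gets this from Lemma~\ref{lem:critperimplieshyp}(ii), based on Epstein--Poonen: $b$ is a root of a monic $\Gamma\in\IZ[X]$ with $\Gamma'\equiv 1\imod{d}$. You can prove it directly with your own polynomials.

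\begin{enumerate}
\item Set $P_1=X$ and $P_{k+1}=P_k^d+X$. Then $P_{k+1}'=dP_k^{d-1}P_k'+1\equiv 1\imod{d}$, so the reduction of $P_n$ modulo $p$ has derivative $1$ and is separable.
\item The minimal polynomial $m$ of $b$ is monic in $\IZ[X]$ and divides $P_n$ in $\IZ[X]$. Hence $m$ is separable modulo $p$, i.e.\ $p\nmid\mathrm{disc}(m)=[\cO_F:\IZ[b]]^2\,\mathrm{disc}(\cO_F)$.
\item Therefore $p\nmid\mathrm{disc}(\cO_F)$, and $p$ is unramified in $F$.
\end{enumerate}

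With this added, Theorem~\ref{thm:mainintro} applies. Your final step $[F(\alpha):F]\le[\IQ(\alpha):\IQ]$ then completes the proof.
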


This theorem applies to $f=T^2-1$ and other
polynomials of the shape $T^p+b$, that were 
also covered in the authors' earlier work~\cite{hs:2021lower}. In this
earlier paper, our polynomials were required to satisfy
the additional \textit{Quill Hypothesis}. This technical hypothesis
can be dropped thanks to the methods developed here.
Below we state a theorem that holds for $f$ of a more general shape.

From now on, $F$ will be a number field. Let $\cO_F$ denote the ring
of algebraic integers in $F$ and $M_F$ its set of places, see
Section~\ref{sec:notation}.

For a real number $t$ we write $\log^+ t =
\log\max\{1,t\}$. Let $L/F$ be a finite field extension and let
$v\in M_L$. The \textit{local canonical height} $\lambda_{f,v} \colon L_v
\rightarrow[0,\infty)$ with respect to $f$ and $v$ is defined as 
\begin{equation}
  \label{def:lambdafv}
  \lambda_{f,v}(\alpha) = \lim_{n\rightarrow\infty} \frac{\log^+ |f^{(n)}(\alpha)|_v}{d^n}
\end{equation}
for $\alpha \in L$; see Remark 3.30~\cite{Silverman:gtm241}. 
The \textit{canonical height} with respect to $f$
 is the function $\hat h_f \colon L\rightarrow[0,\infty)$
given by
\begin{equation}
  \label{def:canonicalhgt}
  \hat h_{f}(\alpha) = \sum_{v\in M_L} \frac{d_v}{[L:\IQ]}
  \lambda_{f,v}(\alpha)
\end{equation}
for all $\alpha\in L$; the local degrees $d_v$ are discussed in
Section~\ref{sec:notation}.
The value $\hat h_{f}(\alpha)$ is independent of the choice of number
field
containing $\alpha$. 
So we consider $\hat h_f$ as a function $\overline
F\rightarrow\IR$. 

In Theorem~\ref{thm:unicriticalwandering} we ask that $0$ is an
$f$-periodic point.
For $f=T^{p^e}+b$ the element $0$ is a
 \textit{critical point}, \textit{i.e.}, a root of the derivative of
 $f$.
We  call a general  $f$ \textit{postcritically finite} if
all critical points in  $\overline F$ are preperiodic.
Thus the polynomials considered in our first theorem are postcritically finite. 

Our more general results allow for a wider class of polynomials. 
The \textit{barycenter} of a non-constant and
monic polynomial $T^d+f_1T^{d-1}+\cdots+f_0$
with coefficients in a field of characteristic $0$ is
\begin{equation}
  \label{def:barycenter}
  -\frac{f_1}{d}.
\end{equation}
A polynomial is called \textit{centered} if its barycenter vanishes.
We recall the notion of a {hyperbolic} complex polynomial in
Definition~\ref{def:hyperbolic} of the Appendix. 
The following theorem is a special case of Corollary~\ref{cor:canhgtlb}.
\begin{theorem}
  \label{thm:mainintro}
  Let $F$ be a number field, let $p$ be a prime that is unramified in
  $F$, and let
  $f\in \cO_F[T]$ be monic of degree
  $p^e$ with $e\in\IN$. Suppose $f$ is postcritically finite and
  that $\sigma(f)$ is hyperbolic for some $\sigma\colon
  F\rightarrow\IC$. Suppose furthermore that the barycenter of $f$
  is an algebraic integer. 
  Then there exists $c(F,f)>0$  with the following property. 
  If $\alpha\in \overline F$  is $f$-wandering, then
  \begin{equation*}
    \hat h_f(\alpha) \ge \frac{c(F,f)}{[F(\alpha):F]^{2}}.  
  \end{equation*}
\end{theorem}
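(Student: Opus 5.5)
We follow the Dimitrov strategy used in our earlier work~\cite{hs:2021lower}. Let $\alpha$ be $f$-wandering, set $D=[F(\alpha):F]$, and let $P\in\cO_F[T]$ be the minimal polynomial of $\alpha$ over $F$, rescaled if necessary. Since $f$ is monic with integral coefficients, $\lambda_{f,v}=\log^+|\cdot|_v$ at every non-archimedean $v$ on integral points, and $\lambda_{f,v}\ge 0$ in general. So it suffices to bound from below the archimedean contribution $\sum_{\tau}\lambda_{f,\tau}(\alpha)$ over the conjugates of $\alpha$ lying above the fixed embedding $\sigma$ for which $\sigma(f)$ is hyperbolic. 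The non-integral case gives a height bound of order $1/D$ directly from the non-archimedean places, which is better than needed.

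The arithmetic input comes from the prime power degree. Because $p$ is unramified in $F$ and $\deg f=p^e$, we have $f(T)\equiv g(T)^{p^e}$ modulo a prime above $p$, up to a Frobenius twist. Hence for $Q_n=\prod_{\beta}(T-f^{(n)}(\beta))$, with $\beta$ running over the conjugates of $\alpha$, the polynomials $Q_{n+k}$ and a Frobenius-twisted $Q_n$ agree modulo $p$ in an appropriate sense. As in Dimitrov's argument, this yields a nonzero algebraic integer resultant or discriminant-type quantity, for instance $\mathrm{Res}(Q_{n},Q_{n+k})$ or a Hankel determinant. Wandering of $\alpha$ guarantees that these quantities are nonzero. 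The product formula then bounds an archimedean quantity from below.

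The upper bound comes from potential theory. The $\sigma$-conjugates of $f^{(n)}(\alpha)$ lie in a sublevel set $\{z: G_f(z)\le \varepsilon\}$ of the Green's function of $\sigma(f)$, where $\varepsilon$ is essentially $p^{en}\max_\tau\lambda_{f,\tau}(\alpha)$. Dimitrov's Hankel determinant or Pólya-type rationality criterion then says the following. If these points lie in a compact set $K$ whose relevant root-type transfinite diameter (of the preimage under a $p^e$-th root map) is $<1$, then a certain power series is rational. Rationality would force $\alpha$ to be preperiodic. We therefore need a compact connected set $K\supseteq \mathcal{K}(\sigma(f))$ whose pullback has transfinite diameter strictly less than $1$, uniformly, together with a margin that measures how much $\varepsilon$-thickening is allowed.

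The main obstacle is constructing this $K$. Here we use the hyperbolicity and postcritical finiteness of $\sigma(f)$. The filled Julia set contains the Hubbard tree, and the critical points are periodic and lie in superattracting basins. We take $K$ to be a finite tree built from the Hubbard tree and its preimages, with the Julia set covered by a neighbourhood. We then bound the transfinite diameter of the relevant pulled-back tree below $1$ by a Dubinin-type comparison extended to general finite trees, controlling the branching combinatorics via core entropy. The thickening margin then determines $\varepsilon\gg 1/D$ per conjugate. Summing over the $D$ conjugates, and noting that only a fraction of them need to escape, the counting loses one more factor of $D$. This yields $\hat h_f(\alpha)\ge c/D^2$.
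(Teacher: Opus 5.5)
Your outline names the right ingredients (Dimitrov's rationality argument, preimages of the Hubbard tree, core entropy), but the analytic step is set up incorrectly and then only named.

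\textbf{The reduction to one embedding targets a false statement.} Take $F=\IQ(\sqrt5)$ (where $2$ is unramified), $f=T^2-1$ (all hypotheses hold with $p=2$), $\sigma$ the inclusion $F\subset\IR$, and $\alpha=((1-\sqrt5)/2)^n$ with $n\ge 3$. Then $|\sigma(\alpha)|<1/3$ lies in the basin of the superattracting cycle $\{0,-1\}$, so the contribution above $\sigma$ is $0$. Yet $\alpha$ is a wandering algebraic integer of degree $1$, because its other conjugate exceeds $2$. The P\'olya--Bertrandias argument needs control at \emph{every} archimedean place. The paper assumes $\lambda_{\sigma(f)}(\sigma(\alpha))<c/D$ for all $\sigma\colon F(\alpha)\to\IC$. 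At the embeddings other than $\sigma_0,\overline{\sigma_0}$ it uses the sublevel sets $\{\lambda_{\sigma(f)}\le c/D\}$, whose logarithmic transfinite diameter is $+c/D$. It must beat these with a set at $\sigma_0$ of logarithmic transfinite diameter $\le -c_1/(2D)$.

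\textbf{Your description of that set is impossible.} Since $f$ is monic, $\mathsf{d}_\infty(K_{\sigma(f)})=\mathsf{d}_\infty(J_{\sigma(f)})=0$. So any compact set containing $K_{\sigma(f)}$, or a neighbourhood of $J_{\sigma(f)}$, has transfinite diameter at least $1$. Images or preimages under monic polynomial maps such as $z\mapsto z^{p^e}$ cannot push it below $1$. What is needed is a thin finite tree $\cT$ through the $2D$ points $\sigma_0(f^{(k)}(\alpha_i)),\sigma_0(f^{(l)}(\alpha_i))$ with $\mathsf{d}_\infty(\cT)\le -c/D$. Producing it is the bulk of the paper:
\begin{itemize}
\item a Vandermonde/divided-difference estimate (Proposition~\ref{prop:transfiniteapplication}) giving $\mathsf{d}_\infty\le -c_1/d^s$ for any compact set covered by at most $c_2d^s$ univalent pullbacks under $f^{(s)}$ of small disks centred on $J_f$, together with the region $\{z\in K_f:\mathrm{dist}(f^{(s)}(z),J_f)\ge\delta\}$;
\item the fact that the convex hull in $H_f^{(s+s_0)}$ of the relevant Fatou centres has only $O(\nu d^s)$ edges, because $\rho(f)<d$ (Corollary~\ref{cor:convexhull}, Proposition~\ref{prop:rhoflessthand});
\item the choice $d^s\asymp D$ (Proposition~\ref{prop:growtree}, Corollary~\ref{cor:growtree}).
\end{itemize}
Your ``Dubinin-type comparison for general trees'' is exactly this missing content.

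\textbf{The arithmetic step is misattributed.} The congruence $f\equiv T^{p^e}+b\pmod{\mathfrak p}$ does not follow from $p$ being unramified and $\deg f=p^e$; consider $T^2+T$ with $p=2$. It comes from Epstein's integrality theorem, which uses hypotheses you never invoke: postcritical finiteness (hence boundedness above $p$) and the integral barycenter. These give $f'\in d\,\cO_{F,p}[T]$ (Lemma~\ref{lem:epstein}).

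Moreover, a congruence modulo $p$ is too weak for Dimitrov's trick. The series $\Phi_d(pX)$ is not integral; already $\sqrt{1+2X}$ has the coefficient $-1/2$. One needs $A_l\equiv A_k\pmod{dp}$. The paper derives this from $f'\in d\,\cO_{F,p}[T]$ by Witt-vector-type computations (Proposition~\ref{prop:congruence}, Lemma~\ref{lem:conglemma_newnew}). That yields $\phi=\Phi_d(dpB/A_k^d)$ with integral coefficients (Lemma~\ref{lem:Phidintegral}, Proposition~\ref{prop:maincongruence}).

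Unramifiedness only serves to make $(1,1+p\mathfrak f)$ admissible (Lemma~\ref{lem:conglemma_new_cor}), i.e.\ $k=1$. That is why the exponent is $k+1=2$.

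\textbf{No nonvanishing resultant enters.} Rationality of $\phi$ gives one of two outcomes. Either $A_k$ and $A_l$ share a root, so $\alpha$ is preperiodic (Lemma~\ref{lem:AkAlpreperiodic}). Or $A_l/A_k$ is a $d$-th power, forcing $[F(f(\alpha)):F]\le D/d$. One then inducts on $D$. This works because $\lambda<c/D$ for $\alpha$ implies $\lambda<c/[F(f(\alpha)):F]$ for $f(\alpha)$.
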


Baker~\cite{Baker:06} proved a height lower bound for elements of a
number field that is inverse linear in the degree after omitting
finitely many exceptions. His bound holds for all {rational} maps of
degree at least $2$.

By an argument of DeMarco, being postcritically finite and hyperbolic
is a Galois-invariant property. With her kind permission, we produce
this fact in  Lemma~\ref{lem:pcfhyperbolicgalois}. 

Our height lower bound is ultimately a consequence of a lower
bound for the  local canonical height at the archimedean places.
The following result
is a special case of Theorem~\ref{thm:main} and establishes a
Schinzel--Zassenhaus type lower bound in the dynamical setting.
We let $M_L^\infty$
denote the set of archimedean places of a number field $L$. 

\begin{theorem}
  \label{thm:main2intro}
  Let $F,p,$ and $f$ be as in Theorem~\ref{thm:mainintro}.
  In particular, $p$ is unramified in $F$.
  Then there exists $c(F,f)>0$  with the following property.
  If
  $\alpha\in \overline F$ is an $f$-wandering algebraic integer
  contained in a finite extension $L$ of $F$, then
  \begin{equation}
    \label{eq:thmmain2introbound}
    \max_{v\in M_L^\infty}
    \lambda_{f,v}(\alpha) \ge \frac{c(F,f)}{[L:F]}. 
  \end{equation}
\end{theorem}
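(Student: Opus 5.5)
We follow Dimitrov's strategy as adapted in~\cite{hs:2021lower}. Suppose the bound fails: the maximum in (\ref{eq:thmmain2introbound}) is less than $c/D$, where $D=[L:F]$ and $c$ is small. Let $\alpha_1,\dots,\alpha_{D'}$ be the conjugates of $\alpha$ over $F$ under all embeddings, so $D' \le [L:\IQ]$. The hypothesis says that for every archimedean $v$ and every $n$ we have $|f^{(n)}(\alpha)|_v \le \exp(p^{en} c/D)$, up to a uniform constant. In particular, for $n$ of size about $\log D$, all conjugates of $f^{(n)}(\alpha)$ lie in a neighbourhood of the filled Julia set $K_{\sigma(f)}$ that shrinks as $D$ grows.

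The arithmetic input comes from the prime $p$. Because $f$ has degree $p^e$, is monic with integral coefficients, and $p$ is unramified, Frobenius-type congruences give
\[
f^{(n+1)}(\alpha)^{p^{e}}\equiv \text{(Galois twist of)}\ f^{(n)}(\alpha)^{p^{e}}\cdots \pmod{p}.
\]
More precisely, $f(x)\equiv \phi(x)^{p^e}$ modulo $p$ for a Frobenius-twisted polynomial $\phi$, after adjusting via the integral barycenter. We use these congruences to build the two auxiliary polynomials $P_n(T)=\prod_i (T-f^{(n)}(\alpha_i))$ and $Q_n(T)=\prod_i (T-f^{(n)}(\alpha_i)^{p^e})$ (or the appropriate twisted variant). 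We show that the resultant-type quantity $\mathrm{Res}(P_{n},Q_{n+1})$, or a suitable discriminant-type expression, is a nonzero algebraic integer divisible by a large power of $p$. Nonvanishing comes from $\alpha$ being wandering, so no two orbit points coincide. This gives an arithmetic lower bound $\prod_v |\cdot|_v\ge p^{cD^2}$-type, which is quadratic in the degree.

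The analytic upper bound uses P\'olya's criterion or Dubinin-type estimates. One realizes the relevant product as the determinant of a Hankel matrix of a power series, rational in $\sqrt[p^e]{\cdot}$-type functions, that has integral coefficients. Its archimedean size is controlled by the transfinite diameter (logarithmic capacity) of a compact set $E$ containing all conjugates. The key requirement is that the branch structure of $z\mapsto f^{-1}(z)$ forces a covering set $E$ with capacity strictly less than $1$. Equivalently, one needs the log-capacity of the preimage structure to be bounded above, uniformly in $D$, by a negative quantity. Combining the two bounds contradicts the smallness of $c$ once $c$ is chosen small enough in terms of $F$ and $f$.

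The main obstacle is this capacity estimate. For hyperbolic postcritically finite $\sigma(f)$, we will take $E$ to be a finite tree built from the Hubbard tree of $\sigma(f)$. It contains the postcritical set and is forward invariant. Hyperbolicity means every critical point is periodic, so the tree is a thin, contracted neighbourhood of the attracting cycles. We then need that the $p^e$-fold ``root'' set, namely $f^{-1}$ of the tree joined appropriately, has transfinite diameter below the threshold required by the congruence.

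We would try to bound the transfinite diameter of a finite tree by decomposing it into star-shaped pieces and applying Dubinin's bound to each piece. The pieces would be glued via subadditivity-type inequalities for Green's functions. The number and lengths of the edges are controlled through Thurston's core entropy of the Hubbard tree, which limits the growth of edges under iteration. This step is where the real work lies, and the arithmetic and assembly steps are comparatively routine.
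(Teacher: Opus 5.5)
Your outline points the right way (Dimitrov's strategy, a tree built from the Hubbard tree, core entropy). However, the step that turns integrality plus small capacity into a contradiction is wrong.

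\textbf{The arithmetic step.} The integral object is $\phi=\Phi_d(dpB/A_k^d)$ with $\phi^d=A_l/A_k$, where $A_m=\prod_i(X-f^{(m)}(\alpha_i))$.
\begin{itemize}
\item A congruence modulo $p$ does not make this $d$-th root integral; already $\Phi_p(pX)\notin\IZ[[X]]$. One needs $A_l\equiv A_k$ modulo $dp$. The paper gets this from Epstein's integrality ($f'\in d\,\cO_{F,p}[T]$) and a Witt-vector-type computation for the admissible pair $(1,1+p\mathfrak f)$ (Proposition~\ref{prop:maincongruence}, Lemma~\ref{lem:conglemma_new_cor}).
\item Small capacity then forces, via P\'olya--Bertrandias, the Hankel determinants of $\phi$ to vanish eventually, i.e.\ $\phi$ is \emph{rational}. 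Your claim that the relevant determinant is nonzero ``because $\alpha$ is wandering'' is false. If $f(\alpha)\in F$ and $[F(\alpha):F]=d$, then $A_1=(X-f(\alpha))^d$ and $A_l=(X-f^{(l)}(\alpha))^d$, so $\phi$ is rational whether or not $\alpha$ wanders.
\item The missing idea is a dichotomy combined with induction on $D=[F(\alpha):F]$. If $A_1,A_l$ share a root, $\alpha$ is preperiodic (Lemma~\ref{lem:AkAlpreperiodic}). If they are coprime, rationality forces $\widetilde A^{d}\mid A_1$ for the minimal polynomial $\widetilde A$ of $f(\alpha)$, so $[F(f(\alpha)):F]\le D/d$.
\item Since $\lambda_{\sigma(f)}(\sigma(f(\alpha)))=d\,\lambda_{\sigma(f)}(\sigma(\alpha))<dc/D\le c/[F(f(\alpha)):F]$, the induction hypothesis applies to $f(\alpha)$. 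Hence $\alpha$ is preperiodic, a contradiction. The induction closes exactly because $k=1$, which is available since $p$ is unramified.
\item Your resultant with ``$p^{cD^2}$'' divisibility is not justified by anything in your argument, and it is not where the bound comes from.
\end{itemize}

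\textbf{The capacity step.} You rightly call this the heart of the matter, but you do not supply it, and the indicated route does not deliver it.
\begin{itemize}
\item Hyperbolic postcritically finite does not mean every critical point is periodic; for $(T^2-1)^2$, treated in the paper, the critical point $-1$ is strictly preperiodic.
\item The tree must pass through the $2D$ points $f(\alpha_i)$ and $f^{(l)}(\alpha_i)$. These are the zeros and poles of $A_l/A_1$, not conjugates of $\alpha$ or branches of $f^{-1}$. They are only known to have local height $O(1/D)$, may lie anywhere near $J_{\sigma(f)}$ and in general outside $K_{\sigma(f)}$. So no compact forward-invariant tree can work.
\item The paper works at depth $s\approx\log_d D$ (Proposition~\ref{prop:growtree}). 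Each point is joined, inside a component of $(f^{(s)})^{-1}(B_r(w))$ and then along an allowable arc, to the center of a bounded Fatou component. These centers are vertices of $H_f^{(s+s_0)}$, and their convex hull has at most $\nu d^{s}$ edges because $\rho(f)<d$ (Corollary~\ref{cor:convexhull}, Proposition~\ref{prop:rhoflessthand}). Core entropy thus enters through sparsity of convex hulls of sparse vertex sets, not through edge lengths.
\item The resulting tree is covered by $O(d^s)$ pulled-back disks plus the set where $f^{(s)}$ stays $\delta$-far from $J_f$. The bound $\mathsf{d}_\infty\le -c/d^s$ then comes from the $f$-adapted Vandermonde matrix, Fischer's inequality, divided differences with Cauchy's formula and Koebe distortion, and a Cauchy--Schwarz argument (Proposition~\ref{prop:transfiniteapplication}). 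Dubinin is not used.
\item Gluing Dubinin bounds for star-shaped pieces by subadditivity cannot replace this. The standard inequality $1/\log(\Delta/\mathrm{cap}\,E)\le\sum_{j=1}^n 1/\log(\Delta/\mathrm{cap}\,E_j)$, for $E=\bigcup_j E_j$ of diameter $\Delta>1$, gives $\mathrm{cap}\,E<1$ only when the pieces have capacity exponentially small in $n$.
\item Since $\phi$ has coefficients in $\cO_F$, one needs $\sum_{\sigma}\mathsf{d}_\infty(\cT_\sigma)<0$ over all $\sigma\colon F\to\IC$. The paper uses the tree at $\sigma_0,\overline{\sigma_0}$ and the level sets $\{\lambda_{\sigma(f)}\le c/D\}$, of logarithmic transfinite diameter $c/D$, at the other embeddings. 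It then passes from $F(\alpha)$ to $L$ via $[F(\alpha):F]\le[L:F]$.
\end{itemize}
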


In some cases, this theorem generalizes Theorem~1.1~\cite{hs:2021lower} of the
authors.

If $\alpha\in \overline F$  is $f$-periodic, then the minimal 
 $n\in\IN$ with $f^{(n)}(\alpha)=\alpha$ is called the
\textit{$f$-period length} of $\alpha$. We denote it by  $\mathrm{per}(\alpha)$.
If $\alpha$ is $f$-preperiodic, there exists
$m\in\IN_0$ such that $f^{(m)}(\alpha)$ is $f$-periodic. The minimal
such $m$ is the \textit{$f$-preperiod length} of $\alpha$, in symbols:
$\mathrm{preper}(\alpha)$. 

While the canonical height of preperiodic point is $0$, our methods
can also recover information from this case.   We refer to Theorem~\ref{thm:mainpreper} and
Section~\ref{sec:proofs} for a stronger variant as well as the concept
of linearly conjugate.
We do not need to assume that $f$ is hyperbolic in the theorem below.
Let $\lceil x\rceil$ denote the smallest integer greater or equal to
$x\in\IR$. Properties of Chebyshev polynomials are recalled around
(\ref{eq:chebynorm}).

\begin{theorem}
  \label{thm:intropreper}
  Let $F$ be a number field
  and let $f\in \cO_F[T]$ be monic of degree $d\ge 2$ which is a
  power of $p$.
  Suppose $f$ is postcritically finite. Suppose furthermore that the
  barycenter of $f$ is an algebraic integer and that $f$ is not
  $\overline F$-linearly conjugate to a Chebyshev polynomial or a
  negative Chebyshev polynomial.
  We set
  \begin{equation*}
    \mathfrak{e} = \max\{ e_v : v\in M_F, v\mid p\}\quad\text{and}\quad
    \mathfrak{f} =\mathrm{lcm}\{ f_v : v\in M_F, v\mid p\}
  \end{equation*}
   where $e_v$ denotes
  the ramification index and 
  $f_v$ denotes the residue degree of the place $v$, respectively.
  Let $\alpha\in \overline F$ be $f$-preperiodic, then 
  \begin{equation*}
    \mathrm{preper}(\alpha)\le
    \left(1+\left\lceil \frac{\log\mathfrak{e}}{\log d}\right\rceil\right)\left(1+\frac{\log[F(\alpha):F]}{\log
      d}\right)
    \quad\text{and}\quad
    \mathrm{per}(\alpha)\le p\mathfrak f[F(\alpha):F]. 
  \end{equation*}
\end{theorem}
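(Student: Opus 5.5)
The plan is to work $p$-adically at a place $w$ of $L=F(\alpha)$ above a place $v\mid p$ of $F$, and to compare the dynamics of $f$ with the Frobenius automorphism. Since $f$ is monic with coefficients in $\cO_F$, every $f$-preperiodic $\alpha$ is an algebraic integer. I take $L_w$ to be the completion and $\bar f$ the reduction of $f$ modulo $w$.

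\textbf{Reduction of $f$ modulo $p$.} The first step is to show that postcritical finiteness, together with $\deg f=p^e$ and the integrality of the barycenter, forces $\bar f$ to be of the special shape $\bar f=(T-\bar c)^{p^e}+\bar a$, up to $\overline F$-linear conjugation, where $-f_1/d$ is the barycenter. The heuristic is this. If $\bar f$ had a non-inseparable part, then some critical point of $f$ would have to be $w$-adically far from the barycenter. Its orbit would then escape $v$-adically, contradicting that every critical point is preperiodic. The hypothesis that $f$ is not linearly conjugate to a (negative) Chebyshev polynomial enters here: those are exactly the postcritically finite exceptions in which $\bar f$ can degenerate differently, for instance $T^2-2$ when $p=2$.

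Granting this shape, $\bar f$ commutes with the $p$-power Frobenius up to acting on the single constant $\bar a\in\IF_{p^{f_v}}$. Hence the $\mathfrak f$-th power of the Frobenius commutes with $\bar f$ on the residue field of $F$. More importantly, $\bar f$ is a bijection on $\overline{\IF}_p$, so every residue class is $\bar f$-periodic.

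\textbf{Period bound.} Let $\alpha$ be periodic. Its reduction $\bar\alpha$ lies in a residue field of size $p^{f_w}$ with $f_w\le \mathfrak f[L:F]$ after passing to a common multiple. Since $\bar f$ acts on $\IF_{p^{f_w}}$ through a map commuting with the Frobenius, the orbit of $\bar\alpha$ has length dividing $\mathfrak f[L:F]$. Next I invoke the standard lifting lemma (Pezda/Zieve type): the period of a periodic point in an integral $p$-adic ring equals the residual period $r$ times the multiplicative order of the multiplier modulo $w$, times a power of $p$. Here the multiplier of $\bar f^{(r)}$ is $0$, because $\bar f'$ vanishes identically. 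This kills the middle factor, and a direct expansion of $f^{(r)}(x)-x$ near the orbit should show that the power of $p$ is at most $p^1$. This yields $\mathrm{per}(\alpha)\le p\mathfrak f[F(\alpha):F]$.

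\textbf{Preperiod bound.} Suppose $\alpha$ has preperiod $m\ge1$, and let $\beta=f^{(m)}(\alpha)$ be the periodic point. Then $f^{(m-1)}(\alpha)$ and the periodic predecessor $\beta'$ of $\beta$ on the cycle are distinct preimages of $\beta$. From $f(x)-f(y)=(x-y)g(x,y)$, with all coefficients of $g$ divisible by $p$ except those of the leading term $(x-\bar c)^{p^e}$, I expect an inequality $|x-y|_w\le |p|_w^{1/p^e}$ for distinct preimages. Iterating it gives $|f^{(j)}(\alpha)-\beta_j|_w\le |p|_w^{\,\sim (m-j)/d}$, where $\beta_j$ denotes the corresponding periodic point. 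The $w$-adic value group has denominator $e_w\le \mathfrak e[L:F]$. A nonzero difference therefore cannot be smaller than $|p|_w^{1/(e_w)}$-steps permit. Bookkeeping the exponent, one $d$-adic digit of $e_w$ is consumed every $1+\lceil\log\mathfrak e/\log d\rceil$ iterations, and this gives the stated logarithmic bound in $[F(\alpha):F]$.

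\textbf{Main obstacle.} I expect the hardest step to be the reduction step: proving that $\bar f$ is essentially a $p^e$-th power, and pinpointing why the Chebyshev cases are the only exceptions. I would attack it first by analysing the Newton polygons of $f(T)-f(c)$ at critical points $c$, in the spirit of the Chebyshev-exclusion argument in Dimitrov's proof of the Schinzel--Zassenhaus Conjecture.
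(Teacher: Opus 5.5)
\textbf{The gap: the preperiod bound.} Every ingredient of your argument is local at $p$. The non-Chebyshev hypothesis is not a local condition, and the preperiod bound really does fail for Chebyshev polynomials. You say the hypothesis enters in the reduction step, but it does not. The shape $f\equiv T^d+f(\overline c)-\overline c^{\,d}\pmod{p\cO_{F,p}[T]}$ is Epstein's theorem (Lemma~\ref{lem:epstein}(i)). It holds for every $f$ that is postcritically bounded above $p$ with $p$-integral barycenter, Chebyshev or not; for example $T^2-2\equiv T^2\pmod 2$.

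Now take $F=\IQ$, $p=2$, $f=T^2-2$. Then $\mathfrak e=\mathfrak f=1$, and the claimed bound gives $\mathrm{preper}(\alpha)\le 1$ for $\alpha\in\IQ$. But $0\mapsto -2\mapsto 2\mapsto 2$, so $\mathrm{preper}(0)=2$. All your local data are present here: $-2$ and $2$ are distinct preimages of $2$, everything lies in $\IQ_2$ with $e_w=1$, and the valuations $v_2(0-2)=1$ and $v_2(-2-2)=2$ are integers. So no version of your valuation bookkeeping can produce the bound. Moreover, upper bounds on $|f^{(j)}(\alpha)-\beta_j|_w$ are compatible with any value group, so they could never force $e_w$ to be large in the first place.

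\textbf{What is missing: a global, archimedean input.} The paper proceeds as follows.
\begin{enumerate}
\item It takes the admissible pair $k=1+\lceil\log\mathfrak e/\log d\rceil$, $l=k+p\mathfrak f$. This is how $\mathfrak e$ enters: through the congruence $a^{d^{k-1}}\equiv a^{d^{l-1}}\pmod{p\cO_{F,p}}$, not through value-group denominators.
\item The pair gives $A_k\equiv A_l\pmod{dp}$, hence a power series $\phi$ with integral coefficients and $\phi^d=A_l/A_k$.
\item By Milnor, $K_{\sigma(f)}$ is not a tree when $\sigma(f)$ is not Chebyshev up to sign. This yields trees $\cT_\sigma\subset K_{\sigma(f)}$ containing all zeros and poles with $\mathsf d_\infty(\cT_\sigma)<0$ (Proposition~\ref{prop:cappreper}).
\item P\'olya--Bertrandias then makes $\phi$ rational, so $A_l/A_k$ is a $d$-th power.
\item Either $A_k$ and $A_l$ share a root, which gives $\mathrm{preper}(\alpha)\le k$; or $[F(f^{(k)}(\alpha)):F]\le[F(\alpha):F]/d$, and induction gives $d^{\mathrm{preper}(\alpha)/k-1}\le[F(\alpha):F]$.
\end{enumerate}

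\textbf{The period bound.} This half can in fact be completed $p$-adically, but both of your intermediate claims are wrong.
\begin{itemize}
\item The residual orbit length need not divide $\mathfrak f[L:F]$. For $f=T^2-1$, $p=2$, $\alpha=0$, the residual orbit $\bar 0\mapsto\bar 1\mapsto\bar 0$ has length $2$, while $\mathfrak f[L:F]=1$. Commuting with a power of Frobenius does not stop $\bar f$ from acting as a translation.
\item The factor $p$ is residual. Write $\bar f=T^d+\bar a$ and $k_w=\IF_{p^N}$ with $N\le f_v[L:F]$. The iterate $\bar f^{(n_1)}$ with $n_1=N/\gcd(N,e)$ is a translation of $k_w$. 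So all residual periods divide $pn_1\le p\mathfrak f[L:F]$.
\item Lifting contributes no factor at all. We have $f(x)-f(y)=(x-y)G(x,y)$ with $G\equiv (X-Y)^{d-1}\pmod p$. So $f$ strictly contracts distances within residue disks, which forces the period of a periodic point to equal its residual period.
\end{itemize}
This gives $\mathrm{per}(\alpha)\le p\mathfrak f[F(\alpha):F]$ without the Chebyshev hypothesis, consistent with the period bound holding for $T^2-2$. The preperiod bound still requires the global argument above.
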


The basic strategy employed in this paper, and
also~\cite{hs:2021lower}, is based on Dimitrov's proof of
the Schinzel--Zassenhaus Conjecture~\cite{dimitrov:SZ}. Dimitrov's
Theorem states that if a non-zero algebraic integer $\alpha$ is not a
root of unity, then $\log |\sigma(\alpha)|\ge \frac{\log 2}{4[\IQ(\alpha):\IQ]}$
for some embedding $\sigma\colon \IQ(\alpha)\rightarrow\IC$. 
A key step in Dimitrov's work  involves a finite topological tree in
the complex plane, we refer to Section~\ref{sec:trees} for our
terminology. 
This star-shaped tree, called a \textit{hedgehog}, is the
 union of line segments beginning at
$0$.
The hedgehog is required to contain the zeros and
poles of a certain rational function $\frac AB$, with $A,B$ rational
polynomials of equal degree that depend on $\alpha$.
The complement of the tree inside the
Riemann sphere is a simply connected domain. The algebraic function
$(\frac AB)^{1/2}$ is well-defined in a neighborhood of the point at
infinity. By the Monodromy Theorem  from complex analysis, this algebraic
function extends to a holomorphic map on the complement of the
hedgehog. Using congruence relations modulo $4$, attributed to
Smyth~\cite{smyth:coloringproof} and
Arnold~\cite{arnold:eulerfermat}, Dimitrov  shows that 
the power series representation of $(\frac AB)^{1/2}$ has integral
coefficients. The P\'olya--Bertrandias
Theorem  implies that the algebraic function is
a rational function, if the \textit{transfinite diameter} of the hedgehog is
strictly less than $1$.
The latter condition holds if the conjugates of $\alpha$ are
sufficiently close to the unit circle. Moreover, rationality of
$(\frac AB)^{1/2}$ imposes a strong restriction on $A$ and $B$ and
thus on $\alpha$.  Dimitrov used a deep result of   
Dubinin~\cite{Dubinin} to bound the transfinite diameter
of the hedgehog from above.

In \cite{hs:2021lower} we follow this basic strategy in the
dynamically setting. In particular, we used Dubinin's Theorem and the
dynamics of $f$ to construct a suitable tree. The geometric features
of a hedgehog ties it to the complex unit circle, which is the
\textit{Julia set} of the polynomial $T^2$ but most
likely not the Julia set of $f$. So it may
come as a surprise that it has application to the dynamics of
polynomials encountered in \cite{hs:2021lower} with different Julia
sets. However, the results in \cite{hs:2021lower} are restricted to
polynomials that satisfy the numerical Quill Hypothesis. This
condition facilitates the use of Dubinin's Theorem and is conjecturally
not necessary.

The current paper introduces several innovations. One novelty is that
we replace hedgehogs by a construction that is informed from complex
dynamics. We also avoid using Dubinin's Theorem. Rather we construct a
suitable finite topological tree $\cT$ using tools in complex
dynamics. Our starting point is the \textit{Hubbard tree} $H_f$ of a
postcritically finite polynomial $f$, introduced by Douady and
Hubbard~\cite{DH:etudedynI}. This finite topological tree contains all
critical points of $f$. The Hubbard tree is mapped to itself under $f$
and is thus contained in the \textit{filled Julia} $K_f$ set of $f$.
We do not directly work with the Hubbard tree. Rather, our
construction involves its preimages
 $$H_f^{(s)}= (f^{(s)})^{-1}(H_f)$$ with
varying parameter $s\in\IN_0$. Each preimage is again a finite topological
tree. Furthermore, as $f(H_f)\subset H_f$, the sequence is nested
\begin{equation}
  \label{eq:Hfsequence}
H_f =   H_f^{(0)} \subset  H_f^{(1)} \subset H_f^{(2)}\subset
\cdots\subset  H_f^{(s)}\subset \cdots.
\end{equation}
Intuitively,  $H_f^{(s)}$  is a combinatorial avatar of $K_f$.
The final tree $\cT$ is a suitable subtree of $H_f^{(s)}$  extended
by new leaves given by the Galois conjugates of a point of small height.
The tree $\cT$ is our dynamical substitute of the hedgehog.

In order to bound the transfinite diameter of $\cT$ from above, we
expand an approach explored by the first-named
author~\cite{hab:dubinin}. This earlier paper contains an elementary
approach to Dubinin's Theorem based on the Vandermonde matrix. The
result is an upper bound for the transfinite diameter of a hedgehog,
albeit with a suboptimal numerical constant.

In the current paper, we develop the ideas in \cite{hab:dubinin} by
introducing some new tools. These include divided differences and
estimates that are common in transcendence theory.
The goal is to bound from above the transfinite diameter of $\cT$ using
Vandermonde determinants.

Our approach requires a good
combinatorial understanding of the Hubbard tree and its preimages
$H_f^{(s)}$.
Let us return to the sequence (\ref{eq:Hfsequence}) that is core to
our approach.
It is not difficult to see that the number of edges of $H_f^{(s)}$ can
be at least $d^s$ for all $s\ge 0$; here $d=\deg f$. For example, when $f=T^2-2$, the
Hubbard tree has vertices $-2,0,2$ and two edges $[-2,0],[0,2]$. The
preimages  $H_f^{(s)}$ consists of $2^{s+1}$ edges aligned on
$[-2,2]$. 

However, this growth rate is too fast for our application. And we will
need to work with a suitable subtree $\cT\subset H_f^{(s)}$ with fewer
edges asymptotically in $s$.

Let $v_s$ denote the number of vertices of $H_f^{(s)}$. Suppose we are
presented with a set of vertices $\cV$ of $H_f^{(s)}$ such that
$\mathsf{p}=\#\cV/v_s$. In this model, we think
of an arbitrary vertex lying in $\cV$ with  probability
$\mathsf{p}$.  Now consider the convex hull $\langle
\cV\rangle$ of $\cV$, the smallest subtree of $H_f^{(s)}$ containing
all vertices in $\cV$. Here is the basic question: is
$\mathsf{q}=\#\langle \cV\rangle / v_s$, the proportion of vertices in the
convex hull in relation to the total number of vertices, also small? In other
words, does the convex hull of a sparse set of vertices have
a sparse set of vertices?

For example, if $f=T^2-2$ as above, then one needs to traverse all
$2^{s+1}$ edges in $H_f^{(s)}$ to pass from $-2$ to $2$.
So if our given set of vertices is $\{-2,2\}$, then the convex hull is
the whole tree. 
In the notation of the previous paragraph
$\mathsf{p}=2/(2^{s+1}+1)$ and $\mathsf{q}=1$. Unfortunately, the answer to the question above is
no, at least in general.

However, the Chebyshev polynomial $T^2-2$ is an outlier. For many
postcritically finite polynomials, including $T^2-1$ we will be able
to do better. This requires a better understanding of the
combinatorial properties of the Hubbard tree $H_f$ and the action of
$f$ on $H_f$ induced by the inclusion $f(H_f) \subset H_f$.

We will rely on Thurston's exponential \textit{core entropy}
$\rho(f)\ge 1$ introduced in \cite{thurstonetal:degreed}.
It is the spectral radius associated to the action of $f$ on the edges
of $H_f$.  
A fundamental inequality states  that $\rho(f)\le \deg f$. The upper bound is attained for
example for $T^2-2$. On the other hand, we will see that
$\rho(T^2-1) = (\sqrt 5+1)/2<2$.
More generally,  $\rho(f)<\deg f$ if $f$ is postcritically finite and not
conjugate to a Chebyshev polynomial up-to sign.

If $f$ has non-maximal
core entropy, \textit{i.e.}, if $\rho(f)<\deg f$, then the answer to
the question above on the convex hull of a sparse set of vertices is yes. In other words, a sparse set of vertices
has a sparse convex hull in $H_f^{(s)}$.

The diameter of the sequence of trees $H_f^{(s)}$ grows asymptotically
at most like
$\rho(f)^s$, up-to a polynomial factor in $s$. However, the number of
edges may be $\ge d^s$. Roughly speaking, having non-maximal core
entropy implies that the family of trees $H_f^{(s)}$ exhibits good
expanding properties, an essential property in our construction of the
 tree $\cT$. 

As in Dimitrov's work and the author's earlier paper in the dynamical
setting,
we establish
a congruence relation modulo a prime $p$.
To this end, 
we systematically use Epstein's work~\cite{epstein:12}.
Epstein proved that postcritically finite $f$ as in
Theorem~\ref{thm:mainintro}
satisfy a strong
integrality condition 
 above the prime $p$. 
Indeed, it suffices to assume that $f$ is
postcritically bounded above $p$, which follows from postcritical
finiteness. Another consequence of Epstein's work allows us to connect
the iterates $f^{(k)}$ with a lift of the Frobenius
map, see Lemma~\ref{lem:fkcongpsqr}. This connection is  the basis of
the desired 
congruence relation modulo a power of $p$.

Let us now discuss some cases where the hypothesis of
Theorem~\ref{thm:mainintro} is met. Part (ii) of the lemma below uses
work of Epstein and Poonen~\cite{epstein:12}. Below, $\pco_{\ge 1}(f)$
denotes the postcritical orbit of $f$, see
Section~\ref{sec:notation}.

\begin{lemma}
  \label{lem:critperimplieshyp}
  Let $f$ be  a polynomial of degree $d\ge 2$
  with coefficients in a field of characteristic $0$.
  \begin{enumerate}
  \item [(i)]   
    Suppose $f\in \IC[T]$ and
    that the forward orbit of each point in $\pco_{\ge 1}(f)$
    contains a critical point. This hold, for example, if all critical
    points of $f$ are periodic. Then $f$ is postcritically finite and
    hyperbolic.
  \item[(ii)] Suppose $f=T^d+b$. If $f$ is postcritically finite, then
    $b$ is an algebraic integer. Moreover, if $0$ is $f$-periodic,
    then   $\IQ(b)/\IQ$ is unramified
    above all prime numbers dividing $d$.
  \end{enumerate}
\end{lemma}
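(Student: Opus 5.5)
For part (i), I will first show that $f$ is postcritically finite. Take a critical point $c$. Its image $f(c)$ lies in $\pco_{\ge 1}(f)$, so by hypothesis some iterate $f^{(n)}(f(c))$ with $n\ge 0$ is a critical point $c'$. Iterating this produces a sequence of critical points $c, c', c'', \ldots$, each obtained from the previous one by applying a positive iterate of $f$. There are at most $d-1$ critical points, so this sequence must repeat. Hence some critical point $c_0$ on the forward orbit of $c$ satisfies $f^{(m)}(c_0)=c_0$ with $m\ge 1$, so $c_0$ is periodic. Therefore $c$ is preperiodic, and $\pco_{\ge 1}(f)$ is finite.

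For hyperbolicity I will use Definition~\ref{def:hyperbolic}, which I expect says that every critical point is attracted to an attracting cycle (equivalently, the postcritical set is disjoint from the Julia set). The argument above shows that each critical point $c$ eventually lands on a periodic cycle containing a critical point. A cycle containing a critical point has multiplier $(f^{(m)})'(c_0)=0$, so it is superattracting. Thus every critical orbit converges to an attracting cycle, which is hyperbolicity. If the definition is instead phrased as ``$\pco_{\ge1}(f)$ is disjoint from $J_f$,'' I will argue directly: each postcritical point has a critical point in its forward orbit, so it lies in the basin of a superattracting cycle, which is open and contained in the Fatou set. The only subtlety is matching the Appendix definition.

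For part (ii), integrality of $b$ comes from the orbit of $0$, the unique critical point. Its orbit is $0, b, b^d+b, \ldots$, and $f^{(n)}(0)=P_n(b)$ where $P_n\in\IZ[T]$ is monic of degree $d^{n-1}$. Postcritical finiteness gives $P_{n}(b)=P_{m}(b)$ for some $n>m\ge1$. The difference $P_n-P_m$ is monic in $b$ because the degrees differ, so $b$ is an algebraic integer. For the ramification statement I plan to use Epstein--Poonen~\cite{epstein:12}. Suppose $0$ has exact period $n$. Then $b$ is a root of the Gleason-type polynomial $G_n(T)=P_n(T)/\prod(\text{lower period factors})$. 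I need $G_n$ to be separable modulo every prime $\ell\mid d$, which makes $\IQ(b)$ unramified at $\ell$. Modulo $\ell$ we have $f'(T)=dT^{d-1}\equiv 0$, and the derivative computation of the periodic-point condition with respect to $b$ gives
\begin{equation*}
  \frac{d}{db}P_n(b)=\sum_{k} \prod_{j>k} f'(P_j(b)) \equiv 1 \pmod{\ell}.
\end{equation*}
So $P_n'(b)$ is an $\ell$-adic unit. Hence $b$ is a simple root of $P_n$ modulo every prime above $\ell$, and the discriminant of $\IQ(b)$ is prime to $\ell$ (Dedekind criterion via the factor $\IZ[b]$, which is $\ell$-maximal).

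The main obstacle is this last step: justifying that a simple root of $P_n$ modulo $\ell$ forces unramifiedness. I will do this with Hensel's lemma. Over $\IQ_\ell$, each root of $P_n$ lifts uniquely from its reduction, so $P_n$ splits into factors over unramified extensions of $\IQ_\ell$. Therefore every completion of $\IQ(b)$ above $\ell$ is unramified. This is exactly the Epstein--Poonen argument, and I would simply cite it.
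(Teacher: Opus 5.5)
Your proposal is correct and matches the paper's proof. In (i) the paper also finds a periodic critical point in each postcritical orbit; that point is superattracting and hence not in $J_f$, and the paper concludes via Lemma~\ref{lem:hyperboliccharacterization}, which is what makes your ``postcritical set disjoint from $J_f$'' formulation equivalent to the expansion-type Definition~\ref{def:hyperbolic}. In (ii) the paper just cites Epstein's appendix, whose monic $\Gamma\in\IZ[X]$ with $\Gamma'\equiv 1 \pmod d$ is exactly your $P_n=f^{(n)}(0)$ viewed as a polynomial in $b$, so your derivative recursion and discriminant/Hensel step only make that citation explicit.
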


Therefore, our Theorems~\ref{thm:mainintro}, \ref{thm:main2intro}, and
\ref{thm:intropreper} apply to $T^{d}+b$ when $d$ is a prime power and
when the critical point $0$ is periodic.
This is the cornerstone for the height lower bound in Theorem~\ref{thm:unicriticalwandering}. 
Moreover, we  have the next corollary regarding preperiodic points. 

\begin{corollary}
  \label{cor:unicritialpreperiodic}
  Let $p$ be a prime number and $d=p^e$ for some $e\in\IN$. Suppose
  $b$ is an algebraic number such that  
  the critical point $0$ of $f = T^d + b$ is periodic.
  We set
  \begin{equation*}
    \mathfrak{f} =\mathrm{lcm}\{ f_v : v\in M_{\IQ(b)}, v\mid p\}.
  \end{equation*}
  where
  $f_v$ denotes the residue degree of the place $v$.
  Then all algebraic and $f$-preperiodic $\alpha$ satisfy
  \begin{equation}
    \label{eq:unicritialpreperiodic}
    d^{\mathrm{preper}(\alpha)-1} \le [\IQ(b,\alpha):\IQ(b)]
    \quad\text{and}\quad \mathrm{per}(\alpha) \le
    p\mathfrak{f}[\IQ(b,\alpha):\IQ(b)].
  \end{equation}
\end{corollary}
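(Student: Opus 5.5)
We will deduce the corollary from Theorem~\ref{thm:intropreper}, applied with $F=\IQ(b)$ and $f=T^d+b$. The argument checks the hypotheses of that theorem and then simplifies its preperiod bound using $\mathfrak e=1$.

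First we verify the hypotheses. Since $0$ is periodic, $f$ is postcritically finite: $0$ is the only critical point. By Lemma~\ref{lem:critperimplieshyp}(ii), $b$ is an algebraic integer, so $f\in\cO_F[T]$ is monic of degree $d=p^e$. The barycenter of $f$ is $0$ when $d\ge 3$, since then the coefficient of $T^{d-1}$ vanishes. For $d=2$ it is also $0$, since $f=T^2+b$ has no linear term. Thus $f$ is centered and the barycenter is integral.

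Next we must rule out that $f$ is $\overline F$-linearly conjugate to $\pm$ a Chebyshev polynomial. Every critical point of a Chebyshev polynomial $\pm T_d$ is strictly preperiodic: its critical values are $\pm2$, which map to the fixed point $2$ or to the cycle $\{2,-2\}$, and these are not critical. This property is invariant under linear conjugation. Our $f$ has a periodic critical point, so it is not such a conjugate. This is the step needing the most care, and we would quote the standard description of Chebyshev postcritical orbits from around (\ref{eq:chebynorm}).

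Lemma~\ref{lem:critperimplieshyp}(ii) also gives that $\IQ(b)/\IQ$ is unramified above $p$. Hence $e_v=1$ for all $v\mid p$, so $\mathfrak e=1$ and $\lceil\log\mathfrak e/\log d\rceil=0$. Theorem~\ref{thm:intropreper} then yields
\begin{equation*}
  \mathrm{preper}(\alpha)\le 1+\frac{\log[F(\alpha):F]}{\log d},
\end{equation*}
which exponentiates to $d^{\mathrm{preper}(\alpha)-1}\le[\IQ(b,\alpha):\IQ(b)]$. The period bound $\mathrm{per}(\alpha)\le p\mathfrak f[\IQ(b,\alpha):\IQ(b)]$ is read off directly, with $\mathfrak f$ as defined in the corollary for $F=\IQ(b)$.
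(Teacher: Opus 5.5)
Your proposal is correct and follows the paper's own proof: apply Theorem~\ref{thm:intropreper} with $F=\IQ(b)$, using Lemma~\ref{lem:critperimplieshyp}(ii) for the integrality of $b$ and for $\mathfrak e=1$. The only difference is how you exclude Chebyshev conjugacy. The paper observes that $f$ is hyperbolic by Lemma~\ref{lem:critperimplieshyp}(i) whereas $\pm\Psi_d$ is not; you instead use the purely algebraic fact that $\pm\Psi_d$ has no periodic critical point, since the forward-invariant set $\{2,-2\}$ contains no critical point. (For $-\Psi_d$ with $d$ even the fixed point is $-2$ rather than $2$, but this does not affect your argument.)
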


The upper bound on the period length of $\alpha$ grows linearly, while
the upper bound for the preperiod length grows logarithmically.
Let us explain this striking difference and why the upper bound
for the period length is essentially best possible for $f=T^2$.

Say $b=0$. Then in Corollary~\ref{cor:unicritialpreperiodic} we may take $\mathfrak
f=1$ because the base field is $\IQ$. Here $0$ is periodic of period
length $1$. The preperiodic points are $0$ and all roots of unity. Let
$\zeta$ be a root of unity of order $N$. We decompose $N=2^\nu M$
where $\nu\ge 0$ and $M\in\IN$ is odd.

If $N$ is odd, \textit{i.e.}, $\nu=0$, then $\zeta$ is periodic under $T^2$
with period length 
$\mathrm{ord}_{(\IZ/N\IZ)^\times}(2)$, the multiplicative order of $2$ modulo
$N$. If $2$ generates $(\IZ/N\IZ)^\times$, then $\mathrm{per}(\zeta) =
\phi(N)$, where $\phi$ is the Euler totient function.
So the second inequality in (\ref{eq:unicritialpreperiodic})
implies $\phi(N) \le 2[\IQ(\zeta):\IQ]$. The left-hand side
equals $[\IQ(\zeta):\IQ]$, so the upper bound is optimal up-to the
factor $2$. 
Of course, in general $2$ does not generate $(\IZ/N\IZ)^\times$.
A deep
conjecture of Artin states that $2$ generates $(\IZ/N\IZ)^\times$ for
infinitely many primes $N$. On the other hand, it is known
unconditionally that $2$ is a generator if $N\ge 3$ is a power of $3$. 

For $\nu \ge 1$, the root of unity $\zeta$ is preperiodic with
$\mathrm{preper}(\zeta) = \nu$.
The first inequality in (\ref{eq:unicritialpreperiodic})
states $2^{\nu-1}\le [\IQ(\zeta):\IQ]$. In other words,
$2^{\nu-1}\le \phi(N)=\phi(2^\nu)\phi(M) = 2^{\nu-1}\phi(M)$. This
bound is sharp  if $M=1$. 

We use a result of Buff, Floyd, Koch, and Parry~\cite{BFKP:Gleason} to
obtain the following corollary.

\begin{corollary}
  \label{cor:unicritialdeg2preperiodic}
  Let $b$ be an algebraic number such that the critical point $0$
  of $f = T^2 + b$ is periodic of period length $n$. Let
  $\alpha$ be an
  $f$-preperiodic algebraic number, then
  \begin{equation}
    \label{eq:unicriticaldeg2preperiodic}
    2^{\mathrm{preper}(\alpha)-1} \le [\IQ(b,\alpha):\IQ(b)]
    \quad\text{and}\quad \mathrm{per}(\alpha) \le
    n[\IQ(b,\alpha):\IQ(b)].
  \end{equation}
\end{corollary}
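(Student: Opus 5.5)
Corollary~\ref{cor:unicritialdeg2preperiodic} follows from Corollary~\ref{cor:unicritialpreperiodic} with $p=2$ and $e=1$. The preperiod bound $2^{\mathrm{preper}(\alpha)-1}\le[\IQ(b,\alpha):\IQ(b)]$ is literally the first inequality in (\ref{eq:unicritialpreperiodic}) for $d=2$. So nothing further is needed there, beyond noting that by Lemma~\ref{lem:critperimplieshyp}(ii) $b$ is an algebraic integer and $\IQ(b)/\IQ$ is unramified above $2$, which are the hypotheses already absorbed in Corollary~\ref{cor:unicritialpreperiodic}.

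For the period bound, Corollary~\ref{cor:unicritialpreperiodic} gives $\mathrm{per}(\alpha)\le 2\mathfrak f[\IQ(b,\alpha):\IQ(b)]$, with $\mathfrak f$ the lcm of the residue degrees of the places of $\IQ(b)$ above $2$. It therefore suffices to prove $2\mathfrak f\le n$, or more precisely that every residue degree $f_v$ divides a number $m$ with $2m\le n$.

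This is where the result of Buff, Floyd, Koch, and Parry~\cite{BFKP:Gleason} enters. I would use it in the form: $b$ is a root of the Gleason polynomial $G_n(c)=f_c^{(n)}(0)$, and its reduction modulo $2$ has a specific shape. Over $\IF_2$, the recursion $G_{k+1}=G_k^2+c$ makes the reduction of the $n$-th Gleason polynomial explicitly computable. Its roots over $\overline{\IF}_2$ then lie in fields $\IF_{2^m}$ with $m$ controlled by $n$; indeed $\bar c$ satisfies a relation tied to period $n$. The precise statement I would extract from BFKP is that the residue fields of $\IQ(b)$ above $2$ have degree $f_v$ with $2f_v\le n$, or at least with $\mathrm{lcm}_v(f_v)\mid m$ and $2m\le n$ (for $n\ge2$). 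For $n=1$, $b=0$ and $\mathfrak f=1$.

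The main obstacle is this middle step, extracting the residue-degree bound from BFKP. If the lcm over several places is problematic, I would instead revisit the proof of the period bound in Theorem~\ref{thm:mainpreper}. That proof only uses a Frobenius lift modulo $2$ on the completion at a single place $v$ above $2$, so $\mathfrak f$ can be replaced by $f_v$ for any one place. One then chooses $v$ with $2f_v\le n$, or uses BFKP's description of the factorization of $G_n \bmod 2$ directly. Where the small cases (such as $n=2$, $b=-1$, with $\mathfrak f=1$) require it, I would handle them by hand.
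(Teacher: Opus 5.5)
Your handling of the preperiod bound is fine: for $d=2$ the first inequality in (\ref{eq:unicritialpreperiodic}) is exactly the claim.

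The period bound, however, cannot be reached through Corollary~\ref{cor:unicritialpreperiodic}, because the inequality $2\mathfrak f\le n$ on which your plan rests is false in general. Already for $n=1$ you have $b=0$ and $\mathfrak f=1$, so $2\mathfrak f=2>n$. For $T^2$ you therefore only get $\mathrm{per}(\alpha)\le 2[\IQ(\alpha):\IQ]$, not the sharp bound. For $n=3$, $b$ is a root of $T^3+2T^2+T+1$, which is irreducible modulo $2$ and has discriminant $-23$. So $2$ is inert in $\IQ(b)$, $\mathfrak f=3$, and you get the factor $6$ instead of $3$. More generally, by \cite{BFKP:Gleason} every residue degree above $2$ equals $n$ when $n$ is odd, so no choice of place gives $2f_v\le n$.

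Your fallback of working at a single place $v\mid 2$ is also unavailable. The congruence $A_k\equiv A_l$ must hold modulo $dp\cO_{F,p}$, that is, at every place above $p$. Only then do $B$, and hence $\phi$, have coefficients in $\cO_F$, which is what lets the P\'olya--Bertrandias Theorem be applied with the archimedean places alone.

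The missing idea is to bypass the pair $(k,k+p\mathfrak f)$ of Lemma~\ref{lem:conglemma_new_cor}. The factor $p$ there serves only to force condition (ii) of Definition~\ref{def:Ffadmissible}, by summing $p$ identical blocks of length $\mathfrak f$. Here that condition is automatic for $(k,l)=(1,1+n)$, since $f^{(n)}(0)=0=f^{(0)}(0)$.

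Condition (i), namely $a^{2^n}\equiv a\imod{2\cO_{F,2}}$, holds because $F=\IQ(b)$ is unramified above $2$ and every residue degree above $2$ divides $n$. What is needed is this divisibility, not an inequality. The paper obtains it from \cite{BFKP:Gleason} (degrees $n$ or $n/2$) via Dedekind--Kummer. Your own mod-$2$ computation also yields it: $f^{(n)}(0)$, viewed as a polynomial in $b$, reduces to $L(b)$ with $L=\sum_{i=0}^{n-1}T^{2^i}$, and in $\IF_2[T]$ one has $L'=1$ and $L^2+L=T^{2^n}+T$. Hence the reduction of the minimal polynomial of $b$ is separable with all roots in $\IF_{2^n}$, and Dedekind--Kummer gives both unramifiedness and $f_v\mid n$.

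Feeding $(1,1+n)$ into Theorem~\ref{thm:mainpreper} then gives $\mathrm{per}(\alpha)\le n[F(\alpha):F]$. Because $k=1$, it also gives $2^{\mathrm{preper}(\alpha)-1}\le[F(\alpha):F]$. This is the paper's route through Lemma~\ref{lem:conglemma_unicritical} and Theorem~\ref{thm:mainpreperdeg2}.
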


Returning to the example with $f=T^2$, we have $n=1$ in the context of
Corollary~\ref{cor:unicritialdeg2preperiodic}. Let $\zeta$ be again a
root of unity of order $N=2^\nu M$ with odd $M$. If $N$ is odd, then
the second bound in (\ref{eq:unicriticaldeg2preperiodic}) reads
$\mathrm{ord}_{(\IZ/N\IZ)^\times}(2) \le [\IQ(\zeta):\IQ]$. This time
the upper bound is sharp if $2$ generates $(\IZ/N\IZ)^\times$
while the first bound in (\ref{eq:unicriticaldeg2preperiodic})
remains unchanged.

Another example that shows that this is sharp is $f = T^2 -1$ for
which $\mathrm{per}(0) = \mathrm{per}(1) = 2$ and $\mathrm{preper}(1)
= 1$.
Moreover, if $\alpha$ satisfies $f^{(s)}(\alpha)=1$, then
$\mathrm{preper}(\alpha) = s+1$ and $[\IQ(\alpha):\IQ]\le 2^{s}$. 
So equality holds on the left of (\ref{eq:unicriticaldeg2preperiodic})
for all $s\ge 0$. 

Our work strengthens the case for an analogy between elliptic curves
with complex multiplication (CM) and postcritically finite maps. In
both cases, preperiodic points enjoy Galois theoretic properties that
do not hold generically. More specifically, if an elliptic curve $E$
has CM, then a torsion point of order $ N$ satisfies \cite[Theorem
6.2]{BourdonClark:Torsion}
\begin{align}
  \label{torsionphi}
  [\mathbb{Q}(j(E), P):\mathbb{Q}(j(E))] \geq \phi(N)/6
\end{align}
where $j(E)$ is the $j$-invariant of $E$. Recall that $\phi(N)
\gg_\epsilon N^{1-\epsilon}$. Such uniformity cannot hold for all
elliptic curves. This can be seen by picking a non-torsion
multi-section of the Legendre family $E_\lambda: Y^2 =
X(X-1)(X-\lambda)$ and showing that it takes torsion values of
arbitrarily high order. This follows for example by using the openness of
the Betti map. So for large enough $N$,  there exists
$\lambda_0 \in \IC$, such that $s(\lambda_0) = (2,
\sqrt{2(2-\lambda_0)})$ has order $N$. We may even take $\lambda_0$ to
be algebraic. Since $j(E_\lambda) \in
\mathbb{Q}(\lambda)$ and $[\mathbb{Q}(s(\lambda_0), \lambda_0):
\mathbb{Q}(\lambda_0)] \leq 2$, we see that a bound such as
(\ref{torsionphi}) can not hold in general. A result of Andr\'e states
that there are at most finitely many $\lambda_0 \in
\IC\setminus\{0,1\}$ such that $E_{\lambda_0}$ has CM and
$s(\lambda_0)$ is torsion.

However, the analogy between CM elliptic curves and postcritically finite
polynomials has its limits.
On the one hand, an elliptic curve with CM defined over a number field
has potentially good reduction at all finite places.
On the other hand, a postcritically finite polynomial $f$ may fail to have
integral coefficients. For example, Ingram~\cite{Ingram:PCF} exhibited
$T^3-\frac 32 T^2$ which is postcritically finite and does not have
potentially good reduction above $2$.
See Section~\ref{sec:examples} for more details. 

It is natural to ask if there are monic polynomials with at least $2$
critical points to which the results in this paper apply. Indeed, in
Section \ref{sec:examples} we show that $(T^2-1)^2\in\IZ[T]$ is a
postcritically finite, hyperbolic quartic with barycenter $0$ and with
$3$ critical points.

Yet the authors do not know if there exists a monic, hyperbolic,
postcritically finite cubic with $2$ critical points, algebraic
coefficients, and integral barycenter.

In future work, we will consider more general $f$, including the
subhyperbolic case.

The paper is structured as follows. In Section~\ref{sec:transfinitei}
we recall basic facts on the transfinite diameter of a compact subset
of the complex plane. Furthermore, we provide an upper bound that
provides a dynamical variant of the first-named author's
bound~\cite{hab:dubinin}. Section~\ref{sec:trees} contains some
background material on the Hubbard tree of a postcritically finite
polynomial. We also cover properties of the core entropy of such
polynomials. In Section~\ref{sec:dynhedgehog} we combine the efforts
of the previous two sections and complete our construction of the
finite topological tree. In Section~\ref{sec:powerseries} we study
integrality conditions which are key to power series construction.
Finally, in Section~\ref{sec:proofs} we prove
Theorems~\ref{thm:mainintro}, \ref{thm:main2intro}, and
\ref{thm:intropreper}
as well as their
generalizations. We cover some additional examples in
Section~\ref{sec:examples}.
Appendix~\ref{app:hyperpolys} collects some classical
definitions and facts on the complex dynamics of polynomials.

\subsection{Notation}
\label{sec:notation}

We describe some further notation.

We let $B_r(z)\subset\IC$ denote the open disk of radius $r>0$
centered at $z\in\IC$.
If $f\in\IC[T]$ has degree at least $2$, then $J_f$ is the Julia set
of $f$ and $K_f$ is the filled Julia set of $f$. Some fundamental
properties are recalled in the Appendix.

Let $F$ be a field, we  fix an algebraic closure $\overline F$ of $F$.
Let $f\in F[T]$ be of degree at least $2$.  Then $\crit{f}$
denote the set of critical points of $f$, \textit{i.e.}, the roots of
the derivative $f'$ in $\overline F$. The \textit{postcritical
orbit} of $f$ is
\begin{equation}
  \label{def:pcoge1}
  \pco_{\ge 1}(f) = \{f^{(n)}(c) : n\in \IN\text{ and } c\in \crit{f}\}.
\end{equation}

When working with rational maps and their dynamics on the projective
line, it is natural to allow the point at infinity to be a critical
point. For example, the point at infinity is always critical for a
polynomial. In this paper, we work exclusively with polynomials. So we
allow ourselves to ignore the point at infinity when considering
critical points of $f$.

Let $F$ be a number field. A \textit{place} of a number field is a
non-trivial absolute value that, when restricted to $\IQ$, is either
the standard absolute value or the $p$-adic absolute value for some
prime number $p$. Let $L$ be a finite field extension of $F$. We write
$M_L$ for the set of places of $L$. We identify each $v\in M_L$ with
the corresponding absolute value $|\cdot|_v\colon L\rightarrow\IR$.
Let us describe places concretely. If $v\in M_L$ is
\textit{archimedean}, then there is a homomorphism $\sigma\colon
L\rightarrow\IC$, unique up-to complex conjugation, such that
$|\alpha|_v = |\sigma(\alpha)|$ for all $\alpha\in L$. We write
$M_L^\infty$ for the set of archimedean places of $L$ and
$v\mid\infty$ to signify that $v$ is archimedean. If $v$ is
\textit{nonarchimedean}, then our normalization yields $|p|_v = 1/p$
for a unique prime number $p$. In this case we write $v\mid p$. For
any place $v$ of $L$, let $L_v$ denote a completion of $L$ with
respect to $v$. We set $d_v=[L_v:\IQ_w]$ where $w$ is the restriction
of $v$ to $\IQ$.

All rings are understood to be commutative with a multiplicative unit.

\subsection{Acknowledgments}
The authors thank Laura DeMarco and Vesselin Dimitrov
for discussions, and the former in particular for the
proof of Lemma~\ref{lem:pcfhyperbolicgalois}. We thank Alessio Cangini
and Tom Tucker
for comments and corrections.
The first-named author acknowledges
support by  the Swiss National Science Foundation grant ``Rational
points, arithmetic dynamics, and heights'' (Nr. 200020\_219397). The second named author thanks the Engineering and Physical Sciences Research Council for support
under the grant ``Heights and equidistribution in dynamical systems and Diophantine geometry'' (UKRI3683).  


\section{Bounding the Transfinite Diameter}
\label{sec:transfinitei}

Let $K\subset \IC$ be a compact and non-empty set. For an integer
$N\ge 2$, the \textit{$N$-th logarithmic diameter} of $K$ is
\begin{equation*}
  \mathsf{d}_N(K) = \frac{2}{N(N-1)} \max_{z_1,\ldots,z_N\in
    K}\log\prod_{1\le i<j\le N} |z_j-z_i| \in\{-\infty\}\cup\IR.
\end{equation*}

It is well-known, see Theorem 5.5.2~\cite{Ransford}, that
$\mathsf{d}_{N+1}(K)\le \mathsf{d}_{N}(K)$ for all $N\ge 2$. Thus the limit
\begin{equation}
  \label{def:dinfty}
  \mathsf{d}_\infty(K) = \lim_{N\rightarrow\infty} \mathsf{d}_N(K) \in \{-\infty\}\cup\IR
\end{equation}
exists and is called the \textit{logarithmic transfinite diameter} of
$K$. It is convenient to set $\mathsf{d}_\infty(\emptyset)=-\infty$. The
\textit{transfinite diameter} is the exponential of the
logarithmic transfinite diameter.

The goal of this section is to prove the following upper bound for the
transfinite diameter of a compact set $K$. Here the $K$ in question is
closely related to the filled Julia set $K_f$ of a polynomial $f$. 
We will apply it later to bound from above the
transfinite diameter of certain trees.
The notion of a hyperbolic polynomial is recalled in
Definition~\ref{def:hyperbolic}.
For $K\subset\IC$ as above, we let
$\mathrm{dist}(z,K) = \min\{|w-z| : w\in K\}$.

\begin{proposition}
  \label{prop:transfiniteapplication}
  Let $f\in\IC[T]$ be monic of degree $d\ge 2$ and hyperbolic. Let
  $r>0$ be sufficiently small in terms of $f$. Let $\delta >0$. There
  exist $c_{1}(f,\delta)>0$ and $c_2(f,\delta)>0$ with the following property. Let
  $n,s\in\IN_0$ with $n\le c_2(f,\delta) d^s$. For each $i\in \{1,\ldots,n\}$ let
  $\mathcal{D}_i\subset\IC$ be an open disk of radius $r$ centered at a point of
  the Julia set
  $J_f$ and let $V_i$ be
  a connected component of $(f^{(s)})^{-1}(\mathcal{D}_i)$. Let
  $K\subset\IC$ be a compact set such that
  \begin{equation}
    \label{eq:propKcontained}
    K \subset V_1\cup\cdots\cup V_n \cup \left\{z\in K_f :
    \mathrm{dist}(f^{(s)}(z),J_f)\ge \delta\right\}.
  \end{equation}
  Then
  \begin{equation}
    \label{eq:corcDNub}
    \mathsf{d}_N(K) \le -\frac{c_1(f,\delta)}{d^s} +
    O_{f}\left(\frac{\log N}{N}\right)
  \end{equation}
  for all integers $N\ge 2$; the implicit constant depends only on $f$. In particular,
  $$\mathsf{d}_\infty(K)\le -\frac{c_1(f,\delta)}{d^s}.$$
\end{proposition}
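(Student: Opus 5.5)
The approach is to compare $K$ with the filled Julia set via the Green function $g_f$ of $K_f$, which satisfies $g_f(f(z)) = d\, g_f(z)$, and to bound the Vandermonde product directly. Since $f$ is monic, $K_f$ has capacity $1$, so $\mathsf{d}_\infty(K_f)=0$. For points $z_1,\ldots,z_N\in K$ we factor the Vandermonde through the pushforward by $f^{(s)}$. The mechanism is that most points of $K$ lie in the set $\{z\in K_f : \mathrm{dist}(f^{(s)}(z),J_f)\ge\delta\}$. There $f^{(s)}(z)$ lies in the interior of $K_f$, at distance $\ge\delta$ from $J_f$. The remaining points lie in at most $n\le c_2 d^s$ small components $V_i$, which carry only a small proportion of the equilibrium mass of $K_f$ (each $V_i$ has $\mu_f$-mass at most $\mu_f(\mathcal D_i)/d^s$ up to degree, and there are at most $c_2 d^s$ of them).

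The cleanest implementation is potential theoretic, with a quantitative $N$-point version at the end. \textbf{Step 1.} Bound the equilibrium measure of $K$. Let $\nu$ be any probability measure on $K$. We have $I(\nu)\le I(\mu_f)$ plus a defect measuring how much of $\nu$ lives where $\mu_f$ does not. Concretely, the obstruction to $\mathsf{d}_\infty(K)=0$ is that $K$ misses most of $J_f$. More precisely, $K\cap J_f\subset V_1\cup\cdots\cup V_n$, since the other piece avoids $(f^{(s)})^{-1}(J_f)=J_f$. \textbf{Step 2.} Hyperbolicity gives a uniform expansion: for $r$ small, each branch of $(f^{(s)})^{-1}$ on a disk of radius $2r$ about a Julia point is univalent, with Koebe distortion bounded. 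Moreover $\mu_f(V_i)= \mu_f(\mathcal D_i)\cdot d^{-s}$ (branch of degree $1$), and $\mu_f(\mathcal D_i)\le 1-\eta(f)$ for some $\eta>0$ once $r$ is small. Thus $\mu_f(V_1\cup\cdots\cup V_n)\le n d^{-s}\le c_2$. \textbf{Step 3.} Use the Green function to bound the capacity. Let $g_K$ be the Green function of $K$. We have $g_K\ge g_f$, and $\log\mathrm{cap}(K) = -\int g_K\,d\mu_f$ (Robin constant comparison, since $\mathrm{cap}(K_f)=1$). So it suffices to show $g_K\ge \kappa d^{-s}$ on a set of $\mu_f$-mass $\ge 1/2$. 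Take $w\in J_f\setminus (V_1\cup\dots\cup V_n)$. Its image $f^{(s)}(w)\in J_f$ is separated, in the plane, from the set $\{\mathrm{dist}\ge\delta\}$. I would build an explicit subharmonic minorant $u=d^{-s}\,h\circ f^{(s)}$. Here $h$ is harmonic off the compact set $\{z\in K_f: \mathrm{dist}(z,J_f)\ge\delta\}\cup\bigcup_i \overline{\mathcal D_i}$, behaves like $\log|z|$ at infinity, and is $\ge 0$. The function $u$ is a competitor in the Perron family defining $g_K$. The only danger is that near $V_i$ the minorant must vanish, but by Step 2 these pieces have total mass $\le c_2$, and for the remaining Julia points $h\circ f^{(s)}\ge \kappa(f,\delta)$. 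Since $\int g_K\,d\mu_f\ge \kappa d^{-s}(1-c_2)$, choosing $c_2=1/2$ gives $c_1=\kappa/2$.

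The main obstacle is Step 3, specifically making $h$ uniform. The disks $\mathcal D_i$ vary, and $h$ must be harmonic off their union, but pulling back by $f^{(s)}$ only needs the single disk $\mathcal D_i$ on the branch $V_i$, not all of them. So instead I would define the minorant branchwise: set $u=0$ on $V_i$, and otherwise $u=d^{-s}\,g_{E}(f^{(s)}(z))$ with $E=\{\mathrm{dist}(\cdot,J_f)\ge\delta\}\cap K_f$. To keep $u$ subharmonic I would take a max with $0$ after subtracting a bump supported on $V_i$, controlled by Koebe distortion and the smallness of $r$. Finally, the $N$-th diameter bound \eqref{eq:corcDNub} follows from the standard Fekete estimate $\mathsf{d}_N(K)\le \log\mathrm{cap}(K)+O(\log N/N)$. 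This estimate needs uniformity in $K$. I would obtain it from the Bernstein--Walsh inequality for the Fekete polynomial, using $K\subset K_f$ together with the bound $\mathrm{diam}$ depending only on $f$. This gives the implicit constant depending only on $f$.
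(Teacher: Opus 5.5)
You have a genuine gap in Step~3, and another in your last step.

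\textbf{Step 3 assumes $K\subset K_f$.} You use $g_K\ge g_f$ and $\log\mathrm{cap}(K)=-\int g_K\,d\mu_f$, and your final step again uses $K\subset K_f$. This fails. The map $f^{(s)}$ sends each $V_i$ onto a disk $\mathcal{D}_i$ centred at a point of $J_f=\partial K_f$, so $V_i$ protrudes from $K_f$. Hence $K$ may contain points with $\lambda_f>0$; in the application, $K$ is the tree of Proposition~\ref{prop:growtree}, which contains the $z_j$. The correct identity is
\[
\log\mathrm{cap}(K)=-\int g_K\,d\mu_f+\int\lambda_f\,d\mu_K .
\]
On $V_i$ we have $\lambda_f\le d^{-s}\max_{\mathcal{D}_i}\lambda_f$. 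So the term you dropped can be as large as $\beta\,d^{-s}\mu_K(V_1\cup\cdots\cup V_n)$, with $\beta>0$ depending on $f$ and $r$ but not on $\delta$. Your gain $\kappa(f,\delta)(1-c_2)d^{-s}$ tends to $0$ with $\delta$. In the application $\delta=c_3(f,r)$ is much smaller than $r$, so nothing absorbs the dropped term.

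In fact no $c_2$ independent of $\delta$ can work, so $c_2=1/2$ is wrong, not merely unproven. Fix $s$ with $c_2d^s\ge 1$ and let $n=1$. Let $V_1'\subset V_1$ be the component over a smaller concentric disk, and set $K=E_s\cup\overline{V_1'}$. As $\delta\to 0$, $E_s$ exhausts the interior of $K_f$. By hyperbolicity this interior is dense in $K_f$ and non-thin at every point of $J_f$. Hence $\mathrm{cap}(K)\to\mathrm{cap}(K_f\cup\overline{V_1'})>1$, and $\mathsf{d}_\infty(K)>0$ for small $\delta$.

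\textbf{What is missing.} You need a penalty for putting mass on the $V_i$. Mass $p_i$ on a single $V_i$ must cost about $p_i^2$. Then, by Cauchy--Schwarz, mass $1-p_0$ spread over at most $c_2d^s$ pieces costs at least $(1-p_0)^2/(c_2d^s)$. This beats the protrusion term $\beta(1-p_0)/d^s$ once $c_2$ is small in terms of $\beta$ and $\delta$.

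This is exactly the paper's mechanism:
\begin{itemize}
\item For a Fekete $N$-tuple, Lemma~\ref{lem:br} and Fischer's inequality (Lemma~\ref{lem:fischer}) split $\mathsf{d}_N(K)$ into the contributions of the groups.
\item Proposition~\ref{prop:vandermondepreboundhyperbolic} bounds each $V_i$-group by $-p_i^2+c_4(f)p_i/d^s$. Its proof uses divided differences, Cauchy's formula on a pulled-back circle, and hyperbolic distortion. The second summand is exactly the term you dropped.
\item The remaining group is handled in Lemma~\ref{lem:DfNieq0}. The maximum principle places those Fekete points on the level set $\mathrm{dist}(f^{(s)}(z),J_f)=\delta$. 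That set is covered by $O_{f,\delta}(d^s)$ preimages of disks inside $K_f$, where $\lambda_f=0$, giving $-c_3(f,\delta)p_0^2/d^s$.
\item Lemma~\ref{lem:qflowerbound} then combines these, and it requires $c_2$ small in terms of both $c_3(f,\delta)$ and $c_4(f)$.
\end{itemize}

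\textbf{Your last step also fails as stated.} There is no estimate $\mathsf{d}_N(K)\le\log\mathrm{cap}(K)+O(\log N/N)$ that is uniform over compact $K$. For example, a finite $K$ has capacity $0$ but finite $\mathsf{d}_N(K)$ for $N\le\#K$. Bernstein--Walsh does not supply such a bound either. The paper gets the $N$-dependence directly because it works with Fekete tuples throughout.
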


\subsection{Partial Transfinite Diameters}
Let $f\in\IC[T]$ be monic of degree $d\ge 2$ throughout this subsection.

Let $\mathfrak i \in \{0,\ldots,d-1\}^{\IN_0}$ be a sequence. We let
$\mathfrak{i}_j$ denote the value of $\mathfrak{i}$ at $j\in\IN_0$. 
The set of possible  base-$d$ digit vectors of a non-negative integer is
\begin{equation}
  \label{def:frakI}
  \begin{aligned}
    \mathfrak{I}
    =\bigl\{  \mathfrak{i} \in\{0,\ldots,d-1\}^{\IN_0} : \mathfrak{i}_j =
    0\text{ for all but finitely many $j$} \bigr\}.
  \end{aligned}
\end{equation}
Let $\mathfrak{i}\in \mathfrak{I}$.
So there exists $m$ with  $\mathfrak{i}_j=0$ for all 
$j>m$. We define
\begin{equation}
  \label{eq:definefi}
  f^\mathfrak{i} = (f^{(0)})^{\mathfrak{i}_0}(f^{(1)})^{\mathfrak{i}_1}(f^{(2)})^{\mathfrak{i}_2}\cdots
  (f^{(m)})^{\mathfrak{i}_m}.  
\end{equation}
Then $f^{\mathfrak{i}}$ lies in $\IC[T]$ and has degree $\sum_{j\ge 0}
\mathfrak{i}_j d^j$. For $i\in\IN_0$  we let
$\mathfrak{i}(i) \in \mathfrak{I}$ denote the digit vector of $i$ in
base-$d$.
Therefore, $\deg f^{\mathfrak{i}(i)} = i$.

Let $z_0,\ldots,z_m\in \IC$ and let $N\in\IN$ satisfy $N\ge 2$. 
The matrix 
\begin{equation}
  \label{def:A}
  A = \left(
    \begin{array}{cccc}
      f^{\mathfrak{i}(0)}(z_0)
      & f^{\mathfrak{i}(0)}(z_1)
      &\cdots
      &f^{\mathfrak{i}(0)}(z_m)
      \\
      \vdots
      & \vdots
      &
      & \vdots
      \\
      f^{\mathfrak{i}(N-1)}(z_0)
      &f^{\mathfrak{i}(N-1)}(z_1)
      &\cdots
      &f^{\mathfrak{i}(N-1)}(z_m)
    \end{array}
  \right) \in \mat_{N,m+1}(\IC).
\end{equation}
Baker and Rumely~\cite{BR:06}, see also~\cite{Baker:06}, studied 
 the case of a square matrix, \textit{i.e.}, if $N=m+1$. Then the
 determinant is just a Vandermonde determinant.
For us, it is important
to also cover the case of non-square matrices as in \cite{hab:dubinin}. 
A substitute for the determinant is the square root of $\det
(\overline{A}^tA) \ge 0$ where $t$ indicates transposition and
$\overline{\cdot}$ indicates complex conjugation.
It is convenient to work logarithmically and normalize by
dividing by ${N\choose 2}$. Thus we define
\begin{equation}
  \label{def:dynvandermonde}
  \mathsf{d}_{f,N}(z_0,\ldots,z_m)=\frac{2}{N(N-1)} \log \sqrt{\det({\overline
    A}^t{A})} \in \{-\infty\}\cup\IR.
\end{equation}
This value is invariant under permuting the $z_j$.
It is convenient to extend our definition to the case $m=-1$ by
setting $\mathsf{d}_{f,N}() = 0$ for the empty tuple.

If $N<m+1$, then $\det(\trans{\overline A}A)=0$. So only the case $N\ge
m+1$ will be relevant for us.

\begin{example}
  Say $f = T^2$ and so $d=2$. Then $f^{(i)} = T^{2^i}$ and
  $f^{\mathfrak{i}(i)} = T^i$ for all integers $i\ge 0$.
  If $N=m+1$,
  then   $A$ as in (\ref{def:A})
  equals the Vandermonde matrix $(z_j^{i})_{0\le i,j\le N-1}$.
  Then $\sqrt{\det (\overline{A}^tA)} = |\det A| = \prod_{i<j}|z_j-z_i|$.
  In this case
  $\mathsf{d}_{f,N}(z_0,\ldots,z_{N-1})$ is the $N$-th logarithmic diameter
  of $\{z_0,\ldots,z_{N-1}\}$.
\end{example}

In the square matrix case $N=m+1$ we recover a Vandermonde determinant
for all monic $f$.
Indeed, a more general version of  following lemma is due to
Baker--Rumely, see Lemma 6.3~\cite{BR:06}.
\begin{lemma}
  \label{lem:br}
  Let  $z_0,\ldots,z_{N-1}\in\IC$ with $N\ge 2$. Then
  \begin{equation*}
    \mathsf{d}_{f,N}(z_0,\ldots,z_{N-1}) = \frac{2}{N(N-1)}\log \prod_{0 \le i<j\le
      N-1} |z_j-z_i|.
  \end{equation*}
\end{lemma}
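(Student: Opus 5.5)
In the square case $N=m+1$ the matrix $A$ is square, so $\det(\overline{A}^tA)=\overline{\det A}\,\det A=|\det A|^2$. Hence $\sqrt{\det(\overline A^tA)}=|\det A|$, and the lemma reduces to the identity
\begin{equation*}
  \det\bigl(f^{\mathfrak{i}(i)}(z_j)\bigr)_{0\le i,j\le N-1} = \pm\prod_{0\le i<j\le N-1}(z_j-z_i).
\end{equation*}
The plan is to obtain this from the Vandermonde determinant by a unitriangular change of basis.

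The key observation is that $f^{\mathfrak{i}(i)}$ is monic of degree exactly $i$ for every $i\in\IN_0$. Indeed, $f$ is monic, so each iterate $f^{(j)}$ is monic of degree $d^j$. A product of monic polynomials is monic, so $f^{\mathfrak{i}(i)}$ is monic of degree $\sum_j \mathfrak{i}(i)_j d^j=i$. Consequently there are complex numbers $a_{ik}$ with
\begin{equation*}
  f^{\mathfrak{i}(i)} = T^i+\sum_{k<i} a_{ik}T^k .
\end{equation*}
In matrix form this says $A = U\,V$, where $V=(z_j^i)_{0\le i,j\le N-1}$ is the Vandermonde matrix and $U$ is lower unitriangular with off-diagonal entries $a_{ik}$.

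Since $\det U=1$, we get $\det A=\det V=\prod_{i<j}(z_j-z_i)$. Taking absolute values and then the logarithm, and multiplying by $2/(N(N-1))$, gives the claimed formula. The degenerate case of repeated $z_j$ needs no separate treatment: then $\det A=0$, both sides equal $-\infty$, and the identity still holds.

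There is no real obstacle. The only point to check carefully is monicity of $f^{\mathfrak{i}(i)}$, which uses that $f$ is monic, and that the base-$d$ digit expansion gives degree exactly $i$, so that the $f^{\mathfrak{i}(i)}$ for $i=0,\ldots,N-1$ form a triangular basis of the polynomials of degree less than $N$.
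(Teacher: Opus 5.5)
Your proof is correct and follows the paper's argument: you reduce to $|\det A|$ via $\det(\overline{A}^tA)=|\det A|^2$ and use that each $f^{\mathfrak{i}(i)}$ is monic of degree $i$. The only difference is that you verify the generalized Vandermonde identity directly through the unitriangular factorization $A=UV$, whereas the paper cites Krattenthaler's Proposition~1.
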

\begin{proof}
  For each $i\in\IN_0$ and $j\in \{0,\ldots,N-1\}$,
  the expression $f^{\mathfrak{i}(i)}(z_j)$ is a monic polynomial in
  $z_j$ of degree $i$.
  Let $A$ be the square matrix given by (\ref{def:A}) in this setting.
  Then $\det(\overline{A}^t A)  = |\!\det A|^2$.
  So $\mathsf{d}_{f,N}(z_0,\ldots,z_{N-1}) = \frac{2}{N(N-1)} \log |\!\det A|$
  by definition. 
  Moreover,  $\det A$
  equals $\prod_{0\le i<j\le N-1} (z_j-z_i)$ by
  the generalized Vandermonde identity, see Proposition~1
  \cite{krattenthaler:adc}. 
\end{proof}

We come to an application of the Hadamard--Fischer Inequality,
compare also Lemma~2.6~\cite{hab:dubinin}. It allows us to decompose the
$N$-th logarithmic transfinite diameter  into parts. 

\begin{lemma}
  \label{lem:fischer}
   Let $N,m_1,\ldots,m_n\in\IZ$
  with $N\ge 2$ and $m_i\ge -1$ for all $i$.
  Let 
  $\bfz_i\in\IC^{m_i+1}$ for all
  $i\in \{1,\ldots,n\}$, then
  \begin{equation*}
    \mathsf{d}_{f,N} (\bfz_1,\ldots,\bfz_n) \le \mathsf{d}_{f,N}(\bfz_1)+\cdots + \mathsf{d}_{f,N}(\bfz_n). 
  \end{equation*}
\end{lemma}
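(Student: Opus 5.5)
The plan is to deduce the inequality from the Hadamard--Fischer Inequality for positive semidefinite Hermitian matrices. Concatenate the tuples into $\bfz = (\bfz_1,\ldots,\bfz_n)\in\IC^{M}$ with $M = \sum_i (m_i+1)$. Let $A\in\mat_{N,M}(\IC)$ be the matrix (\ref{def:A}) attached to $\bfz$. Its columns split into consecutive blocks $A = (A_1 \mid A_2 \mid \cdots \mid A_n)$, and $A_i\in\mat_{N,m_i+1}(\IC)$ is exactly the matrix (\ref{def:A}) attached to $\bfz_i$. Blocks with $m_i=-1$ are empty; they contribute nothing on either side, since $\mathsf{d}_{f,N}()=0$, so we discard them. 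The Gram matrix $G = \overline{A}^tA$ is Hermitian positive semidefinite. Its diagonal blocks are $G_{ii} = \overline{A_i}^tA_i$.

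Hadamard--Fischer states that for a positive semidefinite Hermitian matrix partitioned into square diagonal blocks,
\begin{equation*}
\det G\le \prod_{i=1}^n \det G_{ii}.
\end{equation*}
This follows by induction on $n$ from the two-block case $\det\begin{pmatrix} P & Q\\ \overline Q^t & R\end{pmatrix}\le \det P\det R$. For the two-block case, first assume $P$ is positive definite and use the Schur complement: the determinant equals $\det P\cdot\det(R - \overline Q^tP^{-1}Q)$, and $0\le R-\overline Q^tP^{-1}Q\le R$ in the Loewner order gives the bound. If $P$ is singular, the whole matrix is singular, since $P$ is a principal submatrix of a positive semidefinite matrix, and the inequality $0\le \det P\det R$ is trivial. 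Alternatively, perturb $G$ to $G+\epsilon I$ and let $\epsilon\to 0$.

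Applying $\log\sqrt{\cdot}$ and multiplying by $2/(N(N-1))>0$ then gives
\begin{equation*}
\mathsf{d}_{f,N}(\bfz_1,\ldots,\bfz_n)\le\sum_i \mathsf{d}_{f,N}(\bfz_i),
\end{equation*}
with the convention $\log 0=-\infty$. If some $\det G_{ii}=0$, the right side is $-\infty$, but then $\det G=0$ as well, and the inequality holds as $-\infty\le-\infty$.

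There is no real obstacle. The only points needing care are the degenerate cases: empty tuples, vanishing determinants, and $N<M$, where both $\det G=0$ and the claim is trivial. Permuting the entries of $\bfz$ is harmless because $\mathsf{d}_{f,N}$ is permutation invariant, as noted after (\ref{def:dynvandermonde}).
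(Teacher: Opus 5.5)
Your proposal is correct and follows the paper's proof: both apply Fischer's inequality to the Hermitian positive semidefinite Gram matrix $\overline{A}^tA$, whose diagonal blocks are the $\overline{A_i}^tA_i$, and both take care of the merely semidefinite case (the paper by passing to $\overline{A}^tA+\epsilon E$ and letting $\epsilon\to 0$, you by the Schur complement together with the observation that a singular principal submatrix forces $\det G=0$). The differences are cosmetic: the paper cites Fischer's inequality from Mirsky and reduces to $n=2$ by induction at the level of the lemma, whereas you sketch a proof of the inequality itself and induct on the number of blocks inside the matrix argument.
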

\begin{proof}
  It suffices to prove the bound in the case $m_i\ge 0$ for all $i$.
  By induction on $n$ we may reduce to the case $n=2$.

    Say $A_1\in \mat_{N,m_1+1}(\IC)$ is as in (\ref{def:A}) with
  $z_0,\ldots,z_m$ replaced by the coordinates of $\bfz_1$.
  Similarly, let $A_2\in\mat_{N,m_2+1}(\IC)$ arise from the coordinates
  of $\bfz_2$.
  Finally, set $A = (A_1 | A_2) \in \mat_{N,(m_1+1)+(m_2+1)}(\IC)$.
  By our  definition
  (\ref{def:dynvandermonde}),
  it suffices to prove
  \begin{equation}
    \label{eq:fischertoshow}
    \det(\overline{A}^tA) \le \det(\overline{A_1}^tA_1)
    \det(\overline{A_2}^tA_2).
  \end{equation}

  The hermitian matrix
  \begin{equation*}
    \overline{A}^t A = 
    \left(
      \begin{array}{cc}
        \overline{A_1}^tA_1 & *\\
        * & \overline{A_2}^tA_2
      \end{array}
    \right)\in \mat_{(m_1+1)+(m_2+1)}(\IC)
  \end{equation*}
  is positive semi-definite.
  We apply Fischer's Inequality,
  Theorem 13.5.5~\cite{Mirsky:IntroLA}. If $\overline{A}^tA$ is
  positive definite, then the hypothesis of the reference is
  satisfied and (\ref{eq:fischertoshow}) follows.  In general,  
  $\overline{A}^t A+\epsilon E$ is hermitian and positive definite for all
  $\epsilon>0$ where $E$ is the appropriate unit matrix.
  So Fischer's Inequality implies
  $\det( \overline{A}^tA +\epsilon E)\le
  \det(\overline{A_1}^tA_1+\epsilon E')
  \det(\overline{A_2}^tA_2+\epsilon E'')$
  for all $\epsilon > 0$; here $E',E''$ are again unit matrices.
  We take $\epsilon\rightarrow 0$ from the right and conclude
  (\ref{eq:fischertoshow}). 
\end{proof}

Recall that $f\in\IC[T]$ is monic of degree $d\ge 2$. 
The \textit{local canonical height} of $\alpha\in\IC$ with respect to $f$ is
\begin{equation}
  \label{def:lambdafcomplex}
  \lambda_f(\alpha) = \lim_{n\rightarrow\infty} \frac{\log^+
  |f^{(n)}(\alpha)|}{d^n}\ge 0.
\end{equation}
Then $\lambda_f\colon\IC\rightarrow [0,\infty)$ is a continuous map,
by Theorem 3.27 and Remark 3.30~\cite{Silverman:gtm241}.

The next proposition is our main tool in the proof of
Proposition~\ref{prop:transfiniteapplication}. Its proof will be
delivered in Section~\ref{subsec:genvander}.

\begin{proposition}
  \label{prop:vandermondepreboundhyperbolic}
  Let $f\in\IC[T]$ be monic of degree $d\ge 2$ and hyperbolic.
  Let $\epsilon\in (0,1]$ be sufficiently small in terms of $f$ and
  satisfy the hypothesis of  Lemma~\ref{lem:fninvuniform_new1}. 
  Let $\widehat w\in J_{f}$ and let $w_0\in\IC$
  such that $\overline{B_{R}(w_0)}\subset B_\epsilon(\widehat w)$ with
  $R>0$.
  Let $s\in\IN_0$ and suppose $z_0,\ldots,z_m$ all lie in the same
  connected component of 
  $(f^{(s)})^{-1}(B_{R/256}(w_0))$.
  Then for all integers $N$ with $N\ge \max\{2,m+1\}$ we have
  \begin{equation}
    \label{eq:thmvandermondepreboundhyperbolicbd}
    \mathsf{d}_{f,N}(z_0,\ldots,z_m)\le
    -p^2+
    2p \frac{\max_{w\in \overline{B_R(w_0)}} \lambda_f(w)}{d^s}  + O_f\left(p\frac{\log N}{N}\right)   
  \end{equation}
  where $p=(m+1)/N$ and where the implicit constant depends only on $f$.
\end{proposition}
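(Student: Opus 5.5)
The plan is to expand $\det(\overline{A}^tA)$ with the Cauchy--Binet formula and bound each maximal minor by a Schwarz-lemma estimate for alternants, after pulling the $z_j$ back to the disk $B_R(w_0)$ through an inverse branch of $f^{(s)}$. Cauchy--Binet gives
\begin{equation*}
  \det(\overline{A}^tA)=\sum_{0\le i_0<\cdots<i_m\le N-1} \bigl|\det\bigl(f^{\mathfrak{i}(i_k)}(z_j)\bigr)_{0\le k,j\le m}\bigr|^2 .
\end{equation*}
There are $\binom{N}{m+1}\le (eN/(m+1))^{m+1}$ terms. After taking the logarithm and multiplying by $2/(N(N-1))$, the number of terms contributes $O(p\log(e/p)/N)$, which is absorbed into $O_f(p\log N/N)$. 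It therefore suffices to bound a single minor with row indices $i_0<\cdots<i_m\le N-1$.

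\emph{Step 1 (pull back to the disk).} Since $f$ is hyperbolic and $\epsilon$ satisfies the hypothesis of Lemma~\ref{lem:fninvuniform_new1}, I expect that lemma to give a univalent inverse branch $g$ of $f^{(s)}$. It should be defined on $B_\epsilon(\widehat w)\supset\overline{B_R(w_0)}$ and map $B_{R/256}(w_0)$ onto the component containing $z_0,\ldots,z_m$. Write $z_j=g(w_j)$ with $|w_j-w_0|<R/256$, and set $h_k=f^{\mathfrak{i}(i_k)}\circ g$, which is holomorphic on a neighborhood of $\overline{B_R(w_0)}$.

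\emph{Step 2 (sup-norm bounds).} Recall $f^{\mathfrak{i}(i)}=\prod_\ell (f^{(\ell)})^{\mathfrak{i}(i)_\ell}$, which has at most $O(\log i)$ nonzero digits, each at most $d-1$.
\begin{itemize}
\item For $\ell\le s$, the point $f^{(\ell)}(g(w))$ lies in a preimage of $B_\epsilon(\widehat w)$ under $f^{(s-\ell)}$. This is contained in a fixed bounded neighborhood of $K_f$, so $|f^{(\ell)}\circ g|\le C_f$.
\item For $\ell>s$, we have $f^{(\ell)}(g(w))=f^{(\ell-s)}(w)$. The standard estimate $\log^+|f^{(n)}(w)|\le d^n\lambda_f(w)+O_f(1)$ gives $\log|f^{(\ell)}\circ g|\le d^{\ell-s}\Lambda+O_f(1)$, where $\Lambda=\max_{\overline{B_R(w_0)}}\lambda_f$.
\end{itemize}
Summing over the digits yields
\begin{equation*}
  \log\max_{\overline{B_R(w_0)}}|h_k|\le \frac{i_k}{d^s}\Lambda+O_f(\log N).
\end{equation*}

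\emph{Step 3 (Schwarz lemma for alternants).} I will use the classical lemma from transcendence theory. If $\phi_0,\ldots,\phi_m$ are holomorphic on $\overline{B_R(w_0)}$ with $|\phi_k|\le M_k$ there, and $|w_j-w_0|\le r<R$, then
\begin{equation*}
  |\det(\phi_k(w_j))|\le (m+1)!\,(r/R)^{\binom{m+1}{2}}(R/r)^{m+1}\prod_k M_k .
\end{equation*}
The factor $(R/r)^{m+1}$ may be adjusted, since any factor of the form $C^{m+1}$ is harmless. The proof is by interpolation: subtract Taylor parts column by column and apply the maximum principle to the determinant viewed as a function of a scaling parameter. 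With $r/R=1/256$ the logarithm of a minor is at most
\begin{equation*}
  -\binom{m+1}{2}\log 256+\frac{\Lambda}{d^s}\sum_k i_k+O_f\bigl((m+1)\log N\bigr).
\end{equation*}
Using $\sum_k i_k\le (m+1)(N-1)$, and noting that the square cancels the factor $1/2$ in $\log\sqrt{\cdot}$, normalizing by $2/(N(N-1))$ gives
\begin{equation*}
  -p^2\log 256+2p\frac{\Lambda}{d^s}+O_f\Bigl(p\frac{\log N}{N}\Bigr).
\end{equation*}
Here $\binom{m+1}{2}/\binom N2=p^2+O(p/N)$, and $\log 256>1$ yields the claimed $-p^2$.

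The main obstacle I anticipate is Step 1 together with the uniform bound in Step 2 for $\ell\le s$. One needs the inverse branch to be univalent on a disk larger than $B_R(w_0)$ (or at least on it), and all intermediate images to stay in a bounded set, uniformly in $s$. This is exactly where hyperbolicity and Lemma~\ref{lem:fninvuniform_new1} enter. The alternant estimate in Step 3 is standard once the functions are holomorphic on the disk with the stated bounds.
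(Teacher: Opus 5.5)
Your proof is correct, but it takes a different route from the paper. The paper stays in the $z$-plane throughout.
\begin{itemize}
\item After Cauchy--Binet, it uses the generalized Vandermonde identity (Lemma~\ref{lem:vandermonde}) to factor a maximal minor as $\det\bigl(f^{\mathfrak{i}(i_k)}[z_0,\ldots,z_j]\bigr)_{k,j}\prod_{i<j}(z_j-z_i)$.
\item It bounds the divided differences by Cook's Cauchy integral formula on the Jordan curve $g(\partial B_R(w_0))$ (Lemma~\ref{lem:ddbound}), and bounds the factors $|z_j-z_i|$ separately.
\item Both of these steps need the quantitative distortion estimates (i) and (ii) of Lemma~\ref{lem:fninvuniform_new1}: the length bound for the curve, the Koebe $1/4$ lower bound for distances to it, and the two-sided Lipschitz comparison.
\item The factors $|{f^{(s)}}'(\widehat z)|^{\pm 1}$ then cancel, leaving $-\binom{m+1}{2}\log 4$.
\end{itemize}

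You instead compose with the inverse branch $g$ first. Each minor then becomes an alternant $\det(h_k(w_j))$ of functions holomorphic on the round disk $\overline{B_R(w_0)}$, evaluated at points of the concentric disk of radius $R/256$. The interpolation-determinant Schwarz lemma gives the decay $256^{-\binom{m+1}{2}}$ directly.

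Your only analytic inputs are the existence of the holomorphic inverse branch on $B_\epsilon(\widehat w)$ (the postcritical set staying away from $J_f$) and the relation $\lambda_f\circ g=\lambda_f/d^s$ for the sup-norm bounds. So you need no distortion control at all, and the constant improves from $\log 4$ to $\log 256$, though this is irrelevant once both are weakened to $-p^2$. The paper's route keeps the Vandermonde factor explicit within the divided-difference framework it builds on; yours is shorter and uses less of Lemma~\ref{lem:fninvuniform_new1}.

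Two minor remarks. First, the factor $(R/r)^{m+1}$ in your alternant bound is unnecessary: the classical statement is $|\det(\phi_k(w_j))|\le (m+1)!\,(r/R)^{\binom{m+1}{2}}\prod_k M_k$. Second, in Step~2 you can skip splitting the digits at $\ell=s$ by applying Lemma~\ref{lem:fngrowthlambda} at $g(w)$, using $\lambda_f(g(w))=\lambda_f(w)/d^s$.
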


\subsection{Divided Differences}
\label{subsec:genvander}

Let $m\in\IN_0$. For an integer $k\ge 0$ let
\begin{equation*}
  h_k(X_0,\ldots,X_m) = \sum_{l_0 + \cdots + l_m = k} X_0^{l_0}\cdots X_m^{l_m}
  \in \IZ[X_0,\ldots,X_m]
\end{equation*}
denote the \textit{complete homogeneous symmetric polynomial} of
degree $k$ in the unknowns $X_0,\ldots,X_m$; here $l_0,\ldots,l_m$
run over non-negative integers. So $h_0=1, h_1 =X_0+\cdots + X_m$, etc.
For an integer $k<0$, we define $h_k = 0$.

We have the identity
\begin{equation}
  \label{eq:dd_fundamental}
  (X_0-X_m) h_k(X_0,\ldots,X_m) = h_{k+1}(X_0,X_1,\ldots,X_{m-1}) - h_{k+1}(X_1,\ldots,X_m).
\end{equation}

Let $R$ be an integral domain. Let $f\in R[T]$ with $f = \sum_{k\ge 0}
f_k T^k$. For a tuple $(z_0,\ldots,z_m)\in R^{m+1}$ with $m\ge 0$ we
define
\begin{equation}
  \label{def:fz_0zm}
  f[z_0,\ldots,z_m] = \sum_{k\ge 0} f_{k} h_{k-m}(z_0,\ldots,z_m) \in R.
\end{equation}

Observe that $f[z_0,\ldots,z_m]=0$ if $m>\deg f$. If $f\not=0$ and
$m=\deg f$, then $f[z_0,\ldots,z_m]$ is the leading term of $f$.
In the case $m=0$ we have $f[z_0]=f(z_0)$. 
Each $h_k$ is symmetric in its arguments, so $f[z_0,\ldots,z_m]$ is
symmetric in $z_0,\ldots,z_m$. We thus recover from
(\ref{eq:dd_fundamental})  the familiar divided
differences relations
\begin{equation}
  \label{eq:ddrelation}
  \begin{aligned}
    (z_0-z_m) f[z_0,\ldots,z_m] &=
    f[z_0,z_1,\ldots,z_{m-1}]-f[z_1,\ldots,z_m]\\
    & = f[z_1,\ldots,z_{m-1},z_0]- f[z_1,\ldots,z_{m-1},z_m].
  \end{aligned}
\end{equation}
If $z_0=\cdots=z_m$, then $h_{k}(z_0,\ldots,z_m) = {m+k\choose
  k}z_0^k$ and $f[z_0,\ldots,z_m] =
\frac{1}{m!}\frac{\mathrm{d}^mf}{\mathrm{d}T^m}(z_0)$; this last
expression  is
well-defined
 even if the characteristic of $R$ is positive. 

For all $z,z_0,\ldots,z_{m-1}\in R$ we claim that
\begin{equation}
  \label{eq:hermiteinterpol}
  f(z) = \sum_{i=0}^{m-1} f[z_0,\ldots,z_i] \prod_{j=0}^{i-1} (z-z_j)
  +f[z_0,\ldots,z_{m-1},z] \prod_{j=0}^{m-1} (z-z_j).
\end{equation}
Indeed, this is trivial for $m=0$. If
 the claim holds for $m\ge 0$, then 
 \begin{equation*}
   f(z) = 
  \sum_{i=0}^{m-1} f[z_0,\ldots,z_i] \prod_{j=0}^{i-1} (z-z_j)
  + (f[z_0,\ldots,z_m]  + (z-z_m)f[z_0,\ldots,z_m,z])
  \prod_{j=0}^{m-1} (z-z_j)
\end{equation*}
by (\ref{eq:ddrelation}) and as $f[\,\cdots]$ is symmetric. We
recover (\ref{eq:hermiteinterpol}) for $m+1$.
Thus the claim follows by induction.

For $m > \deg f$ we recover the
Hermite Interpolation Formula  for polynomials
\begin{equation*}
  f(z) =\sum_{i=0}^{m-1} f[z_0,\ldots,z_i] \prod_{j=1}^{i-1} (z-z_j).
\end{equation*}

The following lemma extends the generalized Vandermonde
identity, Proposition~1~\cite{krattenthaler:adc}. The
proof is also given by Krattenthaler in Lemma~22, \textit{loc.cit.},
following Lascoux who pointed out the connection to divided differences.
For sake of completeness we provide the argument in our notation. 

\begin{lemma}
  \label{lem:vandermonde}
  Let $m\in\IN_0,f_0,\ldots,f_m\in R[T]$ and $z_0,\ldots,z_m\in R$. We set  
    $A = (v_0,\ldots,v_m)\in \mathrm{Mat}_{m+1}(R)$
  where
  \begin{equation*}    
    v_j=\trans{(f_0(z_j),f_1(z_j),\ldots,f_m(z_j))}\text{ for }j\in \{0,\ldots,m\}.
  \end{equation*}  
  Then
  \begin{equation}
    \label{eq:genvandermonde}
    \det A = 
    \det
    \bigl(f_i[z_0],f_i[z_0,z_1],f_i[z_0,z_1,z_2],\ldots,f_i[z_0,\ldots,z_m]\bigr)_{0\le
      i\le m}\prod_{0\le i<j\le m} (z_j-z_i).
  \end{equation}
\end{lemma}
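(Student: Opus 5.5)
We will prove (\ref{eq:genvandermonde}) by column operations on $A$ that turn the evaluation columns into divided-difference columns, using only the relation (\ref{eq:ddrelation}) and the symmetry of $f_i[\,\cdots]$. Everything is a polynomial identity in $z_0,\ldots,z_m$ with coefficients in $R$. Alternatively, since both sides are polynomial in the $z_j$, we may first work over the fraction field of $R[Z_0,\ldots,Z_m]$ with independent unknowns $Z_j$, where division by $Z_j-Z_i$ is allowed, and then specialize.

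The key step is to show by induction on $k\in\{0,\ldots,m\}$ that $\det A$ equals
\begin{equation*}
\prod_{0\le i<j\le k}(z_j-z_i)\cdot
\det\bigl(f_i[z_0],f_i[z_0,z_1],\ldots,f_i[z_0,\ldots,z_k],\,f_i[z_0,\ldots,z_{k-1},z_{k+1}],\ldots,f_i[z_0,\ldots,z_{k-1},z_m]\bigr)_{0\le i\le m}.
\end{equation*}
For $k=0$ the columns are $f_i[z_j]=f_i(z_j)$, so this is just $\det A$. The induction is cleanest over the fraction field. For $k\to k+1$, for every $j>k+1$ replace the column $f_i[z_0,\ldots,z_{k-1},z_j]$ by its difference with the column $f_i[z_0,\ldots,z_{k-1},z_{k+1}]$. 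This leaves the determinant unchanged.

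By (\ref{eq:ddrelation}) in its second form, together with symmetry,
\begin{equation*}
f[z_0,\ldots,z_{k-1},z_j]-f[z_0,\ldots,z_{k-1},z_{k+1}]=(z_j-z_{k+1})\,f[z_0,\ldots,z_{k+1},z_j].
\end{equation*}
Pulling the factors $z_j-z_{k+1}$ out of the columns $j>k+1$ produces exactly the new Vandermonde factors $\prod_{j>k+1}(z_j-z_{k+1})$. What remains has the shape claimed at stage $k+1$. At $k=m$ the product becomes the full $\prod_{i<j}(z_j-z_i)$ and the matrix is the one in (\ref{eq:genvandermonde}).

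The main obstacle is bookkeeping rather than a conceptual difficulty. First, the indexing has to be arranged so that each stage applies (\ref{eq:ddrelation}) with the correct pair of last entries; the stage-$k$ columns share the prefix $z_0,\ldots,z_{k-1}$. Second, the accumulated product must come out as $\prod_{i<j}(z_j-z_i)$ with the correct sign. Each new factor $z_j-z_{k+1}$ has $j>k+1$, which matches the orientation $z_j-z_i$ with $i<j$, so no sign correction arises. Third, to avoid dividing in $R$, one can either run the argument in $R[Z_0,\ldots,Z_m]$, where all column operations are integral, and specialize, or note that the column operations themselves never divide. Column operations plus factoring out a scalar from a column are valid over any commutative ring, so the identity holds directly in $R$.
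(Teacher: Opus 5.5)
Your strategy is the same as the paper's: repeated column operations in which a pivot column is subtracted from the later ones. Relation (\ref{eq:ddrelation}) turns each difference into $(z_j-z_i)$ times a divided difference with one more argument, and these factors are pulled out. No division ever occurs, so the fraction field detour is unnecessary, as you note yourself.

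However, the bookkeeping you single out as the main obstacle is off by one, and the induction as written does not go through.
\begin{itemize}
\item The stated invariant keeps only the factors $z_j-z_i$ with $j\le k$. It omits those with $i<k<j$, which have already been extracted at stage $k$. For $m=2$, $k=1$, $f_i=T^i$ your right-hand side equals $(z_1-z_0)(z_2-z_1)$, whereas $\det A=(z_1-z_0)(z_2-z_0)(z_2-z_1)$.
\item The step pivots on the wrong column: you subtract column $k+1$, which at stage $k$ is still $f_i[z_0,\ldots,z_{k-1},z_{k+1}]$.
\item Your displayed identity has one argument too many on the right. For $k=0$ and $f=T^2$ it says $z_j^2-z_1^2=z_j-z_1$. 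The correct difference is $(z_j-z_{k+1})f[z_0,\ldots,z_{k-1},z_{k+1},z_j]$, which skips $z_k$ and is not of the stage-$(k+1)$ shape.
\item Column $k+1$ is never turned into $f_i[z_0,\ldots,z_{k+1}]$. No factor $z_j-z_0$ is ever extracted, so the final product would not be the full Vandermonde product.
\end{itemize}

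The repair is to pivot on column $k$, which at stage $k$ already equals $f_i[z_0,\ldots,z_k]$. Subtract it from every column $j>k$ and use the second form of (\ref{eq:ddrelation}) together with symmetry:
\begin{equation*}
  f[z_0,\ldots,z_{k-1},z_j]-f[z_0,\ldots,z_{k-1},z_k]=(z_j-z_k)\,f[z_0,\ldots,z_k,z_j].
\end{equation*}
Then extract $\prod_{j>k}(z_j-z_k)$. Correct the prefactor of the invariant to $\prod_{0\le i<k}\prod_{i<j\le m}(z_j-z_i)$. This step then carries your stage-$k$ matrix exactly to your stage-$(k+1)$ matrix, and at $k=m$ it yields (\ref{eq:genvandermonde}). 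This is precisely the paper's argument: subtract the first column from all others, then the second from the remaining ones, and so on.
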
 
\begin{proof}
  The determinant is multilinear and alternating. So its value does
  not change  when
  subtracting the first column of the argument from the second column, then the first
  column from the third column, and so on until we have reached the
  final column.   We find
  \begin{alignat*}1
    \det A &=
    \det(v_0,v_1-v_0,v_2-v_0,\ldots,v_m-v_0)
  \end{alignat*}
  because the first column remained unchanged. 

  For $j\in \{1,\ldots,m\}$ we use (\ref{eq:ddrelation}) for $m=1$  
  to  factor
  $v_j-v_{0} = (f_i(z_j)-f_i(z_0))^t_{i} = (f_i[z_j]-f_i[z_0])_i^t
  = (z_j-z_{0})
  \bigl(f_i[z_j,z_0]\bigr)_i^t$
  where here and below the index $i$
  runs through $\{0,\ldots,m\}$. 
  As the determinant  is linear in each  column we factor out
  $z_j-z_0$ and get
  \begin{equation*}
    \det A = 
    \det(A_2)\prod_{j=1}^m (z_j-z_0) \quad\text{with}\quad
    A_2=
    \bigl( f_i[z_0], f_i[z_0,z_1], \ldots ,f_i[z_0,z_m]\bigr)_{i} \in
    \mat_{m+1}(R).
  \end{equation*}
  
  We continue with column operations starting by  subtracting the second
  column of $A_2$ from the third column of $A_2$, then the second column
  of $A_2$ from the fourth column of $A_2$ etc. 
   We find
  \begin{alignat*}1
    \det A_2 = \det \bigl(f_i[z_0], f_i[z_0,z_1], f_i[z_0,z_2]-f_i[z_0,z_1],
    \ldots,
    f_i[z_0,z_m]-f_i[z_0,z_1] \bigr)_i
  \end{alignat*}
  as now the first two columns remained unchanged. 
  Let us apply (\ref{eq:ddrelation}) with $m=2$ to all but the first two
  columns. So $f_i[z_0,z_j]-f_i[z_0,z_1] = (z_j-z_1) f_i[z_0,z_1,z_j]$
  for all $j\in \{2,\ldots,m\}$. We now factor  $z_j-z_1$ out of the
  determinant 
  for all $j\ge 2$ and
  conclude
  \begin{equation*}
    \det A_2 = \det(A_3)     \prod_{j=2}^m (z_j-z_1)    
  \end{equation*}
  with
  \begin{equation*}  
    A_3 = \bigl(f_i[z_0], f_i[z_0,z_1], f_i[z_0,z_1,z_2],
    \ldots,
    f_i[z_0,z_1,z_m] \bigr)_i.
  \end{equation*}
  Together with  the equality we obtain
  $\det A = \det(A_3)\prod_{j=1}^m (z_j-z_0)
     \prod_{j=2}^m (z_j-z_1)$. 

  We apply the same procedure to the last $m-3$ columns of $A_3$,
  apply (\ref{eq:ddrelation}), obtain $A_4$ then repeat continue with
  the last $m-4$ columns of $A_4$ etc. After exhausting all columns we
  arrive at (\ref{eq:genvandermonde}).
\end{proof}

We now switch from working over a general base ring $R$ to working
over $\IC$. Our goal is to use the previous lemmas to bound from above the
partial transfinite diameter~(\ref{def:dynvandermonde}).

Recall that $\mathfrak{I}$ was defined in (\ref{def:frakI}); we also
use the notation (\ref{eq:definefi}) for $f^{\mathfrak{i}}$
and $f^{\mathfrak{i}(i)}$ defined just below for $i\ge 0$ an integer.

\begin{lemma}
  \label{lem:DfNddbound}
  Let $f\in \IC[T]$ be monic of degree $d\ge 2$ and let
  $z_0,\ldots,z_m\in\IC$. Let $N\ge \max\{2,m+1\}$ be an integer.
  There exist pairwise distinct integers $i_0,\ldots,i_m\in \{0,\ldots,N-1\}$ with
  \begin{equation}
    \label{eq:DfNz0zmbd}
    \mathsf{d}_{f,N}(z_0,\ldots,z_m) \le \frac{2}{N(N-1)}\left(\sum_{j=0}^m
    \log  \,\bigl|f^{\mathfrak{i}(i_j)}[z_0,\ldots,z_j]\bigr|
    + \sum_{0\le i<j\le m}\log|z_j-z_i|\right) + \frac{2(m+1)\log
  N}{N(N-1)}. 
  \end{equation}
\end{lemma}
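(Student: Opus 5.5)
The plan is to combine the Cauchy--Binet formula with Lemma~\ref{lem:vandermonde}. Let $A\in\mat_{N,m+1}(\IC)$ be the matrix (\ref{def:A}). By Cauchy--Binet,
\begin{equation*}
  \det(\overline{A}^tA) = \sum_{S} |\det A_S|^2,
\end{equation*}
where $S$ runs over the subsets of $\{0,\ldots,N-1\}$ of cardinality $m+1$, and $A_S$ is the square submatrix formed by the rows indexed by $S$. There are ${N\choose m+1}\le N^{m+1}$ such subsets. Hence some $S$ satisfies
\begin{equation*}
  \det(\overline{A}^tA)\le N^{m+1}|\det A_S|^2,
\end{equation*}
and therefore $\log\sqrt{\det(\overline{A}^tA)}\le \log|\det A_S| + \frac{m+1}{2}\log N$. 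After multiplying by $2/(N(N-1))$, the error term is at most $\frac{2(m+1)\log N}{N(N-1)}$, matching the statement. The case $\det(\overline{A}^tA)=0$ is trivial because the left-hand side is then $-\infty$.

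Next, write $S=\{k_0,\ldots,k_m\}$ and apply Lemma~\ref{lem:vandermonde} with $R=\IC$ and polynomials $f_i=f^{\mathfrak{i}(k_i)}$. This gives
\begin{equation*}
  \det A_S=\det\bigl(f^{\mathfrak{i}(k_i)}[z_0,\ldots,z_j]\bigr)_{0\le i,j\le m}\prod_{0\le i<j\le m}(z_j-z_i).
\end{equation*}

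Expand the remaining $(m+1)\times(m+1)$ determinant by the Leibniz formula. It is a sum of $(m+1)!$ terms, each of the form $\pm\prod_j f^{\mathfrak{i}(k_{\pi(j)})}[z_0,\ldots,z_j]$ for a permutation $\pi$. Choose the permutation whose term has maximal absolute value, and set $i_j=k_{\pi(j)}$; these are pairwise distinct elements of $\{0,\ldots,N-1\}$. Then the determinant is at most $(m+1)!\prod_j|f^{\mathfrak{i}(i_j)}[z_0,\ldots,z_j]|$.

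This introduces an extra factor $(m+1)!$. It is the only delicate point, since the stated error budget seems already spent on the Cauchy--Binet count. The remedy is to count both losses together. The number of injective maps $j\mapsto i_j$ into $\{0,\ldots,N-1\}$ is $N(N-1)\cdots(N-m)\le N^{m+1}$. So one should not split the count as ${N\choose m+1}\cdot(m+1)!$ separately, but expand directly:
\begin{equation*}
  \det(\overline{A}^tA)=\sum_S|\det A_S|^2\le\Bigl(\sum_S|\det A_S|\Bigr)^2\le\Bigl(\sum_{(i_0,\ldots,i_m)}\prod_j|f^{\mathfrak{i}(i_j)}[z_0,\ldots,z_j]|\Bigr)^2\prod_{i<j}|z_j-z_i|^2,
\end{equation*}
where the last sum runs over injective tuples. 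Bounding that sum by $N^{m+1}$ times its largest term gives
\begin{equation*}
  \sqrt{\det(\overline{A}^tA)}\le N^{m+1}\prod_j|f^{\mathfrak{i}(i_j)}[z_0,\ldots,z_j]|\prod_{i<j}|z_j-z_i|
\end{equation*}
for a suitable injective tuple. The constant here is $(m+1)\log N$ rather than $\frac{m+1}{2}\log N$, which after the normalization $2/(N(N-1))$ is exactly $\frac{2(m+1)\log N}{N(N-1)}$, as stated. Taking logarithms and normalizing completes the proof.
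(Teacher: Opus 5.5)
Your proof is correct and follows the paper's route: Cauchy--Binet, then Lemma~\ref{lem:vandermonde} applied to an $(m+1)\times(m+1)$ minor, then the Leibniz formula, with all combinatorial losses absorbed into $N^{m+1}$. The ``delicate point'' you worried about is not one: the paper takes the largest minor and uses $\binom{N}{m+1}^{1/2}(m+1)!\le\binom{N}{m+1}(m+1)!\le N^{m+1}$, which is the same count of injective tuples that you reach via $\sum_S|\det A_S|^2\le(\sum_S|\det A_S|)^2$.
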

\begin{proof}
  Let $A\in \mathrm{Mat}_{N,m+1}(\IC)$ be the matrix (\ref{def:A})
  used in the definition (\ref{def:dynvandermonde}).
  
  The Cauchy--Binet formula states $\det(\overline A^t A) = \sum_{A'}
  |\det A'|^2$  where $A'$ ranges over the $(m+1)\times(m+1)$ submatrices
  of $A$.
  Let $A'$ be such a submatrix with $|\det A'|$ maximal. Then
  $\det(\overline A^t A) \le {N\choose m+1} |\det A'|^2$. 

  We apply Lemma~\ref{lem:vandermonde} to $A'$.
  So $\det A'=\det(A'') \prod_{0\le i<j\le m} (z_j-z_i)$
  where the entries of $A''\in \mathrm{Mat}_{m+1}(\IC)$ 
  are of the form  $f^{\mathfrak{i}(i)}[z_0,\ldots,z_j]$; each
  $i \in\{0,\ldots,N-1\}$ corresponds to one of
  the $m+1$ rows selected from $A'$ and $j\in \{0,\ldots,m\}$.
  
  By the Leibniz Formula there are pairwise distinct
   $i_0,\ldots,i_m\in\{0,\ldots,N-1\}$ with
  $|\det A''|\le (m+1)! \prod_{j=0}^m
  |f^{\mathfrak{i}(i_j)}[z_0,\ldots,z_j]|$.  
  And so
  $$
  \sqrt{\det(\overline A^t A)} \le {N\choose m+1}^{1/2} (m+1)!
  \prod_{j=0}^m 
  |f^{\mathfrak{i}(i_j)}[z_0,\ldots,z_j]|\prod_{0\le i<j\le m}|z_j-z_i|.
  $$
  
  Certainly, ${N\choose m+1}^{1/2}(m+1)! \le {N\choose m+1}(m+1)! \le
  N^{m+1}$. The lemma follows from the definition
  (\ref{def:dynvandermonde}).  
\end{proof}

Our next task is to bound the expressions
$|f^{\mathfrak{i}(i_j)}[z_0,\ldots,z_j]|$ appearing in
(\ref{eq:DfNz0zmbd}) from above. Our tool of choice is  complex analysis.
More precisely, we need the following simple consequence of
Cook's~\cite{Cook:DD} Cauchy Integral Formula for
divided differences.

\begin{lemma}
  \label{lem:cifdd}
  Let $f\in\IC[T]$ and 
  let $\gamma \colon [0,1]\rightarrow \IC$ be a
  smooth Jordan curve such that the interior
  component of $\Gamma=\gamma([0,1])$ contains $z_0,\ldots,z_j \in
  \IC$ where $j\ge 0$.
  Let $L(\gamma)$ denote the length of $\gamma$, 
  then
  \begin{equation*}
    |f[z_0,\ldots,z_j]| \le \frac{L(\gamma)}{2\pi} \frac{\max_{z\in
        \Gamma} |f(z)|}{\prod_{k=0}^{j} \mathrm{dist}(z_k,\Gamma)}.
  \end{equation*}
\end{lemma}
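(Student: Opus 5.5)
The plan is to derive the estimate from the Cauchy integral representation of divided differences,
\begin{equation*}
  f[z_0,\ldots,z_j] = \frac{1}{2\pi i}\int_\gamma \frac{f(\zeta)}{\prod_{k=0}^{j}(\zeta-z_k)}\,\mathrm{d}\zeta,
\end{equation*}
valid for any polynomial $f$ and any positively oriented smooth Jordan curve $\gamma$ whose interior contains $z_0,\ldots,z_j$. This is Cook's formula; I would prove it directly, since it is short in our setting. Once it is available, the lemma follows from the standard length estimate. On $\Gamma$ the integrand is bounded by $\max_{\Gamma}|f| / \prod_k \mathrm{dist}(z_k,\Gamma)$, because $|\zeta-z_k|\ge \mathrm{dist}(z_k,\Gamma)$ for $\zeta\in\Gamma$. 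Integrating over a curve of length $L(\gamma)$ and dividing by $2\pi$ gives exactly the stated bound. Orientation only affects the sign, so it plays no role for absolute values.

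To prove the integral formula, define $D(z_0,\ldots,z_j)$ to be the right-hand side. I would check that $D$ satisfies the defining properties of $f[\cdots]$.

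First, for $j=0$ we have $D(z_0)=f(z_0)$ by the usual Cauchy integral formula, since $z_0$ lies in the interior component and $f$ is entire.

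Second, for distinct $z_0\neq z_j$ the partial fraction identity
\begin{equation*}
  \frac{z_0-z_j}{(\zeta-z_0)(\zeta-z_j)}=\frac{1}{\zeta-z_0}-\frac{1}{\zeta-z_j}
\end{equation*}
gives the recursion (\ref{eq:ddrelation}) for $D$. Since $f[\cdots]$ satisfies the same recursion and both agree at $j=0$, induction on $j$ shows $D=f[\cdots]$ whenever the points are pairwise distinct.

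Third, to handle coincident points, I would use continuity. The divided difference $f[z_0,\ldots,z_j]$ is by definition (\ref{def:fz_0zm}) a polynomial in the $z_k$. The integral $D$ is continuous in $(z_0,\ldots,z_j)$ on the open set where all points lie in the interior of $\Gamma$, since the integrand is uniformly continuous there. Points with distinct coordinates are dense in this set, so the identity extends to all tuples.

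An alternative route avoids the recursion. Expand $f=\sum_k f_k\zeta^k$, so by linearity it suffices to treat $f=\zeta^k$. Then use the generating function
\begin{equation*}
  \frac{1}{\prod_{k}(\zeta-z_k)}=\zeta^{-(j+1)}\sum_{l\ge 0} h_l(z_0,\ldots,z_j)\,\zeta^{-l},
\end{equation*}
valid for large $|\zeta|$. The integrand is holomorphic outside the interior of $\Gamma$, so we may deform $\Gamma$ to a large circle and read off the residue at infinity. This gives $h_{k-j}$, matching (\ref{def:fz_0zm}). I would present this second argument, since it uses the definition directly and needs no density step.

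I do not expect a serious obstacle. The only point needing care is justifying that the contour can be deformed to a large circle: all poles $z_k$ lie in the interior component of $\Gamma$, and $f$ is entire, so the integrand is holomorphic between $\Gamma$ and the circle. The final estimate itself is routine.
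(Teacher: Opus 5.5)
Your proposal is correct. Your final step is exactly the paper's: bound the integrand on $\Gamma$ by $\max_\Gamma|f|/\prod_k\mathrm{dist}(z_k,\Gamma)$ and multiply by $L(\gamma)/2\pi$. The difference is in how you obtain the Cauchy integral representation.

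The paper does not prove it. It cites Cook's formula (2.0 in \cite{Cook:DD} with $\Omega=\IC$), notes that Cook's hypothesis only covers $j\ge 2$, and checks $j=0$ and $j=1$ by hand. For this it uses the Cauchy Integral Formula (including $f[z_0,z_0]=f'(z_0)$) and the Residue Theorem.

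You instead compute the residue at infinity. You expand $\prod_k(\zeta-z_k)^{-1}=\zeta^{-(j+1)}\sum_{l\ge 0}h_l(z_0,\ldots,z_j)\zeta^{-l}$ and extract the coefficient of $\zeta^{-1}$ from each monomial. This derives the representation directly from the paper's algebraic definition (\ref{def:fz_0zm}). What this buys you:
\begin{itemize}
\item the argument is self-contained and uniform in $j\ge 0$;
\item coincident points need no separate treatment;
\item you never have to identify Cook's divided differences with the $h_k$-definition, a step the paper leaves implicit.
\end{itemize}
What it gives up is generality. It uses that $f$ is a polynomial, whereas Cook's formula applies to any $f$ holomorphic on a domain containing $\Gamma$ and its interior. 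That is irrelevant for the lemma as stated.

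Two small points of presentation. For the deformation to a large circle, the cleanest justification is the residue theorem with winding numbers: both positively oriented curves wind once around each $z_k$, so the two integrals agree. Also rename the monomial exponent in $f=\sum f_k\zeta^k$, since it currently clashes with the product index $k$.
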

\begin{proof}
  By the Cauchy Integral Formula for divided differences,
  see 2.0 in \cite{Cook:DD} with $\Omega=\IC$, we have
  \begin{equation}
    \label{eq:cifdd}
    f[z_0,\ldots,z_j] = \frac{\pm 1}{2\pi i} \int_\gamma
    \frac{f(z)}{\prod_{k=0}^{j} (z-z_k)} \mathrm{d}z. 
  \end{equation}
  The sign-ambiguity arises as we have not indicated an orientation.
  Cook's hypothesis amounts to $j\ge 2$. However, (\ref{eq:cifdd}) holds
  for $j=0$ and $j=1$ as well. 
  Indeed, if $j=0$, then $f[z_0] = f(z_0)$ and the formula reduces to
  the Cauchy Integral Formula.
  If $j=1$ and $z_0=z_1$, then $f[z_0,z_1] = f'(z_0)$. We again
  refer to the Cauchy Integral Formula. Finally, if $j=1$ and
  $z_0\not=z_1$, then $f[z_0,z_1] = (f(z_0)-f(z_1))/(z_0-z_1)$.
  Moreover,
  $\frac{1}{2\pi i}\int_\gamma \frac{f(z)}{(z-z_0)(z-z_1)} \mathrm{d}z =
  \pm f[z_0,z_1]$ by the Residue Theorem.
  
  The lemma follows from (\ref{eq:cifdd}) and an application of
  the triangle inequality.
\end{proof}

In the special case $z_0=\cdots=z_j$, Cook's result reduces to the
Cauchy Integral Formula since
$f[z_0,\ldots,z_0]= \frac{1}{j!} \frac{\mathrm{d}^j
  f}{\mathrm{d}T^j}(z_0)$.

\begin{lemma}
  \label{lem:fngrowthlambda}
  Let $f\in\IC[T]$ be monic of degree at least $2$. We have
  \begin{equation*}
    \log^+ |f^{\mathfrak{i}}(z)|\le \deg(f^{\mathfrak{i}})
\lambda_f(z) + O_f\left(\log(1+\deg f^{\mathfrak{i}})\right)
  \end{equation*}
  for all $\mathfrak{i}\in\mathfrak{I}$ and all $z\in\IC$. Here the
  implicit constant depends only on $f$. 
\end{lemma}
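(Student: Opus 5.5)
The plan is to reduce everything to the classical one-iterate estimate
\[
\left|\frac{\log^+|f^{(n)}(z)|}{d^n}-\lambda_f(z)\right|\le \frac{C_f}{d^n},
\]
valid for all $z\in\IC$ and $n\in\IN_0$, and then to sum over the digits of $\mathfrak{i}$.

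\textbf{The one-iterate estimate.} Since $f$ is monic of degree $d$, there is a constant $C_0=C_0(f)\ge 0$ with
\[
\bigl|\log^+|f(z)| - d\log^+|z|\bigr|\le C_0
\]
for all $z\in\IC$. For $|z|$ large this holds because $f(z)=z^d(1+O(1/|z|))$. On a fixed compact disk both sides are bounded. Applying this to $f^{(k)}(z)$ and telescoping gives
\[
\left|\frac{\log^+|f^{(k+1)}(z)|}{d^{k+1}}-\frac{\log^+|f^{(k)}(z)|}{d^k}\right|\le \frac{C_0}{d^{k+1}}.
\]
Summing over $k\ge n$ and using the definition (\ref{def:lambdafcomplex}) yields
\[
\log^+|f^{(n)}(z)|\le d^n\lambda_f(z)+\frac{C_0}{d-1},
\]
with a constant independent of $n$. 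This is the standard estimate behind Silverman's Theorem~3.27. I expect the only mild care needed is uniformity in $n$, and the geometric series handles that.

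\textbf{Summing over the digits.} Write $\mathfrak{i}$ with $\mathfrak{i}_j=0$ for $j>m$, and choose $m$ minimal. Using $\log^+(ab)\le \log^+a+\log^+b$, we get
\[
\log^+|f^{\mathfrak{i}}(z)|\le \sum_{j=0}^m \mathfrak{i}_j\log^+|f^{(j)}(z)| \le \sum_j \mathfrak{i}_j d^j\lambda_f(z)+\frac{C_0}{d-1}\sum_j \mathfrak{i}_j .
\]
The first sum equals $\deg(f^{\mathfrak{i}})\lambda_f(z)$. For the error term, $\sum_j\mathfrak{i}_j\le (d-1)(m+1)$. If $\mathfrak{i}\neq 0$, then $d^m\le \deg f^{\mathfrak{i}}$, so
\[
m+1\le 1+\frac{\log \deg f^{\mathfrak{i}}}{\log d}\ll_f \log(1+\deg f^{\mathfrak{i}}).
\]
Here we use that $\deg f^{\mathfrak{i}}\ge 1$, so $\log(1+\deg f^{\mathfrak{i}})\ge \log 2$. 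If $\mathfrak{i}=0$, then $f^{\mathfrak{i}}=1$ and both sides are trivially compatible, since $\log^+ 1=0$. This gives the claimed bound with an implicit constant depending only on $f$ (through $C_0$ and $d$).

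The main point to get right is that the error is logarithmic in the degree rather than linear. This is exactly why $f^{\mathfrak{i}}$ is built from base-$d$ digits: the number of factors is at most $(d-1)(m+1)=O(\log \deg)$, and each factor contributes only an $O_f(1)$ error.
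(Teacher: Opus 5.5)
Your proof is correct and follows essentially the same route as the paper. The paper gets the per-iterate bound $\log^+|f^{(j)}(z)|\le d^j\lambda_f(z)+O_f(1)$ by citing the bounded difference between $\lambda_f$ and $\log^+|\cdot|$ (Silverman, Theorem~3.27) together with $\lambda_f(f^{(j)}(z))=d^j\lambda_f(z)$, where you derive it by telescoping. From there it multiplies over the at most $(d-1)(m+1)$ digit factors and uses $d^m\le\deg f^{\mathfrak{i}}$, exactly as you do.
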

\begin{proof}
  If $\mathfrak{i}=0$, then the claim follows as $f^{\mathfrak{i}}=1$
  in this case. So we
  may assume $\mathfrak{i}\not=0$.
  
  The local canonical height $\lambda_f$ and $z\mapsto \log^+|z|$ differ by
  a bounded function on $\IC$, see  Theorem
  3.27~\cite{Silverman:gtm241}. In particular, 
  there exists $B=B(f)\ge 1$ such that $\log^+ |z| \le \log
  B+\lambda_f(z)$ for all $z\in\IC$. As $\lambda_f (f^{(j)}(z)) = d^j
  \lambda_f(z)$ with $d=\deg f$ we obtain
  \begin{equation*}
    \max\{1,|f^{(j)}(z)|\} \le B e^{ d^j \lambda_f(z)} \quad\text{for all
      $z\in\IC$ and all $j\in \IN_0$.}
  \end{equation*}
  
  We set
  $i = \sum_{j=0}^u \mathfrak{i}_j
  d^j$ where $\mathfrak{i}_j\in \{0,\ldots,d-1\}$
  and $\mathfrak{i}_j=0$ for all $j>u$. 
  Then $i = \deg f^{\mathfrak i}\ge 1$ and
  \begin{equation*}   
    \max\{1,|f^{\mathfrak{i}}(z)|\} = \prod_{j=0}^u
    |f^{(j)}(z)|^{\mathfrak{i}_j}
    \le \prod_{j=0}^u (B^{\mathfrak{i}_j} e^{\mathfrak{i}_j d^j\lambda_f(z)})
    \le B^{(d-1)(u+1)} e^{\sum_{j=0}^u\mathfrak{i}_j d^j \lambda_f(z)}
    =B^{(d-1)(u+1)} e^{i \lambda_f(z)}. 
  \end{equation*}
  If $\mathfrak{i}_j\not=0$, then $j \le (\log i)/\log d$.  So $u+1\le
  (\log i)/\log d+1$ and
  \begin{equation*}
    \max\{1,|f^{\mathfrak{i}}(z)|\}
    \le B^{d-1} i^{\frac{d-1}{\log d}\log B} e^{i \lambda_f(z)}.
  \end{equation*}
  The lemma follows on
  applying logarithm
  and since $i=\deg f^{\mathfrak i}$. 
\end{proof}

We suppose from now on that $f\in\IC[T]$ is monic of degree $d\ge 2$
and hyperbolic.

Before proceeding, let us set up some notation. Let
$\epsilon=\epsilon(f)>0$ be sufficiently small in terms of $f$, as in 
Lemma~\ref{lem:fninvuniform_new1}. We may assume $\epsilon\le 1$.
Let $\widehat w\in J_{f}$ and $s\in\IN_0$.
Suppose $V$ is a connected component of
$(f^{(s)})^{-1}(B_{\epsilon}(\widehat w))$. 
The map
$f^{(s)}|_V\colon V\rightarrow B_{\epsilon}(\widehat w)$ is
biholomorphic by Lemma~\ref{lem:fninvuniform_new1}.
We fix the unique $\widehat z\in V$ with
$f^{(s)}(\widehat z)=\widehat w$. 
The inverse of $f^{(s)}|_V$ is
$g\colon B_{\epsilon}(\widehat w)\rightarrow V$.
In particular,  $g(B_\epsilon(\widehat w)) = V$ and $\widehat z =
g(\widehat w)$. 

We fix real parameters $R$ and $r$  with
\begin{equation}
  \label{eq:rRhyp}
  0 < r \le \frac R2.
\end{equation}

\begin{lemma}
  \label{lem:ddbound}
  Suppose $w_0\in \IC$ with
  $\overline{B_{R}(w_0)} \subset B_{\epsilon}(\widehat w)$.
  Let $j\in\IN_0$ and $z_0,\ldots,z_j\in V$ such that
  $f^{(s)}(z_0),\ldots,f^{(s)}(z_j) \in B_r(w_0)$.
  Then for all $\mathfrak i \in \mathfrak{I}$ we have
  \begin{equation*}
    \log \bigl|f^{\mathfrak{i}}[z_0,\ldots,z_j]\bigr|\le
    \frac{\deg(f^{\mathfrak{i}})}{d^s}    \max_{w\in \overline{B_R(w_0)}} \lambda_f(w)
    + j \log\frac{16 |{f^{(s)}}'(\widehat z)|}{R} 
    +O_f(\log(1+\deg
    f^{\mathfrak{i}}))
  \end{equation*}
  where the implicit constant depends only on $f$. 
\end{lemma}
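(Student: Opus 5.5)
We apply Lemma~\ref{lem:cifdd} to $f^{\mathfrak i}$, integrating over the pullback under $g$ of a circle in $B_\epsilon(\widehat w)$. Take $\gamma$ to be $g$ applied to the circle $\partial B_{R}(w_0)$; or, if more room for distortion is needed, to $\partial B_{R'}(w_0)$ with $R'$ slightly less than $R$. Since $g$ is biholomorphic on $B_\epsilon(\widehat w)\supset\overline{B_R(w_0)}$, this is a smooth Jordan curve in $V$. Its interior is $g(B_R(w_0))$, which contains $z_0,\ldots,z_j$ because $f^{(s)}(z_k)\in B_r(w_0)$ and $z_k\in V$. Then
\[
|f^{\mathfrak i}[z_0,\ldots,z_j]|\le \frac{L(\gamma)}{2\pi}\frac{\max_{\Gamma}|f^{\mathfrak i}|}{\prod_{k=0}^j \mathrm{dist}(z_k,\Gamma)}.
\]

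For the numerator, Lemma~\ref{lem:fngrowthlambda} gives $\log|f^{\mathfrak i}(z)|\le \deg(f^{\mathfrak i})\lambda_f(z)+O_f(\log(1+\deg f^{\mathfrak i}))$. For $z\in\Gamma$ we have $f^{(s)}(z)\in\overline{B_R(w_0)}$, so $\lambda_f(z)=\lambda_f(f^{(s)}(z))/d^s\le \max_{\overline{B_R(w_0)}}\lambda_f/d^s$. This yields the first term of the claimed bound.

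The geometric factors $L(\gamma)$ and $\mathrm{dist}(z_k,\Gamma)$ are controlled by Koebe distortion for $g$, which is univalent on $B_\epsilon(\widehat w)$. Write $\rho=\mathrm{dist}(\overline{B_R(w_0)},\partial B_\epsilon(\widehat w))$. The tool is the uniform distortion control supplied by Lemma~\ref{lem:fninvuniform_new1}: I expect it gives that $|g'|$ on $B_{\epsilon'}(\widehat w)$ is comparable to $|g'(\widehat w)|=1/|{f^{(s)}}'(\widehat z)|$ up to an absolute factor, provided the disk sits well inside the domain of univalence. If the lemma only gives univalence, the plan is to apply Koebe's distortion theorem directly to $g$ on $B_\epsilon(\widehat w)$ and absorb the resulting constant. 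Granting this comparability:
\begin{itemize}
\item $L(\gamma)\le 2\pi R\max|g'|\ll R/|{f^{(s)}}'(\widehat z)|$.
\item By Koebe's one-quarter theorem applied to $g$ on $B_{R}(f^{(s)}(z_k))\supset\ldots$, more simply $g(B_{R-r}(f^{(s)}(z_k)))$ contains a disk about $z_k$ of radius $\frac14 (R-r)|g'(f^{(s)}(z_k))|$. Since $R-r\ge R/2$, this gives $\mathrm{dist}(z_k,\Gamma)\ge \frac{R}{8}|g'(f^{(s)}(z_k))|$.
\end{itemize}
Combining with comparability of $|g'|$ gives $\mathrm{dist}(z_k,\Gamma)\gg R/|{f^{(s)}}'(\widehat z)|$.

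Putting these together, the $j+1$ distance factors in the denominator are partially cancelled by the length factor, leaving exactly $j$ factors of order $|{f^{(s)}}'(\widehat z)|/R$. The constant $16$ has to absorb the Koebe factors, namely the $8$ above times the distortion ratio. \textbf{The main obstacle} is making this distortion ratio uniform and small, independent of $s$, $w_0$ and $R$. Two points need care:
\begin{itemize}
\item Where the comparison of $|g'|$ at $f^{(s)}(z_k)$, on $\Gamma$, and at $\widehat w$ is not uniform, the plan is to compare with $g'(w_0)$ rather than $g'(\widehat w)$, and then relate the two using the growth theorem on $B_\epsilon(\widehat w)$. Any leftover constant factor that is independent of $j$ goes into the $O_f$ term.
\item A factor of the form $C^{j}$ cannot be absorbed into $O_f$. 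Hence the per-point constant must genuinely be at most $16$, which is where the precise choice of radii (shrinking $R$ to $R/2$ if needed) matters.
\end{itemize}
Finally, the remaining single factors $L(\gamma)/2\pi$ against one distance are bounded by an absolute constant and are absorbed.
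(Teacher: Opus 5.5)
Your proposal is correct and follows essentially the paper's proof. You use the same curve $\gamma=g(w_0+Re^{2\pi i t})$, Lemma~\ref{lem:cifdd}, and Lemma~\ref{lem:fngrowthlambda} for the numerator. Lemma~\ref{lem:fninvuniform_new1}(i) gives exactly the distortion factor $2$ you hoped for, so $L(\gamma)\le 4\pi R/|{f^{(s)}}'(\widehat z)|$ and your Koebe argument yields $\mathrm{dist}(z_k,\Gamma)\ge R/(16|{f^{(s)}}'(\widehat z)|)$ with no shrinking of $R$ needed; the paper obtains the same bound from part (ii) of that lemma, which is itself a Koebe argument.

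The only loose end is that the leftover constant $\log 32$ can be absorbed into $O_f(\log(1+\deg f^{\mathfrak i}))$ only when $\deg f^{\mathfrak i}\ge 1$. For $\mathfrak i=0$ the divided difference is $1$ (if $j=0$) or $0$ (if $j\ge 1$), so the bound holds trivially and should be stated separately.
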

\begin{proof} 
  By hypothesis,  the boundary of $B_R(w_0)$ lies in the domain
  $B_\epsilon(\widehat w)$ of $g$.  We consider the  smooth Jordan curve
  \begin{equation}
    \label{eq:defjordancurve}
    [0,1]\ni t\mapsto \gamma(t) = g\left(w_0+R e^{2\pi it}\right)
  \end{equation}
  with image contained in $V$; recall that $g$ is biholomorphic.

  To bound the divided differences we
  will use  Lemma~\ref{lem:cifdd} applied to
  $\gamma$ and $f^{\mathfrak{i}}$.
  Indeed, 
  \begin{equation}
    \label{eq:fiddbound1_0}
    \log |f^{\mathfrak{i}}[z_0,\ldots,z_j]| \le
    \log \frac{L(\gamma)}{2\pi} + \max_{z\in
      \gamma([0,1])}|f^{\mathfrak{i}}(z)|
    -\sum_{k=0}^j \log
    \mathrm{dist}(z_k,\gamma([0,1])).
  \end{equation}

  We have
  \begin{equation*}
    |\gamma'(t)| = 2\pi R \left|g'( w_0+ R e^{2\pi it})\right|.
  \end{equation*}
  Therefore, $|\gamma'(t)|\le {4\pi} R|g'(\widehat w)|$ by
  Lemma~\ref{lem:fninvuniform_new1}(i).  
  Recall that $g$ is the inverse of $f^{(s)}|_V$. The chain
  rule implies $g'(f^{(s)}(\widehat z)) {f^{(s)}}'(\widehat z)=1$. So
  $g'(\widehat w) = 1/{f^{(s)}}'(\widehat z)$. Thus
  \begin{equation}
    \label{eq:Lgammabound}
    L(\gamma)\le     \frac{4\pi R}{|{f^{(s)}}'(\widehat z)|}.  
  \end{equation}

  Since
  $\lambda_f(f^{(s)}(z)) = d^s\lambda_f(z)$ for all $z\in\IC$ we find
  \begin{equation}
    \label{eq:smalllambdafzj}
    \lambda_f(z)
    \le \frac{\lambda_{\mathrm{max}}}{d^s}\quad\text{for all}\quad
    z\in g(\overline{B_{R}(w_0)}),
  \end{equation}
  where
  \begin{equation*}
    \lambda_{\mathrm{max}} = \max_{w\in \overline{B_R(w_0)}} \lambda_f(w).
  \end{equation*}
  Lemma~\ref{lem:fngrowthlambda}
  yields
  $\log^+ |f^{\mathfrak i}(z)| \le \deg(f^{\mathfrak i}) \lambda_f(z)+  O_f(\log(1+  \deg f^{\mathfrak i}))$
     for all $z\in\IC$.
  So
  \begin{equation}
    \label{eq:absffrakibd}
    \log^+ |f^{\mathfrak i}(z)| \le
    \frac{\deg(f^{\mathfrak i})}{d^s}\lambda_{\mathrm{max}}+
    O_f\left(\log(1+  \deg f^{\mathfrak i})\right)
    \quad\text{for all}\quad z\in \gamma([0,1])
  \end{equation}
  by (\ref{eq:smalllambdafzj}).

  By hypothesis, 
  $f^{(s)}(z_0),\ldots,f^{(s)}(z_j)$ lie in $B_{r}(w_0)\subset
  B_R(w_0)$.
  So each image $g(f^{(s)}(z_k))=z_k$ is in the 
  interior 
  component of $\gamma([0,1])$.  
  We apply (\ref{eq:Lgammabound}) and (\ref{eq:absffrakibd}) to
  (\ref{eq:fiddbound1_0}) to find
  \begin{equation}
    \label{eq:fiddbound1_1}
    \log |f^{\mathfrak{i}}[z_0,\ldots,z_j]| \le
    \log \frac{2R}{|{f^{(s)}}'(z)|} + \frac{\deg
      (f^{\mathfrak{i}})}{d^s}\lambda_{\mathrm{max}} -\sum_{k=0}^j \log
    \mathrm{dist}(z_k,\gamma([0,1])) + O_f(\log(1+  \deg f^{\mathfrak
      i})).  
  \end{equation}

  We proceed by bounding each distance in the sum from below. 
  Say $z\in \gamma([0,1])$ and $k\in \{0,\ldots,j\}$.
  Our immediate task is to bound $|z_k-z|$ from below.  
  Note that $z$ lies on the curve $\gamma([0,1])\subset
  g(\overline{B_{R}(w_0)})\subset V$ and that
  $z_k \in g(B_r(w_0))\subset V$.
  We apply
  Lemma~\ref{lem:fninvuniform_new1}(ii) to $z_k$ and $z$ to find
  \begin{equation*}
    |z_k-z|\ge \frac 18
    \frac{|f^{(s)}(z_k)-f^{(s)}(z)|}{|{f^{(s)}}'(\widehat z)|}.
  \end{equation*}
  Furthermore,
  (\ref{eq:defjordancurve}) implies
  $|f^{(s)}(z)-w_0|=R$. The hypothesis 
  implies $|f^{(s)}(z_k)-w_0|<r$.
  So
  $|f^{(s)}(z_k)-f^{(s)}(z)|>R-r\ge R/2$ by (\ref{eq:rRhyp}). Therefore,
  $|z_k-z| \ge \frac{R}{16} |{f^{(s)}}'(\widehat z)|^{-1}$. As $z$
  was an arbitrary point in $\gamma([0,1])$ we conclude
  \begin{equation*}
    \mathrm{dist}(z_k,\gamma([0,1])) \ge
    \frac{R}{16{|{f^{(s)}}'(\widehat z)|}}\quad\text{for all}\quad
    k\in \{0,\ldots,j\}. 
  \end{equation*}

  To conclude the proof we insert this last bound into
  (\ref{eq:fiddbound1_1}) to
  obtain
  \begin{alignat*}1
    \log \bigl|f^{\mathfrak{i}}[z_0,\ldots,z_j]\bigr| &\le
    \frac{\deg (f^{\mathfrak{i}})}{d^s}    \lambda_{\mathrm{max}} 
    + \log\frac{2R}{|{f^{(s)}}'(\widehat z)|} -
    (j+1)\log\frac{R}{16|{f^{(s)}}'(\widehat z)|}+O_f(\log(1+\deg
    f^{\mathfrak{i}}))\\
    &\le  \frac{\deg (f^{\mathfrak{i}})}{d^s}\lambda_{\mathrm{max}}
    + j \log\frac{16 |{f^{(s)}}'(\widehat z)|}{R} + \log 32
    +O_f(\log(1+\deg
    f^{\mathfrak{i}})).
  \end{alignat*}
  If $\deg f^{\mathfrak{i}}\ge 1$, then we may safely include $\log
  32$ in the error term. If the degree vanishes, \textit{i.e.}, if
  $\mathfrak i=0$, and if $j\ge 1$, then
  $f^{\mathfrak{i}}[z_0,\ldots,z_j]$ vanishes and the lemma follows
  trivially.
  Finally, if $\mathfrak i=0$ and  $j=0$, then $f^{\mathfrak
    i}[z_0]=1$ and the lemma also follows.
\end{proof}

\begin{proof}[Proof of Proposition~\ref{prop:vandermondepreboundhyperbolic}]
  Let $\epsilon\in (0,1]$ be as in the hypothesis. So
  Lemma~\ref{lem:fninvuniform_new1} applies.
  We choose $r = R/256$, so (\ref{eq:rRhyp}) is satisfied. We abbreviate
  $\lambda_{\mathrm{max}} = \max_{w\in \overline{B_R(w_0)}}
  \lambda_f(w)$.
  Note that   $(f^{(s)})^{-1}(B_{R/256}(w_0))$ is contained in 
  $(f^{(s)})^{-1}(B_{\epsilon}(\widehat w))$. It follows from the
  hypothesis, that
  some  connected component, say $V$, of
  $(f^{(s)})^{-1}(B_{\epsilon}(\widehat w))$ contains
  $z_0,\ldots,z_m$. 
  We set $\widehat z$ to be the unique point in $V$ with
  $f^{(s)}(\widehat z)= \widehat
  w$, just as before Lemma~\ref{lem:ddbound}.
  Note that $f^{(s)}(z_j) \in B_{R/256}(w_0)=B_{r}(w_0)$ for all $j$
  by hypothesis.
  
  We shall apply Lemma~\ref{lem:DfNddbound} combined with
  Lemma~\ref{lem:ddbound}. Say $i_j$ come from the former lemma, then
  $\deg f^{\mathfrak{i}(i_j)} =
  i_j\le N-1$. For brevity, we write $X = \frac{N(N-1)}{2} \mathsf{d}_{f,N}(z_0,\ldots,z_m)$.  
  Then we find
  \begin{equation}
    \label{eq:boundX1}
    \begin{aligned}
      X \le  
      \frac{(N-1)(m+1)}{d^s} \lambda_{\mathrm{max}} +
      \sum_{j=0}^m j\log\frac{16|{f^{(s)}}'(\widehat z)|}{R} +\sum_{0\le
        i<j\le m} \log|z_j-z_i|
      +  O_{f}\left((m+1)\log N\right).
    \end{aligned}
  \end{equation}
  
  For all $i,j$, Lemma~\ref{lem:fninvuniform_new1}(ii) yields
  \begin{equation*}
    |z_i-z_j| 
    \le 2|{f^{(s)}}'(\widehat z)|^{-1}|f^{(s)}(z_i)-f^{(s)}(z_j)| 
  \end{equation*}
  as $z_i$ and $z_j$ both lie in $V$.
  By hypothesis $|f^{(s)}(z_i)-f^{(s)}(z_j)|\le R/128=2r$.
  So $|z_i-z_j|\le 4r |{f^{(s)}}'(\widehat z)|^{-1}$.
  We insert these bounds into (\ref{eq:boundX1}) and get
  \begin{alignat*}1
    X&\le  \frac{(N-1)(m+1)}{d^s}\lambda_{\mathrm{max}} + \sum_{j=0}^m
    j\log\frac{16|{f^{(s)}}'(\widehat z)|}{R} +\sum_{0\le i<j\le m}\log
    \frac{4r}{|{f^{(s)}}'(\widehat z)|} + O_f((m+1)\log N)
    \\
    &=  \frac{(N-1)(m+1)}{d^s} \lambda_{\mathrm{max}}+
    \frac{m(m+1)}{2} \left(\log\frac{16|{f^{(s)}}'(\widehat z)|}{R}
      +\log\frac{4r}{|{f^{(s)}}'(\widehat z)|}\right)
    + O_f((m+1)\log N)
    \\
    &=
      \frac{(N-1)(m+1)}{d^s}    \lambda_{\mathrm{max}}  +
    \frac{m(m+1)}{2}\log\frac{64r}{R} +O_f((m+1)\log N).
  \end{alignat*}

  Recall that $R=256r$ and so $\log(64r/R)=-\log 4\le -1$.
  Therefore,
  \begin{equation*}
    \mathsf{d}_{f,N}(z_0,\ldots,z_m)=\frac{2}{N(N-1)}X
    \le 2p\frac{\lambda_{\mathrm{max}}}{d^s} - \frac{m}{N-1}p +O_f\left(p\frac{\log N}{N-1}\right)
  \end{equation*}
  with $p=(m+1)/N$.
  Observe that 
  $-\frac{m}{N-1}p =-p^2 + \frac{p(1-p)}{N-1} \le
  -p^2 + \frac{2p}{N}$.
  The proposition follows.
\end{proof}

\subsection{Proof of Proposition~\ref{prop:transfiniteapplication}}

Before we come to the proof of
Proposition~\ref{prop:transfiniteapplication}, we show
an elementary application of the  Cauchy--Schwarz inequality.

\begin{lemma}
  \label{lem:qflowerbound}
  Let $n\in\IN_0$ and $p_0,\ldots,p_n\ge 0$ with $p_0+\cdots+p_n=1$ and
  $\alpha,\beta\in (0,\infty)$. Suppose $X>0$ satisfies
  \begin{equation*}    
    X \ge \left(\alpha+2\beta+\frac{\beta^2}{2\alpha}\right)n. 
  \end{equation*}
  Then
  \begin{equation*}
    \alpha\frac{p_0^2}{X} +\sum_{i=1}^n\left(p_i^2 - \beta\frac{p_i}{X}\right)
    \ge
    \frac{\alpha}{2X}.
  \end{equation*} 
\end{lemma}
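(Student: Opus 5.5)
The plan is to reduce the inequality to a one-variable quadratic in $S=p_1+\cdots+p_n=1-p_0$, using the Cauchy--Schwarz inequality on the terms $p_1,\ldots,p_n$, and then to check that this quadratic is nonnegative by computing its discriminant.

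First I dispose of the case $n=0$. Then $p_0=1$, the sum over $i$ is empty, and the left-hand side equals $\alpha/X\ge\alpha/(2X)$.

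Now assume $n\ge 1$. By Cauchy--Schwarz, $\sum_{i=1}^n p_i^2\ge S^2/n$. Hence
\begin{equation*}
  X\cdot\mathrm{LHS}\;\ge\;\alpha(1-S)^2+\frac{X}{n}S^2-\beta S .
\end{equation*}
I multiply by $X>0$ because it clears denominators without changing the direction of the inequality. Since $S^2\ge 0$ and $X/n\ge c$ with $c:=\alpha+2\beta+\beta^2/(2\alpha)$, I may replace $X/n$ by $c$. It therefore suffices to show
\begin{equation*}
  Q(S):=(\alpha+c)S^2-(2\alpha+\beta)S+\frac{\alpha}{2}\;\ge\;0,
\end{equation*}
which is exactly $\alpha(1-S)^2+cS^2-\beta S\ge\alpha/2$ after expanding and moving terms. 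This must hold for $S\in[0,1]$, and in fact I will show it for all real $S$.

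The leading coefficient $\alpha+c$ is positive, so it is enough to check that the discriminant of $Q$ is non-positive. Expanding, the discriminant is
\begin{equation*}
  (2\alpha+\beta)^2-2\alpha(\alpha+c)=4\alpha^2+4\alpha\beta+\beta^2-2\alpha^2-2\alpha^2-4\alpha\beta-\beta^2=0 .
\end{equation*}
So $Q$ is a perfect square times a positive constant, hence $Q(S)\ge 0$ for every $S$, and dividing back by $X$ gives the claim.

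The only step that needs any thought is the choice of $c$. It is precisely the value that makes the discriminant vanish, so the algebra is routine once one checks it; I do not expect a genuine obstacle.
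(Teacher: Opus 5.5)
Your proof is correct and follows the same route as the paper: Cauchy--Schwarz reduces the claim to a one-variable quadratic in $p_0$ (equivalently $S=1-p_0$) whose minimum must be at least $\alpha/2$. The only difference is the order of steps. You first replace $X/n$ by its lower bound $c$, which is legitimate because it multiplies $S^2\ge 0$, and then observe that the discriminant vanishes. The paper instead minimizes over $p_0$ first and then shows the resulting fractional-linear expression in $n/X$ is monotone, which is a slightly longer path to the same equality case $X/n=c$.
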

\begin{proof}
  The claim is clear for $n=0$ as then $p_0=1$. So let us assume $n\ge 1$.
  The Cauchy--Schwarz Inequality implies
  $\sum_{i=1}^n p_i \le n^{1/2} (\sum_{i=1}^n p_i^2)^{1/2}$. So
  $\sum_{i=1}^n p_i^2 \ge (1-p_0)^2/n$ because $p_1+\cdots+p_n=1-p_0$.
  Therefore,
  \begin{alignat*}1
    \alpha\frac{p_0^2}{X}  +\sum_{i=1}^n\left(p_i^2 -\beta\frac{p_i}{X}\right) &\ge 
    \alpha\frac{p_0^2}{X}  +\frac{(1-p_0)^2}{n} - \beta\frac{1-p_0}{X}\\
    &= \left(\frac{\alpha}{X} +\frac 1n\right)p_0^2 +
    \left(\frac{\beta}{X}-\frac{2}{n}\right)p_0 +\frac{1}{n} - \frac{\beta}{X}.
  \end{alignat*}

  Let us factor out $X^{-1}$, so
  \begin{equation}
    \label{eq:factoroutXinv}
    \alpha\frac{p_0^2}{X}  +\sum_{i=1}^n\left(p_i^2 -\beta\frac{p_i}{X}\right)
    \ge
    \left(\left(\alpha+\frac{X}{n}\right)p_0^2 +
      \left(\beta-2\frac{X}{n}\right)p_0 + \frac{X}{n} - \beta\right)\frac{1}{X} .
  \end{equation}

  We consider the factor in front of $\frac 1X$
  on the right-hand side
  as a quadratic polynomial in $p_0$.
  Its leading coefficient is positive and so its minimal value equals
  \begin{equation}
    \label{eq:minvalueXn}
    -\frac{(\beta/2-X/n)^2}{\alpha+X/n} + \frac Xn - \beta
    = -\frac{(\beta^2/4+\alpha\beta)\frac{n}{X}-\alpha}{\alpha\frac{n}{X}+1}.
  \end{equation}

  On its domain, the
  derivative of $z\mapsto (az+b)/(cz+d)$ has the same sign as $ad-bc$.
  The derivative of $z\mapsto ((\beta^2/4+\alpha\beta)z-\alpha)/(\alpha z+1)$
  has the same sign as $\beta^2/4+\alpha\beta+\alpha^2>0$.  It
  follows that  (\ref{eq:minvalueXn}) is decreasing in
  $z=n/X$. We have
  $-1/\alpha < n/X \le 2\alpha/(2\alpha^2 + 4\alpha\beta + \beta^2)$ by  
  hypothesis.  Therefore,
  \begin{equation*}    
    -\frac{(\beta/2-X/n)^2}{\alpha+X/n} + \frac Xn - \beta
    \ge
       -\frac{(\beta^2/4+\alpha\beta)\frac{2\alpha}{2\alpha^2+4\alpha\beta+\beta^2}-\alpha}{\alpha\frac{2\alpha}{2\alpha^2+4\alpha\beta+\beta^2}+1}
       =\frac{\alpha}{2}. 
  \end{equation*}  
  The lemma now follows from (\ref{eq:factoroutXinv}). 
\end{proof}

We now prove Proposition~\ref{prop:transfiniteapplication}. We let $f$
be as in the hypothesis, $d=\deg f\ge 2,$ and $\delta >0$. 

The final claim of the proposition follows from taking the limit
$N\rightarrow \infty$ in (\ref{eq:corcDNub}). So
it suffices to prove (\ref{eq:corcDNub}). 

We fix $\epsilon \in (0,1]$  sufficiently small such that
Proposition~\ref{prop:vandermondepreboundhyperbolic} applies.
Thus $\epsilon$ depends only on $f$.

We may assume that $r$ 
from Proposition~\ref{prop:transfiniteapplication}
is small in terms of $f$. So we assume
\begin{equation*}
  r\le \frac{\epsilon}{512}.
\end{equation*}

Let $n,s\in\IN_0$ with $n\le c_2(f,\delta)d^s$; where $c_2(f,\delta)$
and also $c_1(f,\delta)$ will be fixed during the argument.

Let us suppose that the proposition is known if $\delta\le \epsilon/2$. Let
$\delta > \epsilon /2$. Then  $K$ is a subset of 
\begin{equation*}
  \bigcup_{i=1}^n {V_i} \cup \{z\in K_f :
  \mathrm{dist}(f^{(s)}(z),J_f)\ge\epsilon /2\}.
\end{equation*}
As the proposition holds for $\delta=\epsilon/2$, we find 
$\mathsf{d}_N(K)\le -c_1(f,\epsilon/2)/d^s + O_{f}((\log
N)/N)$ if $n\le c_2(f,\epsilon/2)d^s$ holds.
We have reduced the proof of the current proposition to the case
\begin{equation}
  \label{eq:deltaepshalf}
  \delta \le \frac{\epsilon}{2}.
\end{equation}

Let $N\in\IZ$ with $N\ge 2$. The $N$-th logarithmic diameter is
monotone, \textit{i.e.}, $\mathsf{d}_N(K)\le \mathsf{d}_N(K')$ when
$K\subset K'\subset \IC$. So we may assume that
\begin{equation}
  \label{eq:Kisdeltafilled}
  K \supset \{z\in K_f :    \mathrm{dist}(f^{(s)}(z),J_f)\ge \delta\}.
\end{equation}

We fix a tuple $(z_1,\ldots,z_N)$ of points in $K$ for which the
maximum is attained in the $N$-th logarithmic diameter, \text{i.e.}, 
\begin{equation}
  \label{eq:feketechoice}
  \mathsf{d}_N(K) = \frac{2}{N(N-1)} \log\prod_{1\le i<j\le N} |z_j-z_i|.
\end{equation}
Such a tuple exists by compactness and is often called a
\textit{Fekete $N$-tuple} in the literature. We may assume without
loss of generality that $z_1,\ldots,z_N$ are pairwise distinct.

Using the structure imposed by  (\ref{eq:propKcontained}) we will
permute and partition $(z_1,\ldots,z_N)$
into $n+1$ tuples as follows.

First we collect all points $z_{0,0},\ldots,z_{0,m_0}$ that lie in
$K_f$
and satisfy 
$\mathrm{dist}(f^{(s)}(z_{0,j}),J_f)\ge \delta$.
The number of such points is $m_0+1\ge 0$.
Then each remaining point lies in ${V_i}$ for some
$i\in\{1,\ldots,n\}$  (which need not be unique).
For each remaining point we fix such an $i$.
Thus for all $i\in \{1,\ldots,n\}$ we have an $(m_i+1)$-tuple
$(z_{i,0},\ldots,z_{i,m_i})$ of points in ${V_i}$ with $m_i\ge -1$.

Thus we rearrange our Fekete $N$-tuple $(z_1,\ldots,z_N)$ as
\begin{equation*}
  \left(z_{0,0},\ldots,z_{0,m_0},z_{1,0},\ldots,z_{1,m_1},\ldots,
    z_{n,0},\ldots,z_{n,m_n}\right)
\end{equation*}
where $m_i \ge -1$ and
\begin{equation*}
  N = (m_0+1) + (m_1+1)+\cdots +(m_n+1). 
\end{equation*}
For brevity, we set $\bfz_i = (z_{i,0},\ldots,z_{i,m_i})$ and 
$p_i = (m_i+1)/N$ for all $i$. We have
$p_0+p_1+\cdots+p_n=1$. The case $n=0$ is allowed, but then $p_0=1$.

Let us consider the zeroth tuple more closely and make a simple but
important observation.  We claim that
\begin{equation}
  \label{eq:crucialcommentmaxmod}
  \mathrm{dist}(f^{(s)}(z_{0,j}),J_f)=\delta\quad\text{for all}\quad j\in
  \{0,\ldots,m_0\}. 
\end{equation}
Indeed, by construction the distance is certainly at least $\delta$. 
Suppose  $\mathrm{dist}(f^{(s)}(z_{0,j}),J_f)>\delta$ for some $j$.
Then this inequality holds on an 
open neighborhood of $z_{0,j}$.
Recall that $K_f\ssm J_f$ is 
open in $\IC$ and contains $z_{0,j}$.
Thus $K$ contains an open neighborhood
of $z_{0,j}$ by (\ref{eq:Kisdeltafilled}). We consider
$\prod_{1\le u<v\le N}(z_u-z_v)$ as a non-constant polynomial in
$z_{0,j}$, holding the other $z_i$ fixed.
This contradicts the maximum principle as $z_{0,j}$ is
member of a Fekete $N$-tuple for $K$.

We proceed by bounding $\mathsf{d}_N(K)$ from above. We have
\begin{equation}
  \label{eq:DNKsumbound}
  \mathsf{d}_N(K)=\frac{2}{N(N-1)} \log\prod_{1\le i<j\le N} |z_j-z_i|
  = \mathsf{d}_{f,N}(z_1,\ldots,z_N)=\mathsf{d}_{f,N}(\bfz_0,\bfz_1,\ldots,\bfz_n)
\end{equation}
where the middle equality used Lemma~\ref{lem:br}.
Lemma~\ref{lem:fischer} implies
\begin{equation}
  \label{eq:bakerchangeofcoordinates}
  \mathsf{d}_N(K) \le \sum_{i=0}^n \mathsf{d}_{f,N}(\bfz_i). 
\end{equation}
We continue by bounding each term $\mathsf{d}_{f,N}(\bfz_i)$  from above. The two cases $i=0$ and
$i\in \{1,\ldots,n\}$ are treated separately. 
Below $c_3(f,\delta)$ and $c_4(f)$ are positive and depend only on
$f,\delta$ and $f$, respectively. Moreover, the constants implicit in
$O_f(\cdot)$ depend only on $f$.

\begin{lemma}
  \label{lem:DfNieq0}
  We have
  \begin{equation*}
    \mathsf{d}_{f,N}(\bfz_0)
    \le -c_3(f,\delta)\frac{p_0^2}{d^s}+ O_{f}\left(p_0\frac{\log N}{N}\right).
  \end{equation*}
\end{lemma}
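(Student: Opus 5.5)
The plan is to cover the set of possible values $f^{(s)}(z_{0,j})$ by a bounded number of small disks, pull these back under $f^{(s)}$, and split $\bfz_0$ further according to which preimage component each point lies in. Proposition~\ref{prop:vandermondepreboundhyperbolic} then applies to each piece with vanishing local canonical height, and Cauchy--Schwarz recombines the pieces.

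\emph{Covering.} Recall that $\delta\le\epsilon/2$ by (\ref{eq:deltaepshalf}). Consider the set
$$S_\delta=\{w\in K_f : \mathrm{dist}(w,J_f)=\delta\},$$
which is compact. By (\ref{eq:crucialcommentmaxmod}), every $f^{(s)}(z_{0,j})$ lies in $S_\delta$. Set $R=\delta/2$. For each $w_0\in S_\delta$ choose $\widehat w\in J_f$ with $|w_0-\widehat w|=\delta$. Then
$$\overline{B_R(w_0)}\subset B_{3\delta/2}(\widehat w)\subset B_\epsilon(\widehat w).$$
Moreover, every point of $\overline{B_R(w_0)}$ has distance at least $\delta/2>0$ from $J_f$. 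Since $w_0$ lies in $K_f\ssm J_f$ and the disk does not meet $J_f$, the whole disk lies in the interior of $K_f$, so $\lambda_f$ vanishes on $\overline{B_R(w_0)}$. By compactness, $S_\delta$ is covered by $M=M(f,\delta)$ disks $B_{R/256}(w_1),\ldots,B_{R/256}(w_M)$ with centres in $S_\delta$.

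\emph{Pulling back.} Each $B_{R/256}(w_k)$ lies inside a disk $B_\epsilon(\widehat w_k)$ on which Lemma~\ref{lem:fninvuniform_new1} applies. The map $f^{(s)}$ has degree $d^s$, so $(f^{(s)})^{-1}(B_{R/256}(w_k))$ has at most $d^s$ connected components. Altogether there are at most $Md^s$ components $U_1,\ldots,U_L$ with $L\le Md^s$. Assign each $z_{0,j}$ to one component containing it. This partitions $\bfz_0$ into tuples $\bfz_{0,1},\ldots,\bfz_{0,L}$ of sizes $q_lN$ with $\sum_l q_l=p_0$; empty tuples are allowed. Lemma~\ref{lem:fischer} gives
$$\mathsf{d}_{f,N}(\bfz_0)\le\sum_l\mathsf{d}_{f,N}(\bfz_{0,l}).$$
For nonempty tuples, Proposition~\ref{prop:vandermondepreboundhyperbolic} applies because all points of $\bfz_{0,l}$ lie in one component of the preimage of $B_{R/256}(w_k)$. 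Since $\lambda_{\max}=0$ on $\overline{B_R(w_k)}$, it yields
$$\mathsf{d}_{f,N}(\bfz_{0,l})\le -q_l^2+O_f\Bigl(q_l\frac{\log N}{N}\Bigr).$$
Empty tuples contribute $0$.

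\emph{Recombining.} Summing over $l$, the error terms add up to $O_f(p_0\log N/N)$. By Cauchy--Schwarz,
$$\sum_l q_l^2\ge\frac{p_0^2}{L}\ge\frac{p_0^2}{Md^s}.$$
This gives the claim with $c_3(f,\delta)=1/M(f,\delta)$.

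\emph{Main obstacle.} The key point is that the number of pieces is $O_{f,\delta}(d^s)$ and not larger, which follows from the degree count of $f^{(s)}$ above. The other thing to check carefully is the hypothesis $\overline{B_R(w_0)}\subset B_\epsilon(\widehat w)$ together with $\lambda_f=0$ on that disk. Both rest on $\delta\le\epsilon/2$ and on the fact that the disk lies in the interior of $K_f$.
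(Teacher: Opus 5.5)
Your proposal is correct and follows the paper's proof essentially step for step: you cover $K_f\cap\{w:\mathrm{dist}(w,J_f)=\delta\}$ by finitely many disks of radius $\delta/512$, split $\bfz_0$ among the at most $d^s$ preimage components per disk, apply Proposition~\ref{prop:vandermondepreboundhyperbolic} with $R=\delta/2$ and $\lambda_{\max}=0$, and recombine via Lemma~\ref{lem:fischer} and Cauchy--Schwarz. Your constant $c_3(f,\delta)=1/M$ is the right one (the paper's closing line writes $c_3(f,\delta)=t$, which should be $1/t$). One small slip: the inclusion $\overline{B_R(w_0)}\subset B_{3\delta/2}(\widehat w)$ holds only for the closed disk $\overline{B_{3\delta/2}(\widehat w)}$, which is harmless since $3\delta/2<\epsilon$.
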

\begin{proof}
  We may assume $p_0>0$.
  By compactness there exist $\widetilde w_1,\ldots,\widetilde w_t\in
  K_f \cap J_{f,=\delta}$, where  $J_{f,=\delta} = \{z\in\IC :\mathrm{dist}(z,J_f)=\delta\}$, such that
  \begin{equation*}
    K_f \cap J_{f,=\delta} \subset \bigcup_{i=1}^t B_{\delta/512}(\widetilde
    w_i). 
  \end{equation*}
  Observe that $t$ and the $\widetilde w_i$ depend on $f$ and
  $\delta$.

  Let $i\in \{1,\ldots,t\}$. We claim that ${B_{\delta}(\widetilde w_i)}\subset  K_f$. 
  As the distance of $\widetilde w_i$ to $J_f$ equals $\delta$, we find
  ${B_{\delta}(\widetilde w_i)} \subset \IC\ssm J_f$.
  So the  disk ${B_{\delta}(\widetilde w_i)}$  lies in a connected
  component of $\IC\ssm J_f$.
  Since $\widetilde w_i\in K_f$ we find that ${B_{\delta}(\widetilde
    w_i)}$ does not meet  $\IC\ssm K_f$
  by Lemma~\ref{lem:fatoucomponents}(v). 
  So
  ${B_{\delta}(\widetilde
    w_i)}\subset K_f$, as desired.

  Let us  fix $\widehat w_i \in J_f$ with
  $|\widehat w_i-\widetilde w_i|=\delta$
  for all $i\in \{1,\ldots,t\}$.

  Each coordinate $z_{0,j}$ of $\bfz_0$ satisfies $f^{(s)}(z_{0,j}) \in
  K_f\cap J_{f,=\delta}$ by (\ref{eq:crucialcommentmaxmod}).
  Thus each  coordinate of $\bfz_0$ lies
  in a connected component
  of  $(f^{(s)})^{-1}(B_{\delta/512}(\widetilde w_i))$ for some
  $i\in\{1,\ldots,t\}$.
  Let $\tau$ be the number of such connected components as $i$ varies.  
  We collect 
  coordinates according to  the  connected components containing them.
  After permuting coordinates,
  $\bfz_0=(\bfz_{0,1},\ldots,\bfz_{0,\tau})$
  where each $\bfz_{0,j}$ consists of coordinates in a fixed connected
  component of some $(f^{(s)})^{-1}(B_{\delta/512}(\widetilde
  w_i))$.

  For given $i$, the preimage $(f^{(s)})^{-1}(B_{\delta/512}(\widetilde w_i))$
  has at most $\deg f^{(s)}=d^s$ connected components. This holds as
  $f^{(s)}$ induces a proper and open map $\IC\rightarrow\IC$. 
  There are $t$ possibilities for
  $i$. So $\tau \le d^s t$. Recall that $t$ depends only on $f$ and
  $\delta$.
  
  We apply Proposition~\ref{prop:vandermondepreboundhyperbolic} to
  $\widehat w = \widehat w_i, w_0 = \widetilde w_i, R=\delta/2,$ and
  $\epsilon= \epsilon$.
  By (\ref{eq:deltaepshalf}) we find
  $\overline{B_{\delta/2}(\widetilde w_i)} \subset
  B_{\epsilon}(\widehat w_i)$.  
  Observe that  $\lambda_f$ vanishes
  on ${B_{\delta}(\widetilde w_i)}\subset K_f$ and on
  $\overline{B_{\delta/2}(\widetilde w_i)}$.
  So
  \begin{equation*}
    \mathsf{d}_{f,N}(\bfz_{0,j}) \le -p_{0,j}^2 + O_f\left(p_{0,j}\frac{\log N}{N}\right)
  \end{equation*}
  for all $j\in \{1,\ldots ,\tau\}$; here $p_{0,j}\in [0,1]$ equals the number
  of coordinates of $\bfz_{0,j}$ divided by $N$.  
  Observe that $p_0 = \sum_{j=1}^\tau p_{0,j}$. By Lemma~\ref{lem:fischer} we have
  \begin{equation*}
    \mathsf{d}_{f,N}(\bfz_0) \le\sum_{j=1}^\tau \mathsf{d}_{f,N}(\bfz_{0,j})
    \le -\sum_{j=1}^\tau p_{0,j}^2 + O_f\left(p_0\frac{\log N}{N}\right).
  \end{equation*}
  The Cauchy--Schwarz inequality implies
  $p_0 = \sum_{j=1}^\tau p_{0,j}\le \tau^{1/2}\left(\sum_{j=1}^\tau
    p_{0,j}^2\right)^{1/2}$. So
  \begin{equation*}
    \mathsf{d}_{f,N}(\bfz_0) \le -\frac{p_0^2}{\tau} + O_f\left(p_0\frac{\log N}{N}\right).
  \end{equation*}
  The lemma follows from $\tau \le d^s t$ with $c_3(f,\delta)=t$. 
\end{proof}

Next we bound $\mathsf{d}_{f,N}(\bfz_i)$ in
(\ref{eq:bakerchangeofcoordinates})
from above for $i\ge 1$.
\begin{lemma}
  \label{lem:DfNige1}
  We have
  \begin{equation}
    \label{eq:DfNige1}
    \mathsf{d}_{f,N}(\bfz_i) \le -p_i^2 + c_4(f)\frac{ p_i}{d^s}  +
    O_{f}\left(p_i\frac{\log N}{N}\right)
    \quad\text{for all}\quad i\in \{1,\ldots,n\}.    
  \end{equation}
\end{lemma}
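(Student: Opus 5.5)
The plan is to apply Proposition~\ref{prop:vandermondepreboundhyperbolic} directly to the tuple $\bfz_i$, for each fixed $i\in\{1,\ldots,n\}$. We may assume $m_i\ge 0$; otherwise $\mathsf{d}_{f,N}(\bfz_i)=0$ and $p_i=0$, so (\ref{eq:DfNige1}) holds trivially. By construction every coordinate of $\bfz_i$ lies in $V_i$, which is a connected component of $(f^{(s)})^{-1}(\mathcal{D}_i)$. Here $\mathcal{D}_i=B_r(w_i)$ for some $w_i\in J_f$.

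\textbf{Choice of parameters.} We match the hypotheses of the proposition by taking $\widehat w=w_i$, $w_0=w_i$ and $R=256r$. Then $B_{R/256}(w_0)=\mathcal{D}_i$, so all coordinates of $\bfz_i$ lie in one connected component of $(f^{(s)})^{-1}(B_{R/256}(w_0))$, namely $V_i$. The standing assumption $r\le\epsilon/512$ gives $R\le \epsilon/2$, hence $\overline{B_R(w_0)}\subset B_\epsilon(\widehat w)$. Since $N\ge m_i+1$ automatically, the proposition yields
\begin{equation*}
  \mathsf{d}_{f,N}(\bfz_i)\le -p_i^2+2p_i\frac{\max_{w\in\overline{B_R(w_i)}}\lambda_f(w)}{d^s}+O_f\left(p_i\frac{\log N}{N}\right).
\end{equation*}

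\textbf{Uniform bound on $\lambda_f$.} It remains to bound $\max_{w\in\overline{B_R(w_i)}}\lambda_f(w)$ independently of $i$, $s$ and $n$. The function $\lambda_f$ is continuous, and $\overline{B_R(w_i)}$ lies in the compact set $\{w:\mathrm{dist}(w,J_f)\le \epsilon/2\}$, which depends only on $f$. So the maximum of $\lambda_f$ over this compact set is a finite constant depending only on $f$ (recall $\epsilon$ depends only on $f$). Setting $c_4(f)$ to be twice this maximum gives (\ref{eq:DfNige1}).

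The only delicate point is confirming that the containment hypotheses of Proposition~\ref{prop:vandermondepreboundhyperbolic} match exactly. This rests on the prior reduction $r\le\epsilon/512$, which ensures $R=256r$ is small enough. Nothing else is an obstacle.
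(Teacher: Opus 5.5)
Your proof is correct and follows the paper's argument. You apply Proposition~\ref{prop:vandermondepreboundhyperbolic} with $w_0=\widehat w$ equal to the center of $\mathcal{D}_i$, and you bound $\lambda_f$ by its maximum on the compact set $\{w:\mathrm{dist}(w,J_f)\le\epsilon/2\}$. The only difference is cosmetic: you take $R=256r$, so that $B_{R/256}(w_0)=\mathcal{D}_i$ exactly, whereas the paper takes $R=\epsilon/2$ and uses that $V_i$ lies in a single component of the preimage of the larger disk $B_{\epsilon/512}(\widehat w)\supseteq\mathcal{D}_i$.
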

\begin{proof}
  Let $i\in \{1,\ldots,n\}$. We may assume $p_i>0$. By the hypothesis 
  of Proposition~\ref{prop:transfiniteapplication}, the
  coordinates of $\bfz_i$ all lie in the
  same connected component $V_i$ of
  $(f^{(s)})^{-1}(B_{r}(\widehat w))$ for some $\widehat w\in J_f$
  (which is allowed to depend on $i$).
  Recall that $r \le \epsilon / 512$. 
  This time we will apply Proposition~\ref{prop:vandermondepreboundhyperbolic} to
  $\widehat w=\widehat w,w_0=\widehat w, R=\epsilon/2,$ and $\epsilon=\epsilon$.

  As $\epsilon \le 1$ and $\widehat w \in J_f$ we find that
  $\overline{B_{\epsilon/2}(\widehat w)}$ is contained in
  the compact set $\{z\in \IC : \mathrm{dist}(z,J_f)\le 1/2\}$.
  In particular, $\lambda_f$ restricted to
  $\overline{B_{\epsilon/2}(w_0)}$ is bounded from above by
  $c_4(f)/2>0$ (which depends only on $f$).
  Proposition~\ref{prop:vandermondepreboundhyperbolic} implies
  (\ref{eq:DfNige1}).
\end{proof}
We bound each term on the right of (\ref{eq:bakerchangeofcoordinates})
from above separately. For $i=0$ we use Lemma \ref{lem:DfNieq0}. For
$i>0$ we refer to Lemma \ref{lem:DfNige1}. As $p_0+p_1+\cdots+p_n=1$
we can simplify the sum over the error terms and obtain
\begin{equation}
  \label{eq:cDfNup}
    \mathsf{d}_N(K) \le - \left(c_3(f,\delta)\frac{p_0^2}{d^s}
  +\sum_{i=1}^n \left(p_i^2 -c_4(f)\frac{p_i}{d^s}\right)\right) +O_f\left(\frac{\log
      N}{N}\right). 
\end{equation}

We apply the inequality in Lemma~\ref{lem:qflowerbound} to $\alpha =
c_3(f,\delta),\beta = c_4(f),$ and $X = d^s$. The condition
 $d^s\ge (\alpha+2\beta+\frac{\beta^2}{2\alpha})n$
is met, if $c_2(f,\delta) \le 1$ from
the hypothesis is small enough in terms of $f$ and $\delta$. Thus by
(\ref{eq:cDfNup}) we find
\begin{equation*}
  \mathsf{d}_\infty(K) \le -\frac{c_1(f,\delta)}{d^s}  + O_f\left(\frac{\log N}{N}\right)
\end{equation*}
with $c_1(f,\delta)=c_3(f,\delta)/2$. This
completes the proof of (\ref{eq:corcDNub}) and thus of Proposition
\ref{prop:transfiniteapplication}.


\section{The Hubbard Tree and its Core Entropy}
\label{sec:trees}

The Hubbard tree encapsulates combinatorial data attached to a
postcritically finite polynomial.
We will review properties of  the Hubbard tree and its core entropy.
Our main reference for the Hubbard tree is the work of Douady and
Hubbard~\cite{DH:etudedynI}, see Chapter IV.3~\textit{loc.cit.} and also 
Poirier's work~\cite{poirier:hubbardtrees}.
The Hubbard tree plays an important role in our construction of a
finite topological tree mentioned in the introduction.

The main results of this section
are Corollary~\ref{cor:convexhull} and Proposition \ref{prop:rhoflessthand}. In
the latter we give a self-contained characterization of postcritically
finite polynomials with maximal core entropy
based on an idea of Alsed\`a--Fagella~\cite{AlsedaFagella}.

\subsection{Graphs and Trees}

For us a \textit{graph} is a pair
$G=(V,E,\ep)$ where $V$ is a set, $E$ is a set, and $\ep$ is an injective map
from $E$ to the set of subsets of $V$ consisting of precisely two
elements. An  element of $V$ is called a \textit{vertex} of $G$, an
element of $E$ is call an \textit{edge} of $G$, and $\ep$ maps each edge
to its two endpoints. We call $G$ finite if $V$ and $E$ are finite.
Our graphs are undirected and simple. 

A \textit{path} in $G$ is a sequence of vertices $(v_0,\ldots,v_n)$,
with $n\in\IN_0$, such that for each $i\in \{1,\ldots,n\}$ there is an
edge $e_i$ with $\ep(e_i) = \{v_{i-1},v_i\}$. We call $n$ the
\textit{length} of the path. A path is called \textit{simple} if no
vertex appears twice. A non-empty graph is \textit{connected} if
any two vertices arise as vertices of a path. The \textit{diameter}
$\mathrm{diam}(G)$ of a finite connected graph $G$ is the maximal
length of a simple path in $G$. A \textit{cycle} is a path of length
$n\ge 1$ with pairwise distinct edges and $v_0=v_n$. A \textit{tree}
is a connected graph without cycles. By \textit{finite topological tree}
we mean a finite tree embedded in $\IC$. 

\subsection{The Hubbard Tree}
\label{sec:hubbard}

Let $f\in \IC[T]$ have degree $d \ge 2$. We assume that $f$ is
postcritically finite, \textit{i.e.}, each critical point of $f$ is
preperiodic. Here we survey the construction of the Hubbard tree of
$f$, its basic properties, and provide some examples.

By Lemmas~\ref{lem:fatoucomponents} and \ref{lem:postcriticalprops} we
have the following properties. The Julia set $J_f$ and the filled
Julia set $K_f$ are both compact, locally connected,
arcwise connected. Moreover, $\IC\ssm K_f$ is connected. The set
$K_f\ssm J_f$ is open in $\IC$ as $J_f$ is the boundary of $K_f$. Let
$\pi_0(K_f\ssm J_f)$ denote the (possibly empty) set of connected
components of $K_f\ssm J_f$. These connected components are also
called \emph{bounded Fatou components} of $f$. For all bounded Fatou
components $U$ of $f$, the image $f(U)$ is again a bounded Fatou
component of $f$. Finally, all bounded Fatou components of $f$ are
simply connected.

We summarize some consequences of Proposition
IV.2.2~\cite{DH:etudedynI}. For each $U\in \pi_0(K_f\ssm J_f)$ we can
find a biholomorphic map $\varphi_U \colon U\rightarrow \Delta$; here
$\Delta = \{z\in\IC : |z|<1\}$. The collection
of $\varphi_U$ can be chosen in a compatible way. More precisely, for
each $U\in\pi_0(K_f\ssm J_f)$ there exists $d(U)\in \IN$ such that the
diagram
\begin{equation}
  \label{dia:varphiU}
  \xymatrix{
    U\ar[d]_{\varphi_U}\ar[r]^{f|_{U}} & f(U)\ar[d]^{\varphi_{f(U)}}
    \\
    \Delta\ar[r]_{z\mapsto z^{d(U)}} & \Delta
  }
\end{equation}
commutes. 

The collection of all maps $\varphi_U$ is unique up-to finite amount
of ambiguity; see Section IV.2~\cite{DH:etudedynI}. Roughly speaking,
two choices differ by a factor that is on the unit circle. In
particular, the \textit{center} $\mathfrak{c}_U=\varphi_U^{-1}(0)\in
U$ of $U$ is intrinsically defined and satisfies
$f(\mathfrak{c}_U) = \mathfrak{c}_{f(U)}$.
The set of centers of $f$ is $\{\mathfrak{c}_U : U\in \pi_0(K_f\ssm
J_f)\}$.
By (\ref{dia:varphiU}) the set of centers of $f$ is fully invariant.  

By a suitable version of Carath\'eodory's Theorem, see Th\'eor\`eme
II.3.1~\cite{DH:etudedynI},  the 
biholomorphic map $\varphi_U \colon U\rightarrow \Delta$ extends to a
homeomorphism $\overline U \rightarrow\overline{\Delta}$.
See also the proof of Proposition~II.4.3a)~\cite{DH:etudedynI}
and Douady's work~\cite{Douady:compact}.
We also denote this extension by $\varphi_U$.

Before defining the Hubbard tree we need to discuss allowable arcs, see
Chapter II.6~\cite{DH:etudedynI}. 
A \textit{ray} in $\overline\Delta$ is a set  $\{t\zeta : t\in [0,1]\}$ where
$|\zeta|=1$. An \textit{arc} in $\IC$ is the image of an injective continuous map
$[0,1]\rightarrow\IC$. An \textit{allowable arc} for $f$ is an arc
$\Gamma\subset K_f$ such that $\varphi_U(\Gamma\cap \overline
U)\subset\overline\Delta$ is contained in the union of two rays for
all $U\in\pi_0(K_f\ssm J_f)$. Any two distinct points $x,y\in K_f$ are
endpoints of a unique allowable arc $[x,y]_f$ by Proposition
II.6.6~\cite{DH:etudedynI}. It is convenient to define $[x,x]_f = \{x\}$
for $x\in K_f$

The \emph{Hubbard tree} $H_f$ of $f$ is the finite union of allowable
arcs $[z',z'']_f$ where $z'$ and $z''$ range over the finitely many
members of $\pco_{\ge 0}(f) = \{f^{(n)}(z) : n\in\IN_0 \text{ and }
f'(z)=0\}$. The Hubbard tree $H_f$ is a finite topological tree. Let
$z\in H_f$. We call $z$ a branching point of $H_f$ if $H_f\ssm\{z\}$
has more than two connected components. Let $V_f$ denote union of
$\pco_{\ge 0}(f)$ and the set of branching points of $H_f$. Then $V_f$
is a finite subset of $H_f$ and its elements are called vertices of
$H_f$.

The complement $H_f\ssm V_f$ is homeomorphic to a finite disjoint
union of copies of $(0,1)$. For each connected component $C$ of
$H_f\ssm V_F$ we fix a homeomorphism $e\colon (0,1)\rightarrow C$. The
map $e$ extends to a continuous injective map $[0,1]\rightarrow H_f$ for which
we also use $e$. Note that $e(0),e(1)\in V_f$ and these points
are distinct since $H_f$ is a tree. We call $e$ an edge of $H_f$. Let
$E_f$ denote the finite set of maps $e$ thus obtained.\footnote{The
  maps $e$ are not uniquely determined, but their images $e([0,1])$ are.}

Let $U\in \pi_0(K_f\ssm J_f)$. By Sullivan's Nonwandering Theorem, the
forward orbit $U,f(U),f^{(2)}(U),\ldots$ is finite, see
Lemma~\ref{lem:fatoucomponents}(iv). So there exists $k\in\IN_0$ such
that $f^{(k)}(U)$ is periodic under $f$ of period length
$\mathrm{per}(U)\in\IN$.

\begin{lemma}
  \label{lem:centerprops}
  Let $f\in\IC[T]$ be of degree at least $2$. Suppose that $f$ is
  postcritically finite. Suppose $U\in \pi_0(K_f\ssm J_f)$ is periodic.
  The center $\mathfrak{c}_U$ of $U$ is a periodic point, and it is the
  only periodic point contained in $U$. Moreover, the forward orbit of
  $\mathfrak{c}_U$ contains a critical point and $\mathfrak{c}_U \in
  \pco_{\ge 1}(f)$.
\end{lemma}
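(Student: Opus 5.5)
The plan is to work in the linearizing coordinates of diagram (\ref{dia:varphiU}) and reduce to the model maps $z\mapsto z^{k}$ on $\Delta$. Let $n=\mathrm{per}(U)$, so $f^{(n)}(U)=U$. Composing the diagrams along the cycle $U,f(U),\ldots,f^{(n-1)}(U)$ gives
\begin{equation*}
\varphi_U\circ f^{(n)}|_U = \bigl(z\mapsto z^{D}\bigr)\circ\varphi_U
\qquad\text{with}\qquad D=d(U)\,d(f(U))\cdots d(f^{(n-1)}(U))\in\IN.
\end{equation*}

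\emph{Periodicity and uniqueness.} Since $\varphi_U(\mathfrak c_U)=0$ and $0^D=0$, the first claim follows at once: $f^{(n)}(\mathfrak c_U)=\mathfrak c_U$. For uniqueness, let $z\in U$ be periodic under $f$. Its period must be a multiple of $n$, because $U,f(U),\ldots$ are distinct components until $f^{(n)}(U)=U$. So $f^{(kn)}(z)=z$ for some $k\ge1$, and $w=\varphi_U(z)\in\Delta$ satisfies $w^{D^k}=w$. If $D\ge 2$, this forces $w=0$ or $|w|=1$. The latter is excluded because $|w|<1$, so $w=0$ and $z=\mathfrak c_U$.

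\emph{The key step: ruling out $D=1$.} Here the cycle contains no critical point, and $f^{(n)}|_U$ is an automorphism of $U$. I would argue that this is impossible for a polynomial. Montel's Theorem on the family $\{f^{(m)}|_U\}$ then gives a non-constant limit map, but for a periodic Fatou component that is an automorphism we are in the Siegel disk case. By the classification of periodic Fatou components recalled in the Appendix (Lemma~\ref{lem:fatoucomponents} and Lemma~\ref{lem:postcriticalprops}), the boundary of a Siegel disk lies in the closure of the postcritical set. That closure is finite here, which contradicts $\partial U$ being an infinite continuum (as $\overline U\cong\overline\Delta$). Alternatively, one can use that the Douady--Hubbard maps $\varphi_U$ from Proposition IV.2.2 of \cite{DH:etudedynI} are only defined with $d(U)\ge 1$ and that in the postcritically finite case every periodic cycle of bounded Fatou components is superattracting. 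I expect this to be the main obstacle; I would lean on the Appendix facts so as not to reprove the Fatou classification.

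\emph{The critical point and $\pco_{\ge1}$.} Since $D\ge2$, some $d(f^{(j)}(U))\ge2$ with $0\le j<n$. For $V=f^{(j)}(U)$, the map $f|_V$ is conjugate to $z\mapsto z^{d(V)}$, which has a critical point at $0$. So $\mathfrak c_V$ is a critical point of $f$. Moreover $\mathfrak c_V=f^{(j)}(\mathfrak c_U)$ by the relation $f(\mathfrak c_U)=\mathfrak c_{f(U)}$, so the forward orbit of $\mathfrak c_U$ contains a critical point. Finally, periodicity gives
\begin{equation*}
\mathfrak c_U=f^{(n-j)}(\mathfrak c_V)\qquad\text{with}\qquad n-j\ge1,
\end{equation*}
so $\mathfrak c_U\in\pco_{\ge1}(f)$.
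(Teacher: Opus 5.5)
Up to one step, your outline is the paper's proof. You compose the diagrams (\ref{dia:varphiU}) around the cycle and read off $f^{(n)}(\mathfrak c_U)=\mathfrak c_U$ from $0^D=0$. You get uniqueness from $w^{D^k}=w$ with $|w|<1$. Once $D\ge 2$, you find a critical point of $f$ on the orbit of $\mathfrak c_U$: you pick $j$ with $d(f^{(j)}(U))\ge 2$, while the paper applies the chain rule to $f^{(n)}$ at $\mathfrak c_U$, and the two are equivalent. This gives $\mathfrak c_U\in\pco_{\ge1}(f)$.

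The problem is the step you flagged as the main obstacle, ruling out $D=1$, which is not justified as written. Lemmas~\ref{lem:fatoucomponents} and \ref{lem:postcriticalprops} do not contain the classification of periodic Fatou components. Nor do they contain the theorem that the boundary of a Siegel disk lies in the closure of the postcritical set. So that route rests on external results you have not secured. Your alternative, that every periodic cycle of bounded Fatou components is superattracting in the postcritically finite case, is essentially the statement you are trying to prove.

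What you missed is that (\ref{dia:varphiU}) gives an exact conjugacy to $z\mapsto z^D$, not merely an automorphism of $U$. If $D=1$, then $\varphi_U\circ f^{(n)}|_U=\varphi_U$. Since $\varphi_U$ is injective, $f^{(n)}(z)=z$ for every $z$ in the nonempty open set $U$. But $f^{(n)}-T$ is a polynomial of degree $d^n\ge 2$, so it has only finitely many zeros, a contradiction.

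This one-line argument is the paper's (``the restriction of $f^{(\mathrm{per}(U))}$ to $U$ cannot be the identity map''). It needs nothing beyond the diagram itself, so the detour through Montel's Theorem and Siegel disks can be dropped. In any case, with $D=1$ the return map would be literally the identity, not an irrational rotation.
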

\begin{proof}
  We recall that $f^{(k)}(\mathfrak{c}_U)$ is the center of
  $f^{(k)}(U)$ for all $k\ge 0$. Therefore, $\mathfrak{c}_U$ is fixed by
  $f^{(\mathrm{per}(U))}$ as, by hypothesis, $f^{(\mathrm{per}(U))}$
  maps $U$ to itself. In particular, $\mathfrak{c}_U$ is $f$-periodic.
  Conversely, by iterating (\ref{dia:varphiU}) we
  find that $\varphi_{f^{(k)}(U)}\circ f^{(k)}(z) = \varphi_U(z)^{D(k)}$ for
  all $z\in U$ and all $k\ge 0$ where $D(k) = d(U)\cdots
  d(f^{(k-1)}(U))\in\IN$. The period of any periodic point in $U$ is a
  multiple of $\mathrm{per}(U)$. Thus $\mathfrak{c}_U$ is the only
  periodic point in $U$. As $\mathrm{per}(U)\ge 1$, the restriction of
  $f^{(\mathrm{per}(U))}$ to $U$ cannot be the identity map. So
  $D(\mathrm{per}(U))\ge 2$ and thus $\mathfrak{c}_U$ is a critical point of
  $f^{(\mathrm{per}(U))}$. By the chain rule, $f^{(k)}(\mathfrak{c}_U)$
  is a critical point of $f$ for some $k\in
  \{0,\ldots,\mathrm{per}(U)-1\}$. Thus
  $\mathfrak{c}_U=f^{(\mathrm{per}(U)-k)}(f^{(k)}(\mathfrak{c}_U)) \in
  \pco_{\ge 1}(f)$.
\end{proof}

On the other hand,
an element in $\pco_{\ge 1}(f)$ need not be a center.
This is the case if $K_f=J_f$ and there are  no centers,
for example for $f=T^2-2$.

In the next lemma we collect some facts about the Hubbard
tree due to Douady and Hubbard.

\begin{lemma}
  \label{lem:hubbardbasic}
  Let $f\in\IC[T]$ be of degree at least $2$. Suppose that $f$ is
  postcritically finite.
  \begin{enumerate}
  \item [(i)] The Hubbard tree $H_f$ is a finite union of allowable
    arcs $\bigcup_i [x_i,y_i]_f$ where $x_i,y_i\in \pco_{\ge 0}(f)$ and where $[x_i,y_i]_f$
    contains no critical points of $f$ expect possibly $x_i$ or
    $y_i$. 
  \item[(ii)] If $x,y\in K_f$ and $[x,y]_f$ contains no critical points
    of $f$ expect possible $x$ and $y$, then
    $f([x,y]_f)=[f(x),f(y)]_f$
    and in particular, $f([x,y]_f)$ is an allowable arc.
  \item [(iii)] We have $f(H_f)\subset H_f$.
  \item[(iv)] We have $f(V_f) \subset V_f$.
  \item[(v)] The set of centers of $f$ equals  the set of
    preperiodic points in $K_f\ssm J_f$. 
  \end{enumerate}
\end{lemma}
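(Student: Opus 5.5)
The lemma collects facts due to Douady and Hubbard, so the plan is to derive each part from the properties of allowable arcs and the compatible Riemann maps $\varphi_U$ in (\ref{dia:varphiU}), citing Chapter IV~\cite{DH:etudedynI} wherever the underlying topology is heavy. I would prove (ii) first, then (i), and deduce (iii) and (iv) from them; part (v) is independent.

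For (ii), let $x,y\in K_f$ and suppose $[x,y]_f$ contains no critical point except possibly $x,y$. Since $f$ is a local homeomorphism near every interior point of the arc, $f|_{[x,y]_f}$ is locally injective. To see that $f([x,y]_f)$ is allowable, take a bounded Fatou component $U$ met by the arc. The arc meets $\overline U$ in at most two radial segments under $\varphi_U$, and (\ref{dia:varphiU}) sends rays of $\overline\Delta$ to rays via $z\mapsto z^{d(U)}$. Hence the image meets $\overline{f(U)}$ in a union of rays. The main obstacle is global injectivity: a locally injective image of an arc could fold back onto itself. I would handle this with the argument of Douady--Hubbard (Proposition II.6.6 and the discussion in IV.3): lift the unique allowable arc $[f(x),f(y)]_f$ starting at $x$ along the local inverse branch of $f$. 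This is possible because the only branch points of that lift are critical points, which the hypothesis excludes in the interior. The lift is then an allowable arc from $x$ to a preimage of $f(y)$. Uniqueness of allowable arcs, together with the fact that lifting the image of $[x,y]_f$ recovers $[x,y]_f$, gives $f([x,y]_f)=[f(x),f(y)]_f$. I would expect to cite this step rather than reprove it.

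For (i), subdivide each defining arc $[z',z'']_f$ of $H_f$ at the finitely many critical points it contains. Every critical point lies in $\pco_{\ge0}(f)$, and a subarc of an allowable arc is allowable, so $[z',z'']_f$ is the union of the arcs $[c_k,c_{k+1}]_f$ between consecutive such points, each with no interior critical points. For (iii), apply (ii) to each arc from (i): $f([x_i,y_i]_f)=[f(x_i),f(y_i)]_f$ with $f(x_i),f(y_i)\in\pco_{\ge0}(f)$, and this arc lies in $H_f$ by definition. For (iv), $f(\pco_{\ge0}(f))\subset\pco_{\ge0}(f)$ is clear. If $z$ is a branching point that is not critical, then $f$ is a local homeomorphism at $z$, so the at least three germs of arcs of $H_f$ at $z$ map to at least three distinct germs at $f(z)$ inside $H_f$ by (iii), making $f(z)$ a branching point. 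If $z$ is critical, then $f(z)\in\pco_{\ge0}(f)\subset V_f$ directly.

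For (v), first note that centers are preperiodic. By Sullivan's theorem, Lemma~\ref{lem:fatoucomponents}(iv), each $U$ is eventually periodic. Since $f^{(k)}(\mathfrak c_U)=\mathfrak c_{f^{(k)}(U)}$, Lemma~\ref{lem:centerprops} shows the center of the periodic component is periodic. Conversely, let $z\in U$ be preperiodic, so $f^{(k)}(z)$ is periodic and lies in a periodic component $W$. Lemma~\ref{lem:centerprops} gives $f^{(k)}(z)=\mathfrak c_W$. In the coordinates $\varphi$, the composite $f^{(k)}|_U$ corresponds to $z\mapsto z^{D}$, and the only preimage of $0$ under this map is $0$. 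Hence $\varphi_U(z)=0$, that is, $z=\mathfrak c_U$.
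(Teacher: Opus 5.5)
Your proposal is correct, and it differs from the paper mainly in what is proved and what is cited. The paper cites every part except (v):
\begin{itemize}
\item (i) comes from the definition together with Section~2 of \cite{poirier:hubbardtrees};
\item (ii) comes from Lemme~IV.3.1 of \cite{DH:etudedynI} plus uniqueness of allowable arcs;
\item (iii) comes from Proposition~IV.3.3 of \cite{DH:etudedynI};
\item (iv) comes from Lemma~2.13 of \cite{poirier:hubbardtrees} with $M=\pco_{\ge 0}(f)$.
\end{itemize}
You instead prove (i), (iii) and (iv) yourself. For (i) you use that critical points lie in $\pco_{\ge0}(f)$, that subarcs of allowable arcs are allowable, and that uniqueness identifies each piece with $[c_k,c_{k+1}]_f$. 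You get (iii) from (i) and (ii). You prove (iv) by the local-homeomorphism argument at a non-critical branching point. That argument is valid because in the finite tree $H_f$, two arcs from $f(z)$ meeting only at $f(z)$ must lie in different components of $H_f\ssm\{f(z)\}$, by uniqueness of arcs in a tree. Your route makes (i), (iii) and (iv) transparent and reduces the external input to two Douady--Hubbard facts: existence and uniqueness of allowable arcs, and the content of (ii). The paper's citations are shorter but lean on Poirier's more general framework. Your (v) is the paper's argument, with the step ``the preimage of a center is a center'' made explicit via $\varphi_{f^{(k)}(U)}\circ f^{(k)}=\varphi_U^{D}$.

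For (ii), you and the paper both end up relying on Lemme~IV.3.1 of \cite{DH:etudedynI}. You should cite exactly that statement, namely injectivity of $f$ on $[x,y]_f$ and allowability of the image, because the heuristic you sketch does not work as written.
\begin{itemize}
\item Local considerations only show that $f([x,y]_f)\cap\overline{W}$ is a union of rays in $\varphi_W$-coordinates. Allowability needs at most two such rays, and several components $U'$ with $f(U')=W$ met by the arc can each contribute.
\item The lifting step is circular. The hypothesis excludes critical points on $[x,y]_f$, not on a lift of $[f(x),f(y)]_f$. So that lift need not be unique, and need not end at $y$.
\item The observation that lifting $f([x,y]_f)$ returns $[x,y]_f$ says nothing about $[f(x),f(y)]_f$ unless you already know $f([x,y]_f)$ is an allowable arc, which is the claim itself.
\end{itemize}
Once Lemme~IV.3.1 is invoked, uniqueness of allowable arcs gives $f([x,y]_f)=[f(x),f(y)]_f$, and the rest of your proposal goes through.
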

\begin{proof}
  Part (i) follows from the definition of the Hubbard and from the
  Section~2~\cite{poirier:hubbardtrees}.

  Regarding part (ii), 
  Lemme~IV.3.1~\cite{DH:etudedynI} implies that the restriction
  $f|_{[x,y]_f}$ is injective and $f([x,y]_f)$ is
  an allowable arc. The latter must equal $[f(x),f(y)]_f$ because
  there is only one allowable arc connecting $f(x)$ and $f(y)$. 
  
  Part (iii) follows from Proposition~IV.3.3~\cite{DH:etudedynI}.

  For part (iv) follows from the more general
  Lemma~2.13~\cite{poirier:hubbardtrees} when $M=\mathrm{PCO}_{\ge
    0}(f)$.
  
  We come to part (v). Let $z\in K_f\ssm J_f$ be preperiodic.
  Some forwards orbit of $z$ is periodic and lies in a periodic element of
  $\pi_0(K_f\ssm J_f)$.  
  Lemma~\ref{lem:centerprops} implies that the
  forward orbit of $z$ contains a center of $f$. 
  But the preimage of a center is a center. Therefore, $z$ is a center. 
  Conversely,
  suppose $z$ is the center of $U\in\pi_0(K_f\ssm J_f)$, say.
  The forward orbit of $U$ under $f$ is
  finite. Moreover, any iterate of $z$ is again a center. Therefore,
  the forward orbit of $z$ is finite and $z$ is preperiodic. 
\end{proof}

\begin{example}
  We compute the Hubbard tree when $f\in\IR[T]$ is postcritically
  finite of degree $d\ge 2$ such that all critical points are real.
  In this case $\pco_{\ge 0}(f)\subset \IR$.

  Any arc in $\IC$ that is stable under complex
  conjugation and contains a real end point, is contained in $\IR$.
  From this observation and since $\pco_{f\ge 0}(f)$ is stable under
  complex conjugation,  one may show that the Hubbard tree $H_f$ lies
  in $\IR$.  Moreover, $H_f$ has
  no branching points.  Its set of vertices is 
  $\pco_{\ge 0}(f) = \{c_1,c_2,\ldots,c_n\}$ with $c_1<c_2<\cdots <
  c_n$. The  edges are just the standard intervals
  $[c_1,c_2],\ldots,[c_{n-1},c_n]$.

  \label{ex:hubbardtrees}
  \begin{enumerate} 
  \item[(i)] Let $f = T^2$, then $H_f = \{0\}$. There is a single vertex
    $V_f = \{0\}$ and no edges.
  \item[(ii)] Let $f = T^2-1$, then $\pco_{\ge 0}(f) = \{-1,0\}$.
    Here there are two
    vertices $V_f = \{-1,0\}$, both centers, and
    a single edge $H_f=[-1,0]$.
  \item[(iii)] Let $f = T^2-2$, then $\pco_{\ge 0}(f) = \{-2,0,2\}$.
    Here $V_f =
    \{-2,0,2\}$ and the Hubbard tree has
    two edges  $[-2,0]$ and $[0,2]$. In this case $J_f=K_f=[-2,2]$ and
    the set of centers is empty. 
  \item[(iv)] Let $f = T^2+c$ with $c$ the unique real number such that
    $c^3+2c^2+c+1=0$, so $c = -1.75487\ldots$. Then $\pco_{\ge 0}(f) =
    \{0,c,c^2+c\}$ as $f(f(c))=0$. Here $c^2+c = 1.32471\ldots$.
    So $V_f = \{0,c,c^2+c\}$ and $E_f =
    \{[c,0],[0,c^2+c]\}$.
  \end{enumerate}
\end{example}

The next compactness argument is useful for later reference. 

\begin{lemma}
  \label{lem:distarclemma}
  Let $f\in\IC[T]$ be of degree at least $2$. Suppose that $f$ is
  postcritically finite. For all $\epsilon > 0$ there exists
  $\delta=\delta(f,\epsilon)>0$ with the following property. Suppose
  $z\in K_f$ with $\mathrm{dist}(z,J_f)\ge \epsilon$. Let $U$ be the
  connected component of $K_f\ssm J_f$ that contains $z$. Then
  $\mathrm{dist}(z',J_f) \ge\delta$ for all $z' \in
  [z,\mathfrak{c}_U]_f$.
\end{lemma}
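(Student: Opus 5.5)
Since $z\in K_f$ and $\mathrm{dist}(z,J_f)\ge\epsilon>0$, the point $z$ lies in $K_f\ssm J_f$, so $U$ is well defined. The plan is to reduce to finitely many Fatou components and then use the Riemann map $\varphi_U$ together with compactness.

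\emph{Step 1 (only finitely many components matter).} The disk $B_\epsilon(z)$ avoids $J_f$ and contains $z\in U$, so it lies in $U$. The components of $K_f\ssm J_f$ are pairwise disjoint open subsets of the bounded set $K_f$. Hence only finitely many of them, say $U_1,\ldots,U_k$, contain a disk of radius $\epsilon$, since an area comparison bounds $k$ in terms of $f$ and $\epsilon$. It therefore suffices to produce a $\delta_U>0$ for each such $U$ separately and then set $\delta=\min_i\delta_{U_i}$.

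\emph{Step 2 (description of the arc inside $U$).} Fix such a $U$ and let $\varphi_U\colon U\to\Delta$ be the biholomorphism with $\varphi_U(\mathfrak c_U)=0$. Write $z=\varphi_U^{-1}(w)$. The set $\varphi_U^{-1}(\{tw:t\in[0,1]\})$ is an arc in $U\subset K_f$ from $\mathfrak c_U$ to $z$. It meets $\overline U$ in the preimage of one ray, and it meets no other Fatou component. So it is allowable, and by uniqueness of allowable arcs (Proposition II.6.6~\cite{DH:etudedynI}) it equals $[z,\mathfrak c_U]_f$. In particular $[z,\mathfrak c_U]_f\subset \varphi_U^{-1}(\overline{B_{|w|}(0)})$.

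\emph{Step 3 (compactness).} The set $S_U=\{z\in U:\mathrm{dist}(z,J_f)\ge\epsilon\}$ is closed and bounded, hence compact, and it lies in $U$. So $\varphi_U(S_U)$ is a compact subset of $\Delta$, contained in $\overline{B_\rho(0)}$ for some $\rho=\rho(U,\epsilon)<1$. Then every arc $[z,\mathfrak c_U]_f$ with $z\in S_U$ lies in $C_U=\varphi_U^{-1}(\overline{B_\rho(0)})$, which is a compact subset of the open set $U$. Since $\partial U\subset J_f$, we get $\mathrm{dist}(C_U,J_f)=\mathrm{dist}(C_U,\IC\ssm U)>0$, and we take this number as $\delta_U$.

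The step needing the most care is Step 2, the identification of the radial arc with the allowable arc $[z,\mathfrak c_U]_f$. This requires checking the definition of allowable arc: the arc must lie in $K_f$, and its image under $\varphi_U$ on $\overline U$ must lie in at most two rays, here a single one. The remaining steps are routine compactness arguments.
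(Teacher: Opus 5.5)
Your proof is correct. Its skeleton matches the paper's: reduce to finitely many bounded Fatou components, then use that $[z,\mathfrak{c}_U]_f$ is the $\varphi_U$-preimage of a radial segment. The two proofs differ in the final step.

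\textbf{The paper's final step.} It argues by contradiction. It takes sequences $z_n$ and $z'_n\in[z_n,\mathfrak{c}_U]_f$ with $\mathrm{dist}(z'_n,J_f)\to 0$, and writes $\varphi_U(z'_n)=\lambda_n\varphi_U(z_n)$. It then passes to limits in $\overline{U}$ and uses the Carath\'eodory extension $\varphi_U\colon\overline{U}\to\overline{\Delta}$ to force $|\varphi_U(z)|=1$. This means $z_n$ accumulates on $\partial U\subset J_f$, a contradiction.

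\textbf{Your final step.} You argue directly. $\varphi_U(S_U)$ is compact in $\Delta$, so every relevant radial arc lies in the fixed compact set $\varphi_U^{-1}(\overline{B_\rho(0)})\subset U$. This route has several advantages:
\begin{itemize}
\item It only uses that $\varphi_U$ is a homeomorphism $U\to\Delta$, with no boundary extension and no extraction of subsequences.
\item It gives $\delta$ as an explicit distance.
\item Via uniqueness of allowable arcs, it justifies the containment $[z,\mathfrak{c}_U]_f\subset U$ and the radial description. The paper simply asserts these ``by the definition of allowable arcs''.
\end{itemize}
Your area argument and the paper's open-cover argument for finiteness of the relevant components are interchangeable.

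\textbf{A small point in Step 2.} Allowability is phrased in terms of $\Gamma\cap\overline{U'}$ for the other components $U'$. So you need $\Gamma\cap\overline{U'}=\emptyset$, not just $\Gamma\cap U'=\emptyset$. This holds because $\Gamma\subset U$, and the open set $U$ is disjoint from $U'$, hence from $\overline{U'}$.
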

\begin{proof}
  We prove the lemma
  by contradiction. We assume that there is
  $\epsilon > 0$ and sequences $(z_n)_{n\in\IN},(z'_n)_{n\in\IN}$ of
  complex numbers with
  the following property.
  Each $z_n$ lies in a connected component $U_n$
  of $K_F\ssm J_f$ and satisfies $\mathrm{dist}(z_n,J_f)\ge\epsilon$.
  Each $z'_n$ lies in the allowable arc
  $[z_n,\mathfrak{c}_{U_n}]_f\subset {U_n}$ and satisfies
  $\mathrm{dist}(z'_n,J_f)<1/n$.

  The compact set $\{z\in K_f
  :\mathrm{dist}(z,J_f)\ge\epsilon\}$ is covered by finitely many
  connected components of $K_f\ssm J_f$. So after passing to a
  subsequence, there exists a connected component $U$ of $K_f\ssm J_f$
  such that $z_n\in U_n=U$ for all $n\in\IN$.
  
  By the definition of allowable arcs, the image
  $\varphi_{U}([z_n,\mathfrak{c}_{U}]_f)$ is a line segment in
  $\Delta$ through $0$ and $\varphi_{U}(z_n)$. As $z'_n$ lies on
  $[z_n,\mathfrak{c}_U]$ we have 
  $\varphi_U(z'_n) = \lambda_n \varphi_{U}(z_n)$ for some
  $\lambda_n\in [0,1]$.
  
  Recall that all $z_n,z'_n$ lie in the bounded set ${U}$. We again pass to a
  subsequence and assume that $\lim_{n\rightarrow \infty} z_n=z$
  and  $\lim_{n\rightarrow \infty} z'_n=z'$ exist, they both lie in
  $\overline {U}$.
  Similarly, we may also assume
  that $\lambda_n$ converges to $\lambda\in [0,1]$.
  Recall that $\varphi_{U}$
  extends  to a continuous map  $\overline
  {U}\rightarrow\overline{\Delta}$.
  Taking the limit
  of $\varphi_{U}(z_n')=\lambda_n \varphi_{U}(z_n)$ leads to
  $\varphi_{U}(z')=\lambda \varphi_{U}(z)$. Moreover,
  $\mathrm{dist}(z'_n,J_f)<1/n$ for all $n$ implies $z'\in J_f$. Thus
  $|\lambda \varphi_{U}(z)|=1$ and finally $\lambda=|\varphi_U(z)|=1$.
  But then $z_n$ converges to  $z\in\partial {U}\subset J_f$. This
  contradicts  $\mathrm{dist}(z_n,J_f)\ge\epsilon$. 
\end{proof}

Being hyperbolic is a purely analytic property of a complex
polynomial.
On the other hand, being postcritically finite is expressible in
algebraic terms, it is preserved under field automorphisms. 
We conclude this section with the observation that set of
postcritically finite hyperbolic polynomials is stable under
field automorphisms.
We thank Laura DeMarco for pointing out the following lemma and
providing us with a proof.

For a subfield $F\subset\IC$, we let $\mathrm{Aut}(F)$ denote the set
of field automorphisms $F\rightarrow F$. 
For $\sigma\in \mathrm{Aut}(F)$ and
 $f\in F[T]$, we let $\sigma(f)$ denote the polynomial in $F[T]$
obtained by applying $\sigma$ to the coefficients of $f$.

\begin{lemma}
  \label{lem:pcfhyperbolicgalois}
  Let $f\in F[T]$ be of degree at least $2$. Suppose that $f$ is
  postcritically finite and hyperbolic.
  Then $\sigma(f)$ is postcritically finite and hyperbolic for all
  $\sigma\in \mathrm{Aut}(F)$. 
\end{lemma}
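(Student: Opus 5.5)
The key reformulation: for a postcritically finite polynomial, hyperbolicity means every critical point is eventually periodic with its periodic cycle \emph{containing} a critical point. Equivalently, no critical point lands on a repelling cycle. Both conditions are algebraic, so they are preserved by any field automorphism.

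First, postcritical finiteness is preserved. The critical points of $\sigma(f)$ are the images under an extension $\tilde\sigma\in\mathrm{Aut}(\IC)$ of $\sigma$ of the critical points of $f$. Iterates commute with $\tilde\sigma$ in the sense that $\tilde\sigma(f^{(n)}(c)) = \sigma(f)^{(n)}(\tilde\sigma(c))$. So finiteness of orbits transfers. (If $F$ is not all of $\IC$, extend $\sigma$ to an automorphism of $\IC$, or just work with the algebraic closure of $F$ in $\IC$; all postcritical points are algebraic over $F$.)

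Second, I characterize hyperbolicity for postcritically finite $f$. By the appendix material (Definition~\ref{def:hyperbolic} and the standard facts recalled there), a polynomial is hyperbolic iff every critical point lies in the basin of an attracting cycle, the point at infinity being superattracting. For postcritically finite $f$, every periodic cycle is either superattracting (multiplier $0$), and thus contains a critical point, or is repelling or irrationally indifferent. More precisely:
\begin{itemize}
\item An attracting cycle that is not superattracting must attract an infinite critical orbit, which is impossible here.
\item A parabolic cycle likewise attracts an infinite critical orbit.
\item A Siegel cycle requires a critical orbit accumulating on the boundary of the Siegel disk; with a finite postcritical set this is impossible, since Siegel disk boundaries are contained in the closure of the postcritical set and would then be finite.
\end{itemize}
Hence a critical point $c$ of a postcritically finite $f$ lies in an attracting basin iff its eventual cycle is superattracting, i.e.\ contains a critical point. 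So $f$ is hyperbolic iff for every critical point $c$ there exist $n\ge 0$ and $k\ge 1$ with $f^{(n+k)}(c)=f^{(n)}(c)$ and $(f^{(k)})'(f^{(n)}(c))=0$. This is a polynomial condition in the critical points and coefficients, so applying $\tilde\sigma$ shows that $\sigma(f)$ satisfies it.

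The main obstacle is the classification step: justifying that a postcritically finite polynomial has no non-superattracting attracting, parabolic, or Siegel cycles. I would cite the classical fact (Fatou) that every attracting or parabolic cycle attracts an infinite critical orbit, and that Siegel disk boundaries lie in the closure of the postcritical set. Presumably the appendix (Lemma~\ref{lem:postcriticalprops}) records this. Alternatively, I would avoid the Siegel case by noting that the Fatou components of a postcritically finite map are all superattracting basins, which is used already in Section~\ref{sec:hubbard} via the maps $\varphi_U$.
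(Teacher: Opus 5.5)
Your proof is correct and has the same structure as the paper's. For $f$ postcritically finite and hyperbolic, you show that every critical orbit lands on a cycle containing a critical point. This is a polynomial condition, so it is carried over by $\sigma$ (extended to $\overline F$ or $\IC$, a point you handle more carefully than the paper's tacit use of $\sigma^{-1}(z)$). The converse, which is the paper's Lemma~\ref{lem:critperimplieshyp}(i), then gives the conclusion.

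The only difference is how you justify the forward direction. The paper combines Lemma~\ref{lem:hyperboliccharacterization} with Lemma~\ref{lem:centerprops}: a periodic point in a periodic bounded Fatou component is its Douady--Hubbard center, and the orbit of that center meets a critical point. You instead use Fatou's theorem that a non-superattracting attracting cycle attracts an infinite critical orbit. Once hyperbolicity is phrased as ``every critical orbit converges to an attracting cycle'', that theorem suffices on its own. So your parabolic and Siegel cases are unnecessary.

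Two small points. Your opening aside that hyperbolicity is equivalent to ``no critical point lands on a repelling cycle'' would also require ruling out Cremer cycles, which you do not do; it is not used later, so nothing breaks. Also, Lemma~\ref{lem:postcriticalprops} does not record the classification of cycles you hoped to cite there.
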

\begin{proof}
  Let $f$ be postcritically finite and hyperbolic. Let $z\in \pco_{\ge
    0}(f)$ be arbitrary. By a characterization of being hyperbolic,
  Lemma~\ref{lem:hyperboliccharacterization}, the forward orbit of $z$
  does not meet the Julia set of $f$. This forward orbit is finite by
  hypothesis. So it contains a periodic point $w$, say. We have $w\in
  K_f\ssm J_f$. The connected component $U\in \pi_0(K_f\ssm J_f)$
  containing $w$ is periodic. Lemma~\ref{lem:centerprops} implies that
  $w$ is the center of $U$. Moreover, the forward orbit of $w$ contains
  a critical point of $f$.
  
  We have proved that the forward orbit of any point in $\pco_{\ge
    0}(f)$ contains a periodic critical point.
  In other words,  for all $z\in \pco_{\ge 0}(f)$ there exists $m_z\in \IN_0$ such that
  $f^{(m_z)}(z)$ is periodic of period $n_z\ge 1$, say, and critical.
  Thus $f^{(m_z+n_z)}(z)=f^{(m_z)}(z)$ and $f'(f^{(m_z)}(z))=0$. 

  Let $\sigma\in \mathrm{Aut}(F)$ and let $z$ be in $\pco_{\ge
    0}(\sigma(f))$.  Applying $\sigma$ commutes with differentiation.
  So $\sigma^{-1}(z)$ lies in $\mathrm{PCO}_{\ge 0}(f)$.  The
  argument above implies
  $f^{(m+n)}(\sigma^{-1}(z)) =
  f^{(m)}(\sigma^{-1}(z))$ and
  $f'(f^{(m)}(\sigma^{-1}(z)))=0$
  with $m=m_{\sigma^{-1}(z)}\ge 0$ and $n=n_{\sigma^{-1}(z)}>0$.
  Now we apply $\sigma$ to find
  $$
  \sigma(f)^{(m+n)}(z)= \sigma(f)^{(m)}(z) \quad\text{and}\quad
  \sigma(f)'(\sigma(f)^{(m)}(z))=0.
  $$
  In other words,  $z$ is a preperiodic point of $\sigma(f)$ and its forward orbit
  under $\sigma(f)$ contains a critical point of $\sigma(f)$.
  Recall that  $z$ was an arbitrary  point of $\pco_{\ge
    0}(\sigma(f))$.
  So by Lemma~\ref{lem:critperimplieshyp}(i) we
  conclude that $\sigma(f)$ is postcritically finite and hyperbolic. 
\end{proof}

\subsection{The Core Entropy of $f$}
\label{sec:entropy}

In this section we survey Thurston's notion of \textit{core
entropy}~\cite{thurstonetal:degreed} of a postcritically finite
polynomial.

Let $f\in \IC[T]$ be of  degree $d \ge 2$ and
postcritically finite. 
Let $H_f,V_f,$ and $E_f$ be as in
Section~\ref{sec:hubbard}. We use $\IZ[E_f]$ to denote the free abelian group generated by the finite set
of edges $E_f$ of $H_f$. 

Let us verify that $f$ induces a \textit{Markov map} on the Hubbard
tree $H_f$, see Appendix~A~\cite{thurstonetal:degreed}. This property
is stated in Lemma 13.7~\textit{loc.cit.}

Suppose $e\in E_f$, considered as usual as an injective continuous map
$e\colon [0,1]\rightarrow H_f$.
We recall
Lemma~\ref{lem:hubbardbasic}.
By part (iii) $f\circ e$ takes values in $H_f$,
by part (iv) we have $f(e(0)),f(e(1)) \in V_f$, and
by parts (i) and (ii)
$f\circ e$ is a path in $H_f$. 

Let us give some details on the final statement which is important for
the things to come.
We claim that $f\circ e$ represents a simple path in the tree $H_f$.
Indeed, $e((0,1))$ does not meet $V_f$ by construction. In particular,
$e((0,1))$  does
not meet any critical point of $f$. So $f$ is locally injective at any
point of $e((0,1))$. Therefore, as $f\circ e$ moves along a fixed edge
of $H_f$, the edge coordinate is strictly monotone. Moreover, $f\circ e$
traverses edges without backtracking. As $H_f$ is a tree, no vertex in
$V_f$ is visited more than once. This implies our claim.

We define $L(e) = e_1+\cdots + e_m$
where $e_1,\ldots,e_m$ are the pairwise distinct edges of the Hubbard
tree 
traversed by
$f\circ e$.
A non-zero element of $\IZ[E_f]$ is called
\textit{reduced} if it is the sum of pairwise different basis elements
in $E_f$.
In particular, $L(e)$ is reduced.
We extend $L$ by linearity and obtain a group homomorphism
\begin{equation}
  \label{def:L}
  L \colon \IZ[E_f] \rightarrow \IZ[E_f].
\end{equation}

If $E_f\not=\emptyset$ then $L$ is not nilpotent. Indeed, for any
$e\in E_f$ the image $L(e)$  is non-zero and
lies in the cone $\IZ[E_f]_{\ge 0}$, the set of all possible
$\sum_{e\in E_f} n(e)e$ with $n(e)\ge 0$. 
The same is true for $L^k(e)$ for all $k\ge 1$. Hence $L^k$ is non-zero for
all $k\ge 1$.

We write $L\otimes \IC$ for  endomorphism of the
$\IC$-vector space $\IC[E_f]$  induced by $L$.
By the Cayley--Hamilton Theorem, the characteristic
polynomial of $L\otimes\IC$ is not a power of $T$. The characteristic
polynomial lies in $\IZ[T]$ and is monic. Hence $L\otimes\IC$
has a complex eigenvalue
of modulus at least $1$.

Let $A$ be an endomorphism of a finite dimensional $\IC$-vector space
$V$. 
If $V\not=0$ we let
$\rho(A)$ denote the spectral radius of $A$, \textit{i.e.}, $\rho(A)$
is the largest absolute value of an eigenvalue of $A$. If $V=0$, we
set $\rho(A)=1$.

\begin{defi}
  \label{def:entropy}
  We keep the notation above.  The exponential core entropy of $f$ is
  $\rho(f)=\rho(L\otimes\IC)\ge 1$. 
  Moreover, $\log \rho(f)\ge 0$ is called the core entropy of $f$.
\end{defi}

By Proposition~A.6~\cite{thurstonetal:degreed} the core entropy of $f$
agrees with the  notation given in Definition 13.8.~\textit{loc.cit.}

Let $A=(a_{ij})_{i,j}\in \mat_{m,n}(\IC)$ with $m,n\in\IN$. The
row-sum norm of $A$ is $\|A\|_\infty = \max_{1\le i \le m}
\sum_{j=1}^n |a_{ij}|$. If $v\in\IC^m$ is a column vector,
\textit{i.e.}, an element of $\mat_{m,1}(\IC)$, then $\|v\|_\infty$ is
the maximum-norm on the coordinates. Moreover, $\|Av\|_\infty \le
\|A\|_\infty \|v\|_\infty$ by a simple application of the triangle
inequality. Suppose $m=n$. Then we consider $A$ as an endomorphism of
$\IC^m$. It is well-known that the spectral radius satisfies
\begin{equation}
  \label{eq:specradbound}
  \rho(A) \le \|A^s\|_\infty^{1/s}
\end{equation}
for all $s\in\IN$. 

\begin{lemma}
  \label{lem:entropybound}
  We have $1\le \rho(f) \le d=\deg f$. 
\end{lemma}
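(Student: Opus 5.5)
The lower bound $\rho(f)\ge 1$ is built into Definition~\ref{def:entropy}. If $E_f=\emptyset$ then $\rho(f)=1$ by convention. Otherwise $L\otimes\IC$ has an eigenvalue of modulus at least $1$, as shown just before the definition. So only $\rho(f)\le d$ needs work. The plan is to apply (\ref{eq:specradbound}) to the matrix of $L$ in the basis $E_f$ and to show that $\|L^s\|_\infty$ grows at most linearly in $d^s$.

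The first step is a geometric description of $L^s(e)$ for $e\in E_f$. I will check by induction on $s$ that $L^s(e)=\sum_{e'\in E_f} n_s(e,e')e'$, where $n_s(e,e')$ is the number of times the path $f^{(s)}\circ e$ traverses the edge $e'$. The base case $s=1$ is exactly the discussion preceding (\ref{def:L}): $f\circ e$ is a simple path through the edges appearing in $L(e)$. For the inductive step, write $f^{(s)}\circ e$ as a concatenation of the paths $f^{(s-1)}\circ e_i$ over the edges $e_i$ traversed by $f\circ e$. Each of these is a concatenation of images of edges under $f$, and counting traversals is additive under concatenation, so the count matches $L^{s}(e)=L^{s-1}(L(e))$ coefficientwise. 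No simplicity is needed here, only that $f$ maps each edge onto a path of edges, which is Lemma~\ref{lem:hubbardbasic}.

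The second step bounds these counts. The row-sum norm involves a fixed $e'$ summed over all $e$, or the transpose, depending on the convention for the matrix. Since spectral radius is invariant under transposition, I can use whichever bound is easier, and I will bound the column sums $\sum_{e}n_s(e,e')$. Choose a point $x$ in the interior of $e'$ that avoids the finitely many points of $\bigcup_{k\le s} (f^{(k)})^{-1}(V_f)$. Each traversal of $e'$ by some $f^{(s)}\circ e$ passes through $x$ exactly once, because the path is monotone in the edge coordinate on that edge. Each such passage therefore yields a distinct point $y\in e((0,1))$ with $f^{(s)}(y)=x$, and distinct edges have disjoint interiors. Hence $\sum_e n_s(e,e')\le \#(f^{(s)})^{-1}(x)\le d^s$. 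Consequently $\|(L^s)^t\|_\infty\le d^s$, and (\ref{eq:specradbound}) gives $\rho(f)\le (d^s)^{1/s}=d$.

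The main obstacle is the injectivity claim that distinct traversals give distinct preimages. The point to verify is that within a single edge $e$, the map $f^{(s)}\circ e$ never revisits $x$ at the same parameter, which is trivial, while different parameters give different points $y=e(t)$ because $e$ is injective. So the argument reduces to the bookkeeping identification of $L^s$ with traversal counts. If that identification turns out to be delicate, the fallback is to argue only for $s=1$ and use submultiplicativity: the column sums of $L$ are at most $d$ by the same preimage count, so $\|(L^t)^s\|_\infty\le d^s$ directly.
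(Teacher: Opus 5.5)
Your proof is correct and rests on the same idea as the paper's proof: you bound the row sums of the matrix of $L$ (a fixed target edge, summed over source edges) by counting the preimages of an interior point of that edge, and then you invoke (\ref{eq:specradbound}). The paper simply does this with $s=1$, which is exactly your fallback and already gives $\rho(f)\le\|A\|_\infty\le d$; the identification of $L^s$ with traversal counts in your main route is correct but unnecessary.
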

\begin{proof}  
  The inequality $\rho(f)\ge 1$ follows from definition.
  We may assume $E_f\not=\emptyset$. 
  Let $e_1,\ldots,e_m$ be the edges in $E_f$.
  There exist $a_{ij}\in \IZ$ with
  $L(e_j) = \sum_{i=1}^m a_{ij} e_i\in\IZ[E_f]$ for all $j$. 
  So $A= (a_{ij})_{i,j}\in \mat_{m}(\IZ)$ represents $L$ with respect
  to
  the edge basis.  The entries of $A$ are zeros or ones since $L(e_j)$
  is reduced. Moreover, each row of
  $A$ contains at most $d$ ones; indeed, any edge of $E_f$ has at most
  $d$ preimages under $f$.
  This implies $\|A\|_\infty\le d$.
  So $\rho(f) \le d$ by (\ref{eq:specradbound}) in the case $s=1$.
\end{proof}

Let us compute the core entropy of each polynomial that appears in
Example \ref{ex:hubbardtrees}.

\begin{example}
  \label{ex:spectralradius}
  \begin{enumerate}
  \item[(i)] The Hubbard tree of $f = T^2$ has no edges, thus $\rho(f)=1$
    by definition. 
  \item[(ii)] If $f = T^2-1$, then $f$ restricts to a bijection of
    the only edge $[-1,0]$ onto itself. Again we find $\rho(f)=1$.  In other words, the core
    entropy of $f$ is $0$. 
  \item[(iii)] If $f = T^2-2$, then $f([-2,0]) = f([0,-2]) =
    [-2,0]\cup [0,2]$. With respect to the basis $([-2,0],[0,2])$ the
    linear map $L$ is represented by
    \begin{equation*}
      \left(
        \begin{array}{cc}
          1 & 1 \\ 1 & 1
        \end{array}
      \right).
    \end{equation*}
    Thus $\rho(f) = 2$ attains the largest possible value in degree $2$.
  \item[(iv)] Let $f = T^2+c$ with $c$ as in
    Example~\ref{ex:hubbardtrees}(iv).
    Since $f([c,0]) = [c,c^2+c] = [c,0]\cup [0,c^2+c]$ and
    $f([0,c^2+c]) = [c,0]$ the linear map $L$ is represented by
    \begin{equation*}
      \left(
        \begin{array}{cc}
          1 & 1 \\
          1 & 0
        \end{array}
      \right)
    \end{equation*}
    with respect to $([c,0],[0,c^2+c])$. Therefore, $\rho(f) = (1+\sqrt 5)/2$. 
  \end{enumerate}
\end{example}

\subsection{Convex Hull in a Tree}
\label{sec:growth}

Let $f\in \IC[T]$ be of  degree $d \ge 2$ and
postcritically finite. Let $H_f,V_f,$ and $E_f$ be as in
Section~\ref{sec:hubbard}.

For any $s\in\IN_0$ we define
\begin{equation}
  \label{def:hubbardpreimage}
 H_f^{(s)} = \bigl(f^{(s)}\bigr)^{-1}(H_f) \subset\IC.
\end{equation}
In particular, $H_f^{(0)} = H_f$.

We will endow $H_f^{(s)}$ with the structure of
a finite topological tree. By
the chain rule, a critical value of $f^{(s)}$ lies in $\pco_{\ge
1}(f)\subset H_f$. Thus $H_f^{(s)}$ is path-connected as $H_f$
contains all critical values of $f^{(s)}$. 

Let us begin by constructing the vertices of $H_f^{(s)}$. 
We define
\begin{equation}
  \label{def:Vfs}
  V_f^{(s)} = \bigl(f^{(s)}\bigr)^{-1}(V_f) 
\end{equation}
for all $s\in\IN_0$ and observe $V_f^{(s)} \subset H_f^{(s)}$. 

Suppose $v\in V_f^{(s)}$ with $s\in\IN_0$. Then $f^{(s+1)}(v)
=f(f^{(s)}(v)) \in f(V_f)$ by definition. Lemma~\ref{lem:hubbardbasic}(iv) implies
$f^{(s+1)}(v)\in V_f$ and hence $v \in V_f^{(s+1)}$. We thus have 
\begin{equation}
  \label{eq:Vfchain}
  V_f = V_f^{(0)} \subset V_f^{(1)}\subset V_f^{(2)}\subset \cdots. 
\end{equation}

Let $e \in E_f$ be an edge. Then $e((0,1)) \cap V_f=\emptyset$. Recall
that $f^{(s)}(\crit{f^{(s)}})\subset \pco_{\ge 1}(f) \subset V_f$. In
particular, $e((0,1))$ does not meet any critical values of $f^{(s)}$.
So $e(1/2)$ has precisely $d^s$ different preimages under $f^{(s)}$.
Each preimage induces a lift of 
$e|_{(0,1)}$  to a continuous map
$\tilde e \colon (0,1)\rightarrow H_f^{(s)}$ with $f^{(s)} \circ
\tilde e = e|_{(0,1)}$. Any such lift extends to a continuous and
injective map on $[0,1]$. Observe that $\tilde e(0),\tilde e(1) \in
V_f^{(s)}$. We define $E_f^{(s)}$ to be the set of maps thus obtained.

We have $E_f^{(0)}=E_f$. Note that $\# E_f^{(s)} = d^s \# E_f$. The
map $f^{(s)}\colon \IC\rightarrow\IC$ is unramified above $\IC\ssm
V_f\subset \IC\ssm \pco_{\ge 1}(f)$. The Riemann--Hurwitz formula
implies $\# V_f^{(s)} = \# E_f^{(s)}+1$. As $H_f^{(s)}$ is connected,
it
follows that $H_f^{(s)}$
is a finite topological tree with vertices $V_f^{(s)}$ and edges the
images of elements of $E_f^{(s)}$. Of course, we recover $H_f,V_f,$
and $E_f$ if $s=0$.

The \textit{convex hull} of a set $\cV$ of vertices of $H_f^{(s)}$ is the
subtree generated by $\cV$. In other words, it is the intersection of all subtrees of
$H_f^{(s)}$ whose vertex set contains $\cV$. The convex hull of $\cV$
is the smallest subtree of
$H_f^{(s)}$ containing $\cV$.

Since $H_f^{(s)}$ has $d^s \# E_f $ edges, the convex hull of $\cV$
has at most $ d^s \# E_f$ edges. We consider this as the trivial bound. 

Our aim is to improve on this trivial bound when $\rho(f)<d$, compare
with Lemma~\ref{lem:entropybound}. We now state our main result
regarding the size of the convex hull of a collection of points in
$H_f^{(s)}$. The constant implicit in $O_f(\cdots)$ below may depend on $f$ but not
on $s$ or $\cV$.

\begin{propo}
  \label{prop:treegrowthestimate}
  We keep the notation from above.
  Let $\lambda\ge 1$ be  the size of the largest Jordan block
  among all eigenvalues of $L\otimes\IC$ of maximal modulus. 
  Let $s\in\IN$ and  $\cV$ be a subset of $V_f^{(s)}$. Then for all
  $t\in \{1,\ldots,s\}$, the convex
  hull of $\cV$ in $H_f^{(s)}$ has 
  $O_f(t^{\lambda-1} \rho(f)^t (d^{s-t}+t \#\cV))$ edges.
\end{propo}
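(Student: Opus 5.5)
The plan is to decompose $H_f^{(s)}$ into \emph{pieces} coming from the intermediate level $s-t$, and then to count separately how many pieces the convex hull meets and how many edges it uses inside each piece.

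\textbf{Decomposition into pieces.} Put $u=s-t$. The map $f^{(t)}$ sends $H_f^{(s)}$ onto $H_f^{(u)}$, since $(f^{(s)})^{-1}(H_f)=(f^{(t)})^{-1}(H_f^{(u)})$, and it sends edges of $H_f^{(s)}$ to edge paths in $H_f^{(u)}$. In the other direction, an edge $\tilde e\in E_f^{(s)}$ satisfies $f^{(u)}\circ\tilde e=f^{(u)}(\tilde e)$, and $f^{(u)}(\tilde e)$ is an edge of $H_f^{(t)}$ because $E_f^{(s)}$ is obtained by lifting along $f^{(s)}=f^{(t)}\circ f^{(u)}$. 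So I partition $E_f^{(s)}$ according to the edge $e'\in E_f^{(u)}$ that is the lift of $\tilde e$ at level $u$. Concretely, I group $\tilde e$ by which edge of $H_f^{(u)}$ contains the subarc $\tilde e$ after identifying $H_f^{(s)}\supset H_f^{(u)}$. The cleaner route is to use $H_f^{(u)}\subset H_f^{(s)}$ directly: each edge $e'$ of $H_f^{(u)}$ is an arc in $H_f^{(s)}$, namely $[a,b]_f$ with $a,b\in V_f^{(u)}\subset V_f^{(s)}$. The number of edges of $H_f^{(s)}$ lying on $e'$ equals the number of edges traversed by the path $f^{(u)}\circ e'$ in $H_f^{(t)}$, pulled back. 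By the Markov property this count is the coefficient sum of $L^t$ applied to the edge $f^{(u)}(e')$ of $H_f$. Using the Jordan form, $\|A^t\|_\infty=O_f(t^{\lambda-1}\rho(f)^t)$, so each level-$u$ arc $e'$ is subdivided into $O_f(t^{\lambda-1}\rho(f)^t)$ edges of $H_f^{(s)}$. More precisely, I would first verify that $f^{(u)}$ maps the arc of $H_f^{(u)}$ homeomorphically onto an edge of $H_f$, and that the vertices of $H_f^{(s)}$ on it correspond to $(f^{(t)})^{-1}(V_f)$ lying on $f^{(u)}(e')$. This is where the iterated Markov bound enters.

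\textbf{The hull meets a limited part of $H_f^{(u)}$.} The convex hull $\langle\cV\rangle$ in $H_f^{(s)}$ splits into the part inside the subtree $H_f^{(u)}$ and the parts hanging off it. I want to show that every edge of $\langle\cV\rangle$ either lies on an edge of $H_f^{(u)}$, or lies in one of the components of $H_f^{(s)}\ssm H_f^{(u)}$ that contain a point of $\cV$. The first kind contributes at most $\#E_f^{(u)}\cdot O_f(t^{\lambda-1}\rho(f)^t)=O_f(d^{s-t}t^{\lambda-1}\rho(f)^t)$ edges.

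\textbf{Hanging components (main obstacle).} A component $C$ of $H_f^{(s)}\ssm H_f^{(u)}$ maps under $f^{(u)}$ into $H_f^{(t)}\ssm H_f$ and attaches to $H_f^{(u)}$ at a single vertex. Within its closure, the hull of the points of $\cV\cap C$ together with the attaching point is a union of $\le\#(\cV\cap C)$ geodesics. Each of these geodesics has length at most $\mathrm{diam}$ of the closure of $C$. The crux is to bound the diameter of the closure of $C$, and thereby of $H_f^{(t)}$ up to lifting, by $O_f(t\cdot t^{\lambda-1}\rho(f)^t)$. I would do this by filtering $H_f^{(t)}=\bigcup_{k\le t}H_f^{(k)}$. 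A geodesic from a point of level $k$ back to $H_f^{(k-1)}$ is a lift of a geodesic in $H_f^{(1)}$ from $f^{(k-1)}$ of that point to $H_f$; such a geodesic has bounded length in $H_f^{(1)}$-edges, hence $O_f(k^{\lambda-1}\rho(f)^{t-k+1})$ edges at level $t$ by the Markov count. Summing over $k\le t$ gives $O_f(t\cdot t^{\lambda-1}\rho(f)^t)$. The subtle points are that lifts of simple paths stay simple, since $f^{(u)}$ is injective on the closure of $C$ away from critical points, and that the attaching to $H_f^{(u)}$ is consistent. Summing over the points of $\cV$ then gives $O_f(t^{\lambda}\rho(f)^t\#\cV)$, and adding the first contribution yields the claimed bound.
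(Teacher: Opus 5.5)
Your proposal has the same skeleton as the paper's proof. Your ``cleaner route'' in the first step is exactly the paper's subtree $\cT$: the refinement of $H_f^{(s-t)}$ inside $H_f^{(s)}$, which has $\sum_{e\in E_f^{(s-t)}}\ell(L^t f_*^{(s-t)}(e))=O_f(t^{\lambda-1}\rho(f)^t d^{s-t})$ edges by Lemma~\ref{lem:Lrcommutativediag}. Drop the earlier attempts to partition all of $E_f^{(s)}$ by edges of $H_f^{(u)}$, since most edges of $H_f^{(s)}$ do not lie on $H_f^{(u)}$. In both arguments each $v\in\cV$ is then attached to this subtree by a path of length $O_f(t^\lambda\rho(f)^t)$. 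That path is built by descending one level at a time: level $k$ contributes $O_f(1)$ edges of $H_f^{(k)}$, which are afterwards refined via $L$.

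The difference is in why each level costs only $O_f(1)$ edges. The paper's Lemma~\ref{lem:connectionlemma} pushes the geodesic from $v$ to a base vertex $w\in V_f$ down to $H_f$ by $f^{(s)}$. A path longer than $\mathrm{diam}(H_f)$ in a tree must backtrack, backtracking forces a critical point of $f^{(s)}$, and such a point lies in $V_f^{(s-1)}$. This is purely combinatorial and automatically produces edge paths between vertices. You instead show that each component of $H_f^{(k)}\ssm H_f^{(k-1)}$ is carried by $f^{(k-1)}$ edge-for-edge onto a component of $H_f^{(1)}\ssm H_f$, so its depth is at most $\mathrm{diam}(H_f^{(1)})$. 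This gives a more geometric picture: the pieces of $H_f^{(s)}$ hanging off $H_f^{(s-t)}$ are copies of pieces of $H_f^{(t)}\ssm H_f$, and $\mathrm{diam}(H_f^{(t)})=O_f(t^\lambda\rho(f)^t)$ comes out along the way. The price is a covering argument.

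That covering argument is the one step you have not actually proved. The claim that $f^{(u)}$ is injective on the closure of $C$ ``away from critical points'' does not follow from local injectivity. You can supply it as follows.
\begin{itemize}
\item All critical values of $f^{(u)}$ lie in $\pco_{\ge 1}(f)\subset H_f$. Hence $f^{(u)}$ restricts to a covering map from $H_f^{(s)}\ssm H_f^{(u)}=(f^{(u)})^{-1}(H_f^{(t)}\ssm H_f)$ onto $H_f^{(t)}\ssm H_f$.
\item Each component of $H_f^{(t)}\ssm H_f$ is a finite tree with one leaf removed, hence contractible, so the covering is trivial over it.
\item The map sends edges to edges, because edges of $H_f^{(s)}$ and of $H_f^{(t)}$ are both lifts of edges of $H_f$.
\end{itemize}
Alternatively, an arc in $C$ maps to a locally injective path in the tree $H_f^{(t)}$, and such a path is injective. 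The same reasoning applies to $f^{(k-1)}$ at each level.

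There are two minor slips. First, the level-$k$ contribution should be $O_f(\max\{1,t-k\}^{\lambda-1}\rho(f)^{t-k})$ rather than $O_f(k^{\lambda-1}\rho(f)^{t-k+1})$; the sum over $k$ is still $O_f(t^\lambda\rho(f)^t)$. Second, the attaching point reached at an intermediate level need not be a vertex of the coarser tree; this costs only one extra partial edge per level.
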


Let us focus on $f$ with non-maximal core entropy. 
We illustrate the previous proposition with a corollary 
when $\rho(f)<d$.
Recall that $H_f^{(s)}$ has $O_f(d^s)$ vertices and edges.
Being vague, we may think of a subset of  vertices as
sparse, if its cardinality is at most  $\kappa d^s$ for some fixed
$\kappa>0$ as $s$ grows.
Similarly, a subtree of $H_f^{(s)}$ is sparse if its number of edges
is at most $\kappa d^s$ as $s$ grows. 
Roughly speaking, the  corollary below states that 
the
convex hull of a sparse subset of vertices is sparse.
The condition $\rho(f)<d$ is met when $f$ is postcritically finite
and hyperbolic, see
Proposition~\ref{prop:rhoflessthand} below.

\begin{corollary}
  \label{cor:convexhull}
  We keep the notation from above and suppose $\rho(f)<d$. For all
  $\nu > 0$ there exists $\kappa(f,\nu)>0$ which depends on $f$ and $\nu$ and
  has the following properties. Let $s\in\IN_0$ and let $\cV$ be a subset
  of $V_f^{(s)}$ with $\# \cV \le \kappa(f,\nu) d^s$. Then the convex hull of
  $\cV$ in $H_f^{(s)}$ has at most $\nu d^s$ edges.
\end{corollary}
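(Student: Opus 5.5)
The corollary should follow from Proposition~\ref{prop:treegrowthestimate} by a suitable choice of the auxiliary parameter $t$, followed by a choice of $\kappa$. Let $C=C(f)\ge 1$ be the implicit constant in that proposition. Then for $s\ge 1$ and $1\le t\le s$ the convex hull of $\cV$ has at most
\begin{equation*}
  C t^{\lambda-1}\rho(f)^t d^{s-t} + C t^{\lambda}\rho(f)^t \#\cV
  \le C d^s\left( t^{\lambda-1}\left(\frac{\rho(f)}{d}\right)^t + \kappa\, t^\lambda \rho(f)^t\right)
\end{equation*}
edges, where $\#\cV\le \kappa d^s$.

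Since $\rho(f)<d$, the sequence $t^{\lambda-1}(\rho(f)/d)^t$ tends to $0$ as $t\to\infty$. First I fix $t_0=t_0(f,\nu)\in\IN$ so that $C t_0^{\lambda-1}(\rho(f)/d)^{t_0}\le \nu/2$. Then I choose $\kappa$ so small that $C\kappa t_0^\lambda\rho(f)^{t_0}\le \nu/2$. Both choices depend only on $f$ and $\nu$, since $\lambda$ and $\rho(f)$ are determined by $f$. For every $s\ge t_0$, applying the proposition with $t=t_0$ then bounds the number of edges by $\nu d^s$.

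The case $s<t_0$ (including $s=0$) is where the proposition is unavailable. Here I use the trivial bound $d^s\#E_f$ together with the smallness of $\kappa$. I shrink $\kappa$ further so that $\kappa d^{t_0}<1$. Then $\#\cV\le\kappa d^s<1$ forces $\cV=\emptyset$, and the convex hull is empty, hence has $0\le\nu d^s$ edges. This handles all small $s$ uniformly, and because $t_0$ depends only on $f$ and $\nu$, so does $\kappa$.

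The only point needing care is the order of choices: $t$ must be fixed independently of $s$ before $\kappa$ is chosen, and the small-$s$ range must be disposed of separately as above. I do not expect any real obstacle beyond this bookkeeping.
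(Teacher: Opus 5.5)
Your proposal is correct and follows the paper's proof essentially step for step. You fix $t$ depending only on $f$ and $\nu$ so that the $(\rho(f)/d)^t$ term is at most $\nu/2$, then shrink $\kappa$ to control the $\#\cV$ term, and impose $\kappa<d^{-t}$ so that the case $s<t$ forces $\cV=\emptyset$. One small remark: the trivial bound $d^s\#E_f$ you mention for small $s$ is never actually used.
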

\begin{proof}
  It suffices to handle the case $\cV\not=\emptyset$ and $s\ge 1$.
  Given $\nu>0$ we will see how to fix $\kappa(f,\nu)$.
  For brevity, we will write $\kappa=\kappa(f,\nu)$.
  By Proposition~\ref{prop:treegrowthestimate} there exists $c=c(f)>0$ such
  that
  the convex hull of $\cV$ in $H_f^{(s)}$ has at most
  \begin{equation*}
    ct^{\lambda-1}\left(
    \left(\frac{\rho(f)}{d}\right)^t + t \rho(f)^t \frac{\#\cV}{d^s} 
  \right)d^s
  \le \left(
    ct^{\lambda-1}\left(\frac{\rho(f)}{d}\right)^t  + ct^{\lambda}
    \rho(f)^t \kappa
  \right)d^s
  \end{equation*}
  edges for all $s\in\IN$ and all $t\in \{1,\ldots,s\}$; here $\lambda
  \ge 1$ depends only on $f$. 

  As $\rho(f)/d<1$ we max fix an integer $t\ge 1$ in terms of $c,f,$ and $\nu$  such that $ct^{\lambda-1}
  (\rho(f)/d)^t \le \nu/2$. For $\kappa>0$ sufficiently small in
  terms of $c,t,f,$ and $\nu$ we have $ct^{\lambda}\rho(f)^t
  \kappa\le\nu/2$.  We may also assume that $\kappa <d^{-t}$. 

  Thus if $s\ge t$, the number of edges in the convex hull is at most
  $\frac{\nu}{2}d^s + \frac{\nu}{2} d^s = \nu d^s$.

  But what if $s<t$? Our hypothesis
  implies $1\le \#\cV\le \kappa d^s<\kappa d^t$. This contradicts our
  choice of $\kappa$.
\end{proof}

We will prove Proposition~\ref{prop:treegrowthestimate} after some
preliminary work.

Let $s\in\IN$. If $e\in E_f^{(s)}$, then we define $f^{(s)}_*(e) =
f^{(s)} \circ e \in E_f$. This determines a map $f^{(s)}_* \colon
E_f^{(s)} \rightarrow E_f$. We let $\IZ[E_f^{(s)}]$ denote the free
abelian group generated by the finitely many elements of $E_f^{(s)}$.
We extend $f^{(s)}_*$ linearly to get  a group homomorphism
\begin{equation*}
  f^{(s)}_* \colon \IZ[E_f^{(s)}]\rightarrow \IZ[E_f],
\end{equation*}
for which we use the same symbol. It is convenient to let $f^{(0)}_*$
denote the identity map on $\IZ[E_f]$. 

A non-zero element of $\IZ[E_f^{(s)}]$ is called \textit{reduced} if
it is the sum of pairwise different basis elements in $E_f^{(s)}$.
This extends the homonymous notion when $s=0$.

Let $\IZ[E_f^{(s)}]_{\ge 0}$ denote all non-negative $\IZ$-linear
combinations of elements in $E_f^{(s)}$. The \textit{support} of
$\sum_{e\in E_f^{(s)}} n(e) e \in \IZ[E^{(s)}_f]_{\ge 0}$ is
$\bigcup_{e:n(e)\not=0} e([0,1])$.

Suppose $e\in E_f^{(s)}$ is an edge with  $s\in\IN_0$. By construction, we have
$e(0),e(1)\in V_f^{(s)}$. 
Recall $V_f^{(s)}\subset V_f^{(s+1)}$ by (\ref{eq:Vfchain}). So $e(0),e(1)\in
V_f^{(s+1)}$.
The image  $f^{(s)}(e([0,1]))$ is back in the Hubbard tree  $H_f$.
As $f(H_f)\subset H_f$ as sets, see
Lemma~\ref{lem:hubbardbasic}(iii), we find $e([0,1]) \subset
H_f^{(s+1)}$.
But $e([0,1])$ may not be the image of an edge
of $H_f^{(s+1)}$. Indeed, there may be  points of $V_f^{(s+1)}$
along $e((0,1))$.
As $H_f^{(s+1)}$ is a tree,
there is exactly one simple path in
$H_f^{(s+1)}$ connecting $e(0)$ to $e(1)$. It consists of a finite
number of pairwise distinct edges $e_1,\ldots,e_m \in E_f^{(s+1)}$,
say. We say that the path $(e_1,\ldots,e_m)$ \textit{refines} the path $(e)$ in
$H_f^{(s+1)}$. We define the refinement map
\begin{equation*}
  r_s \colon \IZ[E_f^{(s)}]\rightarrow \IZ[E_f^{(s+1)}]
\end{equation*}
by imposing $r_s(e) = e_1+\cdots + e_m$ and extending it by linearity. 
Here $r_s(e)$ is reduced as the $e_1,\ldots,e_m$ are pairwise
distinct. Finally, the support of $r_s(e)$ equals $e([0,1])$, the
support of $e$.

By a simple induction we find that 
\begin{equation}
  \label{eq:rsiterated}
  (r_{s+t} \circ \cdots \circ r_{s+1}\circ r_s)(e) \in
  \IZ[E_f^{(s+t+1)}]_{\ge 0}
\end{equation}
is reduced for all $s,t\in\IN_0$ and all $e\in E_f^{(s)}$. Moreover,
the support of the left-hand side of (\ref{eq:rsiterated}) is the
support of $e$.

It is sometimes convenient to drop the subscript and
consider $r$ as a self-map of the disjoint  union
$\dot\bigcup_{s\in\IN_0} \IZ[E_f^{(s)}]$.
Thus it makes sense to consider iterates $r^{(0)}=\mathrm{id},r^{(1)}=r,r^{(2)} =
r\circ r,\ldots$ of $r$.

\begin{lemma}
  \label{lem:Lrcommutativediag}
  We keep the notation from above. In particular,  $L$ is as in (\ref{def:L}).
  \begin{enumerate}
  \item  [(i)] For all $s\in\IN$ we have $L\circ f_*^{(s-1)} =
    f_*^{(s)} \circ r_{s-1}$ on $E_f^{(s-1)}$.
  \item  [(ii)] For all $s\in\IN_0$ and all $t\in \{0,\ldots,s  \}$ we have
    $L^t \circ f_*^{(s-t)} = f_*^{(s)} \circ r^{(t)}$ on
  $\IZ[E_f^{(s-t)}]$.  We follow here the convention $r^{(t)} = r_{s-1}\circ \cdots \circ
  r_{s-t}$ on   $\IZ[E_f^{(s-t)}]$.
  \end{enumerate}
\end{lemma}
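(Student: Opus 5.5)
Part (i) will be verified on a single basis element $e\in E_f^{(s-1)}$; linearity then gives it on all of $\IZ[E_f^{(s-1)}]$. Part (ii) will follow from (i) by induction on $t$.

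For (i), let $e\in E_f^{(s-1)}$ and write $r_{s-1}(e)=e_1+\cdots+e_m$, where $(e_1,\ldots,e_m)$ is the simple path in $H_f^{(s)}$ refining $(e)$. Its union is the support $e([0,1])$. Put $\epsilon=f^{(s-1)}\circ e=f_*^{(s-1)}(e)\in E_f$. Then $L(\epsilon)$ is the sum of the pairwise distinct edges of $H_f$ traversed by $f\circ\epsilon$, which is a simple path in $H_f$, as shown before (\ref{def:L}). On the other side, $f_*^{(s)}(r_{s-1}(e))=\sum_i f^{(s)}\circ e_i$, with each $f^{(s)}\circ e_i\in E_f$. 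The key step is that $f^{(s)}$ maps the simple path $e_1,\ldots,e_m$ onto the simple path $f\circ\epsilon$ in $H_f$ edge by edge and bijectively.

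To see this, note that $f^{(s)}=f\circ f^{(s-1)}$ and that $f^{(s-1)}\circ e=\epsilon$ is injective on $[0,1]$. Hence $f^{(s)}\circ e=f\circ\epsilon$, and this map traverses $H_f$ without backtracking. The vertices of $H_f^{(s)}$ lying on $e([0,1])$ are exactly the points $e(x)$ with $f(\epsilon(x))\in V_f$, that is, the preimages along $e$ of the vertices visited by $f\circ\epsilon$. So the subarcs $e_i([0,1])$ are mapped homeomorphically by $f^{(s)}$ onto the consecutive edges traversed by $f\circ\epsilon$. Since that path is simple, these edges are pairwise distinct. Thus $\{f^{(s)}\circ e_i\}$ is exactly the set of edges in $L(\epsilon)$, each counted once, and $f_*^{(s)}(r_{s-1}(e))=L(f_*^{(s-1)}(e))$.

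The main care is needed to identify edges of $E_f$ as images rather than as particular parametrizations. Edges are determined by their images, and I would treat $f_*^{(s)}$ in that sense, as the paper's footnote permits.

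For (ii), the case $t=0$ is trivial, since $r^{(0)}$ and $f^{(0)}_*$ are identities. Suppose the statement holds for $t-1$, where $1\le t\le s$. On $\IZ[E_f^{(s-t)}]$ we have
\[
L^t\circ f_*^{(s-t)}=L^{t-1}\circ L\circ f_*^{(s-t)}=L^{t-1}\circ f_*^{(s-t+1)}\circ r_{s-t},
\]
where the second equality is (i) applied with $s-t+1$ in place of $s$. The induction hypothesis, applied at level $s$ with $t-1$ (note $t-1\le s$), gives $L^{t-1}\circ f_*^{(s-t+1)}=f_*^{(s)}\circ r_{s-1}\circ\cdots\circ r_{s-t+1}$. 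Composing with $r_{s-t}$ yields $f_*^{(s)}\circ r^{(t)}$, which is the claim.
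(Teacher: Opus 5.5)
Your proof is correct and follows essentially the paper's route. The paper also reduces (i) to the fact that $f^{(s)}\circ e=f\circ\widetilde e$ is a simple path in $H_f$, traversed edge by edge by the $f_*^{(s)}(e_i)$; it phrases the conclusion as ``both sides are reduced with the same support, hence equal'' instead of your explicit edge matching, and it gets (ii) by an equally routine induction, on $s$ rather than $t$. One small slip: the case $t=0$ is trivial because $L^0$ and $r^{(0)}$ are identities, not because $f_*^{(0)}$ is.
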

\begin{proof}
  The second claim follows from the first claim by a simple induction on $s$.
  
  To prove the first claim, it 
  suffices to verify $L(f_*^{(s-1)}(e)) = f_*^{(s)}(r_{s-1}(e))$ for all
  edges $e\in E_f^{(s-1)}$. We evaluate both sides of the equality.

  First, recall that $f_*^{(s-1)}(e)$ is an
  edge, say  $\widetilde e$,
  of the Hubbard tree $H_f$. Therefore, $L(f_*^{(s-1)}(e))$ is 
  reduced as $L$ maps
  edges to reduced elements. The support of $L(f_*^{(s-1)}(e))$ equals
  the support of $L(\widetilde e)$, which equals 
  $f(\widetilde e([0,1]))=f^{(s)}(e([0,1]))$.

  Second,
  say $r_{s-1}(e) = e_1+\cdots+e_m$ with $e_i\in E_f^{(s)}$.
  As we have seen
  during the construction of $L$ at the beginning of this section,
  the composition $f\circ \widetilde e
  = f^{(s)}\circ e$ is a
  simple path in the tree $H_f$. Traversing this path is the same as
  traversing the $f^{(s)}_*(e_i)$ with $i\in \{1,\ldots,m\}$ in the
  appropriate
  order (at a different
  speed). So $f^{(s)}_*(e_i)$ are pairwise distinct edges of $H_f$ as
  $i\in\{1,\ldots,m\}$. Thus $f^{(s)}_*(r_{s-1}(e))$ is reduced. 
  Finally, the support of $f_*^{(s)}(r_{s-1}(e))$ also equals
  $f^{(s)}(e([0,1]))$.

  Because $L(f_*^{(s-1)}(e))$ and $f_*^{(s)}(r_{s-1}(e))$ are reduced and
  have the same support they must be equal.   
\end{proof}

Let $s\in \IN_0$. 
A path $p$ in $H_f^{(s)}$ naturally defines an element $p_*\in
\IZ[E_f^{(s)}]_{\ge 0}$: the coefficient of $p_*$ at the edge $e\in
E_f^{(s)}$ is the number of times $p$ traverses $e$. We say that
$p_*$ represents $p$. Note that we do not account for orientation.

We define a group homomorphism $\ell\colon
\IZ[E_f^{(s)}]\rightarrow\IZ$, called \textit{length}, by imposing
$\ell(e)=1$ for all $e\in E_f^{(s)}$. This homomorphism takes
non-negative values on $\IZ[E_f^{(s)}]_{\ge 0}$. The \textit{length}
of the path $p$ is $\ell(p_*)\ge 0$. If $e\in E_f^{(s)}$, then
$f^{(s)}_*(e)$ is an edge of the Hubbard tree. So
$\ell(f^{(s)}_*(e))=1=\ell(e)$. By linearity, we find
\begin{equation}
  \label{eq:ellisometryHfs}
  \ell(f_*^{(s)}(\gamma)) =\ell(\gamma)\quad\text{for all}\quad \gamma\in
  \IZ[H_f^{(s)}].
\end{equation}

\begin{lemma}
  \label{lem:connectionlemma}
  We keep the notation from above.
  Let $s\in \IN_0, v\in V_f^{(s)},$ and $w\in V_f$.
  For each $i\in \{0,\ldots,s\}$ there exists
  $\gamma_i \in \IZ[E_f^{(i)}]_{\ge 0}$ with 
  $\ell(\gamma_i)\le \mathrm{diam}(H_f)$ such that the following
  holds. For all $j\in \{0,\ldots,s\}$, the
  element $\gamma_s + r(\gamma_{s-1}) +
  r^{(2)}(\gamma_{s-2})+\cdots + r^{(j)}(\gamma_{s-j})$ represents a path
  in $H_f^{(s)}$. Moreover, $v$ lies in the
  support of $\gamma_s$ and $w$ lies in the support of
  $\gamma_s + r(\gamma_{s-1}) +
  r^{(2)}(\gamma_{s-2})+\cdots + r^{(s)}(\gamma_{0})$.
\end{lemma}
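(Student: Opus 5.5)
We construct the $\gamma_i$ by induction on $s$. The idea is to peel off one level at a time. We map $v$ forward by $f$ to a vertex of $V_f^{(s-1)}$, connect that image inductively, and then pull the path back by one level. Since lifting a path of $H_f^{(s-1)}$ is awkward, we instead pull back at the level of the tree $H_f^{(s)}$ using refinement. Concretely, the path built at level $s-1$ only transports us among vertices that lie in $V_f^{(s-1)}\subset V_f^{(s)}$. So we reorganize the induction as follows: at each level $i$ we only need a short path inside $H_f^{(i)}$ from the current vertex to some vertex of $V_f^{(i-1)}$.

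\textbf{Base case $s=0$.} Here $v,w\in V_f$ and $H_f$ is a tree. Let $\gamma_0$ represent the simple path from $v$ to $w$ in $H_f$. Then $\ell(\gamma_0)\le\mathrm{diam}(H_f)$, and all claims hold trivially.

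\textbf{Inductive step.} Let $s\ge1$ and $v\in V_f^{(s)}$. I want a vertex $v'\in V_f^{(s-1)}$ and a simple path in $H_f^{(s)}$ from $v$ to $v'$ of length at most $\mathrm{diam}(H_f)$.

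\emph{Choice of $v'$.} Consider $f^{(s)}(v)\in V_f$, and pick any $u\in V_f$ with $f(u)\in V_f$, for instance $u$ a critical point, which lies in $\pco_{\ge0}(f)\subset V_f$ and maps into $V_f$. Take the simple path in $H_f$ from $f^{(s)}(v)$ to $u$; it has length at most $\mathrm{diam}(H_f)$. Lift it along $f^{(s)}$, starting at $v$, edge by edge. This is possible because each edge $e\in E_f$ has $d^s$ lifts in $E_f^{(s)}$ and the lifts at a vertex cover the edges there: $f^{(s)}$ is a branched cover and $H_f^{(s)}$ is the full preimage. The lift ends at some $v'\in(f^{(s)})^{-1}(u)$. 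Since $f^{(s-1)}(v')$ lies in $f^{-1}(V_f)$ and not necessarily in $V_f$, the choice of $u$ must be adjusted: choose $u\in f(V_f)\subset V_f$ and lift so that the endpoint satisfies $f^{(s-1)}(v')\in V_f$. Better still, take the target in $H_f$ to be a vertex $u'$ with $u'\in V_f$, and observe that $V_f^{(s-1)}=(f^{(s-1)})^{-1}(V_f)$ contains $(f^{(s)})^{-1}(f(V_f))$ only partially. Hence the cleanest route is to take $u\in V_f$ arbitrary and instead apply the induction at level $s-1$ to $f(v)$ rather than to $v'$.

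\emph{Revised plan.} Apply the inductive hypothesis to $f(v)\in V_f^{(s-1)}$ and $w$, which gives $\gamma_0,\dots,\gamma_{s-1}$. It then remains to find $\gamma_s$, a path in $H_f^{(s)}$ of length at most $\mathrm{diam}(H_f)$ from $v$ to a vertex lying in the support of $r(\gamma_{s-1})$, together with the concatenation claims. For the representation claims I would use that $r$ maps a path to the refined path with the same support (equation (\ref{eq:rsiterated})), so $r^{(j)}$ of a path is a path.

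\textbf{Main obstacle.} The hard step is relating the inductive path for $f(v)$ at level $s-1$ to a path starting near $v$ at level $s$. The inductive path lives in $H_f^{(s-1)}$ and starts at $f(v)$, not at $v$. Via $r$ it sits in $H_f^{(s)}\supset H_f^{(s-1)}$, but it still starts at $f(v)$. So the intended ordering must instead be: $\gamma_s$ connects $v$ to some $v_1\in V_f^{(s-1)}$; $\gamma_{s-1}$ is the inductive first path for $v_1$ at level $s-1$; and so on. The crux is therefore to show that every $v\in V_f^{(s)}$ is within distance $\mathrm{diam}(H_f)$ of $V_f^{(s-1)}$ in $H_f^{(s)}$. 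I would prove this by taking the component of $(f^{(s)})^{-1}(H_f)$ structure near $v$: lift the tree $H_f$ through $v$ via the branched cover, which is possible since $H_f$ contains the critical values. The lifted copy of $H_f$ is a subtree of diameter at most $\mathrm{diam}(H_f)$ containing $v$ and a preimage of $\pco_{\ge1}(f)$, and by $f(V_f)\subset V_f$ such a preimage lies in $V_f^{(s-1)}$. Concatenation then follows because consecutive pieces share endpoints.
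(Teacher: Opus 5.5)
\textbf{Gap in the crux step.} Your final plan has the same structure as the paper's proof: cut off a path of length at most $\mathrm{diam}(H_f)$ from $v$ to a vertex of $V_f^{(s-1)}$, apply the induction at level $s-1$, refine with $r$, and concatenate. The gap is in how you find that vertex. You claim that a lift $\sigma(u)$ on the lifted copy $\sigma(H_f)$, with $u\in\pco_{\ge 1}(f)$, lies in $V_f^{(s-1)}$ ``by $f(V_f)\subset V_f$''. That inclusion only gives $V_f\subset f^{-1}(V_f)$. What you need is $f^{(s-1)}(\sigma(u))\in V_f$, but you only know $f^{(s-1)}(\sigma(u))\in f^{-1}(u)$, and $f^{-1}(u)$ is generally not contained in $V_f$. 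This is the same obstruction you noticed yourself earlier in the proposal. For example, take $f=T^2-1$, $s=1$, $v=1$. The lifted copy of $H_f=[-1,0]$ through $v$ is the edge $[0,1]$, and the lift of $0\in\pco_{\ge1}(f)$ is $1\notin V_f=\{-1,0\}$. So, as written, the claim that every $v\in V_f^{(s)}$ lies within distance $\mathrm{diam}(H_f)$ of $V_f^{(s-1)}$ is not proved.

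\textbf{The missing idea.} The vertex to aim for is a critical point of $f^{(s)}$ (or $w$ itself). If ${f^{(s)}}'(z)=0$, the chain rule makes some $f^{(j)}(z)$ with $0\le j\le s-1$ a critical point of $f$. Hence $f^{(s-1)}(z)\in\pco_{\ge0}(f)\subset V_f$, i.e.\ $z\in V_f^{(s-1)}$.

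The paper finds such a $z$ as follows. Let $e_1,\dots,e_m$ be the simple path from $v$ to $w$ in $H_f^{(s)}$, and project it to the walk $f_*^{(s)}(e_1),\dots,f_*^{(s)}(e_m)$ in $H_f$. A walk in a tree without immediate backtracking is a simple path. So either $m\le\mathrm{diam}(H_f)$, and one takes $\gamma_s=e_1+\cdots+e_m$ and all other $\gamma_i=0$. Or $f_*^{(s)}(e_i)=f_*^{(s)}(e_{i+1})$ for some $i\le\mathrm{diam}(H_f)$, and then $f^{(s)}$ is not locally injective at the common vertex $z$ of $e_i$ and $e_{i+1}$.

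Your lifted-copy route can be rescued the same way. Suppose $\sigma(H_f)$ contained no critical point of $f^{(s)}$. At each vertex $y$ of $\sigma(H_f)$, the edges of $H_f^{(s)}$ at $y$ would then correspond bijectively to the edges of $H_f$ at $f^{(s)}(y)$, and all of these are lifted at $y$. So $\sigma(H_f)$ would contain every edge of $H_f^{(s)}$ incident to its vertices, and by connectedness would equal all of $H_f^{(s)}$. This contradicts $\#E_f^{(s)}=d^s\#E_f>\#E_f$ for $s\ge1$ and $E_f\neq\emptyset$. In the example above, the critical point found this way is $0=\sigma(-1)$.

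\textbf{Smaller points.} Justify the existence of $\sigma$ by the local model $z\mapsto z^k$ at critical points together with $H_f$ being a tree, not by ``$H_f$ contains the critical values''. Also carry the statement ``the path starts at $v$'' through the induction, so that the concatenation step is legitimate.
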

\begin{proof}
  Recall that $V_f \subset
  V_f^{(s)}$ by (\ref{eq:Vfchain}). So $v,w\in V_f^{(s)}$.

  The proof is by induction on $s$.
  
  Suppose $s=0$. We fix the unique
  simple path $p$ in the Hubbard tree $H_f$  connecting $v$ to
  $w$. Therefore, 
  $\ell(p_*)  \le \mathrm{diam}(H_f)$ and we take $\gamma_0=p_*$.

  Assume $s\ge 1$. We again fix $p$ to be the unique simple path in
  $H_f^{(s)}$ connecting $v$ to $w$. So $p_* = e_1+\cdots + e_m$ where
  $e_1,\ldots,e_m$ are pairwise distinct
  edges of $E_f^{(s)}$ traversed by $p$ in given
  order. In particular, $v$ lies in the support of $e_1$. 
  
  Suppose $m\le \mathrm{diam}(H_f)$. Then we may take $\gamma_s = p_*$ and
  $\gamma_{s-1}=0, \ldots,\gamma_0=0$. The lemma follows.
  
  So let us assume
  $m>\mathrm{diam}(H_f)$. Each of $f_*^{(s)}(e_1), \ldots,
  f_*^{(s)}(e_m)$ is an edge of the Hubbard tree. Following these
  edges
  in order
  produces a path in $H_f$ whose length is greater than the
  diameter of $H_f$. As $H_f$ is a tree, it contains no cycles.  So
  there must be backtracking, \textit{i.e.}, there is an $i\le \mathrm{diam}(H_f)$
  such that $f_*^{(s)}(e_i)=f_*^{(s)}(e_{i+1})$.

  Let  $z\in V_f^{(s)}$
  be the endpoint of $e_i$
  below which backtracking occurs in $H_f$. Then $f^{(s)}$ is not
  locally injective near $z$. So ${f^{(s)}}'(z)=0$. This implies
  $f'(z)f'(f(z))\cdots f'(f^{(s-1)}(z))=0$ by the chain rule.
  Among $z,f(z),\ldots,f^{(s-1)}(z)$ is a critical point of $f$.
  Therefore, $f^{(s-1)}(z) \in \pco_{\ge 0}(f) \subset V_f$. In particular,
  $z\in V_f^{(s-1)}$ by (\ref{def:Vfs}).  
  We set $\gamma_s=e_1+\cdots +e_i \in\IZ[E_f^{(s)}]_{\ge 0}$. 
  It represents
  the truncated path $(e_1,\ldots,e_i)$ that begins at $v$ and ends at
  $z$.
  Note that $\ell(\gamma_s) = i \le \mathrm{diam}(H_f)$.
  
  We apply induction to $z\in V_f^{(s-1)}$ and $w\in V_f$. So there  exist
  $\gamma_{s-1},\ldots,\gamma_0$ all of length at most 
  $\mathrm{diam}(H_f)$ such that 
  $\gamma_{s-1}+r(\gamma_{s-2}) + \cdots
  +r^{(j)}(\gamma_{s-1-j})\in\IZ[E_f^{(s-1)}]$
  represents a path in $H_f^{(s-1)}$ for all $j\in \{0,\ldots,s-1\}$.
  The support of $\gamma_{s-1}$ contains $z$ and for $j=s-1$ the
  support of the sum contains $w$. 

  We refine and apply $r_{s-1}$ to find that
  $r(\gamma_{s-1})+r^{(2)}(\gamma_{s-2}) + \cdots
  +r^{(s)}(\gamma_0)\in\IZ[E_f^{(s)}]$ represents a path in
  $H_f^{(s)}$ connecting $z$ and $w$. The support does not change on
  applying $r_{s-1}$. 
  Finally, adding  $\gamma_s$ concatenates the path from $v$ to $z$
  with the path from $z$ to $w$. This completes the proof.
\end{proof}

\begin{proof}[Proof of Proposition~\ref{prop:treegrowthestimate}]
  If $\cV$ is empty, then the convex hull is a tree with no zero edges
  and our claim is
  trivial. So assume $\cV\not=\emptyset$.
  Similarly, we may also assume $E_f\not=\emptyset$. 

  Let $t\in \{1,\ldots,s\}$. As a
  set, the tree $H_f^{(s-t)}$ is contained in $H_f^{(s)}$. Recall that
  an edge of $H_f^{(s-t)}$ need not be an edge of $H_f^{(s)}$.
  However, the image $r^{(t)}(E_f^{(s-t)})$ can be identified with the
  set of edges of a subtree $\cT$ of $H_f^{(s)}$.
  Each vertex in $V_f^{(s-t)}$ is a vertex of $\cT$.

  Recall that
  $\ell(f_*^{(s)}\circ r^{(t)}(e)) = \ell(r^{(t)}(e))$ by
  (\ref{eq:ellisometryHfs}). So the
  number of edges of $\cT$ equals
  \begin{equation*}
    \sum_{e\in E_f^{(s-t)}} \ell(r^{(t)}(e)) = \sum_{e\in E_f^{(s-t)}}
    \ell(f_*^{(s)}\circ r^{(t)}(e))
    =\sum_{e\in E_f^{(s-t)}}
    \ell(L^{t}\circ f_*^{(s-t)}(e))
  \end{equation*}
  where we used Lemma~\ref{lem:Lrcommutativediag}(ii).
  Recall 
  that $E_f^{(s-t)}$ has $d^{s-t} \#E_f$ edges. So the number of edges
  of $\cT$ is at most
  \begin{equation}
    \label{eq:nredgesofTpre}
    \max_{e\in E_f^{(s-t)}}\ell(L^{t}\circ f_*^{(s-t)}(e))d^{s-t} \#E_f.
  \end{equation}

  Next we will bound $\ell(L^{t}\circ f_*^{(s-t)}(e))$ from above as $t$ varies.
  We fix  a norm $\|\cdot\|$ on the
  endomorphisms of $\IZ[E_f]\otimes\IC$. 
  Consider the Jordan normal form of $L$. By definition, the largest
  absolute value of an eigenvalue is the exponential core entropy
  $\rho(f)\ge 1$. Moreover, 
  the largest Jordan block of an eigenvalue of maximal modulus is
  $\lambda \ge 1$, by hypothesis. 
  By basic linear algebra, we have 
  $\| L^t\| = O_f(t^{\lambda-1}\rho(f)^t)$ for all  $t\in\IN$.
  
  The image  $f_*^{(s-t)}(e)$ is an edge of $H_f$ for all $e\in
  E_f^{(s-t)}$.  Thus $\| f_*^{(s-t)}(e)\| = O_f(1)$, with implicit
  constant independent of $s$ and $t$.
  Then length $\ell(\cdot)$ is bounded from above linearly in terms of
  the norm $\|\cdot\|$. 
  Therefore, $\ell(L^{t}\circ f_*^{(s-t)}(e)) = O_f(t^{\lambda-1}\rho(f)^t)$.
  We recall (\ref{eq:nredgesofTpre}) and
  conclude that the number of edges of  $\cT$ is
  \begin{equation}
    \label{eq:nredgesofT}
    O_f( t^{\lambda-1} \rho(f)^{t} d^{s-t}). 
  \end{equation}

  The construction up till now did not involve the set of vertices
  $\cV$. In next step we successively connect each element in $\cV$ to
  a vertex of $\cT\subset H_f^{(s)}$ using edges from $H_f^{(s)}$.
  This will lead to a
  subtree of $H_f^{(s)}$ containing $\cT$ and $\cV$. This new subtree
  must contain the  convex hull of $\cV$.   

  To fill in the details
  we fix a base point $w\in V_f$ from $\pco_{\ge 0}(f)$. In
  particular, $w$ is a vertex of $\cT$.

  Let $v\in \cV$.
  
  By Lemma~\ref{lem:connectionlemma}, for $j=s$, there is $\gamma_s +
  r(\gamma_{s-1})+\cdots + r^{(s)}(\gamma_0)$ that represents a path $p$
  in $H_f^{(s)}$ starting at  $v$ and ending at $w$. We have 
  $\gamma_i \in \IZ[E_f^{(i)}]_{\ge 0}$ and $\ell(\gamma_i)\le
  \mathrm{diam}(H_f)$ for all $i\in \{0,\ldots,s\}$.
  The support of $\gamma_s$ contains $v$. 
  The partial sum $q_* = \gamma_s + r(\gamma_{s-1})+\cdots +
  r^{(t)}(\gamma_{s-t})$ represents
  the path $p$ truncated.
  The truncated path starts at $v$ and ends at a vertex of
   $V_f^{(s-t)}$, hence at a vertex of $\cT$.
  We insert $q_*$ into  $f_*^{(s)}$ and
  apply
  Lemma~\ref{lem:Lrcommutativediag}(ii) to find
  \begin{equation*}
    f_*^{(s)}(q_*) = \sum_{i=0}^t f_*^{(s)}(r^{(i)}(\gamma_{s-i}))=  \sum_{i=0}^{t} L^{i} \circ f_*^{(s-i)}(\gamma_{s-i}).    
  \end{equation*}
  We take  the length and use (\ref{eq:ellisometryHfs}) to conclude
  \begin{equation}
    \label{eq:ellpstareq}
    \ell(q_*) = \ell(f_*^{(s)}(q_*)) =  \sum_{i=0}^{t}
      \ell\left(L^i \circ f_*^{(s-i)}(\gamma_{s-i})\right).
  \end{equation}

  Now $\ell(f_*^{(s-i)}(\gamma_{s-i})) = \ell (\gamma_{s-i}) \le
  \mathrm{diag}(H_f) = O_f(1)$.
  So $\|f_*^{(s-i)}(\gamma_{s-i})\| = O_f(1)$. 
  As before, some basic linear algebra yields
  $\ell\left(L^i \circ f_*^{(s-i)}(\gamma_{s-i})\right) =
  O_f(\| L^i \circ f_*^{(s-i)}(\gamma_{s-i})\|)=
  O_f(\max\{1,i\}^{\lambda-1}\rho(f)^i)$.
  We  sum over $i\in \{0,\ldots,t\}$ and use (\ref{eq:ellpstareq})
  to get
  \begin{equation}
    \label{eq:connecttoT}
    \ell(q_*) = O_f(t^{\lambda} \rho(f)^{t}). 
  \end{equation}
  
  Recall that $q_*$  represents a path in
  $H_f^{(s)}$ that connects $v\in\cV$  to a vertex of the
  subtree $\cT\subset H_f^{(s)}$.

  To  attach all points
  in $\cV$ to $\cT$
  we repeat the procedure above at most $\#\cV$ times. At each iteration we
  adjoin at most
  (\ref{eq:connecttoT}) edges. 
  Not forgetting that the  number of edges of $\cT$ is bounded by  (\ref{eq:nredgesofT}),
  it follows that $\cV$ lies in a subtree of
  $H_f^{(s)}$ with at most
  \begin{equation*}
    O_f \left(t^{\lambda-1} \rho(f)^{t} d^{s-t}+ t^{\lambda} \rho(f)^t
      \#\cV \right)
  \end{equation*}
  edges.
\end{proof}

\subsection{Non-maximal Core Entropy}
\label{sec:nonmaxentropy}

Let $f\in \IC[T]$ be of degree $d \ge 2$ and postcritically finite.
Recall that $\rho(f)$ is the exponential core entropy of $f$, see
Definition~\ref{def:entropy}.

Lemma~\ref{lem:entropybound} implies  $ \rho(f) \le d$.
Our goal is to improve this upper bound to a strict inequality
in the hyperbolic case. 

For an integer $d\ge 2$, we let $\Psi_d$ denote the degree $d$ monic
\textit{Chebyshev polynomial}, normalized as in Problem
7-c~\cite{Milnor} and Section B.1~\cite{Milnor:pasting}.
For example $\Psi_2 = T^2-2$ and $\Psi_3=T^3-3T$. We have the functional
equation
\begin{equation}
  \label{eq:chebynorm}
  \Psi_d (2\cos \theta) = 2\cos(d \theta) 
\end{equation}
for all $\theta\in\IC$. 
Moreover,  the Julia set of $\pm \Psi_d$ equals $[-2,2]$, see
Problem 7-c~\cite{Milnor}.
This exercise also shows that both $\pm 2$ are critical values of
$\Psi_d$ for $d\ge 3$. Certainly $-2$ is a critical value for
$\Psi_2$. So $\Psi_d$ is not hyperbolic by Lemma~\ref{lem:hyperboliccharacterization}.
The same argument shows that $-\Psi_d$ is not hyperbolic.

Two polynomials $f,g\in\IC[T]$ are
called \textit{linearly conjugate}
if there exist $a\in\IC^\times$ and $b\in\IC$ with $f = (g(aT+b)-b)/a$.
Clearly, two linearly conjugate polynomials are both hyperbolic if one
of them is. 

\begin{propo}
  \label{prop:rhoflessthand}
  Let $f\in\IC[T]$ be of degree $d \ge 2$ and postcritically finite.
  Suppose $f$ is not linearly conjugate to $\pm \Psi_d$. This is the
  case, for example, if
  $f$ is hyperbolic. Then $1\le \rho(f)<d$.
\end{propo}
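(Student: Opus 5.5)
Since $1\le\rho(f)\le d$ by Lemma~\ref{lem:entropybound}, it remains to show that $\rho(f)=d$ forces $f$ to be linearly conjugate to $\pm\Psi_d$. The remark about hyperbolicity then follows, because $\pm\Psi_d$ is not hyperbolic and hyperbolicity is invariant under linear conjugation. So assume $\rho(f)=d$; in particular $E_f\neq\emptyset$. Let $A\in\mat_m(\IZ)$ be the $0/1$ matrix representing $L$, as in the proof of Lemma~\ref{lem:entropybound}. Every row of $A$ has sum at most $d$, since row $i$ counts the edges $e_j$ whose image $f\circ e_j$ traverses $e_i$.

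\emph{Step 1: every row sum equals $d$.} By Perron--Frobenius for nonnegative matrices, $d=\rho(A)$ is an eigenvalue with a nonnegative eigenvector $x\neq 0$, and $\rho(A)\le\|A^s\|_\infty^{1/s}$. Let $i$ be a coordinate at which $x_i$ is maximal. Then $dx_i=(Ax)_i\le(\text{row sum}_i)\,x_i$, so row $i$ has sum $d$ and each $j$ with $a_{ij}=1$ also attains the maximum. This propagates along $A$ to the set $S$ of maximal coordinates, which is $A$-invariant. I would rather argue on an irreducible component carrying spectral radius $d$, and use the Markov structure to conclude that some edge $e$ has $d$ distinct preimage edges in $H_f$. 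The aim is a subtree $H'\subset H_f$ with $f^{-1}(H')\cap H_f\supseteq$ a $d$-fold cover, so that $f^{-1}(H')\subset H_f$ in the sense that every lift of every edge of $H'$ lies in $H_f$. Taking $H'$ to be the union of edges in $S$, full row sums mean that all $d$ lifts of each edge of $H'$ are edges of $H_f$ that map into $H'$. Hence $f^{-1}(H')\subset H'':=$ the union of edges in $S$, which is contained in $H'$ as sets (up to the indexing convention of $A$). Therefore the compact set $H'$ is completely invariant, $f^{-1}(H')=H'$.

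\emph{Step 2: from complete invariance to Chebyshev.} A nonempty compact, completely invariant set with more than two points contains $J_f$, because the Julia set is the minimal closed, completely invariant set with at least three points. So $J_f\subset H'\subset K_f$ is contained in a finite tree and has empty interior. Following the idea of Alsed\`a--Fagella, I would then show that $K_f$ has no interior. Any bounded Fatou component $U$ meeting $H_f$ would give edges inside $\overline U$, and under $\varphi_U$ these correspond to radial segments on which $f$ acts by $z\mapsto z^{d(U)}$. The number of preimages of such a radial edge inside the orbit of $U$ is governed by the degrees $d(U)$, and their product over a cycle is less than $d^{\text{period}}$ unless all critical points lie in that cycle. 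I expect to show that the row-sum/$d$-cover condition fails there. Once $K_f=J_f\subset H'$ is a tree, the complement of $J_f$ is the basin of infinity, and $J_f$ is a dendrite contained in a finite tree. Complete invariance of the finite tree $H'=J_f$ then makes $f$ a degree-$d$ branched cover of an arc or star onto itself. Counting endpoints shows that $J_f$ has no branch points: a branch point would have $d$ preimages, each a branch point, which is impossible in a finite tree with finitely many vertices unless $d=1$. So $J_f$ is an arc. By the classical theorem that a polynomial whose Julia set is a line segment is conjugate to $\pm\Psi_d$ (the Green's function of a segment is explicit, and the B\"ottcher coordinate identifies $f$ with $T^d$ through the Joukowski map), we conclude.

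The main obstacle is Step 1 together with the Fatou-component exclusion in Step 2. There one must convert ``spectral radius exactly $d$'' into a genuine complete invariance statement, and handle edges lying inside bounded Fatou components, where the counting uses the local degrees $d(U)$. I would try first to prove directly that if $H_f$ meets a bounded Fatou component, then the edges in its closure span an invariant block of $A$ whose rows have sum strictly less than $d$ after iteration. The argument is that lifts of an internal radial edge that land in other components are not in $H_f$.
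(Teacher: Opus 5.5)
Your Step~1 is correct, and it takes a different route from the paper. The paper argues contrapositively. If $J_f\not\subset H_f$, a Montel argument shows that eventually fewer than $d^s$ preimages under $f^{(s)}$ of a fixed interior point $z_{e'}$ of each edge lie in $H_f$ (Lemma~\ref{lem:JfsubsetHf}). These counts bound the row sums of $L^s$, so $\|L^s\|_\infty<d^s$ (Lemma~\ref{lem:JfnotsubsetHfrhoflessd}).

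You instead take a nonnegative eigenvector for $\rho(A)=d$ and the set $S$ of its maximal coordinates. For $i\in S$, row $i$ has sum $d$, and $a_{ij}=1$ forces $j\in S$. The sum of row $i$ is exactly the number of the $d$ lifts of $e_i$ contained in $H_f$. Indeed, each $e_j$ with $a_{ij}=1$ contains exactly one lift, and the interior of a lift contains no vertex because $f(V_f)\subset V_f$. Hence the union $H'$ of the edges indexed by $S$ satisfies $f^{-1}(H')\subset H'$, and so $J_f\subset H'$. This avoids iterating $L$ and gives the slightly stronger $J_f\subset H'$.

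Note that you only get backward invariance, not $f^{-1}(H')=H'$. Equality would also need each column $j\in S$ to have $a_{ij}=1$ only for $i\in S$. Backward invariance of a closed set with at least three points is all the Montel argument needs, so this does not hurt.

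Step~2 has two gaps.

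\textbf{Excluding bounded Fatou components.} You leave this as an unexecuted plan via the local degrees $d(U)$, but it follows at once from what you already have. The set $\IC\ssm H'$ is connected, unbounded and disjoint from $J_f$, so it lies in the basin of infinity. Hence $K_f\subset H'\subset H_f\subset K_f$, and $K_f=J_f=H'$ is a finite tree. The paper says the same thing by noting that $\partial U\subset J_f\subset H_f$ cannot enclose a bounded Fatou component $U$.

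\textbf{The endgame does not close.} From ``$J_f$ is an arc'' you apply the theorem for a straight segment, but nothing shows the arc is straight. The B\"ottcher--Joukowski construction only gives a conformal map $\IC\ssm J_f\to\IC\ssm[-2,2]$ conjugating $f$ to $\Psi_d$. Extending it to an affine map across $J_f$ is the real content, and arcs need not be conformally removable.

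Your branch-point count is also loose, since a critical value has fewer than $d$ distinct preimages. The correct reason is that preimages of branch points are branch points. A branch point would then have a finite backward orbit, hence be exceptional and lie in the Fatou set, not in $J_f$.

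The fix is to stop once $K_f$ is known to be a finite tree and invoke Milnor's Lemma~B.3, Theorem~\ref{thm:chebpropn}, exactly as the paper does.
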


This proposition is based on two results. The first one is the
following theorem of Milnor~\cite[Lemma B.3]{Milnor:pasting}.

\begin{thm}[Milnor]
  \label{thm:chebpropn}
  Let $f\in\IC[T]$ be of degree at least 2. Suppose that
  the filled Julia set of $f$ is a finite tree. Then $f$ is linearly
  conjugate to $\pm\Psi_d$ for an integer $d\ge 2$.
\end{thm}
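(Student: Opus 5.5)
The plan is to reduce to the case where $K_f$ is a single arc and then identify that arc with a straight segment. This is done by gluing the exterior Riemann map to its reflection across the unit circle. Once $K_f=[-2,2]$ up to an affine change of coordinates, the semiconjugacy by the Joukowski map $w\mapsto w+1/w$ forces $f=\pm\Psi_d$.

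\emph{Step 1: $K_f$ is an arc.} A finite tree in $\IC$ has empty interior, so $K_f=J_f$. Call $z\in K_f$ a branching point if $K_f\ssm\{z\}$ has at least three components. Near a non-critical point $f$ is a local homeomorphism mapping $K_f$ onto $K_f$, and near a critical point of local degree $k$ the valence gets multiplied by $k$. Hence every preimage of a branching point is again a branching point. There are only finitely many branching points, so each of them has finite backward orbit. Such a point is exceptional, and for a polynomial an exceptional point in $\IC$ is a superattracting center, which lies in the Fatou set. This contradicts the point lying in $J_f$, so there are no branching points and $K_f$ is an arc with two endpoints $a,b$.

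\emph{Step 2: the involution on the circle.} Let $\psi\colon\{|w|>1\}\to\IC\ssm K_f$ be the Böttcher map, normalized so that $\psi(w^d)=f(\psi(w))$ up to the affine normalization of the leading coefficient; replacing $f$ by a linear conjugate we assume $f$ monic. Since $K_f$ is locally connected, $\psi$ extends continuously to $\{|w|\ge1\}$. Every interior point of the arc has exactly two accesses, so there is a fixed-point-free away from two points, orientation-reversing involutive homeomorphism $\sigma$ of the unit circle with $\psi\circ\sigma=\psi$ on the circle. Both sides of the arc are mapped by $f$ to sides of the arc, so $\sigma(w^d)=\sigma(w)^d$. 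An orientation-reversing homeomorphism commuting with $w\mapsto w^d$ is determined by its values on the dense set of iterated preimages of a fixed point. From this I expect to show $\sigma(w)=\zeta\bar w$ for some $|\zeta|=1$. This rigidity step is the one I expect to be the main obstacle; I would prove it by matching $\sigma$ with $w\mapsto\zeta\bar w$ on the $d-1$ fixed points of $w^d$ and then propagating along preimages using continuity. After a rotation of the $w$-coordinate, which commutes with $w^d$ up to replacing $f$ by $-f$-type conjugates, I may take $\sigma(w)=\bar w$.

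\emph{Step 3: gluing and conclusion.} Define $\widehat\psi(w)=\psi(w)$ for $|w|\ge1$ and $\widehat\psi(w)=\psi(1/w)$ for $0<|w|<1$. This is continuous on $\IC^\times$ because on the circle $1/w=\bar w=\sigma(w)$ and $\psi\circ\sigma=\psi$. It is holomorphic off the unit circle. An analytic curve is removable for continuous functions by Morera's theorem, so $\widehat\psi$ is holomorphic on $\IC^\times$ with simple poles at $0$ and $\infty$ and $\widehat\psi(1/w)=\widehat\psi(w)$. Hence $\widehat\psi(w)=\alpha(w+1/w)+\beta$, and after an affine conjugation $K_f=[-2,2]$ with $\psi(w)=w+1/w$. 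The relation $f(\psi(w))=\psi(g(w))$ holds with $g$ a proper self-map of $\{|w|>1\}$ of degree $d$ whose only pole is at $\infty$, so $g(w)=cw^d$ with $|c|=1$. Then $f(w+1/w)=cw^d+c^{-1}w^{-d}$, and invariance under $w\mapsto1/w$ forces $c=c^{-1}$, that is $c=\pm1$. Comparing with (\ref{eq:chebynorm}) gives $f=\pm\Psi_d$ up to the linear conjugations made along the way.
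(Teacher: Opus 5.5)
Your proof is correct, and it takes a genuinely different route from the paper, which gives no argument of its own. The paper imports the statement as Milnor's Lemma~B.3. After the proof of Proposition~\ref{prop:rhoflessthand} it even notes that in the hyperbolic case the citation can be bypassed, because $J_f=K_f$ would put a critical point into $J_f$. So the theorem is really needed only for non-hyperbolic $f$: in Proposition~\ref{prop:rhoflessthand} in general, and in Proposition~\ref{prop:cappreper}, which feeds Theorem~\ref{thm:mainpreper}. Your argument makes this input self-contained with elementary tools.
\begin{itemize}
\item The valence identity $v(z)=k\,v(f(z))$ shows that the finite set of branch points is backward invariant, hence exceptional, hence empty. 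A one-point tree is excluded since $J_f$ is infinite.
\item The folding involution $\sigma$ of the circle, its rigidity, the reflection $w\mapsto 1/w$ and the removability of the circle then give $\psi(w)=\eta(w+1/w)+\beta$.
\item This shows in one stroke that the arc is a straight segment and that the Joukowski map semiconjugates $\pm w^d$ to $f$.
\end{itemize}
Another standard route after your Step~1 is orbifold-theoretic. Every critical point is an interior point of local degree $2$ mapping to an endpoint, and endpoints map to endpoints. So the orbifold of $f$ is Euclidean of signature $(2,2,\infty)$, and one appeals to the classification of such polynomials. Your reflection argument avoids that machinery.

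Two steps deserve a line more than you give them.

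\emph{The relation $\sigma(w^d)=\sigma(w)^d$.} On the circle, $\psi(\sigma(w)^d)=f(\psi(\sigma(w)))=f(\psi(w))=\psi(w^d)$, so $\sigma(w)^d\in\{w^d,\sigma(w^d)\}$. Off the $2d$ points where $w^d$ is fixed by $\sigma$, these alternatives are exclusive. So on each complementary arc one of them holds identically. The first would make $\sigma$ coincide with a rotation $w\mapsto\omega w$ on an arc, contradicting orientation reversal.

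\emph{Rigidity.} The cleanest proof is by lifting. The map $\tau(w)=\overline{\sigma(w)}$ is an orientation-preserving homeomorphism that also commutes with $w^d$. Lift it to an increasing $T\colon\IR\to\IR$ with $T(x+1)=T(x)+1$ and $T(dx)=dT(x)+k$ for a constant $k\in\IZ$. Then $S=T+k/(d-1)$ satisfies $S(dx)=dS(x)$ and $S(x+1)=S(x)+1$. Hence $S$ fixes every $m/d^n$ and is the identity, so $\sigma(w)=\zeta\bar w$ with $\zeta^{d-1}=1$. Your propagation along the $(d-1)d^n$-th roots of unity also works: a cyclic-order-reversing bijection of these is a reflection, and it is pinned down by its restriction to the $(d-1)$-th roots of unity.

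\emph{Normalization.} Keep track of it through Step~3. After replacing $\psi(w)$ by $\psi(\eta w)$ with $\eta^2=\zeta$, the dynamics becomes $w\mapsto\eta^{d-1}w^d$ with $\eta^{d-1}=\pm1$; this is exactly where the sign comes from. Also $\psi(\eta w)=\eta w+O(1)$, so the final conjugation is $z\mapsto\eta z+\beta$ rather than a translation.
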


The second result is a statement closely related to
Alsed\`a--Fagella's Theorem~D~\cite{AlsedaFagella} which characterizes
Misiurewicz polynomials of maximal core entropy. The following lemma
is a close adaptation of their proof to our setting.

\begin{lemma}
  \label{lem:JfsubsetHf}
  Let $f\in \IC[T]$ be of degree $d \ge 2$ and postcritically finite.
  Suppose $J_f\not\subset H_f$. Let $z\in H_f$. There exists
  $s_0=s_0(f,z)\ge 0$ such that 
  $\#\left(f^{(s)}|_{H_f}\right)^{-1}(z)<d^s$ for all 
  integers $s\ge s_0$.
\end{lemma}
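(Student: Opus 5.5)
The strategy is to find a point of $(f^{(s)})^{-1}(z)$ that does not lie in $H_f$. Once a single preimage of $z$ under some iterate $f^{(s_1)}$ escapes $H_f$, pulling back further produces many escaping preimages, and the count inside $H_f$ stays strictly below $d^s$ for every later $s$.

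\textbf{Step 1: $H_f^{(s)}$ eventually leaves $H_f$.} Since $J_f\not\subset H_f$ and $H_f$ is compact, there is $x\in J_f\ssm H_f$ and an open disk $B$ around $x$ disjoint from $H_f$. Preimages of any non-exceptional point accumulate on all of $J_f$. Since $H_f$ is infinite unless it is a single point, $z$ is non-exceptional; the degenerate case $H_f=\{z\}$ with $z$ exceptional means $f$ is conjugate to $T^d$, and then $\#\{z\}=1<d^s$ for $s\ge1$. Hence there are $s_1$ and $y\in B$ with $f^{(s_1)}(y)=z$, and $y\notin H_f$.

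\textbf{Step 2: propagate to all $s\ge s_1$.} For $s\ge s_1$ write $f^{(s)}=f^{(s_1)}\circ f^{(s-s_1)}$. Every solution of $f^{(s)}(u)=z$ with $f^{(s-s_1)}(u)=y$ lies outside $H_f$, because $f(H_f)\subset H_f$ by Lemma~\ref{lem:hubbardbasic}(iii): if $u\in H_f$ then $y=f^{(s-s_1)}(u)\in H_f$, a contradiction. Such $u$ exist since $f^{(s-s_1)}$ is surjective on $\IC$. Counted with multiplicity, $(f^{(s)})^{-1}(z)$ has exactly $d^s$ points, and at least one of them lies off $H_f$. Therefore $\#(f^{(s)}|_{H_f})^{-1}(z)\le d^s-1<d^s$, and we take $s_0=s_1$.

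\textbf{Main obstacle.} The only delicate point is Step 1: showing that some iterated preimage of $z$ lands in $B$. I would use density of iterated preimages in $J_f$ (Montel's theorem, as recalled in the Appendix) for $z$ not exceptional. If that citation is unavailable, an alternative is to pick a preimage tree point near $J_f$ via $J_f\subset\overline{\bigcup_s (f^{(s)})^{-1}(z)}$. Only a single escaping preimage is needed, so no quantitative control is required.
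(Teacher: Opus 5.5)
Your proof is correct and is essentially the paper's argument. Both rest on two ingredients: the backward orbit of $z$ accumulates on $J_f\not\subset H_f$ (Montel), and the forward invariance $f(H_f)\subset H_f$ carries a single good level to all larger $s$. The only real difference is how the Montel step is run. The paper argues by contradiction on $\IC\ssm\overline{B}$ for the full backward orbit $B$, which is automatically infinite under the contradiction hypothesis, so no exceptional-point case ever arises. You cite the density theorem instead, so you must treat the monomial case $H_f=\{z\}$ separately, and you do so correctly.
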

\begin{proof}
  We follow the line of argumentation in the proof of 
  Theorem~D~\cite{AlsedaFagella}.
  
  Let $z\in H_f$. We begin by proving the \textit{a priori}
  weaker conclusion that
  there exists $s\in\IN_0$ with
  $\#(f^{(s)}|_{H_f})^{-1}(z)<d^s$.
  We aim at a contradiction and assume 
  $\# (f^{(s)}|_{H_f})^{-1}(z) \ge d^{s}$ for all $s\in\IN_0$.
  Since $(f^{(s)})^{-1}(z)$ has at most $\deg f^{(s)} = d^s$ elements
  for all $s$, we find
  $(f^{(s)}|_{H_f})^{-1}(z) = (f^{(s)})^{-1}(z)$ for all $s\in\IN_0$.
  In particular,
  $(f^{(s)})^{-1}(z)  \subset H_f$ for all $s\in\IN_0$.
  In other words, 
  the backward orbit  $B = \bigcup_{s\in\IN_0} (f^{(s)})^{-1}(z)$ is
  contained in  $ H_f$.
  The topological closure $\overline B$ is also contained in the
  closed set $H_f$. 
  Observe that $f^{-1}(B)\subset B$
  and hence $\overline{f^{-1}(B)}\subset \overline B$.
  
  As $f$ is continuous, we have $\overline{f^{-1}(B)}\subset
  f^{-1}(\overline B)$. But $f$ is also an open mapping
  $\IC\rightarrow\IC$, since it is a non-constant polynomial. This
  implies
  $f^{-1}(\overline B)\subset\overline{f^{-1}(B)}$.
  Hence $f^{-1}(\overline B)=\overline{f^{-1}(B)}$. Together with the
  previous paragraph we find $f^{-1}(\overline B)\subset \overline B$.

  Let $U$ denote the open set $\IC\ssm
  \overline B$.
  Say $w\in \IC$ with $f(w)\not\in U$. Then $f(w)\in \overline B$ and
  hence
  $w\in f^{-1}(\overline B)\subset \overline B$. So $w\not\in U$. In
  other words,  $f(U) \subset U$.

  Observe that $B$ is an infinite set as it contains the set
  $(f^{(s)})^{-1}(z)$ of cardinality $d^s$ for all $s\ge 0$. 
  In particular, $\#\overline B\ge 2$. By Montel's Theorem, see
  Theorem~3.7~\cite{Milnor}, the family $\{f^{(s)}|_\Omega : s\in\IN_0\}$ is
  normal for each connect component $\Omega$ of $U$.
  So $U=\IC\ssm \overline B$ is contained in the Fatou set of
  $f$. As the Fatou set is the complement of the Julia set we find
  $J_f\subset \overline B$. Therefore, $J_f \subset  H_f$. This
  contradicts the hypothesis.

  To summarize, we have shown that given $z\in H_f$, there exists
  $s_0\in\IN_0$, which may depend on $f$ and $z$, with $\#
  (f^{(s_0)}|_{H_f})^{-1}(z) < d^{s_0}$.

  Recall that $f(H_f)\subset H_f$, so $f|_{H_f}\colon H_f\rightarrow
  H_f$ is a self-map. For all  $m\ge 0$ we thus find
  \begin{equation*}
    \# (f^{(m+s_0)}|_{H_f})^{-1}(z) = \#
    (f^{(m)}|_{H_f})^{-1}\left((f^{(s_0)}|_{H_f})^{-1}(z)\right)
    < d^{m}d^{s_0}=d^{m+s_0}.\qedhere
  \end{equation*}
\end{proof}

\begin{lemma}
  \label{lem:JfnotsubsetHfrhoflessd}
  Let $f\in \IC[T]$ be of degree $d \ge 2$ and postcritically finite.
  Suppose $J_f\not\subset H_f$. Then $\rho(f)<d$.   
\end{lemma}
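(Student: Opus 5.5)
Bound $\rho(f)$ via (\ref{eq:specradbound}) by showing $\|A^s\|_\infty<d^s$ for some $s\ge 1$. Here $A\in\mat_m(\IZ)$ is the $0/1$ matrix representing $L$ in the edge basis $e_1,\ldots,e_m$ of $H_f$, as in the proof of Lemma~\ref{lem:entropybound}. If $E_f=\emptyset$ then $\rho(f)=1<d$ and there is nothing to prove, so we assume $E_f\not=\emptyset$.

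First we interpret the entries of $A^s$ combinatorially. Lemma~\ref{lem:Lrcommutativediag}(ii) with $t=s$ gives $L^s = f^{(s)}_*\circ r^{(s)}$ on $\IZ[E_f]$. The element $r^{(s)}(e_j)$ is reduced with support $e_j([0,1])$, so it is the sum of the edges of $H_f^{(s)}$ lying on $e_j$. Applying $f^{(s)}_*$, the $(i,j)$ entry of $A^s$ equals the number of edges $\tilde e\in E_f^{(s)}$ contained in $e_j([0,1])$ with $f^{(s)}\circ\tilde e=e_i$. Summing over $j$, the $i$-th row sum of $A^s$ is the number of lifts of $e_i$ under $f^{(s)}$ that lie inside $H_f$. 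Each such lift contains exactly one preimage of the midpoint $z_i=e_i(1/2)$, and distinct lifts give distinct preimages. Hence the $i$-th row sum of $A^s$ is at most $\#(f^{(s)}|_{H_f})^{-1}(z_i)$.

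Next apply Lemma~\ref{lem:JfsubsetHf} to each $z_i\in H_f$, using the hypothesis $J_f\not\subset H_f$. It gives $s_0(f,z_i)$ with $\#(f^{(s)}|_{H_f})^{-1}(z_i)<d^s$ for all $s\ge s_0(f,z_i)$. Fix an integer $s\ge\max\{1,s_0(f,z_1),\ldots,s_0(f,z_m)\}$. Since there are finitely many edges, every row sum of $A^s$ is then at most $d^s-1$, so $\|A^s\|_\infty\le d^s-1$. By (\ref{eq:specradbound}), $\rho(f)\le (d^s-1)^{1/s}<d$.

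The main obstacle is the identification of the row sums of $A^s$ with counts of lifts lying in $H_f$. One must check carefully that the refinement $r^{(s)}$ captures exactly the edges of $H_f^{(s)}$ inside $H_f$, that is, that $H_f$ is a union of edges of $H_f^{(s)}$. This holds because $V_f\subset V_f^{(s)}$ by (\ref{eq:Vfchain}), so every edge $e_j$ of $H_f$ is subdivided by vertices of $H_f^{(s)}$. One must also check that the lifts of $e_i$ are distinguished by their midpoints, which holds because $e_i((0,1))$ avoids all critical values of $f^{(s)}$. Only the inequality direction is needed, so an overcount by vertices lying in $H_f$ but off those lifts is harmless.
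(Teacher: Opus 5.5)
Your proposal is correct and follows the paper's proof essentially step for step: you use $L^s=f^{(s)}_*\circ r^{(s)}$ from Lemma~\ref{lem:Lrcommutativediag}(ii), bound each row sum of the matrix of $L^s$ by the number of preimages in $H_f$ of an interior point of the corresponding edge, apply Lemma~\ref{lem:JfsubsetHf} to fix one large $s$, and conclude with (\ref{eq:specradbound}). The only cosmetic difference is bookkeeping: you count the lifts of $e_i$ lying in $H_f$ via their midpoints, whereas the paper counts preimages of $z_{e'}$ edge by edge and avoids double counting by noting that distinct edges of $H_f$ meet only in vertices, which $f$ maps to vertices.
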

\begin{proof}
  If the set  of  edges $E_f$ of the Hubbard tree is empty, then
  $\rho(f)=1$ by definition. In this case, the lemma is true. So let us assume
  $E_f\not=\emptyset$. 
  Let $L$ be as in (\ref{def:L}). Using the basis $E_f$ of $\IZ[E_f]$ we
  represent $L$ by a square
  matrix with integer coefficients.
  We also use $L$ to denote this matrix. 
  
  We fix a point $z_e$ on the relative interior of
  each edge $e\in E_f$. That is, $z_e$ is not in the set of vertices
  $V_f$ and $z_e\in H_f$. 
  
  Let $s\in\IN$. This parameter will be fixed later in terms of the
  $z_e$.
  
  We apply Lemma~\ref{lem:Lrcommutativediag}(ii) in the case $t=s$.
  So $L^s e = f_*^{(s)}(r^{(s)}(e))$.

  On the other hand, we have $L^s e = \sum_{e' \in E_f} l(e',e) e'$
  with $l(e',e)\in\IZ$. Here $l(e',e)$ is the entry of
  $L^s$ in row $e'$, column $e$.

  Recall that $r^{(s)}(e)$ is reduced and has the same support as $e$
  by the arguments around (\ref{eq:rsiterated}). We write $r^{(s)}(e)
  = \sum_{e''} n(e,e'') e''$ where $e''$ ranges over $E_f^{(s)}$ and $n(e,e'')
  \in \{0,1\}$.
  We have $n(e,e'')=1$ if and only if
  the support of $e''$ is contained in the support of $e$.
  In particular,  $n(e,e'')=1$ implies that
  the support of $e''$   is contained in 
  Hubbard tree $H_f$.
  
  Therefore,
  \begin{equation*}
    \sum_{e'\in E_f} l(e',e) e' = L^s e =
    f_*^{(s)} (r^{(s)}(e)) = \sum_{e''\in E_f^{(s)}} n(e,e'')
    f_*^{(s)}(e'') 
  \end{equation*}
  where  $f_*^{(s)}(e'')$ is an edge in $E_f$ for all 
  $e''$. Comparing coefficients we find 
  \begin{equation*}
    l(e',e)  =  \sum_{\substack{ e''\in E_f^{(s)} \\
        f_*^{(s)}(e'')=e'}} n(e,e'')
    \quad\text{for all}\quad e'\in E_f. 
  \end{equation*}

  Suppose $e''\in E_f^{(s)}$ with $n(e,e'')=1$. T hen $e''([0,1])\subset
  e([0,1])$. 
  If $f^{(s)}(e'')=e'$, then $e([0,1])$ contains a
  preimage under $f^{(s)}$ of the point $z_{e'}$ we fixed in advance.
  As $n(e,e'')\in \{0,1\}$
  for all $e''$ we conclude
  \begin{equation*}
    0\le l(e',e)  \le \#\{ z\in e([0,1]) : f^{(s)}(z) = z_{e'} \}
    \quad\text{for all}\quad e,e'\in E_f. 
  \end{equation*}

  Two different edges $e,\widetilde e\in E_f$ intersect on the set of
  vertices $V_f$ of $H_f$. But vertices are mapped to vertices under
  the action of $f$ by Lemma~\ref{lem:hubbardbasic}(iv).
  As $z_{e'}$ is
  an interior point of $e'$, there can be no  $z$ in the support of $e$ and
  $\widetilde e$ for which $f^{(s)}(z) = z_{e'}$.
  We sum over $e\in E_f$ to find
  \begin{equation*}
    \sum_{e\in E_f} l(e',e) \le \#\{ z\in H_f : f^{(s)}(z) = z_{e'}
    \} = \# (f^{(s)}|_{H_f})^{-1}(z_{e'})
    \quad\text{for all}\quad e'\in E_f.
  \end{equation*}

  By Lemma~\ref{lem:JfsubsetHf}, we may fix $s\in\IN$ large enough such
  that $\# (f^{(s)}|_{H_f})^{-1}(z_{e'}) < d^s$ for all the
  finitely many ${e'}\in E_f$. Therefore,
  \begin{equation*}
    \sum_{e\in E_f} l(e',e) < d^s    \quad\text{for all}\quad e'\in E_f.
  \end{equation*}
  
  Recall that the $l(e',e)\ge 0$ are the entries of $L^s$.
  Thus, the row-sum norm satisfies $\|L^s\|_\infty < d^s$. We take the
  $s$-th root to find $\|L^s\|_\infty^{1/s} < d$.  By
  (\ref{eq:specradbound}) we conclude $\rho(L) < d$.
  Thus $\rho(f)<d$ by the definition of the exponential core entropy.
\end{proof}

\begin{proof}[Proof of Proposition~\ref{prop:rhoflessthand}]
  The polynomials $\pm \Psi_d$ are not hyperbolic. So any hyperbolic
  polynomial is not linearly conjugate to $\pm \Psi_d$. This settles
  one claim of the proposition.
  
  We have $\rho(f)\ge 1$ by definition of the exponential core entropy.

  Let us assume that $\rho(f) \ge d$. So $J_f \subset H_f$ by
  Lemma~\ref{lem:JfnotsubsetHfrhoflessd}. 

  We claim that $J_f = K_f$. Indeed, we assume $J_f \subsetneq K_f$
  and will derive a contradiction. We fix a connected component $U$
  of $K_f\ssm J_f$. Then $U$ is open in $\IC$ as $J_f$ is the
  topological boundary of $K_f$. Observe that $\partial U \subset J_f$.
  Therefore, $\partial U \subset H_f$ by what we showed above.
  
  As $H_f$ is a finite topological tree, it cannot contain the open
  subset $U$ of $\IC$.
  Fix any $z\in U\ssm H_f$ and any $w\in \IC\ssm (H_f\cup\overline{U})$, the
  latter exists since $H_f\cup \overline U$ is bounded.
  As $H_f$ is a tree, the complement $\IC\ssm H_f$ is connected and open.
  So there exists a path $\gamma\colon [0,1]\rightarrow\IC \ssm H_f$
  with $\gamma(0)=z\in U$ and $\gamma(1) = w\in \IC\ssm \overline U$.
  There is $t\in [0,1]$
  with $\gamma(t) \in \partial U$. But this contradicts $\partial
  U\subset H_f$.

  This contradiction implies our claim
  \begin{equation}
    \label{eq:KfeqJf}
    J_f = K_f.   
  \end{equation}

  As $J_f \subset H_f\subset K_f$ we conclude that $J_f = H_f$ is a finite
  topological tree. Milnor's Lemma~B.3~\cite{Milnor:pasting} implies
  that $f$ is linearly conjugate to $\pm \Psi_d$.

  This last paragraph can be replaced by the following direct
  argument if we assume \textit{a priori}
  that $f$ is hyperbolic. Indeed, 
  fix a critical point $z_0$ of $f$. Then $z_0\in K_f$ as $f$ is 
  postcritically finite. Then (\ref{eq:KfeqJf}) implies $z_0\in
  J_f$.
  So $f$ cannot be hyperbolic
  by Lemma~\ref{lem:hyperboliccharacterization}.
\end{proof}


\section{The Dynamical Hedgehog}
\label{sec:dynhedgehog}

Let $f\in \IC[T]$ be monic of degree $d\ge 2$, hyperbolic, and postcritically
finite.

Suppose we are presented with a finite set of complex numbers that have
small local canonical height (\ref{def:lambdafcomplex}). In this section we
construct a finite topological tree containing this set and with
controlled transfinite diameter. The construction relies on the
Hubbard tree of $f$, whose construction we reviewed in
Section~\ref{sec:hubbard}.

Our main technical result in the section is the following proposition.
We will use it to bound the transfinite diameter
of the tree further
down. Recall that $K_f$ and $J_f$ are the filled Julia set and
Julia set of $f$, respectively. 

\begin{proposition}
  \label{prop:growtree}
  Let $f\in \IC[T]$ be monic of degree $d\ge 2$, hyperbolic, and
  postcritically finite. Let $r > 0$ be sufficiently small in
  terms of $f$. Let $\nu>0$. There exist
  $c_{1}(f,r)>0,c_2(f,r,\nu)>0,c_3(f,r)>0,$ and
  $c_4(f,r)>0$ with the following property. Let $s\in\IN_0$ and
  $z_1,\ldots,z_D\in \IC$ satisfy
  \begin{equation}
    \label{eq:propDdshypo}
    \max_{1\le j\le D} \lambda_f(z_j) < \frac{c_1(f,r)}{d^s}
    \quad\text{and}\quad D\le c_2(f,r,\nu) d^s. 
  \end{equation}
  There exists a finite topological tree $\cT\subset\IC$ with the
  following properties.
  \begin{enumerate}
  \item[(i)] We have $\{z_1,\ldots,z_D\}\subset \cT$.
  \item[(ii)]
    There is an integer $n$ with
    \begin{equation*}
      0 \le n \le D+c_4(f,r) \nu d^s
    \end{equation*}
    and open disks $\cD_1,\ldots,\cD_n$ of radius at most
    $r$ with the following properties.
    For all $j$, the disk $\cD_j$ is centered at a point of $J_f$ and
    there is a
    connected component $V_j$ of $(f^{(s)})^{-1}(\cD_j)$ such that
    \begin{equation*}
      \cT \subset V_1\cup\cdots\cup V_n \cup \{z\in K_f :
      \mathrm{dist}(f^{(s)}(z),J_f) \ge c_3(f,r)\}.
    \end{equation*}   
  \end{enumerate}
\end{proposition}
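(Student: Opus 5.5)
Fix $c_3(f,r)=\delta$, where $\delta>0$ comes from a compactness argument, and take $c_1(f,r)$ so small that any $z$ with $\lambda_f(z)<c_1/d^s$ has $w=f^{(s)}(z)$ with $\lambda_f(w)<c_1$. Continuity of $\lambda_f$ and the fact that $\lambda_f$ vanishes exactly on $K_f$ then give $\mathrm{dist}(w,K_f)<r/4$.

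\textbf{Attaching the points $z_j$.} For each $j$, look at $w_j=f^{(s)}(z_j)$. There are two cases.
\begin{itemize}
\item If $\mathrm{dist}(w_j,J_f)<r/2$, let $\cD_j$ be the disk of radius $r$ centred at a nearest point of $J_f$. Then $z_j$ lies in a component $V_j$ of $(f^{(s)})^{-1}(\cD_j)$. By hyperbolicity (Lemma~\ref{lem:fninvuniform_new1}) this component is a small topological disk.
\item Otherwise $w_j$ lies in a bounded Fatou component $U$ with $\mathrm{dist}(w_j,J_f)\ge r/2$. This uses that for small $c_1$ no point outside $K_f$ is within $r/4$ of $K_f$ unless it is near $J_f$, since $\partial K_f=J_f$.
\end{itemize}
In the second case, $z_j$ lies in a Fatou component $U'$ of $f$ mapping onto $U$ under $f^{(s)}$, and $f^{(s)}|_{U'}$ is a covering compatible with the maps $\varphi$ from (\ref{dia:varphiU}). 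I connect $z_j$ to the center $\mathfrak c_{U'}$ by the allowable arc $[z_j,\mathfrak c_{U'}]_f$. Its image under $f^{(s)}$ is $[w_j,\mathfrak c_U]_f$ by Lemma~\ref{lem:hubbardbasic}(ii) applied iteratively; the center is the only possible critical point in a Fatou component, via Böttcher coordinates. Lemma~\ref{lem:distarclemma} with $\epsilon=r/2$ gives $\mathrm{dist}(\cdot,J_f)\ge\delta$ along it. Then $\mathfrak c_U$ and $\mathfrak c_{U'}$ are centers, hence preperiodic points. After further iterates they land in periodic centers, which lie in $\pco_{\ge1}(f)\subset H_f$ by Lemma~\ref{lem:centerprops}. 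I would rather arrange that $\mathfrak c_{U'}$ is a vertex of $H_f^{(s')}$ for a bounded $s'$, but in general I will connect it to $H_f^{(s)}$ as in the first case below. For the first case, $\cD_j$ meets $J_f$; I connect $z_j$ inside $V_j$ to a point of $H_f^{(s)}$, using a preimage of $J_f$ near the centre. This step requires $H_f$ to come within $r$ of every point of $J_f$; if it does not, route through a nearby Fatou component and its center.

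\textbf{Main step (the obstacle).} Choose a vertex $v_j\in V_f^{(s)}$ near each attachment point and let $\cV=\{v_1,\dots,v_D\}$. Corollary~\ref{cor:convexhull}, applicable since $\rho(f)<d$ by Proposition~\ref{prop:rhoflessthand}, gives a convex hull in $H_f^{(s)}$ with at most $\nu d^s$ edges once $c_2\le\kappa(f,\nu)$. Each edge $e\in E_f^{(s)}$ maps under $f^{(s)}$ onto an edge of $H_f$. Cover $H_f\cap\{\mathrm{dist}(\cdot,J_f)<\delta\}$ by $O_{f,r}(1)$ disks of radius $r$ centred on $J_f$; the rest of $H_f$ lies in $\{\mathrm{dist}\ge\delta\}$ after shrinking $\delta$, since $H_f$ meets $J_f$ only at finitely many points and a compactness argument applies. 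Pulling back along each edge, every edge of the hull is covered by $O_{f,r}(1)$ components $V$ plus the $\delta$-region. This gives $n\le D+c_4\nu d^s$.

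The delicate points are these. First, the pulled-back pieces must be genuine connected components of preimages of disks; this needs hyperbolic univalence. Second, the connecting paths from the $z_j$ to vertices of $H_f^{(s)}$ must cost only $O(1)$ disks each. Third, the resulting union must be a tree: I take the union of the hull with the attaching arcs and remove cycles by discarding redundant arcs, which does not increase the covering. For the first-case attachments I would bound the connection inside $V_j$ by pulling back a single short arc in $\cD_j$ from $w_j$ to $H_f$. This is possible once $r$ is small enough that every point of $J_f\cap H_f$-neighbourhoods is $r$-close to $H_f$ or to a Fatou center arc; otherwise I use the Fatou-component route above. This is the step I expect to need the most care.
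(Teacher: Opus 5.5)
Your overall architecture matches the paper's: reach each $z_j$ through a bounded Fatou component and its center, take a convex hull via Corollary~\ref{cor:convexhull} using $\rho(f)<d$, and cover the hull by pulled-back disks. The proposal breaks where the attaching points meet the tree in which you take the hull. You insist on $H_f^{(s)}$, but the centers you reach are in general not in $H_f^{(s)}$. Indeed $f^{(s)}(\mathfrak c_{U'})=\mathfrak c_U$ need not lie on $H_f$: for $T^2-1$, the center $1$ of the component mapped onto that of $0$ is not on $H_f=[-1,0]$.

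Your routes to $H_f^{(s)}$ do not work. Connecting $\mathfrak c_{U'}$ to $H_f^{(s)}$ ``as in the first case'' is unavailable, since $f^{(s)}(\mathfrak c_{U'})$ stays at distance bounded below from $J_f$. The first case itself needs $H_f$ to meet every disk of radius $r$ centred on $J_f$, which fails for small $r$: for $T^2-1$, the point $(1+\sqrt5)/2\in J_f$ is at distance about $1.6$ from $H_f$. Choosing ``a vertex $v_j\in V_f^{(s)}$ near each attachment point'' leaves the cost of reaching $v_j$ uncontrolled.

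The missing idea is a uniform shift $s_0=s_0(f,r)$, as in Lemma~\ref{lem:quill}. Fix a finite $r/2$-net $w_1,\ldots,w_l$ of $J_f$ and auxiliary points $\tilde w_i\in B_r(w_i)\cap(K_f\ssm J_f)$ (Lemma~\ref{lem:KfJfdenseinKf}). The set $\{w\in K_f:\mathrm{dist}(w,J_f)\ge r/2\}$ meets only finitely many Fatou components. So Lemma~\ref{lem:fatoucomponents}(iv) gives a single $s_0$ sending these components, and those of the $\tilde w_i$, to periodic ones.
\begin{itemize}
\item Near $J_f$, replace $z_j$ by the preimage $\tilde z_j$ of $\tilde w_i$ in the same component of $(f^{(s)})^{-1}(B_r(w_i))$.
\item Far from $J_f$, keep $\tilde z_j=z_j$.
\end{itemize}
In both cases $f^{(s+s_0)}$ sends the center $\mathfrak c_j$ of the Fatou component of $\tilde z_j$ to a periodic center. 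That center lies in $\pco_{\ge 1}(f)\subset V_f$ by Lemma~\ref{lem:centerprops}, so $\mathfrak c_j\in V_f^{(s+s_0)}$. The hull must therefore be taken in $H_f^{(s+s_0)}$. Corollary~\ref{cor:convexhull} still applies because $D\le\kappa(f,\nu)d^{s}\le\kappa(f,\nu)d^{s+s_0}$, and the extra factor $d^{s_0}$ in the edge count is absorbed into $c_4(f,r)$. The arc from $\tilde z_j$ to $\mathfrak c_j$ is controlled by Lemma~\ref{lem:distarclemma} applied at $f^{(s)}(\tilde z_j)$, whose distance to $J_f$ is at least a constant $c(f,r)$.

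Consequently you must cover edges of $H_f^{(s+s_0)}$ while pulling back only by $f^{(s)}$; these edges map onto edges of the fixed tree $H_f^{(s_0)}$. Your claim that each edge costs $O_{f,r}(1)$ components is not justified. An edge may meet a small disk $\mathcal{D}$ in several disjoint sub-arcs, with no a priori bound on their number, and different sub-arcs can lift into different components of $(f^{(s)})^{-1}(\mathcal{D})$. The reason you give, that $H_f$ meets $J_f$ in finitely many points, is false in general. For the airplane polynomial of Example~\ref{ex:hubbardtrees}(iv), the interval $H_f=[c,c^2+c]$ contains the repelling fixed point $(1-\sqrt{1-4c})/2$ and infinitely many of its iterated preimages.

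What makes the count work is the uniform-continuity statement of Lemma~\ref{lem:smallepsilonpcfhyp}. For $\delta$ small, two points of an edge of $H_f^{(s_0)}$ within $\delta$ of the same $w\in J_f$ are joined along that edge inside a disk $B_\kappa(w)$ avoiding $\pco_{\ge 1}(f)$. Lifting that sub-arc through the univalent branches of Lemma~\ref{lem:hyperboliccovering0} puts both lifts into one component. So each pair (edge, net point) costs a single component, as in Lemma~\ref{lem:subtreecover}. With these two ingredients in place, your cycle removal (cut each new arc at its first contact with the tree built so far) and your count $n\le D+c_4\nu d^s$ go through.
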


Let us emphasize that $c_1(f,r), c_3(f,r),$ and $c_4(f,r)$ in this
proposition are independent of $\nu$. Only $c_2(f,r,\nu)$ is allowed
to depend on $\nu$. 

We will use the description to $\cT$ in part (ii) and the results from
Sections~\ref{sec:transfinitei} and \ref{sec:trees}  to bound its
logarithmic transfinite diameter  $\mathsf{d}_\infty(\cT)$ from above.

\begin{corollary}
  \label{cor:growtree}
  Let $f\in \IC[T]$ be monic of degree $d\ge 2$, hyperbolic, and
  postcritically finite.
  There exists $c(f)>0$ with the following property.
  Let $D\ge 1$ and $z_1,\ldots,z_D\in\IC$ satisfy
  \begin{equation}
    \label{eq:growtreeshyp}
    \max_{1\le j\le D} \lambda_f(z_j) <\frac{c(f)}{D}. 
  \end{equation}
  There exists a finite topological tree $\cT\subset\IC$ with the
  following properties.
  \begin{enumerate}
  \item [(i)] We have $\{z_1,\ldots,z_D\}\subset \cT$.

  \item[(ii)] We have $\mathsf{d}_\infty(\cT)\le -c(f)/D$.
  \end{enumerate}
\end{corollary}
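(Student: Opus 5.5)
The corollary follows by combining Proposition~\ref{prop:growtree} with Proposition~\ref{prop:transfiniteapplication}, choosing the free parameters $r,\nu,s$ in the right order.

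First fix $r>0$ small enough in terms of $f$ that both propositions apply. Write $c_1=c_1(f,r)$, $c_3=c_3(f,r)$, $c_4=c_4(f,r)$ for the constants of Proposition~\ref{prop:growtree}. Set $\delta=c_3$. Proposition~\ref{prop:transfiniteapplication} with this $\delta$ then provides constants $c_1'=c_1(f,\delta)$ and $c_2'=c_2(f,\delta)$; these depend only on $f$, since $r$ does.

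Next choose $\nu$ with $c_4\nu\le c_2'/2$. This is legitimate because $c_4$ does not depend on $\nu$, which is exactly the point emphasised after Proposition~\ref{prop:growtree}. Only now do we obtain $c_2=c_2(f,r,\nu)$, and we may shrink it so that $c_2\le c_2'/2$.

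Given $D\ge1$, let $s\in\IN_0$ be minimal with $D\le c_2 d^s$. If $s\ge1$, minimality gives $c_2d^{s-1}<D$, so
\begin{equation*}
  d^s< dD/c_2 .
\end{equation*}
If $s=0$ we have $D\le c_2$, so this case only occurs when $c_2\ge 1$, and again $d^s=1\le D/c_2\le dD/c_2$. Hence $d^s\le dD/c_2$ in all cases. Now take $c(f)\le c_1c_2/d$. Then hypothesis~(\ref{eq:growtreeshyp}) yields
\begin{equation*}
  \lambda_f(z_j)<\frac{c(f)}{D}\le \frac{c_1c_2}{dD}\le \frac{c_1}{d^s},
\end{equation*}
so the hypotheses~(\ref{eq:propDdshypo}) of Proposition~\ref{prop:growtree} hold. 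That proposition produces a tree $\cT$ containing $z_1,\ldots,z_D$, which is part~(i), together with $n\le D+c_4\nu d^s$ disks $\cD_j$ of radius at most $r$ centered on $J_f$.

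For part~(ii), observe that
\begin{equation*}
  n\le c_2d^s+c_4\nu d^s\le \tfrac{c_2'}{2}d^s+\tfrac{c_2'}{2}d^s=c_2'd^s .
\end{equation*}
Moreover, $\cT$ satisfies the containment~(\ref{eq:propKcontained}) with $\delta=c_3$. Proposition~\ref{prop:transfiniteapplication} requires disks of radius exactly $r$. Since a disk of radius at most $r$ lies inside the concentric disk of radius $r$, each component $V_j$ lies inside a component of the preimage of that larger disk, so we may enlarge the $\cD_j$ to radius exactly $r$. We conclude
\begin{equation*}
  \mathsf{d}_\infty(\cT)\le -\frac{c_1'}{d^s}\le -\frac{c_1'c_2}{dD}.
\end{equation*}
Finally set $c(f)=\min\{c_1c_2/d,\ c_1'c_2/d\}$; it depends only on $f$.

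The main difficulty is bookkeeping rather than a new estimate. The choices must be made in the order $r$, then $\delta=c_3$, then $c_2'$, then $\nu$, then $c_2$, then $s$, so that no constant depends circularly on another. The two places that need care are the size bound $n\le c_2'd^s$, which rests on $c_4$ being independent of $\nu$, and the case $s=0$, which is handled above by noting it forces $c_2\ge1$.
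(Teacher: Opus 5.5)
Your route is the paper's own proof. You make the same choices in the same order: $r$, then $\delta=c_3(f,r)$, then $c_2(f,\delta)$ from Proposition~\ref{prop:transfiniteapplication}, then $\nu$ (using that $c_4$ does not depend on $\nu$), then $c_2(f,r,\nu)$, then $s$. You also use the same count $n\le c_2(f,\delta)d^s$ and the same passage from $d^s$ back to $D$. Your observation that disks of radius at most $r$ can be enlarged to concentric disks of radius exactly $r$, with each $V_j$ lying in a component of the larger preimage, is correct. It makes explicit a point the paper passes over silently.

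There is, however, a wrong step in your treatment of $s=0$. If $s=0$, minimality gives $D\le c_2$, so $D/c_2\le 1$. Your inequality $d^s=1\le D/c_2$ therefore runs the wrong way, and $1\le dD/c_2$ fails whenever $c_2>dD$. Your choices do not exclude this: you only shrank $c_2$ to at most $c_2(f,\delta)/2$, and $c_2(f,\delta)$ is merely some positive constant. Suppose, say, $c_2>d$ and $D=1$. Then $s=0$, and your $c(f)=\min\{c_1c_2/d,\,c_1'c_2/d\}$ may exceed both $c_1$ and $c_1'$. In that case neither the hypothesis $\lambda_f(z_j)<c_1/d^0$ of Proposition~\ref{prop:growtree} nor the final bound $\mathsf{d}_\infty(\cT)\le -c_1'\le -c(f)/D$ follows.

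The repair is immediate, and either of the following works:
\begin{itemize}
\item Also shrink $c_2$ so that $c_2\le 1$. Then $s=0$ together with $D\ge 1$ forces $D=c_2=1$, so $d^s=D/c_2\le dD/c_2$ in every case. The paper does exactly this by working with $c_6=\min\{1,c_2(f,r,\nu)\}\min\{1,c_5(f,r)/2\}$.
\item Additionally require $c(f)\le\min\{c_1,c_1'\}$ and treat $s=0$ directly.
\end{itemize}
With that fix your argument is complete.
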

\begin{proof}
  We fix $r > 0$ small enough in terms of $f$ such that
  Proposition~\ref{prop:transfiniteapplication} applies to $f$. We also
  assume that $r$ is small enough for
  Proposition~\ref{prop:growtree} to hold.
  Let
  $c_1(f,r),c_2(f,r,\nu),c_3(f,r),$ and
  $c_4(f,r)$ be as in Proposition~\ref{prop:growtree}. Here $\nu$ is to be chosen
  later in terms of $f$ and $r>0$.

  Later, we will apply Proposition~\ref{prop:transfiniteapplication}
  to $\delta=c_3(f,r)$. Let $c_5(f,r)>0$ be
  $c_2(f,\delta)$ from the said proposition.
  We fix
  \begin{equation}
    \label{def:nu}
    \nu=\frac{c_5(f,r)}{2c_4(f,r)}.
  \end{equation}
  It is crucial
  that $c_3(f,r)$ and $c_4(f,r)$ do
  not depend on $\nu$ as $c_5$ was defined
  in terms of $c_3$.
  
  It will be convenient to set
  $c_6(f,r,\nu)=\min\{1,c_2(f,r,\nu)\}\min\{1,c_5(f,r)/2\}$.
  We ultimately set the value $c(f)>0$ from the conclusion of the
  corollary in function of $f,r,$ and $\nu$.

  We choose
  \begin{equation*}
    s = \left\lceil
    \log^+\left(\frac{D}{c_6(f,r,\nu)}\right)/(\log
  d)\right\rceil \in \IN_0. 
  \end{equation*}
  
  It follows that  $D\le c_6(f,r,\nu)d^s\le
  c_2(f,r,\nu)d^s$. So the  condition on the right
  in (\ref{eq:propDdshypo}) is satisfied.
  If $D\ge c_6(f,r,\nu)$, then  $d^s <
  {dD}/{c_6(f,r,\nu)}$ by our choice of $s$. 
  Thus
  $$\max_{1\le j\le D} \lambda_f(z_j)< \frac{c(f)}{D} \le
  \frac{c(f)d}{c_6(f,r,\nu)d^s} \le \frac{c_1(f,r)}{d^s}$$
  by (\ref{eq:growtreeshyp}) and
  as we may assume $c(f) \le c_1(f,r)c_6(f,r,\nu)/d$.
  So the  condition on the left of (\ref{eq:propDdshypo}) is satisfied.
  If $D<c_6(f,r,\nu)$, then $s=0$ by definition. Then
  (\ref{eq:propDdshypo})
  also follows from (\ref{eq:growtreeshyp}) and
  since we may assume $c(f)\le c_1(f,r)$.
  
  We may apply
  Proposition~\ref{prop:growtree} to $s$ and $z_1,\ldots,z_D$ and
  obtain a tree $\cT$ containing $z_1,\ldots,z_D$. We also know that
  \begin{equation*}
    \cT\subset V_1\cup\cdots\cup V_n \cup \{z\in K_f :
    \mathrm{dist}(f^{(s)}(z),J_f)\ge c_3(f,r)\}    
  \end{equation*}
  with  $V_1,\ldots,V_n$  as in the
  said proposition. Above, we
  saw $D\le c_6(f,r,\nu) d^s$. Therefore,
  $$n\le D+c_4(f,r) \nu d^s 
  \le (c_6(f,r,\nu)+c_4(f,r)\nu)d^s
  \le
  \left(\frac{c_5(f,r)}{2}+c_4(f,r)\nu\right)d^s =
  c_5(f,r)d^s$$
  where we used the choice of $\nu$ made in (\ref{def:nu}).

  It remains to bound $\mathsf{d}_\infty(\cT)$ from above. This is where   
  Proposition~\ref{prop:transfiniteapplication} comes in. We already
  announced the choice 
  $\delta = c_3(f,r)$
  and we showed  $n\le c_5(f,r)d^s$. Recall that
  $c_5(f,r)$ is $c_2(f,\delta)$ from Proposition~\ref{prop:transfiniteapplication}.
  Thus Proposition~\ref{prop:transfiniteapplication}
  implies $\mathsf{d}_\infty(\cT)\le -c_7(f,r)/d^s$
  where $c_7(f,r)$ is $c_1(f,\delta)$ from the said
  proposition.
  
  Recall that $D\ge c_6(f,r,\nu)$ implies $d^s
  <dD/c_6(f,r,\nu)$. In this case, $\mathsf{d}_\infty(\cT)\le
  -c_6(f,r,\nu)c_7(f,r)/(dD)$ and the corollary follows for
  $c(f)$ small enough. If $D<c_6(f,r,\nu)$, then $s=0$ and
  $\mathsf{d}_\infty(\cT)\le -c_7(f,r)\le -c_7(f,r)/D$. Again,
  we are done.  
\end{proof}

\begin{remark}
  \begin{enumerate}
  \item [(i)]
    We explain how this corollary can be understood as a dynamical
    version of Dubinin's Theorem~\cite{Dubinin}.
    So let us consider for the moment $f=T^2$. The Julia set is the
    complex unit circle and the local canonical height is $\lambda_f(z) =
    \log^+|z|$. Suppose we are given $z_1,\ldots,z_D\in\IC$ with $D\ge 1$.
    The hedgehog
    \begin{equation*}
      \cT = \bigcup_{i=1}^D [0,1]z_i
    \end{equation*}
    is a finite topological tree containing $z_1,\ldots,z_D$.
    Dubinin's Theorem~\cite{Dubinin}, implies    
    \begin{equation*}
      \mathsf{d}_\infty(\cT) \le -\frac{\log 4}{D}+\log\max_{1\le i\le
        D}\log|z_i|;
    \end{equation*}
    we refer also to the discussion in Dimitrov's work \cite{dimitrov:SZ}.
    If we assume $\max_{1\le i\le D}\log^+|z_i| \le (\log 2)/D$,
    then
    \begin{equation*}
      \mathsf{d}_\infty(\cT)\le -\frac{\log 2}{D} <0.
    \end{equation*}
    Up-to a constant, the logarithmic transfinite diameter of $\cT$ satisfies a
    similar bound as in (ii) of Corollary~\ref{cor:growtree}.

    \item[(ii)] In Corollary~\ref{cor:growtree} we cannot drop the
      hypothesis that $f$ is hyperbolic. Indeed, $f=T^2-2$ is
      postcritically finite with Julia set $[-2,2]$. The local canonical height of
      $\pm 2$ with respect to $f$ is zero.      

      Any connected and compact subset $\cT$ of $\IC$ containing $-2$ and
      $2$ has diameter at least $4$. Thus $\mathsf{d}_\infty(\cT)\ge \log(4/4)=0$ by
      Theorem~5.3.2(a)~\cite{Ransford}.  On the other hand,
      the logarithmic
      transfinite diameter of any finite topological tree produced by
      Corollary~\ref{cor:growtree}
      is negative.

      Observe that   $T^2-2$ is not hyperbolic, as the critical point lies in
      the Julia set, see
      Lemma~\ref{lem:hyperboliccharacterization}.
    \item[(iii)] In Corollary~\ref{cor:growtree} we cannot drop the
      hypothesis that $f$ is postcritically finite. Indeed, let us
      consider $f=T^2+c$ with $|c|>2$. We reproduce here a well-known
      computation. Observe that $|f(z)|-|z|=|z^2+c|-|z|\ge
      |z|^2-|z|-|c|$. The map $t\mapsto t^2-t-|c|$ is increasing on
      $[1/2,\infty)$. With $t=|c|$ we find  $|f(z)|-|z|\ge
      |c|^2-2|c|>0$ if $|z|\ge |c|$. Thus the critical orbit of $f$ is
      unbounded. In particular, $f$ is not postcritically finite. But it is
      hyperbolic by Lemma~\ref{lem:hyperboliccharacterization}.

      The points $z_{\pm} = (1\pm \sqrt{1-4c})/2$ are fixed points of $f$. We
      have $|f'(z_{\pm})|=|2z_{\pm}|=|1\pm \sqrt{1-4c}|\ge \sqrt{|1-4c|}-1 \ge
      \sqrt{4|c|-1}-1>\sqrt 7 -1>1$ as $|c|>2$. Thus $z_{\pm}$ are repelling
      fixed points and thus lie in the Julia set $J_f$. 

      Any compact and connected set $\cT$ containing both $z_{\pm}$
      has diameter at least $|z_+-z_-|=|\sqrt{1-4c}|\ge
      \sqrt{4|c|-1}$. If $|c|\ge 17/4$, then the diameter is at least
      $4$. As in part (ii) we find $\mathsf{d}_\infty(\cT)\ge
      0$. 
  \end{enumerate}
\end{remark}

\subsection{Growing the Tree}

Let $f\in \IC[T]$ be monic of degree $d\ge 2$, hyperbolic, and
postcritically finite.

Attached to $f$ is the Hubbard tree of $H_f$ from
Section~\ref{sec:hubbard}. Let $s\in\IN_0$ be a parameter. We
considered preimages $H_f^{(s)}$ of $H_f$ under $f^{(s)}$ given by
(\ref{def:hubbardpreimage}). Recall that $H_f^{(s)}\subset\IC$ is a
finite topological tree, see Section~\ref{sec:growth} for details.

Fix $r > 0$ small enough so that
Proposition~\ref{prop:transfiniteapplication} holds as does
Lemma~\ref{lem:fninvuniform_new1} with $\epsilon=r$.

\begin{lemma}
  \label{lem:quill}
  We keep the notation from above. There are
  $s_0=s_0(f,r)\in\IN$ and  $c(f,r)>0$
  with the following property. For all $z\in \IC$ and all $s\in\IN_0$ with
  $\lambda_f(z)<c(f,r)/d^s$ there is $\tilde z\in \IC$ such that the
  following holds true.
  \begin{enumerate}
  \item [(i)]  The iterate  $f^{(s+s_0)}(\tilde z)$ lies in a periodic bounded Fatou component of
   $f$ and $\mathrm{dist}(f^{(s)}(\tilde z),J_f)\ge c(f,r)$.
   In particular, $\tilde z\in K_f\ssm J_f$.
  \item[(ii)]   If $\mathrm{dist}(f^{(s)}(z),J_f)<r/2$, then  $z$
    and $\tilde z$ lie in the same
    connected component  of $(f^{(s)})^{-1}(B_{r}(w))$
    for some $w\in J_f$.
  \item[(iii)]   If $\mathrm{dist}(f^{(s)}(z),J_f)\ge r/2$,
    then $z=\tilde z$.
  \end{enumerate}
\end{lemma}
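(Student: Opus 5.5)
We split according to the two alternatives. If $\mathrm{dist}(f^{(s)}(z),J_f)\ge r/2$ we set $\tilde z=z$, so (iii) holds by definition and (ii) is vacuous. What remains is (i) for this choice. Since $\lambda_f(f^{(s)}(z))=d^s\lambda_f(z)<c(f,r)$, the point $w=f^{(s)}(z)$ has local canonical height below a small constant. It is also at distance $\ge r/2$ from $J_f$. We claim that, for $c(f,r)$ small enough, this forces $w\in K_f$. Indeed, $\lambda_f$ is continuous, vanishes exactly on $K_f$, and is positive on the compact set $\{w:\mathrm{dist}(w,J_f)\ge r/2,\ w\notin \mathrm{int}\ldots\}$. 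More precisely, on the closed set $\{w\in\IC\ssm \mathrm{int}(K_f) : \mathrm{dist}(w,J_f)\ge r/2,\ |w|\le B\}$ the function $\lambda_f$ has a positive minimum, and for $|w|$ large $\lambda_f$ is large. So $w$ lies in a bounded Fatou component $U$ with $\mathrm{dist}(w,J_f)\ge r/2$, which gives the distance claim once $c(f,r)\le r/2$.

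For the remaining part of (i), hyperbolicity and postcritical finiteness imply that every bounded Fatou component is eventually periodic, and only finitely many components meet the compact set $\{w\in K_f:\mathrm{dist}(w,J_f)\ge r/2\}$. Let $s_0$ exceed the maximal preperiod of these finitely many components. Then $f^{(s_0)}(U)$ is periodic, and hence so is the component containing $f^{(s+s_0)}(\tilde z)$.

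Now suppose $\mathrm{dist}(f^{(s)}(z),J_f)<r/2$. Pick $w\in J_f$ with $|f^{(s)}(z)-w|<r/2$ and let $V$ be the component of $(f^{(s)})^{-1}(B_r(w))$ containing $z$. By Lemma~\ref{lem:fninvuniform_new1} (with $\epsilon=r$), $f^{(s)}|_V\colon V\to B_r(w)$ is biholomorphic with inverse $g$. Since $w\in J_f=\partial K_f$ and $K_f\ne J_f$ for hyperbolic postcritically finite $f$ (every critical point lies in the Fatou set), the disk $B_{r/2}(w)$ meets $K_f\ssm J_f$. Our plan is to choose, by compactness of $J_f$ and once and for all depending on $f,r$, a point $u$ of $K_f\ssm J_f$ in $B_{r/2}(w)$ whose distance to $J_f$ is at least some uniform $c'(f,r)>0$. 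To get this, cover $J_f$ by finitely many disks $B_{r/4}(w_i)$ and pick in each $B_{r/4}(w_i)$ a Fatou point $u_i\in K_f$. Then set $c'=\min_i \mathrm{dist}(u_i,J_f)>0$ and, for $w\in B_{r/4}(w_i)$, take $u=u_i\in B_{r/2}(w)$. Define $\tilde z=g(u)$. Then $z,\tilde z\in V$ gives (ii), and $f^{(s)}(\tilde z)=u$ with $\mathrm{dist}(u,J_f)\ge c'$. Since $K_f$ is fully invariant, $\tilde z\in K_f\ssm J_f$. For periodicity, there are finitely many $u_i$, so the argument of the first case with $s_0$ enlarged to cover the components containing them gives the periodic component. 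Finally we set $c(f,r)=\min\{r/2,c'\}$, further shrunk as required by the first case.

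The main obstacle is the first case: showing that small $\lambda_f(f^{(s)}(z))$ together with distance $\ge r/2$ from $J_f$ forces $f^{(s)}(z)\in K_f$. This is the compactness/continuity argument for $\lambda_f$ outside $K_f$, using that $\{\lambda_f\le 1\}$ is bounded. All remaining steps are finiteness statements that depend only on $f$ and $r$, not on $s$.
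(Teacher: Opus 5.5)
Your proof is correct and is essentially the paper's argument. It uses the same split at $\mathrm{dist}(f^{(s)}(z),J_f)=r/2$, sets $\tilde z=z$ in the far case (with a compactness argument for $\lambda_f$ forcing $f^{(s)}(z)\in K_f$), lifts in the near case a fixed interior Fatou point from a finite cover of $J_f$ through the biholomorphic inverse branch, and chooses $s_0$ from the finitely many eventually periodic components involved. The one step that needs a better justification is that every small disk about a point of $J_f$ meets $K_f\ssm J_f$: this does not follow from $J_f=\partial K_f$ and $K_f\neq J_f$ alone, but it is exactly Lemma~\ref{lem:KfJfdenseinKf}(ii), the density of $K_f\ssm J_f$ in $K_f$, which the paper invokes at this point.
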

\begin{proof}
  We will fix $c(f,r)>0$ in function of $f$ and $r$
  during this proof.
  The ``in particular'' statement in (i)
  follows from the preceding claim as the preimage under $f$ of a point
  in $K_f\ssm J_f$ lies in $K_f\ssm J_f$.

  Let us continue with some preliminary remarks.

  As $J_f$ is compact, there exist $l\ge 1$ and $w_1,\ldots,w_l\in J_f$ with 
  \begin{equation}
    \label{eq:Jfcover}
    J_f \subset  B_{r/2}(w_1)\cup\cdots\cup B_{r/2}(w_l).
  \end{equation}

  By Lemma~\ref{lem:KfJfdenseinKf}(ii) the complement $K_f\ssm J_f$ lies
  dense in $K_f$.
  So for each $i\in \{1,\ldots,l\}$ we fix $\tilde w_i \in
  B_{r}(w_i)\cap (K_f\ssm J_f)$. We may assume that
  \begin{equation}
    \label{eq:condctildew}
    0 < c(f,r) \le \min_{1\le i\le l} \mathrm{dist}(\tilde w_i,J_f). 
  \end{equation}
  
  The connected component of $K_f\ssm J_f$ that contains $\tilde w_i$
  is preperiodic by Lemma~\ref{lem:fatoucomponents}(iv). We may fix  
  $s_0=s_0(f,r)\in\IN$ such that $f^{(s_0)}(\tilde w_i)$ lies in a periodic
  bounded Fatou component of $f$ for all $i$. Here $s_0$ depends on $f$ and $r$.

  The compact set $\{w\in K_f : \mathrm{dist}(w,J_f)\ge r/2 \}$ is
  contained in a necessarily finite union of connected components of
  $K_f\ssm J_f$. These connected components depend only on $f$ and the
  choice of $r$. So, again by Lemma~\ref{lem:fatoucomponents}(iv)
  and 
  after possibly increasing $s_0$, we may assume that $f^{(s_0)}(w')$
  lies in a periodic bounded Fatou component for all $w'\in K_f$ with
  $\mathrm{dist}(w',J_f)\ge r/2$. We will continue to use this $s_0$, it
  depends only on $f$ and $r$.

  The local canonical height $\lambda_f\colon\IC\rightarrow[0,\infty)$
  is continuous and vanishes precisely on the compact set $K_f$. It
  differs from $w\mapsto \log^+|w|$ by a bounded function. If
  $c(f,r)>0$ is small enough, then $\lambda_f(w)<c(f,r)$
  implies $\mathrm{dist}(w,K_f)< r / 2$.

  Let $z\in \IC$ and $s\in\IN_0$ with $\lambda_f(z) < c(f,r)/d^s$.
  Then $\lambda_f(w)<c(f,r)$ for $w = f^{(s)}(z)$. We split up into two
  cases.

  \textbf{Case 1:} We have $\mathrm{dist}(w,J_f)<r/2$. By
  (\ref{eq:Jfcover}) there is $i$ with $|w-w_i|<r$. Fix the connected
  component $V$ of $(f^{(s)})^{-1}(B_{r}(w_i))$ that contains $z$. By
  Lemma~\ref{lem:fninvuniform_new1} 
  and the choice of $r$ we have
  $f^{(s)}(V) = B_{r}(w_i)$. As $B_{r}(w_i)$ contains $\tilde w_i$ there
  exists $\tilde z\in V$ with $f^{(s)}(\tilde z) = \tilde w_i$.
  
  Then $f^{(s+s_0)}(\tilde z) = f^{(s_0)}(\tilde w_i)$ is in a periodic
  bounded Fatou component of $f$ by the choice of $s_0$. This is the
  first claim in (i).
  
  We have $\mathrm{dist}(f^{(s)}(\tilde z),J_f) =
   \mathrm{dist}(\tilde w_i,J_f)$.
  So the second claim in (i) follows from (\ref{eq:condctildew}). Conclusion (ii) holds as $V$ contains $z$ and $\tilde
  z$ by construction.
    
  \textbf{Case 2:} We have $\mathrm{dist}(w,J_f)\ge r/2$.
  Note that $w\in K_f$ as we have $\mathrm{dist}(w,K_f)< r/2$.  
  By our choice of $s_0$, the
  iterate $f^{(s_0)}(w)$ lies in a periodic bounded Fatou
  component of $f$.  In this case we set $\tilde z = z$.
  Thus $f^{(s+s_0)}(\tilde z) = f^{(s+s_0)}(z) = f^{(s_0)}(w)$
  lies in a periodic bounded Fatou component of $f$.
  Moreover, $\mathrm{dist}(f^{(s)}(\tilde z),J_f) = \mathrm{dist}( w,J_f)\ge
  r /2\ge c(f,r)$ as we may assume $c(f,r)
  \le r/2$. So parts (i) and (iii)
  follow. 
\end{proof}

We continue to work with $f$ and $r$ fixed and as above.
Let $s_0(f,r)$ be as in the
previous lemma.

We continue with a simple compactness argument. Recall that
$H_f^{(s_0)}$ is a finite topological tree. We treat its edges as arcs
$[0,1]\rightarrow H_f^{(s_0)}$.

\begin{lemma}
  \label{lem:smallepsilonpcfhyp}
  The following holds for all $\delta >
  0$ small enough in terms of $f,r,$ and $s_0$. Let $e$ be an edge of
  $H_f^{(s_0)}$ and $z,z'\in e$ be members of $B_\delta(w)$ for some
  $w\in J_f$. Then the segment on $e$ between $z$ and $z'$ lies in an
  open disk of radius $\ge \delta$ around $w$ that is disjoint from
  $\pco_{\ge 1}(f)$.  
\end{lemma}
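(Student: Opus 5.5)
Two facts drive the argument. Since $f$ is hyperbolic and postcritically finite, the finite set $\pco_{\ge 1}(f)$ is disjoint from $J_f$ (Lemma~\ref{lem:hyperboliccharacterization}). Hence $\eta=\mathrm{dist}(\pco_{\ge 1}(f),J_f)>0$, and we first require $\delta<\eta/3$. Then for every $w\in J_f$ the disk $B_{3\delta}(w)$ is disjoint from $\pco_{\ge 1}(f)$. So it suffices to show that the sub-arc of $e$ between $z$ and $z'$ lies in $B_{3\delta}(w)$, or more generally in $B_{C\delta}(w)$ for a fixed constant $C$ with $C\delta<\eta$. The disk $B_{C\delta}(w)$ has radius $\ge\delta$, as the statement requires.

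The main step is a uniform ``bounded turning'' property for the finitely many edges of $H_f^{(s_0)}$. For each edge $e\colon[0,1]\to H_f^{(s_0)}$, which is injective and continuous on a compact interval, we aim for the following. For every $\epsilon>0$ there is $\delta'>0$ such that any $t,t'$ with $|e(t)-e(t')|<\delta'$ satisfy $\mathrm{diam}\, e([t,t'])<\epsilon$. We would prove this by contradiction and compactness, as in Lemma~\ref{lem:distarclemma}. Suppose there are sequences $t_n,t_n'$ with $|e(t_n)-e(t_n')|\to0$ but $\mathrm{diam}\, e([t_n,t_n'])\ge\epsilon$. Pass to convergent subsequences $t_n\to t$ and $t_n'\to t'$. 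Continuity gives $e(t)=e(t')$, and injectivity then forces $t=t'$. But then uniform continuity of $e$ makes $\mathrm{diam}\,e([t_n,t_n'])\to 0$, a contradiction. Because there are only finitely many edges (their number depends on $f$ and $s_0$), we may take $\delta'$ uniform over all edges.

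Now apply this with $\epsilon=\eta/2$, and choose $\delta\le\min\{\delta',\eta/4\}$. Let $z,z'\in e\cap B_\delta(w)$. Then $|z-z'|<2\delta$, so to invoke bounded turning we should take $\delta\le\delta'/2$. It follows that the segment between $z$ and $z'$ has diameter $<\eta/2$. Since this segment contains $z$ and $|z-w|<\delta$, it lies in $B_{\delta+\eta/2}(w)\subset B_{3\eta/4}(w)$. This disk misses $\pco_{\ge 1}(f)$, because every point of $\pco_{\ge 1}(f)$ is at distance $\ge\eta$ from $w\in J_f$, and its radius $3\eta/4$ is $\ge\delta$. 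All the choices depend only on $f$, $r$ (through $s_0$) and $s_0$.

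The only delicate point is the uniform bounded-turning step. It needs the injectivity of the edges and the finiteness of $E_f^{(s_0)}$, which we get from the construction of $H_f^{(s_0)}$ in Section~\ref{sec:growth}. Beyond that, the statement is a pure compactness argument and should pose no real obstacle.
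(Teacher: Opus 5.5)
Your proof is correct and follows essentially the paper's argument. You use the same constant $\eta=\min_{c\in\pco_{\ge 1}(f)}\mathrm{dist}(c,J_f)>0$ and the same compactness-by-contradiction over the finitely many edges of $H_f^{(s_0)}$, with injectivity of the edge forcing the limiting parameters to coincide. The only differences are cosmetic: you isolate an intrinsic uniform bounded-turning property of the edges, so you never extract a convergent subsequence of the points $w_n\in J_f$, whereas the paper runs the contradiction directly with $w_n\to w$ and takes the disk $B_\eta(w)$ instead of your $B_{3\eta/4}(w)$.
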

\begin{proof}  
  The minimum
  \begin{equation}
    \label{eq:choose}
    \kappa = \min_{c\in \pco_{\ge 1}(f)} \mathrm{dist}(c,J_f)
  \end{equation}
  is well-defined as $f$ is postcritically finite. Note that $\kappa>0$;
  indeed $f$ is hyperbolic by assumption and so 
  $J_f$ contains no  point
  in the postcritical orbit $\pco_{\ge 1}(f)$, see
  Lemma~\ref{lem:hyperboliccharacterization}.
  
  \textbf{Claim:} For all sufficiently small  $\delta > 0$ the
  following holds.  Let $e$ be an edge of 
  $H_f^{(s_0)}$, let $z,z'\in e$ such that
  $z,z'\in B_\delta(w)$ for some $w\in J_f$. 
  Then all points $z''$ on $e$ between $z$ and $z'$ satisfy
  $|z''-w|<\kappa$.

  We prove the claim by contradiction. Suppose there exist sequences
  $(\delta_n)_{n\in\IN},(t_n)_{n\in\IN},$
  $(t'_n)_{n\in\IN},(s_n)_{n\in\IN}$ in $\IR$, $(w_n)_{n\in\IN}$ in
  $J_f$, and edges $(e_n)_{n\in \IN}$ of $H_f^{(s_0)}$ with the
  following properties. We have $\lim_{n\rightarrow \infty}\delta_n=0$
  and for all $n\ge 1$ we have 
  $0\le t_n\le s_n\le t_n'\le 1$ such that
  \begin{equation*}
    |e_n(t_n)-w_n|<\delta_n, \quad |e_n(t'_n)-w_n|<\delta_n, \quad
    |e_n(s_n)-w_n| \ge \kappa.
  \end{equation*}

  Recall that $H_f^{(s_0)}$ has only finitely many edges.
  By passing to a subsequence we may assume that $e$ is independent
  of $n$. 
  Moreover, as $[0,1]$ and $J_f$ are compact we can pass to a further
  subsequence and assume that
  $\lim_{n\rightarrow\infty} t_n=t,  \lim_{n\rightarrow\infty}
  t'_n=t',\lim_{n\rightarrow\infty} s_n=s,$ and that $w_n$ converges
  to $w\in J_f$. 
  In the limit, the first two displayed inequalities above imply $e(t) = e(t')=w$. However, $e$ is an
  arc, and so $t=t'$. Thus $s=t=t'$ and we conclude $e(s)=w$. 
  The final displayed inequality above now yields $0=|e(s)-w| \ge \kappa$, a
  contradiction. So our claim follows. 

  We may fix
  $\delta$ small in terms of $f$. So we may assume
  $\delta \le \kappa$.
  
  Let $e$ and $z,z'\in e,w\in J_f$ be as in the lemma.   
  We have $B_\kappa(w)\cap \pco_{\ge 1}(f)=\emptyset$
  by the choice of $\kappa$ in (\ref{eq:choose}).
  By the claim, all points $z''\in e$ between $z$ and $z'$ lie in
  $B_\kappa(w)$.
  The lemma follows with the disk $B_\kappa(w)$ whose radius is at
  least $\delta$.
\end{proof}

For the following steps we fix
$\delta>0$ small enough in terms of $f,r,$ and $s_0$ as in the last
lemma. We may assume $\delta\le r$.
Recall that $s_0$ depends on $f$ and $r$. So $\delta$
depends only on $f$ and $r$.
\begin{lemma}
  \label{lem:subtreecover}
  Let $s\in\IN_0$ and let $\cT$ be a subtree of $H_f^{(s+s_0)}$ with
  $\tau$ edges. Then there exist open disks $\cD_1,\ldots,\cD_n$ of radius
  $\delta$ centered at points of $J_f$, and for each $i\in \{1,\ldots,n\}$ a connected
  component $V_i$ of $(f^{(s)})^{-1}(\mathcal{D}_i)$ such that
  \begin{equation}
    \label{eq:subtreecoversubset}
    \cT\subset V_1\cup\cdots\cup V_n\cup \{ z\in K_f
    : \mathrm{dist}(f^{(s)}(z),J_f)\ge \delta/2\}.
  \end{equation}
  Moreover,  $n= O_{f,\delta}(\tau)$.
\end{lemma}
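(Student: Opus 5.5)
We reduce to a statement about the finitely many edges of $H_f^{(s_0)}$ and lift it along $f^{(s)}$. Each edge $\tilde e$ of $\cT\subset H_f^{(s+s_0)}$ is mapped by $f^{(s)}$ homeomorphically onto an edge $e=f^{(s)}\circ\tilde e$ of $H_f^{(s_0)}$. Indeed, $f^{(s+s_0)}\circ\tilde e$ is an edge of $H_f$, and the interior of $\tilde e$ contains no critical point of $f^{(s)}$, because critical values of $f^{(s)}$ lie in $\pco_{\ge1}(f)\subset V_f$ and hence in $V_f^{(s_0)}$. It therefore suffices to cover each of the $\tau$ edges of $\cT$ by $O_{f,\delta}(1)$ sets of the required type.

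\textbf{Step 1: a uniform cover of each edge of $H_f^{(s_0)}$.} Fix an edge $e$ of $H_f^{(s_0)}$. Points $z\in e$ with $\mathrm{dist}(z,J_f)\ge\delta/2$ lie in $K_f$, since $H_f^{(s_0)}\subset K_f$, so they are already in the last set of (\ref{eq:subtreecoversubset}) once we pull back. The remaining set $e_{<}=\{z\in e:\mathrm{dist}(z,J_f)<\delta/2\}$ is covered by compactness of $J_f$. Choose a finite $\delta/2$-net $w_1,\ldots,w_l$ of $J_f$, with $l$ depending only on $f,\delta$; then $e_<\subset\bigcup_i B_\delta(w_i)$. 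We refine this to connected pieces: for each $i$, the set $e\cap B_\delta(w_i)$ may be disconnected. By Lemma~\ref{lem:smallepsilonpcfhyp}, however, any two of its points $z,z'$ are joined by the subarc of $e$ between them, and this subarc lies in a disk $B_\kappa(w_i)$ of radius $\ge\delta$ disjoint from $\pco_{\ge1}(f)$. We therefore cover the subarc of $e$ spanned by $e\cap B_\delta(w_i)$, namely from its first to its last point, by that single disk. To keep the radius exactly $\delta$ as the statement demands, we use the disk $\cD=B_\delta(w_i)$ together with the connected set $A_i$, the hull subarc, which sits in the disk $B_\kappa(w_i)$ free of critical values. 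Thus each edge $e$ is covered by at most $l$ arcs $A_i$ together with the far-from-$J_f$ part.

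\textbf{Step 2: lifting.} Let $\tilde e$ be an edge of $\cT$ and $e=f^{(s)}\circ\tilde e$. Because $B_\kappa(w_i)$ contains no critical value of $f^{(s)}$, the component of $(f^{(s)})^{-1}(B_\kappa(w_i))$ containing a point of $\tilde e$ maps biholomorphically. The lift $\tilde A_i\subset\tilde e$ of $A_i$ is connected, so it lies in a single component. What we actually need is that $\tilde e\cap (f^{(s)})^{-1}(B_\delta(w_i))$ is contained in one component $V$ of $(f^{(s)})^{-1}(B_\delta(w_i))$. This follows because all these points lie on the connected lift $\tilde A_i$, and the branch of $(f^{(s)})^{-1}$ on $B_\kappa(w_i)$ maps $B_\delta(w_i)$, which is connected, onto a connected set containing them. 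Points of $\tilde e$ with $\mathrm{dist}(f^{(s)}(z),J_f)\ge\delta/2$ lie in $K_f$, since $\tilde e\subset H_f^{(s+s_0)}\subset K_f$. So $\tilde e$ is covered by at most $l$ components $V_i$ and the last set. Summing over the $\tau$ edges gives $n\le l\tau=O_{f,\delta}(\tau)$.

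\textbf{Main obstacle.} The delicate point is ensuring that one disk of radius $\delta$ yields one connected component per edge. Naive pullback could split $\tilde e\cap(f^{(s)})^{-1}(\cD_i)$ among several components. This is exactly why we use Lemma~\ref{lem:smallepsilonpcfhyp}, whose larger disk avoids $\pco_{\ge1}(f)$, so that a single inverse branch of $f^{(s)}$ contains both the disk and the relevant subarc.
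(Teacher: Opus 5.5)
Your proposal is correct and follows essentially the paper's argument. You use Lemma~\ref{lem:smallepsilonpcfhyp} to put the subarc of $f^{(s)}(\tilde e)$ spanned by its points in $\cD=B_\delta(w)$ inside a larger disk free of $\pco_{\ge 1}(f)$. You then lift it to one biholomorphic sheet of $(f^{(s)})^{-1}$, which forces all these points of $\tilde e$ into a single component of $(f^{(s)})^{-1}(\cD)$, and you count $n=\tau\cdot\#\mathcal{C}$ for a finite $\delta/2$-net $\mathcal{C}$ of $J_f$. One cosmetic point: since $e\cap B_\delta(w_i)$ is relatively open in $e$, your ``first to last point'' hull should be read as the union of the subarcs between pairs of its points; this union is still connected and contained in $B_\kappa(w_i)$.
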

\begin{proof}
  We recall that the Hubbard tree $H_f$ is contained in $K_f$ and so
  is $H_f^{(s+s_0)}$.
  
  \textbf{Claim:}
  Let $\tilde e$ be an edge of the tree $H_f^{(s+s_0)}$. Let $\cD$ be
  an open disk of radius $\delta$ centered at a point in $J_f$. Then
  \begin{equation}
    \label{eq:preimageedisk}
    \{z\in \tilde e
    : \mathrm{dist}(f^{(s)}(z),J_f)<\delta/2 \text{ and }f^{(s)}(z)\in
    \cD\}  
  \end{equation}
  is contained in single connected component of $(f^{(s)})^{-1}(\cD)$.
  
  Suppose $z$ and $z'$ are in (\ref{eq:preimageedisk}).
  Let $V$
  and $V'$ be the connected components
  of $(f^{(s)})^{-1}(\mathcal{D})$ containing $z$ and $z'$,
  respectively. We need to show $V=V'$.

  Recall that $w=f^{(s)}(z)$ and $w'=f^{(s)}(z')$ are both on the
  edge
  $f^{(s)}(\tilde e)$ of $H_f^{(s_0)}$.
  Lemma~\ref{lem:smallepsilonpcfhyp}, applied to these two points
  and the disk $\mathcal{D}$,  
  implies that the
  segment $[w,w']$ of $f^{(s)}(\tilde e)$ lies in an open disk $\mathcal{D}'$ disjoint
  from $\pco_{\ge 1}(f)$ and with
  $\mathcal{D}\subset \mathcal{D}'$.

  By Lemma~\ref{lem:hyperboliccovering0}, $f^{(s)}$ restricts to a covering map 
  $(f^{(s)})^{-1}(\mathcal{D}')\rightarrow \mathcal{D}'$ such that each connected
  component of the domain maps homeomorphically to $\mathcal{D}'$. 
  We lift $[w,w']$ via $f^{(s)}|_{(f^{(s)})^{-1}(\mathcal{D}')}$ starting at $z$, recall $f^{(s)}(z)=w$.
  By uniqueness, the lifted path coincides with the segment $[z,z']$ of $\tilde
  e$ between $z$ and $z'$. Thus $[z,z']\subset (f^{(s)})^{-1}(\mathcal{D}')$
  and $[z,z']$ is contained in a connected
  component $\mathcal{D}^0$ of $(f^{(s)})^{-1}(\mathcal{D}')$.
  
  As $\mathcal{D}\subset \mathcal{D}'$  each of $V,V'$ is
  contained in some connected component of  $(f^{(s)})^{-1}(\mathcal{D}')$.
  But as we just saw, $z$ and $z'$ are both contained in $\mathcal{D}^0$. Since
  $z\in V$ and $z'\in V'$, we get $V\subset \mathcal{D}^0$ and $V'\subset
  \mathcal{D}^0$. Finally, $f^{(s)}$ is injective on $\mathcal{D}^0$
  by Lemma~\ref{lem:hyperboliccovering0}(ii). As
  $f^{(s)}(V)=f^{(s)}(V')=\mathcal{D}$ we conclude
  $V=V'$.
  The claim follows.

  Recall that $J_f$ is compact.
  So there is a finite subset $\mathcal{C}$ of $J_f$ such
  that any point on $J_f$ lies in an open disk of radius $\delta/2$
  centered at a point in $\mathcal{C}$. The set $\mathcal{C}$ and its
  cardinality depend only on $f$ and $\delta$. 
 
  To  complete the proof, let 
  $z\in  \cT$. Recall that $\cT\subset H_f^{(s+s_0)}\subset K_f$.
  To show that $z$ lies in the right-hand side of
  (\ref{eq:subtreecoversubset}) we may assume
  $\mathrm{dist}(f^{(s)}(z),J_f) <\delta / 2$.
  Then $f^{(s)}(z)$ lies
  in an open  disk $\cD$ of radius $\delta$ centered at a point in
  $\mathcal{C}$.
  Moreover,
  $z$ lies on some edge $\tilde e$ of $\cT$. 
  For $\tilde e$ and $\cD$ fixed, 
  the claim implies that $z$ lies in a single connected component of 
  $(f^{(s)})^{-1}(\cD)$.
  
  The open disks in the conclusion are centered at the points in
  $\mathcal{C}$. For each such point we take the corresponding disk
  $\tau$ times, once for each edge of $\cT$. So $n = \tau \#\mathcal{C}$, 
  the number of possible pairs $(\tilde  e,\tau)$.
  Recall that $\#\mathcal{C}$ depends only on $f$ and $\delta$. Therefore, $n =
  O_{f,\delta}(\tau)$.  This completes
  the proof. 
\end{proof}

We are now ready to prove Proposition~\ref{prop:growtree}.

We assume that  $r>0$ is small enough in terms of $f$ to
satisfy the needs stated before Lemma~\ref{lem:quill}. Let us fix
$s_0\in\IN_0$, which depends on $f$ and $r$, as in the said
lemma. We also fix  $\delta \in (0,r]$
as before Lemma~\ref{lem:subtreecover}.
Then $\delta$ ultimately depends  only on $f$ and $r$.
Let $\nu > 0$.
The positive values $c_1(f,r),
c_2(f,r,\nu),c_3(f,r),$ and $c_4(f,r)$ will be
fixed throughout the argument. 

Let $z_1,\ldots,z_D\in\IC$ and $s\in\IN_0$ with
\begin{equation*}
  \max_{1\le j\le D} \lambda_f(z_j)< \frac{c_1(f,r)}{d^s}.
\end{equation*}
We will assume {$c_1(f,r)\le c(f,r)$} where
$c(f,r)$ is from Lemma~\ref{lem:quill}.

We will carefully track the dependency of expressions in our main
parameter $s\in\IN_0$.

For all $j\in \{1,\ldots,D\}$ we attach to each $z_j$ a point $\tilde
z_j$ as in Lemma~\ref{lem:quill}. So $\mathrm{dist}(f^{(s)}(\tilde
z_j),J_f)\ge c(f,r)>0$ and $\tilde z_j\in K_f\ssm J_f$
for all $j$. Let $U_j$ denote the bounded Fatou
component of $f$ containing $\tilde z_j$. So $f^{(s+s_0)}(U_j)$ is
also a bounded Fatou component of $f$, see
Lemma~\ref{lem:fatoucomponents}(iii). 
It is periodic under the action of
$f$ by Lemma~\ref{lem:quill}(i).

Recall that $\mathfrak{c}_{U_j}\in U_j$ denotes the center of $U_j$ as
defined in Section~\ref{sec:hubbard}.
We abbreviate $\mathfrak{c}_j = \mathfrak{c}_{U_j}$.
Then $f^{(s+s_0)}(\mathfrak{c}_{j})$ is the center of the periodic
bounded Fatou component
$f^{(s+s_0)}(U_j)$. So it lies in $\pco_{\ge 1}(f)$ by
Lemma~\ref{lem:centerprops}. In particular, $f^{(s+s_0)}(\mathfrak{c}_j)$
is a vertex of $H_f$. We conclude
that $\mathfrak{c}_{j}$ is a vertex of  $H_f^{(s+s_0)}$ for all
$j\in \{1,\ldots,D\}$.

As in Definition~\ref{def:entropy}, let $\rho(f)\ge 1$ denote the
exponential core entropy of $f$. We have $\rho(f)<d$ by
Proposition~\ref{prop:rhoflessthand} and as $f$ is hyperbolic.

Thus we may apply Corollary~\ref{cor:convexhull} to $f$ and $s+s_0$.
For $\cV$ we take the  vertices 
$\{\mathfrak{c}_{1},\ldots,\mathfrak{c}_{D}\}$ of $H_f^{(s+s_0)}$.
We need to check
$D\le \kappa(f,\nu) d^{s+s_0}$.
So if we take $c_2(f,r,\nu) = \kappa(f,\nu)$
then (\ref{eq:propDdshypo}) gives what we desire. Recall that $c_2$ is
allowed to depend on $\nu$. 
The convex hull $\cT_0$ of $\cV$ is a subtree of $H_f^{(s+s_0)}$ with
at most $\nu d^{s+s_0}$ edges.

We have constructed a subtree $\cT_0$ of $H_f^{(s+s_0)}$ to which
Lemma~\ref{lem:subtreecover} applies with $\tau\le \nu d^{s+s_0}$. 
Thus
\begin{equation}
  \label{eq:T0inclusion}
  \cT_0 \subset V_1\cup\cdots\cup V_m \cup \{z\in K_f :
  \mathrm{dist}(f^{(s)}(z),J_f) \ge \delta/2\}
\end{equation}
with $m = O_{f,\delta}(\nu d^{s+s_0})$.
So
\begin{equation}
  \label{eq:mc4nuds}
  m \le c_4(f,r) \nu d^{s} 
\end{equation}
where {$c_4(f,r)>0$};
recall that $\delta$ and $s_0$ were chosen in
function of $f$ and $r$. 
Each $V_j$ is a connected component of the preimage under
$f^{(s)}$ of some open disk of radius $\delta\le r$ centered
at a point of $J_f$. 

Our
tree needs to grow further as
$z_1,\ldots,z_D$ are not necessarily on $\cT_0$.   Our process is inductive.
To summarize, we will obtain a chain of finite topological trees
\begin{equation*}
  \cT_0 \subset \cT_1 \subset \cdots \subset \cT_{D}
\end{equation*}
as follows. 
Suppose we have constructed $\cT_{j-1}$ with $j\in \{1,\ldots,D-1\}$.
We will attach at most $2$ edges to $\cT_{j-1}$ and obtain
$\cT_{j}$ to achieve $z_j\in \cT_j$.
The final tree $\cT_D$ will contain $\cT_0$ as a subtree and also
all points $z_1,\ldots,z_D$.
But it may no longer be a subset of $K_f$. 

We load the induction with one additional piece of information.
Recall that $\cT_0$ is covered as in (\ref{eq:T0inclusion}). 
At each step we keep track of additional sets required to cover $\cT_j$ by showing
\begin{equation}
  \label{eq:Tjcover}
  \cT_j \subset \cT_{j-1} \cup W_j\cup \{ z\in K_f :
  \mathrm{dist}(f^{(s)}(z),J_f)\ge c_3(f,r)\}
\end{equation}
where each $W_j$ is either empty or a connected component of
$(f^{(s)})^{-1}(B_{r}(w_j))$ for some $w_j\in J_f$.
We may and will assume that {$c_3(f,r)\le \delta/2$} since
$\delta$ is a function of $f$ and $r$.
Note that $c_3(f,r)$ is independent of $z_1,\ldots,z_D$ and of $s,\nu$.

Let us begin the construction of the $\cT_j$ with $j\in
\{1,\ldots,D\}$.

Recall from above that $\tilde z_j \in U_j, f^{(s)}(\tilde z_j)\in K_f,$ and
$\mathrm{dist}(f^{(s)}(\tilde z_j),J_f)\ge c(f,r)$.

We are now in one of two cases corresponding to the conclusions (ii)
and (iii) of Lemma~\ref{lem:quill}.

\textbf{Case 1:} We have $\mathrm{dist}(f^{(s)}(z_j),J_f)<r/2$.

The center $\mathfrak{c}_j$ of $U_j$ is already a vertex of $\cT_0$.
Thus it is a vertex of $\cT_{j-1}$. We use an allowable
arc inside $U_j$ to connect $\tilde z_j$ to $\mathfrak{c}_j$. If the
allowable arc hits $\cT_{j-1}$ before reaching $\mathfrak{c}_j$, we
stop.

If we pick up a new edge, say $e$, then $e\subset U_j$ by the
definition of allowable arcs.
As $e$ is contained in a ray of $U_j$, the image $f^{(s)}(e)$ is an
allowable arc. 
 By
Lemma~\ref{lem:distarclemma} applied to $c(f,r)$ and $f^{(s)}(\tilde z_j)$,
all points of
$f^{(s)}(e)$ have distance at least $c_3(f,r)$ to $J_f$. Here we allow
{$c_3(f,r)$}
to be small in terms of $c(f,r)$. So this new edge is contained
in the very right-hand side of (\ref{eq:Tjcover}).

While we have now connected $\tilde z_j$ to $\cT_{j-1}$, we still need
to deal with $z_j$. So we add a second edge in order to connect $\tilde
z_j$ to $z_j$. For this we use the fact, afforded by
Lemma~\ref{lem:quill}(ii), that $z_j,\tilde z_j\in W$ where $W$ is a
connected component of the open set
$(f^{(s)})^{-1}(B_{r}(w_j))$ for some $w_j\in J_f$.
We connect $z_j$ to $\tilde z_j$ by any arc
inside $W$. Again, we stop if the arc hits $\cT_{j-1}$ or the
allowable arc from above, before reaching $\tilde z_j$. This
construction leads to an additional $W_j=W$ as in (\ref{eq:Tjcover}).
If $\tilde z_j$ happens to lie in $\cT_{j-1}$ there is nothing to do, and
we set $W_j=\emptyset$. 

\textbf{Case 2:} We have $\mathrm{dist}(f^{(s)}(z_j),J_f)\ge r/2$.

Here $\tilde z_j = z_j$ by Lemma~\ref{lem:quill}(iii). So we need only
connect $\tilde z_j$ to $\mathfrak{c}_j\in \cT_{j-1}$.  
Thus we need only repeat the first step of the construction in
case 1. 
This leads to at most one new edge with is contained in the very
right-hand side of (\ref{eq:Tjcover}). In this case we take $W_j=\emptyset$. 

We have completed our construction. Indeed, the inclusion
(\ref{eq:Tjcover}) yields
\begin{equation*}
  \cT \subset \cT_0 \cup W_1\cup\cdots \cup W_D \cup
  \{z\in K_f : \mathrm{dist}(f^{(s)}(z),J_f)\ge c_3(f,r)\}
\end{equation*}
where $\cT = \cT_D$. Moreover, by (\ref{eq:T0inclusion}) 
and $c_3(f,r)\le \delta/2$ we find
\begin{equation*}
  \cT \subset V_1\cup\cdots\cup V_m \cup W_1\cup\cdots \cup W_D \cup
  \{z\in K_f : \mathrm{dist}(f^{(s)}(z),J_f)\ge c_3(f,r)\}.
\end{equation*}

By the construction, all $z_1,\ldots,z_D$ lie on $\cT$. This is the
property stated in
Proposition~\ref{prop:growtree} part (i).

For part (ii) we  take  $V_1,\ldots,V_n$ to
be  $V_1,\ldots,V_m,W_1,\ldots,W_D$ deprived of the empty set.
So $n\le m+D$. 
To bound $m$ from above we use (\ref{eq:mc4nuds}), we find
$n\le m+D \le D+c_4(f,r) \nu d^s$, as desired. \qed

\subsection{Growing a Tree in the Filled Julia Set}

We conclude this section with a qualitative result for trees that are
contained entirely in the
filled Julia set of a complex polynomial. We relax the conditions on
the polynomial, it no longer needs to be postcritically finite or hyperbolic.

\begin{proposition}
  \label{prop:cappreper}
  Let $f\in\IC[T]$ be monic of degree $d \geq 2$. Suppose that 
  $K_f$ is path-connected and that $f$ is
  not linearly conjugated to a Chebyshev polynomial or a negative Chebyshev
  polynomial. Let $S \subset K_f$ be a finite set. There exists a
  finite topological tree  $\cT\subset K_f$
  with $S\subset \cT$ and  $\mathsf{d}_\infty(\cT) < 0$.
\end{proposition}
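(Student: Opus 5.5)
The plan is to argue by potential theory, comparing capacities: show that every finite topological tree $\cT\subset K_f$ has $\mathsf{d}_\infty(\cT)\le 0$, and that equality forces $f$ to be linearly conjugate to $\pm\Psi_d$.

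\textbf{Building the tree.} First I construct some finite topological tree $\cT\subset K_f$ containing $S$; I may enlarge $S$ so that it has at least two points. A path-connected Hausdorff space is arcwise connected, so any two points of $K_f$ are joined by an arc in $K_f$.
\begin{enumerate}
\item Enumerate $S=\{a_1,\ldots,a_k\}$ and let $\cT_1$ be an arc in $K_f$ from $a_1$ to $a_2$.
\item Given a finite tree $\cT_{i-1}\subset K_f$ with $a_i\notin\cT_{i-1}$, take an arc $\gamma\colon[0,1]\rightarrow K_f$ from $a_i$ to a point of $\cT_{i-1}$.
\item Since $\cT_{i-1}$ is compact, there is a first time $t_0$ with $\gamma(t_0)\in\cT_{i-1}$. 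Set $\cT_i=\cT_{i-1}\cup\gamma([0,t_0])$.
\end{enumerate}
The new arc meets $\cT_{i-1}$ only in its endpoint, so $\cT_i$ is again a finite topological tree, possibly after subdividing the edge containing $\gamma(t_0)$. The final tree $\cT=\cT_k$ lies in $K_f$ and contains $S$.

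\textbf{The upper bound.} Since $f$ is monic, $K_f$ has logarithmic capacity $1$, i.e.\ $\mathsf{d}_\infty(K_f)=0$; this is standard, since the Green function of $\IC\ssm K_f$ equals $\lambda_f$ and is $\log|z|+o(1)$ at infinity. Monotonicity gives $\mathsf{d}_\infty(\cT)\le 0$. It remains to exclude equality, and I expect this to be the main obstacle.

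\textbf{Excluding equality.} Suppose $\mathsf{d}_\infty(\cT)=0$. Then $\cT\subset K_f$ has the same positive capacity as $K_f$. Let $\mu$ be the equilibrium measure of $\cT$. Its energy equals that of the equilibrium measure of $K_f$, and it is a probability measure on $K_f$. By uniqueness of the equilibrium measure (Ransford), $\mu$ is the equilibrium measure of $K_f$. The support of the latter is the outer boundary of $K_f$, which is $J_f$ because $\IC\ssm K_f$ is connected. Hence $J_f\subset\cT$.

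Now I repeat the topological argument from the proof of Proposition~\ref{prop:rhoflessthand}. The complement of a finite tree is connected, and $\partial U\subset J_f\subset\cT$ for any bounded Fatou component $U$. Together these force $K_f\ssm J_f=\emptyset$, so $K_f=J_f\subset\cT\subset K_f$. Thus $K_f=\cT$ is a finite tree. Milnor's Theorem~\ref{thm:chebpropn} then shows that $f$ is linearly conjugate to $\pm\Psi_d$, contradicting the hypothesis. Therefore $\mathsf{d}_\infty(\cT)<0$.

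The delicate points are the two potential-theoretic facts. The first is that capacity equality for $\cT\subset K_f$ forces the equilibrium measures to coincide; this is strict convexity of the energy functional. The second is identifying the support of the equilibrium measure of $K_f$ with $J_f$. I would quote both from Ransford.
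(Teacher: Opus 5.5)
Your proof is correct, but it reaches the strict inequality by a different route than the paper. The tree construction and the normalization $\mathsf{d}_\infty(K_f)=0$ are the same in both.

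\textbf{The paper's route.} The paper invokes Milnor's Theorem~\ref{thm:chebpropn} \emph{first}, to know that $\cT\subsetneq K_f$. It then applies a general strict-monotonicity result, Lemma~\ref{lem:capacity}: if $K'\subsetneq K$ are compact with simply connected complements in $\widehat\IC$ and $\mathsf{d}_\infty(K)>-\infty$, then $\mathsf{d}_\infty(K')<\mathsf{d}_\infty(K)$. The proof compares the normalized Riemann maps of the two complements. If the capacities agree, the Schwarz Lemma forces the maps to coincide, and injectivity then forces $K'=K$.

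\textbf{Your route.} You assume equality of capacities and extract structure from it.
\begin{enumerate}
\item Uniqueness of the equilibrium measure gives $\mathrm{supp}\,\mu_{K_f}\subset\cT$, hence $J_f\subset\cT$.
\item The planar-topology argument from Proposition~\ref{prop:rhoflessthand} upgrades this to $K_f=\cT$.
\item Only then do you apply Milnor's theorem.
\end{enumerate}

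\textbf{Comparison.} The paper's lemma is a soft conformal statement, valid for arbitrary full compacta, with no regularity or support considerations. It does, however, need $\widehat\IC\ssm K_f$ to be simply connected. Your route uses heavier potential theory. In exchange, your capacity step never uses connectedness of $K_f$; path-connectedness enters only in building the tree. It also yields the more informative fact that any compact subset of $K_f$ of full capacity contains $J_f$.

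\textbf{One claim to tighten.} You state that the support of the equilibrium measure is the outer boundary and attribute this to Ransford. In general Ransford only gives $\mathrm{supp}\,\nu\subset\partial_e K$. Equality can fail, for instance for a closed disk with an isolated point adjoined. Moreover, the inclusion you actually need is the reverse one, $J_f\subset\mathrm{supp}\,\mu_{K_f}$.

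This reverse inclusion does hold here, because $K_f$ is regular. The Green function $\lambda_f$ is continuous and vanishes exactly on $K_f$, and $\mu_{K_f}$ is $\frac{1}{2\pi}$ times its distributional Laplacian. Suppose $\mu_{K_f}$ vanished near some $z\in J_f$. Then $\lambda_f\ge 0$ would be harmonic near $z$ with $\lambda_f(z)=0$, so the minimum principle would force $\lambda_f\equiv 0$ near $z$. That is impossible, since $z\in\partial(\IC\ssm K_f)$. Alternatively, cite Brolin's theorem. With that justification in place, your proof is complete.
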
 

The proof of Proposition~\ref{prop:cappreper} needs the following
fact about the transfinite diameter. We were unable to locate a
reference for this elementary fact.
Let $\widehat\IC$ denote the Riemann sphere $\IC\cup\{\infty\}$. 

\begin{lemma}
  \label{lem:capacity} Let $K' \subsetneq K\subset \mathbb{C}$ be two
  compact subsets, such that $\widehat{\IC}\ssm K$ and
  $\widehat\IC\ssm K'$ are simply connected and such that
  $\mathsf{d}_\infty(K)>-\infty$.  Then
  $$\mathsf{d}_\infty(K') < \mathsf{d}_\infty(K).$$
\end{lemma}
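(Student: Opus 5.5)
We plan to argue through Green's functions and equilibrium measures. Since $\mathsf{d}_\infty(K)>-\infty$, $K$ has positive capacity, so the Green's function $g_K$ of $\Omega=\widehat\IC\ssm K$ with pole at $\infty$ exists. It satisfies $g_K(z)=\log|z|-\mathsf{d}_\infty(K)+o(1)$ as $z\to\infty$, where we use that the logarithmic transfinite diameter equals the logarithmic capacity (Fekete--Szeg\H{o}, Theorem~5.5.2~\cite{Ransford}). If $\mathsf{d}_\infty(K')=-\infty$ there is nothing to prove. So we assume $K'$ has positive capacity and let $g_{K'}$ be the Green's function of $\Omega'=\widehat\IC\ssm K'\supset\Omega$.

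Consider $h=g_{K'}-g_K$ on $\Omega$. It is harmonic on $\Omega\ssm\{\infty\}$, and it extends harmonically to $\infty$ with value $h(\infty)=\mathsf{d}_\infty(K)-\mathsf{d}_\infty(K')$. Since $\Omega$ is simply connected, hence has no isolated boundary components of zero capacity, it is regular for the Dirichlet problem except on a polar set. So $g_K\to 0$ quasi-everywhere on $\partial K$, while $g_{K'}\ge 0$ everywhere. The extended maximum principle (boundedness holds near $\partial K$ since $g_{K'}$ is bounded on compacta) then gives $h\ge 0$ on $\Omega$, hence $\mathsf{d}_\infty(K')\le \mathsf{d}_\infty(K)$.

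For strictness, suppose $h(\infty)=0$. Then $h$ attains its minimum $0$ at an interior point of the connected open set $\Omega$, so $h\equiv 0$ and $g_{K'}=g_K$ on $\Omega$. Now take $z_0\in K\ssm K'$. We claim $g_{K'}(z_0)>0$ while $g_K$ tends to $0$ at points of $K$ approached from $\Omega$. The main obstacle is that $z_0$ may be an interior point of $K$, not approached from $\Omega$.

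This is where simple connectivity enters. Because $\Omega'$ is connected and contains both $z_0$ and $\infty$, while $K$ is compact and $\Omega$ is connected, we will choose a point $z_1\in\partial K\cap\Omega'$. Such a point exists: otherwise $\partial K\subset K'$, and then $\Omega'\ssm K\subset\Omega'$ would be separated from $\mathrm{int}(K)\cap\Omega'\ni$ (a point near $z_0$), contradicting connectedness of $\Omega'$. More precisely, $\Omega'=(\Omega'\cap \mathrm{int}K)\sqcup(\Omega'\cap\Omega)$ would be a union of disjoint open sets, both nonempty if $\partial K\subset K'$, and we handle the case $z_0\in\partial K$ directly.

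At $z_1$ we have $g_{K'}(z_1)>0$, since Green's functions are strictly positive on their domain. By continuity, $g_{K'}\ge c>0$ on a neighborhood $B$ of $z_1$ inside $\Omega'$. However, $g_K$ restricted to $B\cap\Omega$ has boundary limit $0$ quasi-everywhere on $\partial K\cap B$. This set has positive capacity: a nonempty relatively open piece of the boundary of a simply connected domain is a nondegenerate continuum piece, or at least contains a continuum unless $K$ is a point, which is excluded by positive capacity. This contradicts $g_K=g_{K'}\ge c$ on $B\cap\Omega$. Hence $h(\infty)>0$, which is the claim.
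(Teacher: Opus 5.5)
Your proof is correct, but it follows a different route from the paper's.

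\textbf{The paper's route.} The paper rescales to $\mathsf{d}_\infty(K)=0$ and takes Riemann maps $r,r'$ of $\widehat\IC\ssm K$ and $\widehat\IC\ssm K'$ onto $\Delta$, sending $\infty$ to $0$ and normalized there by the capacities via Fekete--Szeg\H{o}. If the two transfinite diameters agreed, the composite self-map $\phi=r'\circ r^{-1}$ of $\Delta$ would fix $0$ with derivative $1$. The Schwarz lemma then forces $r'=r$ on $\widehat\IC\ssm K$. Strictness follows from surjectivity of $r$ and injectivity of $r'$ at a point of $K\ssm K'$, with no boundary analysis at all.

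\textbf{How the two routes correspond.} Your Green's functions are exactly $g_K=-\log|r|$ and $g_{K'}=-\log|r'|$. Hence $h=g_{K'}-g_K=-\log|(\phi\circ r)/r|$. So your monotonicity step and the strong minimum principle for $h$ are the potential-theoretic form of the Schwarz lemma: $h\ge 0$ is $|\phi(w)|\le|w|$, and $h(\infty)=0$ is $|\phi'(0)|=1$.

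\textbf{What each route costs and buys.} Your strictness argument needs more: a point $z_1\in\partial K\ssm K'$, which your connectedness argument for $\widehat\IC\ssm K'$ produces correctly, together with the vanishing of $g_K$ at $z_1$. In exchange, you never need a Riemann map for $\widehat\IC\ssm K'$, only that it is connected and $K'$ is non-polar. So your argument also covers, for instance, a disconnected $K'$ inside a disk $K$.

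\textbf{Two details to tighten.}
\begin{enumerate}
\item Replace the loose claim that $\partial K\cap B$ contains a continuum. It is cleaner to note that every point of $\partial K$ is a regular boundary point of $\widehat\IC\ssm K$, because $K$ is connected with more than one point. Then $g_K(z)\to0$ as $z\to z_1$ directly.
\item In the monotonicity step, the boundedness you actually need is that of $g_K$ from above on the bounded part of $\widehat\IC\ssm K$, not that of $g_{K'}$.
\end{enumerate}
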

\begin{proof}
  We have
  $\mathsf{d}_\infty(K')\le \mathsf{d}_\infty(K)$ since $K'\subset K$.
  The burden is to prove strict inequality. 

  After rescaling, we may assume  $\mathsf{d}_\infty(K) = 0$.
  If $K'$ is a singleton, then $\mathsf{d}_{\infty}(K')=-\infty$ and
  we are done.

  Now assume that $K'$ contains at least $2$ points.
  Both $\widehat{\mathbb{C}}\ssm K$ and $\widehat{\mathbb{C}}\ssm K'$
  are simply connected and their complements contained at least $2$
  points.
  So the Riemann Mapping Theorem provides biholomorphisms $r\colon
  \widehat{\mathbb{C}}\ssm K \rightarrow \Delta, r' \colon
  \widehat{\mathbb{C}}\ssm K' \rightarrow \Delta$,
  where $\Delta = \{z\in \IC : |z|<1\}$.
  
  For $z$ large enough we have $r(z) = z + O(1)$
  by the Fekete--Szeg\H{o} Theorem,
  Theorem 5.5.2~\cite{Ransford}, and as $K$ has transfinite diameter $1$.
  
  If  $\mathsf{d}_\infty(K')=0$ as well,
  then $r'(z) = z + O(1)$. In this case, $f= r\circ (r')^{-1}$
  maps $\Delta$ to $\Delta$ and satisfies $f(0) = 0$ and $f'(0) = 1$. By the
  Schwarz Lemma,  $f$ is the identity. Thus $r$ is the
  restriction of $r'$ to $\widehat{\mathbb{C}}\ssm K$.
  This is impossible; indeed, 
  let $z\in K\ssm K'$. There is $w\in \widehat
  \IC\ssm K$ with $r(w)=r'(z)$.
  But $r(w)=r'(w)$ by the argument above. So $z=w$ as $r'$ is
  injective, a contradiction. 
\end{proof}

\begin{proof}[Proof of Proposition~\ref{prop:cappreper}]
  Since $K_f$ is in particular connected, Theorem 9.5~\cite{Milnor}
  implies that $\widehat\IC\ssm K_f$ is simply connected.

  Every finite set $S\subset K_f$ is contained in a finite  topological
  tree contained entirely in $K_f$. 
  If $f$ is postcritically finite, the tree
  can be chosen to be the allowable hull of the points in $S$, see
  Section~\ref{sec:trees}. 
  For general $f$, we fix a point $s_0\in S$ as a root  of the
  tree. Then 
  we continue by successively attaching the points in $S\ssm\{s_0\}$ using
  arcs inside $K_f$ to $s_0$. If an arc hits part of the tree
  under constructing we stop  and designate the intersection point as  a
  new   vertex.
  The resulting graph is connected. Its number of vertices equals
  the number of edges plus one. So it is a tree, say $\cT$, containing
  $S$ and with $\cT\subset K_f$. 

  Theorem \ref{thm:chebpropn} implies $\cT \subsetneq K_f$. Since
  $f$ is monic, we have $\mathsf{d}_\infty(K_f)=0$, see
  Theorem~6.5.1~\cite{Ransford}. Hence $\mathsf{d}_\infty(K_S)< 0$ by
  Lemma \ref{lem:capacity}, as desired.
\end{proof}


\section{Constructing the Power Series}
\label{sec:powerseries}
\subsection{Integrality and Postcritically Bounded Maps}

Let $F$ be a number field and let $p$ be a prime number.
Let $f\in F[T]$ be monic of degree $d\ge 2$.

Fix any  finite field  extension $F'/F$ such that derivative $f'$ splits
completely in $F'[T]$. We write $c_1,\ldots,c_{d-1}\in F'$ for the
critical points of $f$, with multiplicities. The barycenter of $f=T^d+
f_1T^{d-1}+\cdots $,
given by (\ref{def:barycenter}), equals
\begin{equation}
  \label{def:barycenter2}
  -\frac{f_1}{d}=  \frac{c_1+\cdots+c_{d-1}}{d-1}.
\end{equation}

We say that $f$ is \textit{postcritically bounded} above $p$
if for all places $v$ of $F'$ above $p$ and all $i\in
\{1,\ldots,d-1\}$ the forward orbit
$\{f^{(n)}(c_i) : n\in\IN_0\}$ is bounded with respect to
$|\cdot|_v$. This condition depends only on $f$ and is independent of
the choice of $F'$. 

For example, $f$ is postcritically bounded above $p$ if it is
postcritically finite. That is, if $\{f^{(n)}(c_i):n\in\IN\}$ is
finite for all $i$.

Let $R$ be a subring of $F$ suppose $f\in R[T]$.
Let $A\in R[T]$ be a monic polynomial.
Suppose $A$ splits in a field extension $K$ of $F$.
Hence $A=(X-x_1) \cdots (X-x_D)$ with $x_1,\dots,x_D\in K$. For all
$k\in\IN_0$ we define
\begin{equation}
  \label{def:Ak}
  A_k = (X-f^{(k)}(x_1))\cdots (X-f^{(k)}(x_D)).
\end{equation}
The fundamental theorem on elementary symmetric polynomials implies
that the coefficients of $A_k$ lie in $R$. Moreover, $A_k$ is
independent of the choice of $K$.

The goal of this section is to generalize a congruence
condition by Dimitrov~\cite{dimitrov:SZ} going back to older work of
Smyth~\cite{smyth:coloringproof} and Arnold~\cite{arnold:eulerfermat}.
The authors later proved a  version~\cite{hs:2021lower} suitable for
 dynamical systems coming from certain polynomials. 

Our main new tool is an integrality result of
Epstein~\cite{epstein:12} on postcritically bounded
polynomials of prime power degree. So we will assume that $d$, the
degree of $f$, is $p^e$ for some $e\in\IN$. 

Let us define
\begin{equation}
  \label{def:OFp}
  \cO_{F,p} = \{x\in F : |x|_v\le 1 \text{ for all }v\in M_F\text{
    with }v\mid p\}.
\end{equation}
Let $\cO_F$ denote the ring of algebraic integers of $F$. 
Then $\cO_{F,p}$ is the localization of 
$\cO_F$  outside the (finite) union of all
prime ideals of $\cO_F$ containing $p$.

In order to state the congruence condition we will require the
following technical notion. 

\begin{defi}
  \label{def:Ffadmissible}
  Let $f\in F[T]$ have degree $d\ge 2$ which is a power of $p$.
  We say that the pair $(k,l)\in\IN^2$ is $(F,f)$-admissible if
  the following two properties hold true.
  \begin{enumerate}
  \item [(i)] For all $a\in \cO_{F,p}$ we have $a^{d^{k-1}} \equiv
    a^{d^{l-1}}\imod {p\cO_{F,p}}$.
  \item[(ii)] We have $f^{(k-1)}(0) - f^{(l-1)}(0) \in p\cO_{F,p}$. 
  \end{enumerate}
\end{defi}

All diagonal pairs are $(F,f)$ admissible.
But such pairs are useless in our application.
We will require an admissible pair with distinct entries. Further
down we exhibit the existence of such pairs, see
Lemmas~\ref{lem:conglemma_new_cor} and \ref{lem:conglemma_unicritical}.

\begin{propo}
  \label{prop:congruence}
  Let $F$ be a number field and $p$ a prime number. Let $d=p^e$ for
  some $e\in\IN$. Let $f\in F[T]$ be monic and of degree $d$. Suppose
  that $f$ is postcritically bounded above $p$ and that its barycenter
  is in $\cO_{F,p}$. Then the following hold true.
  \begin{enumerate}
  \item [(i)] We have $f\in \cO_{F,p}[T]$. Moreover,
    $A_k\in \cO_{F,p}[X]$ for all monic $A\in \cO_{F,p}[X]$ and
    all $k\in\IN_0$.
  \item[(ii)] Let $(k,l)\in\IN^2$ be an $(F,f)$-admissible pair. Then
    $A_k\equiv A_l \imod{dp \cO_{F,p}[X]}$
    for all monic $A\in \cO_{F,p}[X]$.  
  \end{enumerate}  
\end{propo}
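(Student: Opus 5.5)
The plan is to separate the integrality statement (i), which should follow quickly from Epstein's theorem, from the congruence statement (ii). For (ii) I would use an inductive ``lifting the exponent'' argument: first prove a congruence modulo $p$ for the $(k-1)$-st and $(l-1)$-st pushforwards, then upgrade it to modulo $dp$ by applying $f$ once more.

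For (i), write $b\in\cO_{F,p}$ for the barycenter and let $g(T)=f(T+b)-b$ be the centered conjugate. It is again monic of degree $d=p^e$, and it is postcritically bounded above $p$, since its critical orbits are translates of those of $f$. Epstein's integrality theorem~\cite{epstein:12} for centered, postcritically bounded polynomials of prime power degree gives $g\in\cO_{F,p}[T]$. Since $b$ is $p$-integral, we get $f\in\cO_{F,p}[T]$. Then for monic $A\in\cO_{F,p}[X]$ with roots $x_i$, the $f^{(k)}(x_i)$ are integral over $\cO_{F,p}$. The coefficients of $A_k$ are symmetric polynomials in the $x_i$ with coefficients in $\cO_{F,p}$, so they lie in $\cO_{F,p}$. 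Alternatively, $A_k(X)=\mathrm{Res}_Y(A(Y),X-f^{(k)}(Y))$ makes this evident.

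For (ii), I would first extract from Epstein's work the shape of $f$ modulo $p$. Since all critical points are $v$-adically bounded for $v\mid p$, the reduction of $f$ has vanishing derivative. Hence $f\equiv T^d+f(0)\imod{p\cO_{F,p}[T]}$, and more precisely $f=T^d+f(0)+p\,h$ with $h\in\cO_{F,p}[T]$; I expect this is essentially Lemma~\ref{lem:fkcongpsqr}. Iterating, and using that $x\mapsto x^p$ is additive modulo $p$, gives
\[
f^{(k-1)}(x)\equiv x^{d^{k-1}}+f^{(k-2)}(0)^{d}+\cdots\equiv x^{d^{k-1}}+f^{(k-1)}(0)-0^{d^{k-1}} \pmod{p}.
\]
Combined with condition (i) of Definition~\ref{def:Ffadmissible}, applied to coefficients of $A$ through Frobenius on the residue rings, and condition (ii), this yields $A_{k-1}\equiv A_{l-1}\imod{p\cO_{F,p}[X]}$. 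To make this rigorous I would work with coefficients via the Frobenius twist $A\mapsto A^{(\mathrm{Frob})}$, rather than with individual roots.

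The heart of the argument is a pushforward lemma. If $B,B'\in\cO_{F,p}[X]$ are monic of the same degree with $B\equiv B'\imod p$, then $f_*B\equiv f_*B'\imod{dp}$, where $f_*B(X)=\mathrm{Res}_Y(B(Y),X-f(Y))$. Applying it to $B=A_{k-1}$ and $B'=A_{l-1}$ gives (ii). To prove it I would write $B'=B+pC$ and view the resultant as a polynomial in the perturbation. The contribution linear in $pC$ involves $f'$, and by the previous paragraph $f'$ is divisible by $p$; indeed $f'=dT^{d-1}+p h'$, and I need to check that $ph'$ is divisible by $dp$, presumably from Epstein's refined statement. Higher order terms carry $p^2$, and I need to compare these with $dp$ when $e\ge 2$, which seems delicate. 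If the resultant approach gets messy, I would instead compare power sums via Newton's identities, noting $\mathrm{tr}(f(y)^m)$ and the congruence $u\equiv v\imod p\Rightarrow u^{p^e}\equiv v^{p^e}\imod{p^{e+1}}$, which is exactly $dp$. The main obstacle is this step: establishing the divisibility by $dp$ for the linear and higher order terms, and dealing with denominators in Newton's identities, which are not invertible at $p$. If needed I would use the Dimitrov--Smyth--Arnold style argument of~\cite{hs:2021lower} with $T^d$ replaced by $f$.
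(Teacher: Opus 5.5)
\textbf{There is a genuine gap in part (ii).} Your decomposition of (ii) is sound and differs from the paper's. Part (i) is fine. The congruence $A_{k-1}\equiv A_{l-1}\imod{p\cO_{F,p}[X]}$ does follow from $f^{(k-1)}\equiv T^{d^{k-1}}+f^{(k-1)}(0)$ modulo $p$ together with the two admissibility conditions.

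One correction along the way: a derivative vanishing modulo $p$ does not by itself give $f\equiv T^d+f(0)$; think of $T^4+T^2$ over $\IF_2$. What you need is $f'=d\prod_i(T-c_i)\in d\,\cO_{F,p}[T]$, which is immediate from Epstein's integrality of the $c_i$. This also settles your worry about $h'$.

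Everything then rests on the push-forward lemma $B\equiv B'\imod{p}\Rightarrow f_*B\equiv f_*B'\imod{dp}$, and you do not prove it. You rightly call it the main obstacle, but your sketch does not get past it:
\begin{itemize}
\item Terms of order $\ge 2$ in $pC$ only visibly carry $p^2$, which falls short of $dp=p^{e+1}$ as soon as $e\ge 2$.
\item Even for the linear term, $f'\in d\,\cO_{F,p}[T]$ is a statement about roots. The roots of $B$ and $B'$ need not be pairwise congruent modulo $p$ (compare $X^2-p$ with $X^2$), so the divisibility has to be transferred to the coefficients of $B$.
\item Newton's identities lose exactly the $p$-adic precision that is needed.
\end{itemize}
As written, (ii) is not established.

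The missing idea is that first-order divisibility in coefficient space controls all orders. Let $\Phi=\sum_\iota\phi_\iota b^\iota$ be the polynomial map over $\cO_{F,p}$ sending the coefficient vector $b$ of $B$ to that of $f_*B$.

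First, $\partial\Phi/\partial b_j\in d\,\cO_{F,p}[b]$. In the roots, $\partial_{y_j}e_s(f(y_1),\ldots,f(y_D))\in d\,\cO_{F,p}[\boldsymbol y]$. This transfers to $b$ by the chain rule for three reasons: the Jacobian of $(e_1,\ldots,e_D)$ is a Vandermonde determinant; this determinant is a non-zero-divisor in $(\cO_{F,p}/d\cO_{F,p})[\boldsymbol y]$; and the $e_j$ are algebraically independent over $\cO_{F,p}/d\cO_{F,p}$.

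Second, this forces $\iota_j\phi_\iota\in p^e\cO_{F,p}$. Using $m\binom{\iota_j}{m}=\iota_j\binom{\iota_j-1}{m-1}$ and $v_p(m)\le m-1$, it follows that $\binom{\iota_j}{m}\phi_\iota\in p^{\max\{0,e+1-m\}}\cO_{F,p}$ for $m\ge1$. Hence every Taylor term of total order $|\beta|\ge 1$ in $\Phi(b+pc)$ lies in $p^{|\beta|}p^{\max\{0,e+1-|\beta|\}}\cO_{F,p}\subseteq dp\,\cO_{F,p}$.

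With these two facts your route closes, and it is arguably tidier than the paper's. The paper instead works directly with $Q=e_s(f^{(k)}(X_1),\ldots,f^{(k)}(X_D))$, whose partials lie in $d^k\cO_{F,p}[\boldsymbol X]$:
\begin{itemize}
\item Lemma~\ref{lem:symmetricppower} gives $Q=\sum_i p^i\tilde Q_i(\boldsymbol X^{d^k/p^i})$ with symmetric $\tilde Q_i$.
\item The Witt-vector congruence of Lemma~\ref{lem:ppowermodulp}, with admissibility (i), yields $Q(\boldsymbol x)\equiv Q(\boldsymbol x^{d^{l-k}})\imod{dp}$.
\item Lemma~\ref{lem:fkcongpsqr}(iv), with admissibility (ii), identifies the right-hand side with the coefficients of $A_l$ modulo $dp$.
\end{itemize}
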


\subsection{Congruence Conditions}

In this section we prove Proposition~\ref{prop:congruence}. We take a
somewhat different approach from the authors's earlier special
case~\cite{hs:2021lower}. Apart from systematically using Epstein's
work~\cite{epstein:12}, we emphasize computations that are reminiscent
to the theory of Witt vectors.

Let $p$ denote a prime number.

We repeat here a well-known congruence relation which we use freely
throughout this section.
\begin{lemma}
  \label{lem:congruencedp}
  Let $R$ be a ring and let $n\in\IN_0$. Then
  \begin{equation*}
    (x+py)^{p^{n}} \equiv x^{p^{n}} \imod {p^{n+1} R}
  \end{equation*}
  for all $x,y\in R$. 
\end{lemma}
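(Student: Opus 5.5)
We argue by induction on $n$, with $R$ and $x,y$ arbitrary. For $n=0$ the claim $(x+py)\equiv x \imod{pR}$ is immediate.

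For the inductive step, assume $(x+py)^{p^{n}} = x^{p^{n}} + p^{n+1}z$ for some $z\in R$. Raise both sides to the $p$-th power and expand with the binomial theorem:
\begin{equation*}
  (x+py)^{p^{n+1}} = \sum_{i=0}^{p} \binom{p}{i} x^{p^{n}(p-i)} \bigl(p^{n+1}z\bigr)^{i}.
\end{equation*}
The term $i=0$ is $x^{p^{n+1}}$. We check that every other term lies in $p^{n+2}R$.

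For $1\le i\le p-1$, the prime $p$ divides $\binom{p}{i}$, so the term is divisible by $p\cdot p^{(n+1)i}$. Since $i\ge1$, this is divisible by $p^{n+2}$.

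For $i=p$, the term is $p^{(n+1)p}z^p$. Here $(n+1)p\ge 2(n+1)\ge n+2$, so it also lies in $p^{n+2}R$.

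Hence $(x+py)^{p^{n+1}}\equiv x^{p^{n+1}}\imod{p^{n+2}R}$, which completes the induction. No step is a genuine obstacle. The only point needing attention is the last term when $p=2$, where the bound $(n+1)p\ge n+2$ uses $p\ge2$ and $n\ge0$.
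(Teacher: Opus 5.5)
Your proof is correct and is essentially the paper's argument: induction on $n$, raising $x^{p^n}+p^{n+1}z$ to the $p$-th power and using $p\mid{p\choose i}$ for $0<i<p$. The only difference is packaging. The paper runs this induction for the universal element, proving $(1+pT)^{p^n}\equiv 1 \imod{p^{n+1}\IZ[T]}$, reads off $p^{n+1}\mid {p^n\choose k}p^k$ for $k\ge 1$, and then expands $(x+py)^{p^n}$ in $R$. You instead induct directly in $R$, which works just as well because each step reuses the same $x,y$.
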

\begin{proof}
  We claim that $(1+p T)^{p^n} \equiv 1 \imod{p^{n+1}\IZ[T]}$ for all $n\ge
  0$. Granted this claim,  $p^{n+1}$
  divides ${p^n\choose k} p^k$ for all $k\in \{1,\ldots,p^n\}$ and this
  implies the lemma. The claim clearly holds for $n=0$.
  If it holds for $n$, then $(1+pT)^{p^n} = 1+ p^{n+1} B$ for some
  $B\in\IZ[T]$. Then it holds for $n+1$ as
  $(1+pT)^{p^{n+1}}  = (1+p^{n+1}B)^p \equiv 1
  \imod{p^{n+2}\IZ[T]}$ by Fermat's Little Theorem. 
\end{proof}

We come to a lemma that is reminiscent of calculations involving the ghost
components of Witt vectors. Let $D\ge 1$ be an integer and
let $X_1,\ldots,X_D$ denote independents.
We abbreviate the tuple $(X_1,\ldots,X_D)$ with the symbol
$\boldsymbol X$ and set ${\boldsymbol X}^k = (X_1^k,\ldots,X_D^k)$ for
all $k\in\IN_0$.

\begin{lemma}
  \label{lem:witt}
  Let $Q\in \IZ[{\boldsymbol X}]$. There
  exist $Q_0,Q_1,\ldots \in \IZ[{\boldsymbol X}]$ with the following
  property. For all $k\in \IN_0$ we have 
  \begin{equation}
    \label{eq:witt}
    Q({\boldsymbol X}^{p^k}) = \sum_{i=0}^k p^i Q_i^{p^{k-i}}.
  \end{equation}
  Moreover, if $Q$ is a symmetric polynomial, then we may assume
  that $Q_0,Q_1,\ldots$ are symmetric symmetric polynomials.
\end{lemma}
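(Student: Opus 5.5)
The $Q_i$ are constructed recursively, Witt-vector style, as ghost-component inversion. Set $Q_0 = Q$; then (\ref{eq:witt}) holds for $k=0$. Suppose $Q_0,\ldots,Q_{k-1}$ have been constructed so that (\ref{eq:witt}) holds for all indices up to $k-1$. Define
\begin{equation*}
  Q_k = \frac{1}{p^k}\left( Q({\boldsymbol X}^{p^k}) - \sum_{i=0}^{k-1} p^i Q_i^{p^{k-i}}\right) \in \IQ[{\boldsymbol X}].
\end{equation*}
Then (\ref{eq:witt}) holds for $k$ by construction. The only real task is to show that $Q_k$ has integer coefficients.

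The integrality step is the main point. By the induction hypothesis at level $k-1$, applied with ${\boldsymbol X}$ replaced by ${\boldsymbol X}^p$, we get
\begin{equation*}
  Q({\boldsymbol X}^{p^k}) = Q(({\boldsymbol X}^p)^{p^{k-1}}) = \sum_{i=0}^{k-1} p^i Q_i({\boldsymbol X}^p)^{p^{k-1-i}}.
\end{equation*}
It therefore suffices to show, for each $i\le k-1$, that
\begin{equation*}
  p^i Q_i({\boldsymbol X}^p)^{p^{k-1-i}} \equiv p^i Q_i({\boldsymbol X})^{p^{k-i}} \imod{p^k\IZ[{\boldsymbol X}]}.
\end{equation*}
Equivalently, we need $Q_i({\boldsymbol X}^p)^{p^{k-1-i}} \equiv (Q_i^p)^{p^{k-1-i}} \imod{p^{k-i}}$.

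This follows in two moves. First, since $Q_i\in\IZ[{\boldsymbol X}]$, Frobenius in characteristic $p$ (Fermat's little theorem on the coefficients, plus additivity of the $p$-th power mod $p$) gives $Q_i({\boldsymbol X}^p) = Q_i^p + pY$ for some $Y\in\IZ[{\boldsymbol X}]$. Second, apply Lemma~\ref{lem:congruencedp} with $R=\IZ[{\boldsymbol X}]$ and $n=k-1-i$: raising to the power $p^{k-1-i}$ upgrades the congruence modulo $p$ to one modulo $p^{k-i}$. Summing over $i$ then shows that the numerator in the definition of $Q_k$ lies in $p^k\IZ[{\boldsymbol X}]$, so $Q_k\in\IZ[{\boldsymbol X}]$.

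Symmetry comes for free. If $Q$ is symmetric, then each $Q_k$ is a polynomial expression, with rational coefficients, in $Q({\boldsymbol X}^{p^k})$ and $Q_0,\ldots,Q_{k-1}$. Each of these is symmetric: $Q({\boldsymbol X}^{p^k})$ because permuting the variables commutes with ${\boldsymbol X}\mapsto{\boldsymbol X}^{p^k}$, and the earlier $Q_i$ by induction. Hence $Q_k$ is symmetric as well.

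I expect no genuine obstacle here. The one point needing care is that the inductive hypothesis must be applied to the substituted variables ${\boldsymbol X}^p$. This is legitimate because the identity (\ref{eq:witt}) holds as a polynomial identity, so it survives substitution.
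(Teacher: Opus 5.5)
Your proof is correct and follows essentially the same route as the paper: substitute ${\boldsymbol X}^p$ into the previous identity, use Fermat's little theorem to write $Q_i({\boldsymbol X}^p)=Q_i^p+pR_i$, upgrade this via Lemma~\ref{lem:congruencedp} to a congruence modulo $p^{k-i}$, and deduce symmetry from the explicit formula for $Q_k$. The only cosmetic difference is the order of steps: the paper sets $Q_{k+1}$ equal to the sum of the integral error terms and invokes your explicit formula only for uniqueness and symmetry, whereas you define $Q_k$ by that formula and then verify integrality.
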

\begin{proof}
  We set $Q_0=Q$ and obtain (\ref{eq:witt}) for $k=0$.
  Assume $k\ge 0$ and that $Q_0,\ldots,Q_k$ have been constructed and satisfy
  (\ref{eq:witt}). We  will construct $Q_{k+1}$ and establish   (\ref{eq:witt})
  for $k+1$.
  We substitute ${\boldsymbol X}^p$ into (\ref{eq:witt}) and find 
  \begin{equation*}   
    Q({\boldsymbol X}^{p^{k+1}}) = \sum_{i=0}^k p^i Q_i({\boldsymbol X}^p)^{p^{k-i}}.
  \end{equation*}
  
  By Fermat's Little Theorem,  there is $R_i\in
  \IZ[{\boldsymbol X}]$ with $Q_i({\boldsymbol X}^p) = Q_i^p + pR_i$
  for all $i\in \{0,\ldots,k\}$.
  Lemma~\ref{lem:congruencedp} yields $Q_i({\boldsymbol X}^p)^{p^{k-i}} =
  Q_i^{p^{k+1-i}} + p^{k+1-i} S_{i}$ for some
  $S_{i}\in\IZ[{\boldsymbol X}]$. 
  Thus
  \begin{equation*}   
    Q({\boldsymbol X}^{p^{k+1}}) 
    = 
    \sum_{i=0}^k p^i Q_i^{p^{k+1-i}} + \sum_{i=0}^k p^{k+1}S_{i}.
  \end{equation*}
  We set $Q_{k+1} = \sum_{i=0}^k S_{i} \in \IZ[{\boldsymbol X}]$ and
  find that (\ref{eq:witt}) holds for $k+1$. 
  
  We have established that $Q_0,Q_1,\ldots$ as in (\ref{eq:witt})
  exist with integral coefficients. The
  existence is unique since 
  $Q_k = \frac{1}{p^k} (Q({\boldsymbol X}^{p^k})- (Q_0^{p^k}+ p
  Q_1^{p^{k-1}}+\cdots + p^{k-1} Q_{k-1}^p))$ for $k\in\IN_0$.
  Moreover, this equality and an induction on $k$ implies that all $Q_k$ are
  symmetric if $Q$ is symmetric. 
\end{proof}

For  $j\in \{0,\ldots,D\}$ we let
$e_j\in\IZ[X_1,\ldots,X_D]$
denote the elementary symmetric polynomial of degree $j$. 

\begin{lemma}
  \label{lem:ppowermodulp}
  Let $R$ be an integral domain.
  Let $k,l\in\IN$ such that
  \begin{equation}
    \label{eq:apkapl}
    a^{p^{k}}\equiv a^{p^{l}} \imod {pR} \quad\text{for all}\quad
    a\in R.
  \end{equation}
  Let $A\in R[X]$ be monic with $A = \prod_{i=1}^D (X-x_i)$
  where $x_1,\ldots,x_D$ lie in a splitting field of $A$ over the field of
  fractions of $R$.
  Then for all symmetric  $Q\in\IZ[{\boldsymbol X}]$ and all $j\in\IN_0$ we
  have 
  $    Q(x_1^{p^{k+j}},\ldots,x_D^{p^{k+j}}),
  Q(x_l^{p^{l+j}},\ldots,x_D^{p^{l+j}})\in R$ and 
  \begin{equation*}
    Q(x_1^{p^{k+j}},\ldots,x_D^{p^{k+j}}) \equiv
    Q(x_1^{p^{l+j}},\ldots,x_D^{p^{l+j}}) \imod{p^{j+1}R}
  \end{equation*}
  for all $j\in\IN_0$. 
\end{lemma}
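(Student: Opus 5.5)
Integrality is immediate. Since $Q$ is symmetric, $Q(\boldsymbol X^{m})$ is again a symmetric polynomial in $\IZ[\boldsymbol X]$ for every $m\in\IN$. By the fundamental theorem on symmetric polynomials it is therefore an integral polynomial in $e_1,\ldots,e_D$. Evaluated at $(x_1,\ldots,x_D)$, the $e_j$ give $\pm$ the coefficients of $A$, which lie in $R$. Hence both $Q(x_1^{p^{k+j}},\ldots,x_D^{p^{k+j}})$ and $Q(x_1^{p^{l+j}},\ldots,x_D^{p^{l+j}})$ lie in $R$.

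For the congruence we use Lemma~\ref{lem:witt}, applied to $Q$ with the exponent $p^{j}$, and then evaluate at $\boldsymbol X=\boldsymbol x^{p^k}$ and at $\boldsymbol X=\boldsymbol x^{p^l}$. Let $Q_0,Q_1,\ldots$ be the symmetric polynomials from Lemma~\ref{lem:witt}. Then
\begin{equation*}
  Q(\boldsymbol x^{p^{k+j}})=\sum_{i=0}^{j}p^i\,Q_i(\boldsymbol x^{p^k})^{p^{j-i}},
  \qquad
  Q(\boldsymbol x^{p^{l+j}})=\sum_{i=0}^{j}p^i\,Q_i(\boldsymbol x^{p^l})^{p^{j-i}}.
\end{equation*}
By the integrality argument, each $a_i=Q_i(\boldsymbol x^{p^k})$ and $b_i=Q_i(\boldsymbol x^{p^l})$ lies in $R$. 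It therefore suffices to show $a_i\equiv b_i \imod{pR}$ for each $i$. Indeed, once $b_i=a_i+py_i$ with $y_i\in R$, Lemma~\ref{lem:congruencedp} gives $a_i^{p^{j-i}}\equiv b_i^{p^{j-i}}\imod{p^{j-i+1}R}$. Multiplying by $p^i$ yields a congruence modulo $p^{j+1}R$, and summing over $i$ gives the claim.

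The main step is to show $Q_i(\boldsymbol x^{p^k})\equiv Q_i(\boldsymbol x^{p^l})\imod{pR}$ for a symmetric $P=Q_i\in\IZ[\boldsymbol X]$. The hypothesis (\ref{eq:apkapl}) only concerns elements of $R$, whereas the $x_i$ live in an extension. We handle this by writing $P(\boldsymbol X^{p^k})$ in the elementary symmetric polynomials. Since $p$-th powering is a ring endomorphism modulo $p$, we have $P(\boldsymbol X^{p})\equiv P(\boldsymbol X)^p \imod{p\IZ[\boldsymbol X]}$, and iterating gives $P(\boldsymbol X^{p^k})\equiv P(\boldsymbol X)^{p^k}$. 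Both sides are symmetric, so their difference is $p$ times a symmetric polynomial with integer coefficients (the division by $p$ preserves symmetry and integrality). That quotient is again an integer polynomial in the $e_j$, and so evaluates into $R$. Therefore $P(\boldsymbol x^{p^k})\equiv c^{p^k}\imod{pR}$ with $c=P(\boldsymbol x)\in R$, and likewise $P(\boldsymbol x^{p^l})\equiv c^{p^l}\imod{pR}$. The hypothesis (\ref{eq:apkapl}) applied to $a=c$ gives $c^{p^k}\equiv c^{p^l}\imod{pR}$, which completes the argument.

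The only delicate point is keeping every intermediate quantity inside $R$ rather than in the splitting field, so that congruences modulo $pR$ make sense. This is why we route every step through symmetric polynomials with integer coefficients.
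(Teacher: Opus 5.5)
Your proof is correct and follows the paper's strategy: the decomposition of Lemma~\ref{lem:witt}, integrality via symmetric functions of the roots of $A$, the hypothesis (\ref{eq:apkapl}) applied to elements of $R$, and Lemma~\ref{lem:congruencedp} to lift a congruence modulo $p$ to one modulo $p^{j+1}$. The only difference is that the paper applies Lemma~\ref{lem:witt} at level $k+j$ (resp.\ $l+j$) and evaluates at $\boldsymbol x$ itself, so both sides share the same coordinates $a_i=Q_i(x_1,\ldots,x_D)\in R$, the hypothesis applies to them directly, and the terms with $i>j$ vanish modulo $p^{j+1}R$; this makes your extra Frobenius step comparing $Q_i(\boldsymbol x^{p^k})$ with $Q_i(\boldsymbol x^{p^l})$ unnecessary.
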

\begin{proof}
  We apply Lemma~\ref{lem:witt} to $Q$ and obtain symmetric polynomials
  $Q_0,Q_1,\ldots \in \IZ[{\boldsymbol X}]$.

  Let $i\in \IN_0$ and let
  us define $a_i = Q_i(x_1,\ldots,x_D)$, member of the said
  splitting field of $A$. 
  Each $Q_i$ is an integral polynomial in $e_1,\ldots,e_D$ by the fundamental
  theorem on symmetric polynomials.
  By hypothesis, $A\in R[X]$ is monic so its coefficients are up-to a
  sign equal to 
  $e_j(x_1,\ldots,x_D)\in R$ for all $j$.  Thus we find
  $a_i\in R$ for all $i\in\IN_0$.

  By hypothesis, $a_i^{p^{k}} \equiv a_i^{p^{l}} \imod
  {p R}$ for all $i\ge 0$. Thus Lemma~\ref{lem:congruencedp} yields
  $a_i^{p^{k+j-i}} \equiv a_i^{p^{l+j-i}} \imod
  {p^{j-i+1} R}$ for all integers $i,j$ with $0\le i\le j$.
  We multiply with
  $p^{i}$ and find
  $p^{i} a_i^{p^{k+j-i}} \equiv p^{i} a_i^{p^{l+j-i}} \imod
  {p^{j+1}R}$ which implies  the middle congruence in
  \begin{alignat}1
    \label{eq:applyghost}
  \sum_{i=0}^{k+j} p^i     a_i^{p^{k+j-i}}
    \equiv \sum_{i=0}^j p^i a_i^{p^{k+j-i}} 
    \equiv \sum_{i=0}^j p^i a_i^{p^{l+j-i}} 
    \equiv \sum_{i=0}^{l+j} p^i a_i^{p^{l+j-i}} \imod{p^{j+1}R}
  \end{alignat}
  for all  $j\ge 0$. 
  We use the identity  Lemma~\ref{lem:witt} and specialize
  ${\boldsymbol X}$ to $(x_1,\ldots,x_D)$. Thus sum on the left on
  (\ref{eq:applyghost}) equals
  $Q(x_1^{p^{k+j}},\ldots,x_D^{p^{k+j}})$ while the sum
  on the right is
  $Q(x_1^{p^{l+j}},\ldots,x_D^{p^{l+j}})$. This completes the
  proof. 
\end{proof}

\begin{lemma}
  \label{lem:symmetricppower}
  Let $R$ be an integral domain of characteristic unequal to $p$ and
  let  $e\in\IN_0$. 
  Let $Q \in R[\boldsymbol X]$ such that $\frac{\partial Q}{\partial
    X_j} \in p^e R[\boldsymbol X]$ for all $j\in \{1,\ldots,D\}$.
  There exist $\tilde Q_0,\ldots,\tilde Q_e \in
  R[\boldsymbol X]$ such that
  \begin{equation}
    \label{eq:diffQpowerpconclusion}
    Q = \sum_{i=0}^e p^i \tilde Q_i(\boldsymbol{X}^{p^{e-i}}).
  \end{equation}
  Moreover, if $Q$ is a symmetric polynomial, then we may assume that
  $\tilde Q_0,\ldots,\tilde Q_e$ are symmetric polynomials. 
\end{lemma}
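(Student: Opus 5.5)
Induction on $e$ is the natural approach, working monomial by monomial. For $e=0$ take $\tilde Q_0=Q$. The hypothesis concerns derivatives, so I first look at a single monomial $c\boldsymbol X^{\boldsymbol m}$ of $Q$ with $c\in R\ssm\{0\}$ and $\boldsymbol m=(m_1,\ldots,m_D)$. Its partial derivative in $X_j$ is $c\,m_j\boldsymbol X^{\boldsymbol m-\boldsymbol e_j}$. Distinct monomials have distinct derivatives, so the hypothesis applies coefficientwise: $c\,m_j\in p^eR$ for all $j$. Decompose $Q=\sum_{\boldsymbol m} c_{\boldsymbol m}\boldsymbol X^{\boldsymbol m}$. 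Let $\mu(\boldsymbol m)=\min_j v_p(m_j)$, where $v_p(0)=\infty$. Then $\boldsymbol X^{\boldsymbol m}=(\boldsymbol X^{p^{\mu}})^{\boldsymbol m/p^{\mu}}$ with $\mu$ capped at $e$.

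The key step is to show that $c_{\boldsymbol m}\in p^{e-\mu}R$ whenever $\mu=\mu(\boldsymbol m)<e$. Pick $j$ with $v_p(m_j)=\mu$ and write $m_j=p^\mu u$ with $p\nmid u$. Then $c_{\boldsymbol m}p^\mu u\in p^eR$. If $u$ were a unit modulo $p^{e}$ in $R$, we could cancel $u$ and then $p^\mu$. Cancelling $p^\mu$ is legitimate because $R$ is an integral domain and $p\ne0$ in $R$ (the characteristic is not $p$; if the characteristic is $0$ or some other prime $\ell$, then $p\neq 0$, and if it is another prime then $p$ is even a unit). To handle $u$: since $\gcd(u,p^e)=1$ there are integers $a,b$ with $au+bp^e=1$. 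Hence $c_{\boldsymbol m}p^\mu=a c_{\boldsymbol m}p^\mu u+b c_{\boldsymbol m}p^{\mu+e}\in p^eR$. Writing $c_{\boldsymbol m}p^\mu=p^e r$ gives $p^\mu(c_{\boldsymbol m}-p^{e-\mu}r)=0$, so $c_{\boldsymbol m}=p^{e-\mu}r$ by the domain property.

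With this in hand I group monomials by $i:=e-\min\{\mu(\boldsymbol m),e\}$. For each $i\in\{0,\ldots,e\}$ define
\[
\tilde Q_i=\sum_{\boldsymbol m:\,\min\{\mu(\boldsymbol m),e\}=e-i}\frac{c_{\boldsymbol m}}{p^{i}}\,\boldsymbol X^{\boldsymbol m/p^{e-i}}.
\]
Here $c_{\boldsymbol m}/p^i$ denotes the unique element $r\in R$ with $p^ir=c_{\boldsymbol m}$, which exists by the key step and is unique by the domain property. The exponents $\boldsymbol m/p^{e-i}$ are integral because $p^{e-i}$ divides every $m_j$. Then $p^i\tilde Q_i(\boldsymbol X^{p^{e-i}})$ recovers exactly the monomials in group $i$, and summing over $i$ gives \eqref{eq:diffQpowerpconclusion}. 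The constant term and monomials with all $m_j$ divisible by $p^e$ fall into $i=0$, which requires no divisibility.

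For symmetry, note that $\mu(\boldsymbol m)$ and the coefficient $c_{\boldsymbol m}$ are invariant under permuting the coordinates of $\boldsymbol m$ when $Q$ is symmetric. Each group is therefore a union of permutation orbits, and the resulting $\tilde Q_i$ is symmetric. The main subtlety is the division step. That is the only place where the hypotheses that $R$ is a domain and that its characteristic is not $p$ are used, and it must be argued carefully, since $u$ need not be invertible in $R$. The Bézout trick above resolves this.
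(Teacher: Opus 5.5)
Your proof is correct, but it is organized differently from the paper's. The paper inducts on $e$. A B\'ezout identity for the coprime tuple $p^e,\iota_1,\ldots,\iota_D$ shows that every monomial whose exponent vector $\iota$ lies outside $p\IZ^D$ has coefficient in $p^eR$, which gives a splitting $Q=\tilde Q(\boldsymbol X^p)+p^e\tilde Q_e$. The hypothesis is then passed to $\tilde Q$ via the chain rule, $\frac{\partial}{\partial X_j}\bigl(\tilde Q(\boldsymbol X^p)\bigr)=p\,\frac{\partial \tilde Q}{\partial X_j}(\boldsymbol X^p)X_j^{p-1}$, and cancelling one factor of $p$ yields $\frac{\partial\tilde Q}{\partial X_j}\in p^{e-1}R[\boldsymbol X]$. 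You instead handle all layers at once. From $c_{\boldsymbol m}m_j\in p^eR$ for a single coordinate of minimal $p$-adic valuation $\mu$, a two-term B\'ezout identity and cancellation of $p^\mu$ give $c_{\boldsymbol m}\in p^{e-\mu}R$ directly, and you write $\tilde Q_i$ explicitly as the sum over exponent vectors with $\min\{\mu(\boldsymbol m),e\}=e-i$. Unwinding the paper's induction produces exactly these $\tilde Q_i$, so the difference is bookkeeping. Your version gives a closed-form description and makes the symmetric case immediate, since you group monomials by a permutation-invariant function of $\boldsymbol m$. The paper's version never looks at valuations of individual exponents, but must check at each inductive step that the splitting preserves symmetry. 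Both use the hypotheses on $R$ only to cancel powers of $p$. Two cosmetic points. Your opening sentence announces an induction on $e$ that you never use. Also, ``distinct monomials have distinct derivatives'' is not literally true, since all monomials with $m_j=0$ differentiate to $0$. What you actually need is that for $m_j\ge 1$ the coefficient of $\boldsymbol X^{\boldsymbol m}/X_j$ in $\partial Q/\partial X_j$ is exactly $m_jc_{\boldsymbol m}$.
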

\begin{proof}
  Our proof is by induction on $e$.
  If $e=0$, then the lemma follows with $\tilde Q_0=Q$. 
 
  Now suppose $e\ge 1$.
  Let 
  $q_\iota X_1^{\iota_1}\cdots
  X_D^{\iota_D}$
  be a term of $Q$ whose
  corresponding
   exponent vector
  $\iota=(\iota_1,\ldots,\iota_D)$  is not an integral multiple of $p$.
  Our hypothesis implies $\iota_j q_\iota\in p^e R$ for all $j$. As the tuple
  $p^e,\iota_1,\ldots,\iota_D$ is coprime,
  we have $a_0 p^e+ a_1 \iota_1+\cdots +a_D \iota_D =1$ for some
  $a_0,\ldots,a_D\in\IZ$.
  Multiplying by $q_\iota$
  gives $q_\iota \in p^e R$.

  So we can write $Q =\tilde Q(\boldsymbol{X}^p) + p^e \tilde Q_e$
  where $\tilde Q,\tilde Q_e\in
  R[\boldsymbol X]$ and all exponent vectors $\iota$ that arise in
  $\tilde Q_e$
  satisfy $\iota\not\in p\IZ^D$.
  If $Q$ is symmetric, then so is $\tilde Q_e$
  because if $\iota\not\in p\IZ^D$, then no
  permutation of $\iota$ is a multiple of $p$. Furthermore, $\tilde Q$ is
  then also symmetric.

  Our hypothesis
  implies that all partial derivatives
  $\frac{\partial (Q-p^e\tilde Q_e))}{\partial X_j}=\frac{\partial \tilde Q(\boldsymbol{X}^p)}{\partial X_j}$ lie in $p^e
  R[\boldsymbol X]$. The chain rule implies
  \begin{equation*}
    p\frac{\partial \tilde Q}{\partial X_j}(\boldsymbol{X}^p) X_j^{p-1}
    \in p^e R[\boldsymbol X]
  \end{equation*}
  for all $j$. As $R$ is a domain of characteristic unequal to $p$, we
  may cancel out $p$. Hence
  $\frac{\partial \tilde Q}{\partial X_j}(\boldsymbol{X}^p) \in p^{e-1}
  R[\boldsymbol X]$  and thus
  $\frac{\partial \tilde Q}{\partial X_j} \in p^{e-1}
  R[\boldsymbol X]$ for all $j$. We apply induction on $e$ to $\tilde
  Q$ and find
  \begin{equation*}
    \tilde Q = \sum_{i=0}^{e-1} p^i \tilde Q_i (\boldsymbol
    X^{p^{e-1-i}})
  \end{equation*}
  for some $\tilde Q_0,\ldots, \tilde Q_{e-1}\in R[\boldsymbol X]$. The
  induction argument also implies that all $\tilde Q_i$ are symmetric
  if $\tilde Q$ is symmetric. We
  substitute $\boldsymbol X^p$ and find
  \begin{equation*}
    Q = \tilde Q(\boldsymbol{X}^p) + p^e \tilde Q_e =
    \sum_{i=0}^{e-1}  p^i \tilde Q_i(\boldsymbol{X}^{p^{e-i}}) + p^e
    \tilde Q_e
  \end{equation*}
  which is just (\ref{eq:diffQpowerpconclusion}).
\end{proof}

Let $F$ be a number field and $p$ a prime number. 
For the remainder of this section $f\in F[T]$ denotes a polynomial as
in  Proposition~\ref{prop:congruence}.
So $f$ is monic of degree $d=p^e$ for some
 $e\in\IN$. Moreover, $f$
is postcritically bounded above $p$ with barycenter
$\overline c\in \cO_{F,p}$ as in (\ref{def:barycenter2}).

The next lemma relies on the work of  Epstein~\cite{epstein:12}.

\begin{lemma}
  \label{lem:epstein}
  The following hold true.
  \begin{enumerate}
  \item [(i)]  We have $f\in \cO_{F,p}[T], f(\overline c)\in
    \cO_{F,p},$     and $f\equiv T^d+ f(\overline c)- \overline c^d
    \imod {p \cO_{F,p}[T]}$.
  \item[(ii)] The derivative satisfies $f' \in d \cO_{F,p}[T]$. 
  \item[(iii)] Let $x,y\in \cO_{F,p}[T]$. Then $f(x+py) \equiv f(x) \imod{dp
      \cO_{F,p}[T]}$. 
  \item[(iv)] We have
    $A_k\in \cO_{F,p}[X]$ for all monic $A\in \cO_{F,p}[X]$ and
    all $k\in\IN_0$.
  \end{enumerate}
\end{lemma}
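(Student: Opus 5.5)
Part (i) is the step that needs genuine external input, namely Epstein's integrality theorem. The other three parts follow from it by elementary manipulations.

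For (i), I would conjugate by the translation $T\mapsto T+\overline c$ and set $g(T)=f(T+\overline c)-\overline c$. Then $g$ is monic of degree $d=p^e$ and centered, and it is postcritically bounded above $p$ because translation preserves boundedness of the critical orbits at each $v\mid p$. Epstein's result~\cite{epstein:12} says that a monic, centered polynomial of degree $p^e$ that is postcritically bounded above $p$ satisfies $g\equiv T^d+g(0)\imod{p\,\cO_{F,p}[T]}$. This includes $g\in\cO_{F,p}[T]$; in fact all non-leading, non-constant coefficients have positive $v$-adic valuation, and $g(0)$ is $v$-integral. The main obstacle is to locate the precise form of Epstein's statement, in particular integrality of $g(0)=f(\overline c)-\overline c$. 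If that integrality is not stated directly, I would derive it from boundedness of the critical orbit: once the other coefficients are known to be small, a non-integral constant term would make $|g^{(n)}(c)|_v$ grow without bound. Since $\overline c\in\cO_{F,p}$ by hypothesis, translating back gives $f\in\cO_{F,p}[T]$ and $f(\overline c)\in\cO_{F,p}$. Reducing modulo $p$ and using $(T-\overline c)^d\equiv T^d-\overline c^d$ yields $f(T)=g(T-\overline c)+\overline c\equiv T^d-\overline c^d+g(0)+\overline c=T^d+f(\overline c)-\overline c^d\imod p$.

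For (ii), I would extract the stronger form of Epstein's statement, that the derivative is divisible by $d$: $g'\in d\,\cO_{F,p}[T]$. This is the content of the reduction ``$g\equiv T^d+g(0)$'' at the level of derivatives. If only the mod-$p$ version were available, I would instead apply Lemma~\ref{lem:symmetricppower} in reverse, or argue directly that the coefficient of $T^i$ in $g$ lies in $p^{e-v_p(i)}\cO_{F,p}$, which again is Epstein's theorem. Then $f'(T)=g'(T-\overline c)$ also lies in $d\,\cO_{F,p}[T]$.

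For (iii), I would use the Taylor expansion $f(x+py)=\sum_{m\ge0}\frac{f^{(m)}(x)}{m!}(py)^m$, where the coefficients $\frac{1}{m!}\frac{\mathrm{d}^m f}{\mathrm{d}T^m}$ lie in $\cO_{F,p}[T]$. The $m=1$ term is divisible by $dp$ by (ii). For $m\ge2$ I would use the divided-difference form instead: by Lemma~\ref{lem:symmetricppower} applied with $D=1$ and (ii), write $f=\sum_{i=0}^e p^i\tilde Q_i(T^{p^{e-i}})$. Then Lemma~\ref{lem:congruencedp} gives $(x+py)^{p^{e-i}}\equiv x^{p^{e-i}}\imod{p^{e-i+1}}$, so each term changes by a multiple of $p^{i}\cdot p^{e-i+1}=dp$.

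For (iv), the roots of $A$ are integral over $\cO_{F,p}$, and $f$ has coefficients in $\cO_{F,p}$ by (i). Hence the elements $f^{(k)}(x_i)$ are integral over $\cO_{F,p}$, and the coefficients of $A_k$ are symmetric polynomials in the $x_i$ with coefficients in $\cO_{F,p}$. They therefore lie in $F\cap\overline{\cO_{F,p}}=\cO_{F,p}$, since $\cO_{F,p}$ is integrally closed, being a localization of $\cO_F$.
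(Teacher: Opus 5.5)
Your arguments for (iii) and (iv) are fine once (ii) holds, but (ii) is not established as written, and (iii) depends on it.

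\textbf{The gap in (ii).} You say that $g'\in d\,\cO_{F,p}[T]$ is ``the content of $g\equiv T^d+g(0)$ at the level of derivatives''. This is false once $e\ge 2$. A congruence modulo $p$ only puts the coefficient $g_k$ of $T^k$ in $p\cO_{F,p}$, so $kg_k\in p^{1+v_p(k)}\cO_{F,p}$. That falls short of $p^e$ whenever $p\nmid k$. For instance, $T^4+2T\equiv T^4\imod{2\IZ[T]}$, yet its derivative $4T^3+2$ is not in $4\IZ[T]$. Your fallbacks do not repair this. Running Lemma~\ref{lem:symmetricppower} ``in reverse'' presupposes the decomposition $f=\sum_i p^i\tilde Q_i(T^{p^{e-i}})$ that you are trying to establish. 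Asserting that $g_k\in p^{e-v_p(k)}\cO_{F,p}$ ``is Epstein's theorem'' leaves open how this follows from what Epstein actually proves.

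\textbf{The missing link.} It is the identity $f'=d\,(T-c_1)\cdots(T-c_{d-1})$ over a splitting field $F'$. Epstein's Theorem~1 applies because the barycenter lies in $\cO_{F,p}$ and the critical orbits are bounded above $p$. It gives $c_1,\ldots,c_{d-1}\in\cO_{F',p}$ and $b=f(\overline c)\in\cO_{F',p}$. This integrality is the ``precise form'' you were looking for. The coefficients of $f'/d$ are elementary symmetric functions of the $c_i$, so they lie in $\cO_{F',p}\cap F=\cO_{F,p}$; this is (ii).

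\textbf{Order of deduction.} Part (i) then follows from (ii), not the other way round. For $1\le k\le d-1$, the coefficient of $T^k$ in $f$ lies in $\frac{d}{k}\cO_{F,p}\subset p\cO_{F,p}$. Evaluating at $\overline c$ shows that the constant term is integral and congruent to $b-\overline c^d$ modulo $p\cO_{F,p}$. Your escape argument for the constant term would also need $|c_i|_v\le 1$, which is the same integrality. This is exactly the paper's route, phrased through Epstein's normal form $F_{\mathbf{c},b}$ with coefficients $\pm\frac{d}{k}s_{d-k}$. Your translation to a centered $g$ is harmless but not needed.

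\textbf{Parts (iii) and (iv).} With (ii) in place, your proof of (iii) via Lemma~\ref{lem:symmetricppower} with $D=1$ and Lemma~\ref{lem:congruencedp} is correct. It repackages the paper's term-by-term estimate $\frac{d}{k}(x+py)^k\equiv\frac{d}{k}x^k\imod{dp\cO_{F,p}[T]}$. Your integral-closure argument for (iv) also works; the paper instead applies the fundamental theorem on symmetric polynomials directly over $\cO_{F,p}$.
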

\begin{proof}
  We fix a finite field extension $F'/F$ over which $f'$ splits,
  \textit{i.e.}, $f' = d(T-c_1)\cdots(T-c_{d-1})$ with
  $c_1,\ldots,c_{d-1}\in F'$ the critical points of $f$.
  Let us abbreviate $\mathbf{c}=(c_1,\ldots,c_{d-1})$ and
  $b=f(\overline{c})$.
  
  We use the notation of Epstein's Section 2~\cite{epstein:12}. 
  There
  \begin{equation}
    \label{eq:Fcb}
    F_{\mathbf{c},b} = T^d + \sum_{k=1}^{d-1} (-1)^{d-k}
    \frac{d}{k}s_{d-k} T^k +
    b - \overline{c}^d - \sum_{k=1}^{d-1} (-1)^{d-k}
    \frac{d}{k}s_{d-k} \overline{c}^k
  \end{equation}
  where $s_{k}$ are elementary symmetric polynomials in
  $c_1,\ldots,c_{d-1}$ of  degree $k$.
  Then $f = F_{\mathbf{c},b}$.
  Indeed, the derivatives of both polynomials
  are equal. So their difference is
  constant. Equality then follows from  $F_{\mathbf{c},b}(\overline
  c)=b=f(\overline c)$.
  
  By hypothesis, the barycenter $\overline c$ of $f$
  is integral with respect to all places
  of $F$ above $p$. Moreover, the forward
  orbit under $f$ of each $c_i$ is bounded with
  respect all  places of $F'$ above $p$.
  By Epstein's Theorem~1~\cite{epstein:12},
  $b\in \cO_{F',p}$ and the critical points satisfy
  $c_1,\ldots,c_{d-1}\in\cO_{F',p}$.

  We observe that $\cO_{F',p}\cap F = \cO_{F,p}$. Therefore, $b\in
  \cO_{F,p}$ and all elementary symmetric polynomials in
  $c_1,\ldots,c_{d-1}$ lie in $\cO_{F,p}$. In other words,
  $s_1,\ldots,s_{d-1}\in\cO_{F,p}$.  The expression for
  $f=F_{\mathbf{a},b}$ in (\ref{eq:Fcb}) and $\overline c\in \cO_{F,p}$
  imply
  $f\in \cO_{F,p}[T]$;
  note that $d/k\in\cO_{F,p}$ for all $k\in \{1,\ldots,d\}$ as $d$ is
  a power of $p$.
  Moreover, if $k\in \{1,\ldots,d-1\}$,
  then $d/k$ is a multiple of $p$ in $\cO_{F,p}$. So $f\equiv T^d + b-
  \overline c^d \imod{p\cO_{F,p}[T]}$ and all claims in (i) hold true.

  Claim  (ii) follows as
  $f=F_{\mathbf{c},b}$, since $s_1,\ldots,s_{d-1}\in \cO_{F,p},$
  and by (\ref{eq:Fcb}).

  For claim (iii) we apply the congruence condition in
  Lemma~\ref{lem:congruencedp} and (\ref{eq:Fcb}). Suppose
  $x,y\in\cO_{F,p}[T]$. Let $k\in
  \{1,\ldots,d\}$. We write $k=p^n m$
  with $0\le n\le e$ and $p\nmid m$. So 
  $(x+py)^{p^n} \equiv  x^{p^n} \imod{ p^{n+1} \cO_{F,p}[T]}$.
  By taking  the $m$-th power we find
    $(x+py)^{k} \equiv  x^k \imod{ p^{n+1} \cO_{F,p}[T]}$. The quotient
    $\frac dk$ lies in $p^{e-n}\cO_{F,p}$, and so 
  $\frac dk (x+py)^{k} \equiv  \frac dk x^k \imod{ p^{e+1}
    \cO_{F,p}[T]}$.
  So the equivalence holds modulo $dp=p^{e+1}$. 
  Recall that all $s_{d-k}$ in (\ref{eq:Fcb}) lie in $\cO_{F,p}$.
  Thus
  $\frac dk s_{d-k} (x+py)^{k} \equiv \frac  dk s_{d-k} x^k \imod{ dp
    \cO_{F,p}[T]}$ for all $k\in \{1,\ldots,d-1\}$. Moreover,
  $(x+py)^d \equiv x^d \imod{dp\cO_{F,p}[T]}$.
  Taking the appropriate sum over all
  $k$  and using (\ref{eq:Fcb}) implies part (iii).

  Part (iv) follows from part (i) and the comment around (\ref{def:Ak}). 
\end{proof}

For all $k\in\IN$ we define
\begin{equation}
  \label{def:alphak}
  \alpha_k = f^{(k)}(0) \in \cO_{F,p} \quad\text{and}\quad \alpha_0 = 0.
\end{equation}
By Lemma~\ref{lem:epstein}(i) we find
\begin{equation*}
  f(0)\equiv   \alpha_1 \equiv f(\overline c) - \overline c^d \imod{p\cO_{F,p}}. 
\end{equation*}

\begin{lemma}
  \label{lem:fkcongpsqr}
  The following hold true.
  \begin{enumerate}
  \item [(i)]
    We have $f^{(k)} \equiv T^{d^{k}}+\alpha_k
    \imod{p \cO_{F,p}[T]}$ for all $k\in\IN_0$.
  \item[(ii)] We have $\alpha_k \equiv f(0)^{d^{k-1}}+\cdots +
    f(0)^d + f(0) \imod{p\cO_{F,p}}$ for all $k\in\IN$. 
  \item[(iii)]
    We have $f^{(k)} \equiv f(T^{d^{k-1}}+\alpha_{k-1})
    \imod{dp\cO_{F,p}[T]}$ for all $k\in\IN$.
  \item[(iv)] Suppose $k,l\in \IN$ satisfies $k\le l$
    and $\alpha_{k-1} \equiv
    \alpha_{l-1} \imod{p\cO_{F,p}}$. Then
    $$f^{(l)}(T)\equiv f^{(k)}(T^{d^{l-k}}) \imod{dp \cO_{F,p}[T]}.$$
  \end{enumerate}
\end{lemma}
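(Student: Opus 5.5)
Write $R=\cO_{F,p}$ throughout. All four parts follow by induction on $k$ from Lemma~\ref{lem:epstein} and Lemma~\ref{lem:congruencedp}.

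\emph{Part (i).} Set $\beta=f(\overline c)-\overline c^d\in R$. By Lemma~\ref{lem:epstein}(i) we have $f\equiv T^d+\beta \imod{pR[T]}$. Substituting $T=0$ shows $\alpha_1=f(0)\equiv\beta$, so $f\equiv T^d+\alpha_1$. The case $k=0$ is trivial. Assume $f^{(k)}=T^{d^k}+\alpha_k+pG$ with $G\in R[T]$. Then
\[
f^{(k+1)}=f(f^{(k)})\equiv (f^{(k)})^d+\alpha_1 \imod{pR[T]}.
\]
Since $d$ is a power of $p$, Frobenius is additive modulo $p$, so $(T^{d^k}+\alpha_k)^d\equiv T^{d^{k+1}}+\alpha_k^d$. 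Evaluating at $T=0$ gives $\alpha_{k+1}\equiv\alpha_k^d+\alpha_1$. This proves (i).

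\emph{Part (ii).} Unroll the recursion $\alpha_{k+1}\equiv\alpha_k^d+f(0)$, again using additivity of $d$-th powers modulo $p$ on $R$.

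\emph{Part (iii).} By (i) for $k-1$, we can write $f^{(k-1)}=T^{d^{k-1}}+\alpha_{k-1}+pG$ with $G\in R[T]$. Lemma~\ref{lem:epstein}(iii), applied with $x=T^{d^{k-1}}+\alpha_{k-1}$ and $y=G$, gives $f(x+py)\equiv f(x)\imod{dpR[T]}$. The left-hand side is $f^{(k)}$, which is exactly (iii).

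\emph{Part (iv).} This is the only step needing care. By (iii) applied to $l$, we have $f^{(l)}\equiv f(T^{d^{l-1}}+\alpha_{l-1})$. By hypothesis $\alpha_{l-1}=\alpha_{k-1}+pu$ with $u\in R$. Applying Lemma~\ref{lem:epstein}(iii) once more absorbs the term $pu$:
\[
f^{(l)}\equiv f\bigl(T^{d^{l-1}}+\alpha_{k-1}\bigr)\imod{dpR[T]}.
\]
Now substitute $T\mapsto T^{d^{l-k}}$ into (iii) for $k$. This is a ring endomorphism of $R[T]$, so it preserves congruences modulo $dpR[T]$, and gives
\[
f^{(k)}(T^{d^{l-k}})\equiv f\bigl(T^{d^{l-1}}+\alpha_{k-1}\bigr).
\]
Comparing the two congruences proves (iv).

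The main obstacle is keeping the modulus $dp$ rather than $p$. This works only because every passage from a mod-$p$ statement to a mod-$dp$ statement goes through Lemma~\ref{lem:epstein}(iii), applied to an argument that is already correct modulo $p$.
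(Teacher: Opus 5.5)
Your proposal is correct and follows the paper's proof essentially step for step. Parts (i) and (ii) use the same induction via $f\equiv T^d+\alpha_1$ modulo $p\cO_{F,p}[T]$ and additivity of $d$-th powers modulo $p$. Part (iii) applies Lemma~\ref{lem:epstein}(iii) to the mod-$p$ expression for $f^{(k-1)}$ from (i). Part (iv) uses the same comparison of both sides with $f(T^{d^{l-1}}+\alpha_{k-1})$.
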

\begin{proof}
  We prove (i) by induction on $k$. The claim is clear for $k=0$.
  For $k\ge 1$ we have
  \begin{equation*}
    f^{(k)} \equiv f(f^{(k-1)}) \equiv (f^{(k-1)})^d + \alpha_1 \imod {p\cO_{F,p}[T]}
  \end{equation*}
  by Lemma~\ref{lem:epstein}(i). The induction hypothesis states
  $f^{(k-1)}\equiv T^{d^{k-1}} + \alpha_{k-1} \imod {p\cO_{F,p}[T]}$.
  As $d$ is a power of $p$, we conclude 
  \begin{equation*}
    f^{(k)} \equiv  T^{d^k} + \alpha_{k-1}^d + \alpha_1 \imod {p\cO_{F,p}[T]}.
  \end{equation*}
  By definition of $\alpha_k$, we have $f^{(k)}(0)= \alpha_k$. 
  Therefore, part (i) follows.

  This  argument also yields $\alpha_k \equiv \alpha_{k-1}^d + \alpha_1
  \imod{p\cO_{F,p}}$.
  A short induction leads to claim (ii).  

  Part (iii) follows from part (i) in the case $k-1\ge 0$ together with
  Lemma~\ref{lem:epstein}(iii).

  To prove part (iv) we first observe
  \begin{equation*}
    f^{(l)}(T)\equiv f(T^{d^{l-1}}+\alpha_{l-1}) \imod {dp\cO_{F,p}[T]}    
  \end{equation*}
  by part (iii) applied to $l$. Second, the hypothesis $\alpha_{k-1}\equiv
  \alpha_{l-1}\imod{p\cO_{F,p}}$ and   Lemma~\ref{lem:epstein}(iii)
  imply
  \begin{equation*}
    f^{(l)}(T)\equiv
    f(T^{d^{l-1}}+\alpha_{l-1}) \equiv     f(T^{d^{l-1}}+\alpha_{k-1}) \imod {dp\cO_{F,p}[T]}.
  \end{equation*}
  Third,  after substituting $T^{d^{l-k}}$ into
  (iii) of the current lemma, we find
  $f^{(k)}(T^{d^{l-k}}) \equiv
  f(T^{d^{l-1}}+\alpha_{k-1})\imod{dp\cO_{F,p}[T]}$.
  The chain of equivalences implies (iv).
\end{proof}

Below we use Definition~\ref{def:Ffadmissible} on admissible pairs.

\begin{lemma}
  \label{lem:conglemma_newnew}
  Suppose $(k,l)\in\IN^2$ is an $(F,f)$-admissible pair.
  Then all monic  $A\in \cO_{F,p}[X]$ satisfy
  \begin{equation*}
    A_k \equiv A_l \imod {dp \cO_{F,p}[X]}.
  \end{equation*}
\end{lemma}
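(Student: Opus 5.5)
The plan is to reduce both $A_k$ and $A_l$, modulo $dp$, to symmetric expressions in $p$-power iterates of the roots, and then compare those with the Witt-vector type congruences of Lemma~\ref{lem:ppowermodulp}. Write $R=\cO_{F,p}$ and $A=\prod_{i=1}^D(X-x_i)$, with the $x_i$ in a splitting field.

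\emph{Step 1: reduce to a single polynomial $f$.} Set $\beta=\alpha_{k-1}$. Admissibility (ii) says exactly that $\alpha_{k-1}\equiv\alpha_{l-1}\imod{pR}$. By Lemma~\ref{lem:fkcongpsqr}(iii) we have $f^{(k)}\equiv f(T^{d^{k-1}}+\alpha_{k-1})$ and $f^{(l)}\equiv f(T^{d^{l-1}}+\alpha_{l-1})$ modulo $dpR[T]$. Combining the second of these with Lemma~\ref{lem:epstein}(iii), applied with $x=T^{d^{l-1}}+\beta$ and $py=\alpha_{l-1}-\beta$, gives $f^{(l)}\equiv f(T^{d^{l-1}}+\beta)\imod{dpR[T]}$.

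The $x_i$ are integral over $R$, so these congruences may be evaluated at the $x_i$ inside the integral closure of $R$. The coefficients of $A_k$ are, up to sign, $e_j(f^{(k)}(x_1),\ldots,f^{(k)}(x_D))$. Define
\begin{equation*}
Q=e_j\bigl(f(X_1+\beta),\ldots,f(X_D+\beta)\bigr)\in R[\boldsymbol X],
\end{equation*}
which is symmetric. Then the coefficients of $A_k$ and $A_l$ are congruent modulo $dp$, in the integral closure, to $Q(\boldsymbol x^{p^{K}})$ and $Q(\boldsymbol x^{p^{L}})$ respectively, where $K=e(k-1)$ and $L=e(l-1)$, so that $d^{k-1}=p^K$ and $d^{l-1}=p^L$. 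Both $Q$-values lie in $R$ by the symmetric function theorem, and $A_k,A_l\in R[X]$ by Lemma~\ref{lem:epstein}(iv). So once the $Q$-values are shown congruent, the congruence descends to $R$, since $dp\,\overline{R}\cap F=dpR$.

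\emph{Step 2: split $Q$ into $p$-power pieces.} By the chain rule, $\partial Q/\partial X_m$ is a multiple of $f'(X_m+\beta)$. Lemma~\ref{lem:epstein}(ii) gives $f'\in dR[T]$, so every partial derivative of $Q$ lies in $p^eR[\boldsymbol X]$. Lemma~\ref{lem:symmetricppower} then yields symmetric $\tilde Q_0,\ldots,\tilde Q_e\in R[\boldsymbol X]$ with
\begin{equation*}
Q=\sum_{i=0}^e p^i\tilde Q_i(\boldsymbol X^{p^{e-i}}).
\end{equation*}
It therefore suffices to prove, for each $i$,
\begin{equation*}
\tilde Q_i(\boldsymbol x^{p^{K+e-i}})\equiv \tilde Q_i(\boldsymbol x^{p^{L+e-i}})\imod{p^{e-i+1}R}.
\end{equation*}

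\emph{Step 3: apply Lemma~\ref{lem:ppowermodulp}.} Write each $\tilde Q_i$ as an $R$-linear combination of monomial symmetric polynomials, which have integer coefficients. This reduces the task to integral symmetric $Q$, where Lemma~\ref{lem:ppowermodulp} applies with exponents $K,L$ and $j=e-i$. Its hypothesis, $a^{p^K}\equiv a^{p^L}\imod{pR}$ for all $a\in R$, is exactly admissibility (i).

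The one subtlety is $k=1$, which gives $K=0$, whereas the lemma is stated for $k,l\in\IN$. Its proof (via Lemma~\ref{lem:witt} and Lemma~\ref{lem:congruencedp}) works verbatim for nonnegative exponents, so I will note this. Multiplying each congruence by $p^i$ and summing over $i$ gives the result modulo $p^{e+1}=dp$.

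\emph{Main obstacle.} I expect the main obstacle to be Step 2: arranging that Lemma~\ref{lem:symmetricppower} applies with the full power $p^e$. This is exactly where Epstein's integrality, through Lemma~\ref{lem:epstein}(ii), enters. A secondary check is bookkeeping the descent of congruences from the integral closure back to $R$.
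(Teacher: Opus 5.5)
Your proposal is correct and uses the same ingredients as the paper's proof: Epstein's $f'\in d\cO_{F,p}[T]$ feeding Lemma~\ref{lem:symmetricppower}, then Lemma~\ref{lem:ppowermodulp} with admissibility (i), and Lemma~\ref{lem:fkcongpsqr} with admissibility (ii). The difference is only bookkeeping. The paper assumes $k\le l$, builds $Q$ from $f^{(k)}$ using ${f^{(k)}}'\in d^k\cO_{F,p}[T]$, and compares $Q(\boldsymbol x)$ with $Q(\boldsymbol x^{d^{l-k}})$ via part (iv) of Lemma~\ref{lem:fkcongpsqr}. You instead build $Q$ from $f(T+\alpha_{k-1})$ and use part (iii) for both $k$ and $l$, which needs only the exponent $e$ and no ordering of $k,l$. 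Your remark that Lemma~\ref{lem:ppowermodulp} must be applied with exponent $0$ when $k=1$ applies equally to the paper's own argument.
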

\begin{proof}
  Without loss of generality we have $k\le l$.
  By Lemma~\ref{lem:epstein} we have $f\in \cO_{F,p}[T]$ and
  $f' \in d\cO_{F,p}[T]$.
  The chain rule implies ${f^{(k)}}' 
    = f'(f^{(k-1)}) \cdots f'(f) f' \in d^k \cO_{F,p}[T]$. 
  
  Let $s\in
  \{1,\ldots,D\}$ be fixed for the moment. Recall
  $\boldsymbol X = (X_1,\ldots,X_D)$ and that $e_s$ is an elementary
  symmetric polynomial in $X_1,\ldots,X_D$. We define the symmetric polynomial
  \begin{equation}
    \label{def:Qmainconglemma}
    Q  =  e_s(f^{(k)}(X_1),\ldots,f^{(k)}(X_D))
    \in \cO_{F,p}[\boldsymbol X].
  \end{equation}
  For all $j\in \{1,\ldots,D\}$ we have
  \begin{equation*}
    \frac{\partial Q}{\partial X_j} = \frac{\partial e_s}{\partial
      X_j}(f^{(k)}(X_1),\ldots,f^{(k)}(X_D)) {f^{(k)}}'(X_j) \in d^k
    \cO_{F,p}[\boldsymbol X]. 
  \end{equation*}
  By Lemma~\ref{lem:symmetricppower}
  there exist $\tilde Q_0,\ldots,\tilde Q_{ek} \in \cO_{F,p}[\boldsymbol
  X]$ such that
  \begin{equation*}
    Q = \sum_{i=0}^{ek} p^i \tilde Q_i(\boldsymbol X^{d^k/p^i});
  \end{equation*}
  recall that $d=p^e$. Furthermore, we may take $\tilde
  Q_0,\ldots,\tilde Q_{ek}$ to
  be symmetric as $Q$ is symmetric.

  By the fundamental theorem on symmetric polynomials,
  there exist $P_0,\ldots,P_{ek} \in \cO_{F,p}[\boldsymbol X]$ such that
  $\tilde Q_i = P_i (e_1(\boldsymbol X),\ldots, e_D(\boldsymbol X))$ for all $i$.
  In particular,
  \begin{equation}
    \label{eq:QsumpnQn}
    Q 
    =\sum_{i=0}^{ek} p^i P_i(e_1(\boldsymbol
    X^{d^k/p^i}), \ldots e_D(\boldsymbol
    X^{d^k/p^i})). 
  \end{equation}

  Let $A\in \mathcal{O}_{F,p}[X]$ be  as in the hypothesis
  with $A = (X-x_1)\cdots(X-x_D)$
  where $x_1,\ldots,x_D$ lie in a splitting field of $A$ over $F$.
  Let $\boldsymbol x$ denote the tuple $(x_1,\ldots,x_D)$.
  
  By Definition~\ref{def:Ffadmissible}(i) we have
  $a^{p^{e(k-1)}}\equiv a^{p^{e(l-1)}} \imod {p\cO_{F,p}}$ for
  all $a\in \cO_{F,p}$. We apply Lemma~\ref{lem:ppowermodulp}  to the
  elementary symmetric polynomial $e_s$ and with $k,l$ chosen
  as $e(k-1),e(l-1)$, respectively, and $j\ge 0$ chosen as $e-i$.  Thus 
  \begin{equation*}
    e_s(\boldsymbol x^{p^{ek-i}})
    \equiv
    e_s(\boldsymbol x^{p^{el-i}})
    \imod{p^{e-i+1}\cO_{F,p}}
  \end{equation*}
  for all $i\in \{0,\ldots,e\}$. This is true for
  all $s\in \{1,\ldots,D\}$. We
  insert into $P_i$ and find
  \begin{equation*}
    P_i(e_1(\boldsymbol x^{p^{ek-i}}), \ldots e_D(\boldsymbol
    x^{p^{ek-i}}))
    \equiv
    P_i(e_1(\boldsymbol x^{p^{el-i}}), \ldots e_D(\boldsymbol
    x^{p^{el-i}}))
    \imod{p^{e-i+1}\cO_{F,p}}
  \end{equation*}
  for all $i\in \{0,\ldots,e\}$. We multiply with $p^i$, so
  \begin{equation*}
    p^i P_i(e_1(\boldsymbol x^{p^{ek-i}}), \ldots e_D(\boldsymbol
    x^{p^{ek-i}}))
    \equiv
    p^iP_i(e_1(\boldsymbol x^{p^{el-i}}), \ldots e_D(\boldsymbol
    x^{p^{el-i}}))
    \imod{p^{e+1}\cO_{F,p}}
  \end{equation*}
  for all $i \in \{0,\ldots,e\}$.  If $i \in \{e+1,\ldots,ek\}$, then this congruence
   trivially holds modulo
   $p^{e+1}\cO_{F,p}=dp\cO_{F,p}$.
 
  We insert $\boldsymbol{x}$ in
  (\ref{eq:QsumpnQn}) and work modulo $dp$ to get
  \begin{equation}
    \label{eq:QxQxdlk}
    \begin{aligned}
    Q(\boldsymbol x) &\equiv
    \sum_{i=0}^{e} p^i P_i(e_1(\boldsymbol
    x^{d^k/p^i}), \ldots e_D(\boldsymbol
    x^{d^k/p^i})) \imod{dp \cO_{F,p}} \\
    &\equiv\sum_{i=0}^{e} p^i P_i(e_1(\boldsymbol
    x^{d^l/p^i}), \ldots e_D(\boldsymbol
      x^{d^l/p^i})) \imod{dp \cO_{F,p}} \\
    &\equiv Q(\boldsymbol x^{d^{l-k}})\imod{dp \cO_{F,p}}; 
    \end{aligned}
  \end{equation}  
  the final equivalence follows from substituting
  ${\boldsymbol x}^{d^{l-k}}$ into  (\ref{eq:QsumpnQn}).

  The choice (\ref{def:Qmainconglemma}) yields
  \begin{equation*}
    Q(\boldsymbol x) = e_s(f^{(k)}(x_1),\ldots,f^{(k)}(x_D)) \in \cO_{F,p}.
  \end{equation*}

  By Definition~\ref{def:Ffadmissible}(ii), 
  Lemma~\ref{lem:fkcongpsqr}(iv) implies $f^{(l)}(T) \equiv
  f^{(k)}(T^{d^{l-k}}) \imod{dp\cO_{F,p}[T]}$. Therefore,
  \begin{equation*}
    Q(\boldsymbol X^{d^{l-k}})
    \equiv  e_s(f^{(l)}(X_1),\ldots,f^{(l)}(X_D))
    \imod{dp\cO_{F,p}[\boldsymbol X]}.
  \end{equation*}
  We substitute $\boldsymbol x$ and retrieve
  \begin{equation*}
    Q(\boldsymbol x^{d^{l-k}}) \equiv
    e_s(f^{(l)}(x_1),\ldots,f^{(l)}(x_D))
    \imod{dp \cO_{F,p}}. 
  \end{equation*}
  Finally, (\ref{eq:QxQxdlk}) implies
  \begin{equation*}
    e_s(f^{(k)}(x_1),\ldots,f^{(k)}(x_D)) \equiv
    e_s(f^{(l)}(x_1),\ldots,f^{(l)}(x_D))
    \imod{dp \cO_{F,p}}
  \end{equation*}
  for all $s\in \{1,\ldots,D\}$. Hence $A_k\equiv A_l
  \imod{\cO_{F,p}[T]}$, as desired. 
\end{proof}

\begin{proof}[Proof of Proposition~\ref{prop:congruence}]  
  The first statement in part (i) follows from
  Lemma~\ref{lem:epstein}(i) and the second statement follows
  from   Lemma~\ref{lem:epstein}(iv).
  Part (ii) follows from Lemma~\ref{lem:conglemma_newnew}.
\end{proof}

As indicated in further up, we prove the existence of
$(F,f)$-admissible pairs outside the diagonal.

\begin{lemma}
  \label{lem:conglemma_new_cor}
  Let $F,p,$ and $f$ be as in Proposition~\ref{prop:congruence}.
  Let $\mathfrak{e}\in \IN$ be an upper bound for the ramification
  indices of all prime ideals of $\cO_F$ above $p$.
  Let $\mathfrak{f}\in\IN$ be a multiple
  of the residue degrees of all prime ideals of $\cO_F$ above $p$. We set
  \begin{equation}
    \label{eq:choicekl}
    k=1+\left\lceil\frac{\log \mathfrak{e}}{\log
        d}\right\rceil\quad\text{and}\quad
    l = k + p\mathfrak f,
  \end{equation}
  using the ceiling function.
  Then $(k,l)$ is an $(F,f)$-admissible pair.
\end{lemma}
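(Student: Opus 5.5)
We check the two conditions of Definition~\ref{def:Ffadmissible} separately, working in the finite ring $\cO_{F,p}/p\cO_{F,p}$. By the Chinese Remainder Theorem this ring is $\prod_{\mathfrak p\mid p} \cO_F/\mathfrak p^{e_{\mathfrak p}}$, where $e_{\mathfrak p}\le\mathfrak e$ is the ramification index and the residue field $\cO_F/\mathfrak p$ has $p^{f_{\mathfrak p}}$ elements with $f_{\mathfrak p}\mid \mathfrak f$. So it suffices to work one prime at a time.

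For condition (i), fix $\mathfrak p$ and $a\in\cO_F$. We aim to show that $a^{d^{k-1}}$ depends, modulo $\mathfrak p^{e_{\mathfrak p}}$, only on the class of $a$ modulo $\mathfrak p$ up to Frobenius, and that the Frobenius part has period dividing $p\mathfrak f$.

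If $a\in\mathfrak p$, then $a^{d^{k-1}}\in\mathfrak p^{d^{k-1}}$. Since $d^{k-1}\ge\mathfrak e\ge e_{\mathfrak p}$ by the choice of $k$, both sides vanish.

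Otherwise, choose a Teichmüller-type representative. Let $\omega\in\cO_{F_{\mathfrak p}}$ be the root of unity of order prime to $p$ with $\omega\equiv a\pmod{\mathfrak p}$, so that $a=\omega u$ with $u\equiv 1\pmod{\mathfrak p}$. The principal unit $u$ satisfies $u^{p^m}\equiv 1\pmod{\mathfrak p^{e_{\mathfrak p}}}$ once $p^m\ge e_{\mathfrak p}$. Indeed, $(1+x)^p=1+px+\dots+x^p$, and raising to the $p$-th power raises the valuation of $x$ by at least one, whether through the factor $p$ or through $x^p$. Hence $u^{d^{k-1}}\equiv1$, and the same holds for $u^{d^{l-1}}$. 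Meanwhile $\omega^{p^{f_{\mathfrak p}}}=\omega$, so $\omega^{d^{k-1}}=\omega^{d^{l-1}}$ as soon as $f_{\mathfrak p}$ divides $e(l-k)=ep\mathfrak f$, which holds. This proves (i). The factor $p$ in $l-k$ is not needed here; it is there for (ii).

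For condition (ii), apply Lemma~\ref{lem:fkcongpsqr}(ii). With $\beta=f(0)\in\cO_{F,p}$, we have $\alpha_{j}\equiv\sum_{i=0}^{j-1}\beta^{d^i}\pmod{p}$. Therefore
\[
\alpha_{l-1}-\alpha_{k-1}\equiv\sum_{i=k-1}^{l-2}\beta^{d^i}\pmod p,
\]
a sum of $p\mathfrak f$ consecutive terms.

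Modulo $\mathfrak p^{e_{\mathfrak p}}$, the terms with $i\ge k-1$ satisfy $\beta^{d^i}\equiv\omega^{d^i}$ (or $0$) by the argument for (i). The sequence $\omega^{d^i}$ is periodic in $i$ with period dividing $\mathfrak f$, since $\omega^{p^{e\mathfrak f}}=\omega$. So the sum equals $p$ times a sum over one full period, and is therefore $\equiv0\pmod p$. This gives (ii).

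The main point requiring care is the principal-unit estimate $u^{p^m}\equiv1\pmod{\mathfrak p^{e_{\mathfrak p}}}$ for $p^m\ge e_{\mathfrak p}$, in particular making sure the exponent $d^{k-1}\ge\mathfrak e$ suffices in the ramified case. It may be cleaner to argue via Lemma~\ref{lem:congruencedp} together with $p\in\mathfrak p^{e_{\mathfrak p}}$ and a repeated-squaring valuation count. We also need to pass correctly between $\cO_{F,p}$ and the completions, where the Teichmüller lift lives. This can be avoided by noting that $a^{p^{f_{\mathfrak p}}}\equiv a\pmod{\mathfrak p}$, writing $a^{p^{f_{\mathfrak p}}}=a+\pi y$, and then raising to $d^{k-1}$.
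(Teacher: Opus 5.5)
Your argument is correct in outline. Your treatment of condition (ii) is essentially the paper's: apply Lemma~\ref{lem:fkcongpsqr}(ii), split the $p\mathfrak f$ terms into $p$ blocks of length $\mathfrak f$, use $\mathfrak f$-periodicity, and pull out the factor $p$.

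For condition (i) the paper stays global and avoids completions and Teichm\"uller lifts. Since $a^{d^{\mathfrak f}}-a\in\mathfrak P\cO_{F,p}$ for every prime $\mathfrak P\mid p$ and $d^{k-1}\ge\mathfrak e$, it gets $(a^{d^{\mathfrak f}}-a)^{d^{k-1}}\in\mathfrak P^{\mathfrak e}\cO_{F,p}$ for all $\mathfrak P$, hence in $p\cO_{F,p}$. Additivity of the $p$-power map modulo $p$ then gives $a^{d^{\mathfrak f+k-1}}\equiv a^{d^{k-1}}\imod{p\cO_{F,p}}$. Iterating yields $a^{d^{j\mathfrak f+k-1}}\equiv a^{d^{k-1}}$ for all $j\ge 0$. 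This gives (i) with $j=p$, and it also supplies the block periodicity in (ii) directly, uniformly in $a$ with no case split on $a\in\mathfrak p$.

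Your factorization $a=\omega u$ gives a more conceptual picture: high $p$-power maps kill the principal-unit part modulo $p$ and leave the $\mathfrak f$-periodic Teichm\"uller part. The price is that you need a principal-unit estimate.

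That estimate is the one real gap as written. Your justification, ``raising to the $p$-th power raises the valuation of $x$ by at least one'', only gives $v_{\mathfrak p}(u^{p^m}-1)\ge m+1$. That would require $m\ge e_{\mathfrak p}-1$, but $d^{k-1}\ge\mathfrak e$ only guarantees $p^m\ge e_{\mathfrak p}$ for $m=e(k-1)$. For instance, $p=d=2$ and $e_{\mathfrak p}=\mathfrak e=4$ give $m=2$ and only $v_{\mathfrak p}\ge 3$.

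The claim itself is true, and there are two easy fixes.
\begin{enumerate}
\item The binomial expansion actually gives $v(x')\ge\min\{e_{\mathfrak p}+v(x),\,p\,v(x)\}$ for $x'=(1+x)^p-1$. Induction then yields $v_{\mathfrak p}(u^{p^m}-1)\ge\min\{e_{\mathfrak p},p^m\}$.
\item More quickly: $p\cO_{F_{\mathfrak p}}=\mathfrak p^{e_{\mathfrak p}}\cO_{F_{\mathfrak p}}$ and the quotient has characteristic $p$, so $u^{p^m}-1\equiv(u-1)^{p^m}\imod{p\cO_{F_{\mathfrak p}}}$. When $p^m\ge e_{\mathfrak p}$, $(u-1)^{p^m}\in\mathfrak p^{p^m}\cO_{F_{\mathfrak p}}\subseteq p\cO_{F_{\mathfrak p}}$.
\end{enumerate}
The second fix is exactly the paper's device. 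Once you use it, as your final remark anticipates, the Teichm\"uller lift is no longer needed.
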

\begin{proof}
  Let $\mathfrak{P}$ be a prime ideal of $\cO_F$ containing $p$. 
  The natural homomorphism $\cO_{F}/\mathfrak{P}\rightarrow
  \cO_{F,p}/\mathfrak{P}\cO_{F,p}$ is an isomorphism.
  
  We first verify part (i) of Definition~\ref{def:Ffadmissible}.
  By the choice of $\mathfrak f$, we
  have  $a^{p^{\mathfrak f}} - a\in \mathfrak P\cO_{F,p}$ for all
  $a\in\cO_{F,p}$. In particular, $a^{d^{\mathfrak f}}-
  a\in\mathfrak{P}\cO_{F,p}$ as $d$ is a power of $p$.

  Our choice of $k$ in (\ref{eq:choicekl}) satisfies
  $d^{k-1}\ge \mathfrak{e}$. Hence $(a^{d^{\mathfrak f}}-a)^{d^{k-1}}
  \in \mathfrak{P}^{\mathfrak{e}}\cO_{F,p}$ for all $a\in \cO_{F,p}$.
  This statement holds for all prime ideals $\mathfrak{P}\subset \cO_{F}$ above
  $p$.
  By hypothesis, the
  ramification index of every prime ideal of $\mathfrak{P}$ above
  $p$
  is at most $\mathfrak{e}$.
  So we conclude 
  $(a^{d^{\mathfrak f}}-a)^{d^{k-1}}\in {p\cO_{F,p}}$. As
  $d^{k-1}$ is a power of $p$, we find
  \begin{equation*}
    0\equiv (a^{d^{\mathfrak f}}-a)^{d^{k-1}}\equiv a^{d^{\mathfrak
        f+k-1}}+(-a)^{d^{k-1}}
    \equiv a^{d^{\mathfrak f+k-1}}-a^{d^{k-1}}\imod{p\cO_{F,p}}.
  \end{equation*}
  A simple induction that involves raising $a^{d^{\mathfrak
      f+k-1}}\equiv a^{d^{k-1}}\imod{p\cO_{F,p}}$ to the $d^{\mathfrak
    f}$-th power provides
  \begin{equation}
    \label{eq:adjpower}
    a^{d^{j\mathfrak f + k-1}} \equiv a^{d^{k-1}} \imod {p\cO_{F,p}}
  \end{equation}    
  for all $j\in\IN_0$. Taking $j=p$ and recalling
  $l = k+p\mathfrak f$ from (\ref{eq:choicekl}) yields
  \begin{equation*}
    a^{d^{l-1}} \equiv a^{d^{k-1}} \imod {p\cO_{F,p}}
  \end{equation*}
  for all $a\in\cO_{F,p}$.
  Thus condition (i) of Definition~\ref{def:Ffadmissible} holds for $(k,l)$.
    
  Now we verify part (ii) of Definition~\ref{def:Ffadmissible}.
  We already know from Proposition~\ref{prop:congruence}
  that $f$ has coefficients in
  $\cO_{F,p}$. In particular, $f^{(n)}(0)\in \cO_{F,p}$ for all $n\ge 0$.  
  By our notation (\ref{def:alphak}) and by Lemma~\ref{lem:fkcongpsqr}(ii) we have
  \begin{equation*}
    f^{(l-1)}(0) - f^{(k-1)}(0) =     \alpha_{l-1}-\alpha_{k-1} 
    \equiv \sum_{i=k-1}^{l-2} f(0)^{d^i} \imod{ p\cO_{F,p}}.
  \end{equation*}
  The number of terms in the final sum is $l-k = p\mathfrak f$. 
  We split this sum up into $p$ parts, each of length $\mathfrak f$,
  to find
  \begin{equation*}
    f^{(l-1)}(0) - f^{(k-1)}(0) \equiv \sum_{j=0}^{p-1}
    \sum_{i=0}^{\mathfrak f-1} f(0)^{d^{i + j\mathfrak f + k-1}}
    \imod{p\cO_{F,p}}.
  \end{equation*}
  It follows from  (\ref{eq:adjpower}) that, modulo $p$, the inner sum is
  independent of $j$. Thus
  \begin{alignat*}1
    f^{(l-1)}(0) - f^{(k-1)}(0) &\equiv \sum_{j=0}^{p-1}
                                \sum_{i=0}^{\mathfrak f-1} f(0)^{d^{i+k-1}}
                              \equiv p \sum_{i=0}^{\mathfrak f-1} f(0)^{d^{i+k-1}}
                              \equiv 0\imod{p\cO_{F,p}}.
  \end{alignat*}
  Therefore, Definition~\ref{def:Ffadmissible}(ii) 
  holds for our choice of $(k,l)$.
\end{proof}

Soon we will obtain a second criterion for the choice of $(k,l)$ for
quadratic polynomials. But first
we prove Lemma~\ref{lem:critperimplieshyp} from the
introduction.

\begin{proof}[Proof of Lemma~\ref{lem:critperimplieshyp}]
  By hypothesis, the forward orbit of any point in $\pco_{\ge 1}(f)$
  contains a critical point of $f$.
  As there are finitely many critical points, the forward orbit of any
  critical point is
  finite. Hence  $f$ is postcritically finite.
  Let $z\in \pco_{\ge 1}(f)$ be arbitrary. By what we just proved, the
  forward orbit of $z$ contains a periodic point of period $n$, say.
  And by hypothesis,
  the forward orbit of this periodic point contains a critical point
  $c$, say. Then $c$ is also periodic of period $n$.
  Moreover,
  $(f^{(n)})'(c) = f'(f^{(n-1)}(c))\cdots f'(c)=0$ by the chain rule.
  So $c$ lies in an attracting periodic orbit. Thus
  $c\not\in J_f$ by Lemma~4.6~\cite{Milnor} and  hence $z\not\in
  J_f$.
  We conclude that $f$
  is hyperbolic by Lemma~\ref{lem:hyperboliccharacterization}. This
  completes the proof of (i).

  It is well known that $b$ is an algebraic integer if $T^d+b$
  is postcritically finite; see the first paragraph in the 
  Appendix~\cite{epstein:12}.
  Suppose now that $0$ is periodic. 
  It follows from the proof of 
  Proposition~A.1~\cite{epstein:12} that
  $b$ is the root of a monic polynomial $\Gamma \in \IZ[X]$ (depending only
  on the period length of $0$)
  with derivative $\Gamma' \equiv 1 \imod d$.
  Therefore, 
  $\IQ(b)/\IQ$ is unramified above all prime divisors of $d$.
  This deduction is  attributed to Gleason and Adler if $d=2$.
  We
  conclude part (ii).  
\end{proof}

Next we consider  degree $2$ polynomials $T^2+b$ such that the unique
critical point $0$ is periodic. 
By Lemma~\ref{lem:critperimplieshyp}(ii),
the critical value $b$ is an algebraic integer 
and the number field $\IQ(b)/\IQ$ is
unramified above $2$. 

The next lemma uses a result of Buff, Floyd, Koch, and
Parry~\cite{BFKP:Gleason}.

\begin{lemma}
  \label{lem:conglemma_unicritical}
  Let $b$ be an algebraic number and $f=T^2+b\in F[T]$ with $F=\IQ(b)$.
  Suppose that the
  critical point $0$ of $f$ is $f$-periodic of period length $n\ge 1$.
  Then $(1,1+n)$ is an $(F,f)$-admissible pair.
\end{lemma}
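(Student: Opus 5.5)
We have to check the two conditions of Definition~\ref{def:Ffadmissible} for $(k,l)=(1,1+n)$ with $d=p=2$. Condition (ii) is immediate: $f^{(0)}(0)=0$ and $f^{(n)}(0)=0$ because $0$ has period length $n$, so the difference is $0\in 2\cO_{F,2}$. The work lies in condition (i), which here reads $a\equiv a^{2^{n}}\imod{2\cO_{F,2}}$ for all $a\in\cO_{F,2}$.

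By Lemma~\ref{lem:critperimplieshyp}(ii), $b$ is an algebraic integer and $F=\IQ(b)$ is unramified above $2$. Hence $2\cO_{F,2}$ is the product of the distinct primes $\mathfrak P\cO_{F,2}$ with $\mathfrak P\mid 2$. By the Chinese remainder theorem, condition (i) is equivalent to the statement that $x^{2^n}=x$ in every residue field $\cO_F/\mathfrak P\cong\IF_{2^{f_\mathfrak P}}$. In other words, it suffices to show that every residue degree $f_{\mathfrak P}$ above $2$ divides $n$.

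To get this, I would use the Gleason polynomial. Let $G_n(X)=f_X^{(n)}(0)\in\IZ[X]$, where $f_X=T^2+X$; it is monic, and $G_n(b)=0$. Modulo $2$ the recursion becomes $G_{m}\equiv G_{m-1}^2+X$, so $G_n(X)\equiv X+X^2+X^4+\cdots+X^{2^{n-1}}\imod 2$. The key input I would cite is the result of Buff, Floyd, Koch, and Parry~\cite{BFKP:Gleason}, that $G_n$ has simple roots modulo $2$ (this is the standard Gleason-type statement). Let $\bar b$ be the image of $b$ in $\cO_F/\mathfrak P$. It is a root of $\bar G_n=\sum_{i=0}^{n-1}X^{2^i}$, which is an additive polynomial. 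Then
\[
\bar G_n(X)^2+\bar G_n(X)=X^{2^n}+X
\]
over $\IF_2$, so $\bar b^{2^n}=\bar b$. The residue field is generated by $\bar b$, since $\cO_F/\mathfrak P$ is generated by the reduction of $\IZ[b]$ when $\mathfrak P$ does not divide the conductor of $\IZ[b]$. This holds because $\IZ[b]$ is $2$-maximal: $b$ is a simple root modulo $2$ of the monic polynomial $G_n$, and a discriminant argument, or Dedekind's criterion applied to the minimal polynomial of $b$, which divides $G_n$, gives it. Therefore the residue field is contained in $\IF_{2^n}$, and $f_\mathfrak P\mid n$.

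The main obstacle is exactly this $2$-maximality and residue-field generation step. I would handle it through Dedekind's criterion. Let $\mu$ be the minimal polynomial of $b$. It divides $G_n$ in $\IZ[X]$ by Gauss's lemma, so $\bar\mu$ is squarefree modulo $2$. Dedekind's criterion then gives that $2$ does not divide the index $[\cO_F:\IZ[b]]$ and that the primes above $2$ correspond to the irreducible factors of $\bar\mu$. The residue fields are therefore generated by $\bar b$, which completes the argument. This also recovers the unramifiedness used above.
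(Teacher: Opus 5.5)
Your proof is correct and has the same skeleton as the paper's. Condition (ii) is trivial. Unramifiedness above $2$ reduces condition (i) to showing that every residue degree $f_{\mathfrak P}$ above $2$ divides $n$. Squarefreeness modulo $2$, together with Dedekind--Kummer, identifies each residue field with $\IF_2(\bar b)$.

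You differ from the paper in the key input for $f_{\mathfrak P}\mid n$. The paper cites Theorem~1.6 of Buff, Floyd, Koch, and Parry~\cite{BFKP:Gleason}: the irreducible factors modulo $2$ of the exact-period Gleason polynomial have degree $n$ or $n/2$. You instead observe that $\bar G_n^2+\bar G_n=X^{2^n}+X$ in $\IF_2[X]$. Hence $\bar G_n$ divides $X^{2^n}-X$, and every root of $\bar G_n$ lies in $\IF_{2^n}$.

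This makes your proof elementary and self-contained. You do not need \cite{BFKP:Gleason} even for simple roots: squarefreeness of $\bar G_n$ follows from the same identity, or from $\bar G_n'=1$, which is the observation from Epstein's work that the paper also uses.

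Your route also extends verbatim to $f=T^{d}+b$ with $d=p^e$. Modulo $p$ one has $\bar G_n=\sum_{i=0}^{n-1}X^{d^i}$, $\bar G_n'=1$, and $\bar G_n^d-\bar G_n=X^{d^n}-X$. So $(1,1+n)$ is admissible in every prime power degree, whereas the paper obtains this pair only for $d=2$. What the citation gives the paper is the exact factor degrees, which admissibility does not require.
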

\begin{proof}
  We briefly recall some properties of the $n$-th Gleason polynomial
  $G_n$, here $n\in\IN$, 
  see Section~1~\cite{BFKP:Gleason} for a definition.
   Its roots 
  are the  $b\in\IC$ for
  which $0$ is $(T^2+b)$-periodic of period length $n$.
  Each $G_n$ is monic, square-free, and has coefficients in $\IZ$.

  A folklore conjecture states that all Gleason polynomials are
  irreducible in $\IZ[X]$. By contrast, the reduction
  $\tilde{G}_n\in \IF_2[X]$ modulo $2$ usually factors.
  
  By the proof of Proposition~A.1~\cite{epstein:12}, the polynomial
  $G_n$ divides a polynomial whose derivative is equivalent to $1$
  modulo $2$. So the discriminant of $\tilde{G}_n$ is  $1\in \IF_2$
  and thus the discriminant of $G_n$ is odd. 
  It follows that  $\IZ[b]$ is an order in $\mathcal{O}_{F}$ with odd index
  $[\mathcal{O}_F : \IZ[b]]$. By the Dedekind--Kummer Theorem, the ideal
  $2\mathcal{O}_F$ factors into pairwise distinct prime ideals whose
  residue degrees equal the degrees of the irreducible factors of
  $\tilde{G}_n$ in $\IF_2[X]$.

  Buff, Floyd, Koch, and Parry computed these degrees  in their
  Theorem~1.6~\cite{BFKP:Gleason}. Indeed, if $n$ is odd, then all
  irreducible factors of $\tilde{G}_n$ in $\IF_2[X]$ have degree
  $n$. If $n$ is even, then any irreducible factor of $\tilde{G}_n$
  in $\IF_2[X]$ has degree $n$ or $n/2$.

  Let $\mathfrak{P}$ be a prime ideal of $\cO_F$ containing $2$.
  Then the residue degree divides
  $n$ by the Dedekind--Kummer Theorem.
  As in the first part of the proof of
  Lemma~\ref{lem:conglemma_new_cor},
  we find
  $a^{p^n} - a \in \mathfrak{P}  \mathcal{O}_{F,2}$ for all $a\in
  \mathcal{O}_{F,2}$
  and all
  prime ideals  $\mathfrak{P}\subset \cO_F$ containing $2$.
  As $F/\IQ$ is unramified above $2$ we conclude
  $a^{p^n} \equiv  a \imod{2\mathcal{O}_{F,2}}$ for all $a\in
  \mathcal{O}_{F,2}$.
  Thus (i) of Definition~\ref{def:Ffadmissible}
  holds for $(k,l)=(1,1+n)$.

  We have $f^{(n)}(0) - f^{(0)}(0) = f^{(n)}(0) = 0$ as $0$ has period
  length $n$. Therefore, Definition~\ref{def:Ffadmissible} part (ii)
  holds
  for $(1,1+n)$ as well.
\end{proof}

\subsection{Application to Power Series}
\label{sec:apppowerseries}

Let $F$ be a number field, let $p$ be a prime number, and let $d=p^e$
with $e\in\IN$.

Suppose $R$ is a subring of $F$.
Consider two polynomials $A,B\in R[X]$ 
with $A$ monic of degree $D$ and $\deg B\le D$.
The quotient $B/A$ lies in $F(X)$. However, we can
also consider $B/A$ as a power series in $1/X$. Indeed,
we write $A = X^D + a_{1}X^{D-1}+\cdots + a_D$ and $B = b_0 X^D +
\cdots + b_D$. Then
\begin{equation*}
  \frac{B}{A} = \frac{X^{-D}B}{X^{-D}A}= \frac{ b_0 + b_{1} X^{-1}+\cdots + b_D X^{-D}}
  { 1 + a_{1} X^{-1}+\cdots + a_D X^{-D}}.
\end{equation*}
Numerator and denominator are certainly elements of $R[[1/X]]$.
But the denominator has constant term $1$, so it is a unit in
$R[[1/X]]$. Thus we can consider $B/A\in R[[1/X]]$.
We have $b_0=0$, \textit{i.e.}, $\deg B < D$, if and only if
$B/A\in X^{-1}R[[1/X]]$. 

According to Newton, the power series
\begin{equation*}
  \Phi_d = \sum_{k=0}^\infty {1/d\choose k} X^k \in \IQ[[X]]
\end{equation*}
satisfies $\Phi^d = 1+X$.

The power seides $\Phi_d$ does not have integral
coefficients as its linear term is $1/d$. A key insight in Dimitrov's proof of the
Schinzel--Zassenhaus Conjecture was the integrality property
$\Phi_p(p^2 X) \in \IZ[[X]]$. We refer to
Proposition~2.6~\cite{dimitrov:SZ} for a proof. Dimitrov's integrality
property extends easily to prime powers.
\begin{lemma}
  \label{lem:Phidintegral}
  We have $\Phi_d(dpX)\in  1+X\IZ[[X]]$. 
\end{lemma}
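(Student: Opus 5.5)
The plan is to reduce to the prime case $\Phi_p(p^2X)\in\IZ[[X]]$, due to Dimitrov (Proposition~2.6~\cite{dimitrov:SZ}), by induction on $e$, where $d=p^e$. The constant term of $\Phi_d(dpX)$ is ${1/d\choose 0}=1$, so only integrality of the remaining coefficients needs proof. The case $e=1$ is Dimitrov's statement.

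For the inductive step, write $d=p\cdot d'$ with $d'=p^{e-1}$. The formal identity $\Phi_d(X)=\Phi_{d'}\bigl(\Phi_p(X)-1\bigr)$ holds in $\IQ[[X]]$: both sides have constant term $1$, and the $d$-th power of the right-hand side equals $\Phi_p(X)^p=1+X$. A power series with constant term $1$ whose $d$-th power is $1+X$ is unique, because $d$-th roots of power series with constant term $1$ are unique in characteristic $0$. Substituting $X\mapsto dpX=p^{e+1}X$ gives
\begin{equation*}
  \Phi_d(dpX)=\Phi_{d'}\bigl(\Phi_p(p^{e+1}X)-1\bigr).
\end{equation*}
Since $\Phi_p(p^2Y)\in 1+Y\IZ[[Y]]$, putting $Y=p^{e-1}X$ yields $\Phi_p(p^{e+1}X)-1\in p^{e-1}X\IZ[[X]]$. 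The induction hypothesis requires an argument divisible by $d'p=p^{e}$, however, while here we only have $p^{e-1}$. This mismatch of one factor of $p$ is the main obstacle.

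To fix the mismatch, I would refine the base case to the sharper statement
\begin{equation*}
  \Phi_p(p^2Y)-1\in pY\IZ[[Y]].
\end{equation*}
The linear coefficient is $p^2/p=p$, and for $k\ge 2$ I expect $p^{2k}{1/p\choose k}$ to be divisible by $p$ as well. Its $p$-adic valuation is at least $2k-k-v_p(k!)\ge k-(k-1)/(p-1)$. This is $\ge 1$ except possibly when $p=2$, where the bound degenerates to $1$ and so still holds. To check this, I would use ${1/p\choose k}=\prod_{j<k}(1-jp)/(p^k k!)$, whose numerator is a $p$-adic unit, so $v_p\bigl(p^{2k}{1/p\choose k}\bigr)=k-v_p(k!)\ge 1$ by Legendre's formula. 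This computation also reproves Dimitrov's integrality directly.

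With the refined base case, $\Phi_p(p^{e+1}X)-1\in p^{e}X\IZ[[X]]=d'pX\IZ[[X]]$. Writing this as $d'pZ$ with $Z\in X\IZ[[X]]$, the induction hypothesis gives $\Phi_{d'}(d'pZ)\in 1+Z\IZ[[Z]]$. Substituting $Z\in X\IZ[[X]]$ is legitimate because $Z$ has no constant term, and it produces an element of $1+X\IZ[[X]]$. This proves the lemma.

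Alternatively, the same Legendre computation can be done directly for general $d$. The numerator $\prod_{j<k}(1-jd)$ is a unit, so $v_p\bigl((dp)^k{1/d\choose k}\bigr)=k(e+1)-ke-v_p(k!)=k-v_p(k!)\ge 1>0$. This settles the lemma in one line and could replace the induction.
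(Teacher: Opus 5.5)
Your induction argument is correct, and it takes a different route from the paper.

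\textbf{The paper's route.} The paper proves the lemma in one pass, coefficient by coefficient. It writes $(dp)^k{1/d\choose k}=p^k\prod_{j=1}^{k-1}(1-jd)/k!$ and then treats two kinds of primes.
\begin{itemize}
\item For the prime $p$, it bounds the valuation using only $v_p(k!)\le k/(p-1)$. This gives $v_p\ge k(p-2)/(p-1)\ge 0$, which degenerates to $0$ when $p=2$.
\item For each prime $\ell\ne p$, it chooses $a\in\IZ$ with $da\equiv 1$ modulo the $\ell$-part $\ell^n$ of $k!$. Then $\prod_{j=0}^{k-1}(1-jd)\equiv d^k\,k!\,{a\choose k}\equiv 0 \bmod \ell^n$.
\end{itemize}

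\textbf{Your route.} You reduce to the prime case through the composition $\Phi_{p^e}=\Phi_{p^{e-1}}\circ(\Phi_p-1)$. The only new input you need is one extra factor of $p$ in $\Phi_p(p^2Y)-1$. Your sharper Legendre bound $v_p(k!)\le (k-1)/(p-1)$ supplies this. Integrality away from $p$ is imported from Dimitrov's Proposition~2.6 in the base case.

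\textbf{Comparison.} Your approach isolates exactly what changes in passing from $p$ to $p^e$. Carried through the induction, it also gives the stronger statement $\Phi_d(dpX)\in 1+pX\IZ[[X]]$. The paper's approach is self-contained and does not use the prime case as a black box.

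\textbf{A gap in your closing remarks.} Two of your side claims are incomplete: the ``one-line'' alternative, and the remark that the Legendre computation ``reproves Dimitrov's integrality directly''. The valuation identity $v_p\bigl((dp)^k{1/d\choose k}\bigr)=k-v_p(k!)\ge 1$ controls only the prime $p$. Integrality at primes $\ell\neq p$ is a separate statement: the numerator $\prod_{j<k}(1-jd)$ must absorb the full $\ell$-part of $k!$. That is exactly the second half of the paper's proof. Equivalently, you can note that $1/d\in\IZ_\ell$ and that $x\mapsto {x\choose k}$ maps $\IZ_\ell$ into itself. Once this is added, your one-liner becomes the paper's proof with a sharper $p$-adic estimate. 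Your induction does not have this problem, because its base case cites Dimitrov for integrality at every prime.
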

\begin{proof}
  The coefficient of $\Phi_d(dpX)$ for $X^k$  
  equals
  \begin{equation*}
    {1/d\choose k}(dp)^k = \frac{d^{-1} (d^{-1} - 1) \cdots (d^{-1} -
      (k - 1))d^k}{k!} p^k
    = \frac{(1-d)(1-2d)\cdots (1-(k-1)d)}{k!}p^k
  \end{equation*}
  for all integers $k\ge 0$. It is well-known that the
  $p$-adic absolute value of the factorial
  satisfies $|k!|_p \ge p^{-k/(p-1)}$.
  So
  $\bigl|{1/d\choose k}(dp)^k \bigr|_p \le |p^k/k!|_p \le
  p^{-k+k/(p-1)}=p^{-k(p-2)/(p-1)}\le 1$.
  Let $\ell\not=p$ be a prime.
  So $\ell\nmid d$. 
  We fix $a\in\IZ$ with $da \equiv 1
  \imod{\ell^{n}}$
  where $n$ is the exponent of $\ell$ in $k!$.
  Then $(1-d)\cdots(1-(k-1)d)\equiv d^k a(a-1)\cdots(a-(k-1))\equiv 0\imod
  {\ell^n}$ as ${a\choose k}$ is an integer.
  Thus $\left|{1/d\choose k}(dp)^k\right|_\ell\le 1$ and
  hence $\Phi_d(dpX)\in\IZ[[X]]$. 
\end{proof}

For a finite set of places $S$ of $F$ containing all archimedean
places, we let
\begin{equation*}
  \cO_{F,S} = \{ x\in F : |x|_v \le 1\text{ for all places $v$ of $F$
    with $v\not\in S$}\}
\end{equation*}
denote the ring of $S$-integers of $F$. 

For example, if $S$ equals the set of all
archimedean places of $F$, then $\cO_{F,S}$ is the ring $\cO_F$ of algebraic
integers of $F$. If $S$ is a finite set of places of $F$
containing all archimedean places but no finite places above $p$, then
$\cO_{F,S}\subset \cO_{F,p}$.

Recall that admissible pairs were introduced in Definition~\ref{def:Ffadmissible}.
\begin{propo}
  \label{prop:maincongruence}
  Let $F$ be a number field and $p$ a prime number.
  Let $S$ be a finite set of places of $F$ containing all archimedean places
  and disjoint from all finite places dividing $p$.
  Let $f\in \cO_{F,S}[T]$ be monic and of degree a positive power of
  $p$.
  Suppose that $f$ is postcritically bounded
  above $p$ and that its barycenter is in $\cO_{F,p}$.
  Let $(k,l)$ be an $(F,f)$-admissible pair. 
  If $A\in \cO_{F,S}[X]$ is monic there
  exists $B\in \cO_{F,S}[X]$ with $\deg B < dD$ such that
  $\phi = \Phi_d\left(dp\frac{B}{A_k^d}\right) \in 1+X^{-1}\cO_{F,S}[[X^{-1}]]$
  and 
  \begin{equation}
    \label{eq:powerseriesequation}
    \phi^d = \frac{A_l}{A_k}.
  \end{equation} 
\end{propo}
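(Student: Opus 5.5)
The plan is to write $A_l = A_k^d + dp\,B$ for a polynomial $B\in\cO_{F,S}[X]$ of degree $<dD$, and then define $\phi = \Phi_d(dpB/A_k^d)$. Newton's identity $\Phi_d^d = 1+X$ then gives
\begin{equation*}
  \phi^d = 1 + dp\frac{B}{A_k^d} = \frac{A_k^d + dpB}{A_k^d}.
\end{equation*}
This is not quite (\ref{eq:powerseriesequation}), so the setup must be adjusted. The correct comparison is between $A_l$ and $A_k^{\,?}$ in degrees: $A_l$ and $A_k$ both have degree $D$. The natural statement is therefore $\phi^d = A_l/A_k$ with $A_l/A_k\cdot$ rewritten as $A_lA_k^{d-1}/A_k^d$. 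So I will set
\begin{equation*}
  B = \frac{A_lA_k^{d-1} - A_k^d}{dp},
\end{equation*}
and the whole proof reduces to showing $B\in\cO_{F,S}[X]$ with $\deg B<dD$.

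For the degree claim, note that $A_lA_k^{d-1}$ and $A_k^d$ are both monic of degree $dD$, since $A_k,A_l$ are monic of degree $D$ by (\ref{def:Ak}). Their difference therefore has degree $<dD$.

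For integrality at places outside $S$ not above $p$, the coefficients of $A_k,A_l$ lie in $\cO_{F,S}$ because $f\in\cO_{F,S}[T]$ and $A$ is monic over $\cO_{F,S}$, by the symmetric function remark after (\ref{def:Ak}). Moreover $dp$ is a unit at every place outside $S$ not dividing $p$. The real content is at the places $v\mid p$. There, since $\cO_{F,S}\subset\cO_{F,p}$, Proposition~\ref{prop:congruence}(ii) gives $A_l \equiv A_k \imod{dp\,\cO_{F,p}[X]}$. Multiplying by $A_k^{d-1}\in\cO_{F,p}[X]$ (integral by Proposition~\ref{prop:congruence}(i)) yields $A_lA_k^{d-1}\equiv A_k^d \imod{dp\,\cO_{F,p}[X]}$. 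Hence $B$ is $v$-integral for all $v\mid p$. Since $S$ contains no place above $p$, combining the two cases gives $B\in\cO_{F,S}[X]$.

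It then remains to check that $\phi\in 1+X^{-1}\cO_{F,S}[[X^{-1}]]$. Because $A_k$ is monic, $A_k^d$ is a unit in $\cO_{F,S}[[X^{-1}]]$ after scaling by $X^{-dD}$, and since $\deg B<dD$ we get $Y:=B/A_k^d\in X^{-1}\cO_{F,S}[[X^{-1}]]$. By Lemma~\ref{lem:Phidintegral}, $\Phi_d(dpY)=\sum_k c_k Y^k$ with $c_k\in\IZ$ and $c_0=1$. Substituting $Y$, which has no constant term, is a legitimate $X^{-1}$-adically convergent substitution and lands in $1+X^{-1}\cO_{F,S}[[X^{-1}]]$. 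Finally,
\begin{equation*}
  \phi^d = 1+dpY = \frac{A_lA_k^{d-1}}{A_k^d} = \frac{A_l}{A_k}
\end{equation*}
holds as formal power series, because $\Phi_d^d=1+X$ is an identity in $\IQ[[X]]$ and is preserved under substitution of any series without constant term.

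The main obstacle is the $p$-adic integrality of $B$, which rests entirely on the congruence $A_k\equiv A_l$ modulo $dp$ supplied by Proposition~\ref{prop:congruence}. Everything else is bookkeeping, and the only subtle point there is that the statement's $B/A_k^d$ forces the choice $B=(A_lA_k^{d-1}-A_k^d)/(dp)$ rather than $(A_l-A_k)/(dp)$.
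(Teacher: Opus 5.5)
Your proof is correct and is essentially the paper's argument: you use the same $B$ defined by $A_lA_k^{d-1}=A_k^d+dpB$, the same cancellation of leading terms to get $\deg B<dD$, the same place-by-place integrality check (Proposition~\ref{prop:congruence}(ii) at $v\mid p$, and $|dp|_v=1$ at the other places outside $S$), and Lemma~\ref{lem:Phidintegral} to place $\phi$ in $1+X^{-1}\cO_{F,S}[[X^{-1}]]$. In a final write-up, delete the opening false start $A_l=A_k^d+dpB$ and define $B$ directly.
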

\begin{proof}
  Let $A\in\cO_{F,S}[X]$ be monic of degree $D\ge 0$.
  By
  the second example before this proposition we have
   $\cO_{F,S}\subset \cO_{F,p}$. So $A\in \cO_{F,p}[X]$. 
 
  Proposition~\ref{prop:congruence}(ii)
  implies $A_l\equiv A_k \imod{dp\cO_{F,p}[X]}$. We multiply with
  $A_k^{d-1}$ and fix $B\in \cO_{F,p}[X]$ such that
  $A_l A_k^{d-1} = A_k^d + dp B$. 
  Certainly, $\deg B\le dD$. But we even have
  $\deg B <dD$, as the leading terms of the monic polynomials
  $A_lA_k^{d-1}$ and $A_k^d$,
  both in degree $dD$, cancel out when taking  the difference.
  Thus we established the desired degree bound for $B$.

  We claim that $B$ has coefficients in $\cO_{F,S}$.
  Our hypothesis $f\in\cO_{F,S}[T]$ implies 
  $A_k,A_l\in \cO_{F,S}[X]$, see the argument around (\ref{def:Ak}). 
  So $dpB\in \cO_{F,S}[X]$. Let $v$ be a  place  of $F$ with
  $v\not\in S$. If $v\mid p$, then $|B|_v\le 1$ since $B\in
  \cO_{F,p}[X]$; here $|\cdot|_v$ denotes the Gauss norm.
  If $v\nmid p$, then $|p|_v=1$ and so $dp B\in\cO_{F,S}[X]$ implies
  $|B|_v = |dp B|_v \le 1$. Therefore $|B|_v\le 1$ for all places $v$
  of $F$ with $v\not\in S$. Our claim follows. 
  
  We divide $A_l A_k^{d-1} = A_k^d + dp B$ by $A_k^d$ and get
  \begin{equation*}
    \frac{A_l}{A_k} = 1 + dp \frac{B}{A_k^d}.
  \end{equation*}
  As per the discussion before the  beginning of this proposition,
  we consider this identity to
  take place in 
  $\cO_{F,S}[[1/X]]$.
  Note that $\deg B < dD = \deg{A_k^d}$ implies $dpB/A_k^d \in X^{-1}
  \cO_{F,S}[[1/X]]$.  

  By Lemma~\ref{lem:Phidintegral},   $\Phi_d(dpB/A_k^d)$ is a
  well-defined element of 
  $1+X^{-1}\cO_{F,S}[[X^{-1}]]$.  The proposition now follows from
  \begin{equation*}
    \Phi_d\left(dp\frac{B}{A_k^d}\right)^d =
    1+dp\frac{B}{A_k^d}=\frac{A_l}{A_k}.\qedhere
  \end{equation*}
\end{proof}  

We retain the notation of the last proposition. Suppose $A =
(X-x_1)\cdots(X-x_D)$ splits in a given finite field extension $K/F$. Let
$\sigma \colon K\rightarrow\IC$ be a field embedding.
We may apply $\sigma$ to power series and rational functions in the
usual way. In particular, 
we consider $\sigma(\phi) \in \IC[[1/X]]$. 
Then (\ref{eq:powerseriesequation}) can be restated as
\begin{equation}
  \label{eq:phipproduct}
  \sigma(\phi)^d = \prod_{i=1}^D \frac{  1-\sigma(f^{(l)}(x_i))/X}{1-\sigma(f^{(k)}(x_i))/X}.
\end{equation}
The right-hand side of (\ref{eq:phipproduct}) is a
rational function in $\IC(X)$.

Recall that $\phi = \Phi_d(dpB/A_k^d)$.
If $z\rightarrow\infty$, then
$\sigma(\frac{B}{A_k^d})(z)\rightarrow 0$
since $\deg B < \deg A_k^d$. But $\Phi_d$ converges in a
neighborhood of $0$, and so  $\sigma(\phi)(z)$ converges
for $z\in\IC$ outside a sufficiently large closed disk. Thus
$\sigma(\phi)$ represents a holomorphic map outside a large enough
closed disk
with $\lim_{z\rightarrow\infty}\sigma(\phi)(z)=1$. 
Finally, by the Monodromy Theorem, $\sigma(\phi)$ extends to any
simply connected domain $\Omega\subset\IC$
that avoids the poles and zeros 
$\{\sigma(f^{(k)}(x_i)), \sigma(f^{(l)}(x_i)) : 1\le i\le D\}$
of (\ref{eq:phipproduct}).


\section{Proof of Main Results}
\label{sec:proofs}

We come to a central result of this paper. Recall that the barycenter
of a monic polynomial is given by (\ref{def:barycenter}) and that
admissibility is defined in Definition~\ref{def:Ffadmissible}.

\begin{theorem}
  \label{thm:main}
  Let $F$ be a number field, let $p$ be a prime number, and let
  $f\in\cO_F[T]$ be monic of degree $d=p^e$
  for some $e\in\IN$.  We suppose
  \begin{itemize}
  \item that $f$ is postcritically finite,
  \item that $\sigma_0(f) \in\IC[T]$ is hyperbolic for some $\sigma_0\colon
    F\rightarrow\IC$, and
  \item that the
    barycenter of $f$ is an algebraic integer.
  \end{itemize}
  Let $(k,l)$ be an $(F,f)$-admissible pair with $k<l$. 
  Then there exists $c=c(F,f,k,l)>0$  with the following property. 
  Let $\alpha\in \overline F$ be an algebraic integer, 
  then 
  \begin{enumerate}
  \item [(i)] either $\alpha$ is $f$-wandering with
    \begin{equation}
      \label{eq:thmmainbound}
      \max_{v\in M^\infty_{F(\alpha)}}
      \lambda_{f,v}(\alpha)\ge \frac{c}{[F(\alpha):F]^{k}} 
    \end{equation}
  \item[(ii)] or $\alpha$ is $f$-preperiodic with
    \begin{equation*}
       d^{\frac{\mathrm{preper}(\alpha)}{k}-1} 
      \le
       [F(\alpha):F]
      \quad\text{and}\quad
      {\mathrm{per}(\alpha)} \le      
      (l-k)[F(\alpha):F]. 
    \end{equation*}
  \end{enumerate}
\end{theorem}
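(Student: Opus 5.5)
We follow Dimitrov's scheme, combining Proposition~\ref{prop:maincongruence} with the tree bounds of Section~\ref{sec:dynhedgehog} and the P\'olya--Bertrandias Theorem. Let $\alpha$ be an algebraic integer, $L=F(\alpha)$ (enlarged to a Galois closure where convenient), and let $A\in\cO_F[X]$ be the minimal polynomial of $\alpha$ over $F$, of degree $D=[F(\alpha):F]$. Take $S=M_F^\infty$, so $\cO_{F,S}=\cO_F\subset\cO_{F,p}$. Proposition~\ref{prop:congruence}(i) and Lemma~\ref{lem:epstein} give the required integrality, and Proposition~\ref{prop:maincongruence} yields $\phi\in 1+X^{-1}\cO_F[[X^{-1}]]$ with $\phi^d=A_l/A_k$. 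The roots of $A_k$ and $A_l$ are the $f^{(k)}$- and $f^{(l)}$-images of the conjugates of $\alpha$, and $\lambda_f(f^{(j)}(z))=d^j\lambda_f(z)$. By Lemma~\ref{lem:pcfhyperbolicgalois}, every embedding $\sigma\colon F\to\IC$ makes $\sigma(f)$ postcritically finite and hyperbolic, so the tree constructions apply at every archimedean place.

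\textbf{Wandering case.} Suppose $\max_v\lambda_{f,v}(\alpha)<c/D^k$. Then every root $z$ of $\sigma(A_k)$ or $\sigma(A_l)$ satisfies $\lambda_{\sigma(f)}(z)\le d^l c/D^k$. Here the exponent $k$ enters: $A_k$ has at most $D$ distinct roots, but we need the height bound to beat the count $2D$ of points placed on the tree. Using Corollary~\ref{cor:growtree}, applied to $\sigma(f)$ and to the at most $2D$ roots, we get, for $c$ small in terms of $F,f,k,l$, a finite tree $\cT_\sigma$ containing all zeros and poles with $\mathsf{d}_\infty(\cT_\sigma)\le -c(\sigma f)/(2D)$. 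If $k=1$ this bound is immediate from the hypothesis; in general the weaker hypothesis $c/D^k$ is more than enough. The complement $\widehat\IC\ssm\cT_\sigma$ is simply connected, so $\sigma(\phi)$ extends holomorphically there by the Monodromy Theorem. Because $\phi$ has $\cO_F$-coefficients and the product over all embeddings of the transfinite diameters is $<1$, the P\'olya--Bertrandias Theorem implies that $\phi$ is a rational function. Then $A_l/A_k$ is a $d$-th power in $F(X)$.

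The combinatorial step is to turn this into a contradiction with wandering. We would write $A_k=\prod Q_i^{a_i}$ and $A_l=\prod Q_i^{b_i}$ and obtain $a_i\equiv b_i \imod d$. Since $A_k$ and $A_l$ have the same degree, the multisets $\{f^{(k)}(\alpha')\}$ and $\{f^{(l)}(\alpha')\}$ over the conjugates $\alpha'$ must agree modulo $d$-th powers. Comparing multiplicities as in Dimitrov's argument, and using that the fibers of $f^{(l-k)}$ have size at most $d^{l-k}$, we expect to force $f^{(l)}(\alpha)$ to be a conjugate of some $f^{(k)}(\alpha')$, and by iterating, finiteness of the orbit. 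This contradicts the assumption that $\alpha$ is wandering. I expect the main obstacle to be this multiplicity argument, in particular handling coincidences among conjugates and repeated roots.

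\textbf{Preperiodic case.} Here all $\lambda_{\sigma(f),v}$ vanish, and we instead use Proposition~\ref{prop:cappreper}, which applies because hyperbolic postcritically finite polynomials are not conjugate to $\pm\Psi_d$ and have connected $K_f$. It gives trees of negative capacity inside $K_{\sigma f}$, and P\'olya--Bertrandias again forces $A_l/A_k$ to be a $d$-th power. From the equality of the multisets modulo $d$, and since $f^{(k)}(\alpha)$ and $f^{(l)}(\alpha)$ have conjugates of degree at most $D$, we deduce that the Galois orbit of $f^{(k)}(\alpha)$ is mapped into itself by $f^{(l-k)}$. Hence $f^{(k)}(\alpha)$ is periodic with period at most $(l-k)D$, because $f^{(l-k)}$ permutes a set of size at most $D$. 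This also gives $\mathrm{preper}(\alpha)\le k$.

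To get the stated bound $d^{\mathrm{preper}/k-1}\le D$, we would apply the same argument to the points $f^{(jk)}(\alpha)$ and count degrees along preimage chains. Each block of $k$ preperiod steps should multiply the degree by at least $d$, since the preimages are totally ramified away from the postcritical set.
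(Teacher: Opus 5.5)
Your analytic half is sound: using Corollary~\ref{cor:growtree} at every embedding, which Lemma~\ref{lem:pcfhyperbolicgalois} justifies, is a harmless variant of the paper's sublevel sets $\{\lambda_{\sigma(f)}\le c/D\}$ away from $\sigma_0,\overline{\sigma_0}$. The arithmetic endgame, however, has a genuine gap.

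Knowing that $A_l/A_k$ is a $d$-th power in $F(X)$ does not tell you that $A_k$ and $A_l$ share a root. Still less does it tell you that $f^{(l-k)}$ maps the Galois orbit of $f^{(k)}(\alpha)$ into itself. The polynomial $A_k$ is the $[F(\alpha):F(f^{(k)}(\alpha))]$-th power of the minimal polynomial of $f^{(k)}(\alpha)$, and likewise for $A_l$. So when $A_k$ and $A_l$ are coprime, the condition only says that both are $d$-th powers, and this is perfectly consistent.

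Here is a concrete case. Take $f=T^2$, $F=\IQ$, $(k,l)=(1,3)$ (admissible by Remark~\ref{rmk:choicekl}), and $\alpha=\zeta_8$ a primitive $8$-th root of unity. Then $A_1=(X^2+1)^2$ and $A_3=(X-1)^4$ are coprime with square quotient. Moreover $f^{(2)}(i)=1$ is not a conjugate of $i$, and $\mathrm{preper}(\zeta_8)=3>k$.

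So your multiplicity argument in the wandering case fails, and so does your claim $\mathrm{preper}(\alpha)\le k$ in the preperiodic case. The closing heuristic about ramification along preimage chains cannot repair this. Preimages are unramified away from the postcritical set; the degree drop comes from the $d$-th power structure instead.

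The missing idea is an induction on $D=[F(\alpha):F]$. This, not the count of $2D$ points on the tree (for which $c/D$ would suffice), is why the bound carries the exponent $k$.

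\begin{itemize}
\item If $A_k$ and $A_l$ are not coprime, Lemma~\ref{lem:AkAlpreperiodic} gives $\mathrm{preper}(\alpha)\le k$ and $\mathrm{per}(\alpha)\le (l-k)D$.
\item If they are coprime, every irreducible factor of $A_k$ occurs with multiplicity divisible by $d$. Hence $D'=[F(f^{(k)}(\alpha)):F]\le D/d$, while
\[
\max_\sigma\lambda_{\sigma(f)}\bigl(\sigma(f^{(k)}(\alpha))\bigr)<\frac{cd^k}{D^k}\le \frac{c}{(D')^k}.
\]
So the induction hypothesis applies to the algebraic integer $f^{(k)}(\alpha)$, which must then be preperiodic with the bounds in (ii).
\item Finally, $\mathrm{preper}(\alpha)\le k+\mathrm{preper}(f^{(k)}(\alpha))$ and $\mathrm{per}(\alpha)=\mathrm{per}(f^{(k)}(\alpha))$ give $d^{\mathrm{preper}(\alpha)/k-1}\le dD'\le D$ and $\mathrm{per}(\alpha)\le (l-k)D$.
\end{itemize}

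This single induction treats wandering and preperiodic $\alpha$ at once. A preperiodic $\alpha$ has $\lambda=0$ and falls under the same small-height hypothesis, so the separate appeal to Proposition~\ref{prop:cappreper} is unnecessary. In the $\zeta_8$ example the induction descends $\zeta_8\to i\to -1$ and gives exactly the sharp bound $2^{3-1}\le 4$.
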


In the notation of the theorem, if  $\sigma_0(f)$ is hyperbolic for one embedding
$\sigma_0\colon F\rightarrow\IC$, then it is hyperbolic for all
embeddings by Lemma~\ref{lem:pcfhyperbolicgalois}.

\begin{remark}
  \label{rmk:choicekl}
  Let $F,p,$ and $f$ be as in Theorem~\ref{thm:main}.
  Then Proposition~\ref{prop:congruence}
  and  Lemma~\ref{lem:conglemma_new_cor}
  apply to $f$.
  
  Lemma~\ref{lem:conglemma_new_cor} provides a possible choice of admissible pair $(k,l)$
  with $k<l$. We recall its conclusion.
  Let $\mathfrak{e}\in \IN$ be an upper bound for the ramification
  indices of all prime ideals of $\cO_F$ above $p$.
  Let $\mathfrak{f}\in\IN$ be a multiple
  of the residue degrees of all prime ideals of $\cO_F$ above $p$. Then
  \begin{equation*}
    k=1+\left\lceil\frac{\log \mathfrak{e}}{\log
        d}\right\rceil\quad\text{and}\quad
    l = k + p\mathfrak f,
  \end{equation*}
  are members of  an $(F,f)$-admissible pair $(k,l)$. 

  Let us consider some special cases. If $F/\IQ$ is unramified above
  $p$,  then we may take $\mathfrak e=1$.   
  With this choice we have
  $k=1+\left\lceil\frac{\log\mathfrak e}{\log d}\right\rceil =1$. Thus
  $(1,1+p\mathfrak f)$ is an $(F,f)$-admissible pair if $F/\IQ$ is unramified.
  
  If furthermore $F=\IQ$, then $\mathfrak f=1$ is a possible choice.
  In this case $(1,1+p)$ is an $(\IQ,f)$-admissible pair.
\end{remark}

We come to a variation of the theorem above for preperiodic points but
without the hyperbolic hypothesis. Let $F'/F$ be a field extension.
Two polynomials $f,g\in F[T]$ are called \textit{$F'$-linearly
conjugate} if there exist $a\in {F'}^\times$ and $b\in F'$ with $f =
(g(aT+b)-b)/a$.

\begin{theorem}
  \label{thm:mainpreper}
  Let $F$ be a number field, let $p$ be a prime number, and let
  $f\in\cO_F[T]$ be monic of degree $d=p^e$
  for some $e\in\IN$.  We suppose
  \begin{itemize}
  \item that $f$ is postcritically finite,
  \item that $f$ is not $\overline F$-linearly conjugate  to a
    Chebyshev polynomial or a
    negative Chebyshev polynomial, and 
  \item that the barycenter of $f$ is an algebraic integer.
  \end{itemize}
  Let $(k,l)$ be an $(F,f)$-admissible pair with $k<l$. 
  Let $\alpha\in \overline F$ be $f$-preperiodic, then 
  \begin{equation}
    \label{eq:thmpreperbounds}
    d^{\frac{\mathrm{preper}(\alpha)}{k}-1} \le [F(\alpha):F] 
    \quad\text{and}\quad
    \mathrm{per}(\alpha)\le (l-k)[F(\alpha):F].
  \end{equation}
\end{theorem}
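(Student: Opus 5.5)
Let $\alpha$ be $f$-preperiodic, set $D=[F(\alpha):F]$ and let $A\in\cO_F[X]$ be the minimal polynomial of $\alpha$ over $F$ (possibly after multiplying by nothing: $\alpha$ is an algebraic integer because $f\in\cO_F[T]$ is monic and $f^{(m+n)}(\alpha)=f^{(m)}(\alpha)$). We apply Proposition~\ref{prop:maincongruence} with $S=M_F^\infty$ to obtain $\phi\in 1+X^{-1}\cO_F[[X^{-1}]]$ with $\phi^d=A_l/A_k$. Fix an embedding $\sigma\colon F(\alpha)\to\IC$. The zeros and poles of $\sigma(A_l)/\sigma(A_k)$ are conjugates of $f^{(l)}(\alpha),f^{(k)}(\alpha)$, which are preperiodic, so they lie in $K_{\sigma(f)}$. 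The key step is to find a finite topological tree $\cT\subset K_{\sigma(f)}$ containing all these zeros and poles with $\mathsf{d}_\infty(\cT)<0$. This is exactly Proposition~\ref{prop:cappreper}: $K_{\sigma(f)}$ is path-connected (postcritically finite implies the filled Julia set is connected and locally connected), and $\sigma(f)$ is not linearly conjugate to $\pm\Psi_d$ since $f$ is not $\overline F$-conjugate to one (linear conjugacy to $\pm\Psi_d$ is defined over $\overline{\IQ}$, so this transfers through $\sigma$). Since $\widehat\IC\ssm\cT$ is simply connected, the Monodromy Theorem extends $\sigma(\phi)$ holomorphically there, and this holds for every embedding $\sigma$ of $F$.

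Next I would apply the P\'olya--Bertrandias rationality criterion: a power series with coefficients in $\cO_F$ whose conjugates extend holomorphically to the complements of compact sets $\cT_\sigma$ with $\prod_\sigma$ of the transfinite diameters (appropriately weighted by local degrees) less than $1$ is a rational function. Here every $\mathsf{d}_\infty(\cT_\sigma)<0$, so the criterion applies uniformly and $\phi\in F(X)$. Hence $A_l/A_k$ is a $d$-th power in $F(X)$. Writing $A_k=\prod_\beta (X-\beta)^{m_\beta}$ and $A_l=\prod (X-\beta)^{n_\beta}$, this forces $n_\beta\equiv m_\beta\pmod d$ for every root $\beta$.

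The combinatorial step, which I expect to be the main obstacle, is to extract the two bounds from this divisibility. For the period: if $A_l\neq A_k$, consider a root $\beta$ with $n_\beta\ne m_\beta$; the multiplicities are bounded by $D$ and congruent mod $d$. I would instead argue via the preperiodic structure: if $k\ge\mathrm{preper}(\alpha)$, then $f^{(k)}$ and $f^{(l)}$ permute conjugates of a periodic orbit, and the multiplicity vectors are shifts along the cycle by $l-k$; rationality of the $d$-th root plus a degree count (summing multiplicities gives $D$) should force $A_k=A_l$, so $f^{(l-k)}$ fixes the multiset of conjugates of $f^{(k)}(\alpha)$, giving $\mathrm{per}(\alpha)\mid (l-k)\cdot(\text{size of Galois orbit})$, hence $\mathrm{per}(\alpha)\le(l-k)D$. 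For the preperiod, I would replace $f$ by $f^{(k)}$ iterates: if $\mathrm{preper}(\alpha)$ is large, the roots of $A_k$ (images after $k$ steps) are strictly preperiodic and each root of $A_l$ closer to the cycle has multiplicity a multiple of $d$ relative to its preimages; iterating the congruence along blocks of $k$ steps shows the number of conjugates grows by a factor $d$ every $k$ steps, yielding $d^{\mathrm{preper}(\alpha)/k-1}\le D$. Making this multiplicity bookkeeping rigorous (in particular handling coincidences $f^{(k)}(\alpha_i)=f^{(k)}(\alpha_j)$ and the case where $n_\beta\equiv m_\beta$ only modulo $d$ rather than equality) is where the real work lies.
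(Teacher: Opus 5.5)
Your first half matches the paper exactly: integrality of $\alpha$, Proposition~\ref{prop:maincongruence}, trees $\cT_\sigma\subset K_{\sigma(f)}$ from Proposition~\ref{prop:cappreper}, then Monodromy and P\'olya--Bertrandias, giving $A_l/A_k=\psi^d$ with $\psi\in F(X)$. The gap is the second half. You leave it as unspecified multiplicity bookkeeping, and it cannot be carried out from the single relation attached to $\alpha$.

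Suppose $A_k$ and $A_l$ are coprime. Then that relation only says that $A_k$ is a $d$-th power, i.e.\ $[F(f^{(k)}(\alpha)):F]\le D/d$ with $D=[F(\alpha):F]$. This is one factor of $d$, however large $\mathrm{preper}(\alpha)$ is. To reach $d^{\mathrm{preper}(\alpha)/k-1}\le D$ you must rerun the whole power-series argument for $f^{(k)}(\alpha)$. That point is again a preperiodic algebraic integer, now of degree $\le D/d$, so you argue by induction on $[F(\alpha):F]$. You then conclude with $\mathrm{preper}(\alpha)\le\mathrm{preper}(f^{(k)}(\alpha))+k$ and $\mathrm{per}(\alpha)=\mathrm{per}(f^{(k)}(\alpha))$. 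This induction is how the paper proceeds.

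Relatedly, your claim that for $\mathrm{preper}(\alpha)\le k$ a degree count forces $A_k=A_l$ is unjustified. $A_k$ is the minimal polynomial of $f^{(k)}(\alpha)$ raised to the power $[F(\alpha):F(f^{(k)}(\alpha))]$. If that index is divisible by $d$, then $A_l/A_k$ is a $d$-th power whether or not $A_k=A_l$. This situation is also handled by the induction.

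The other missing ingredient is the non-coprime case, which you never isolate. Lemma~\ref{lem:AkAlpreperiodic} shows that a single common root of $A_k$ and $A_l$ suffices:
\begin{itemize}
\item it gives $\tau\in\mathrm{Gal}(\overline F/F)$ with $f^{(k)}(\tau(\alpha))=f^{(l)}(\alpha)$;
\item since $\tau$ commutes with $f$, induction gives $\tau^r(f^{(k)}(\alpha))=f^{(k+(l-k)r)}(\alpha)$ for all $r$;
\item choosing $1\le r\le D$ with $\tau^r(\alpha)=\alpha$ yields $f^{(k)}(\alpha)=f^{(k+(l-k)r)}(\alpha)$, hence $\mathrm{preper}(\alpha)\le k$ and $\mathrm{per}(\alpha)\le (l-k)D$.
\end{itemize}

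Your substitute, that $f^{(l-k)}$ ``fixes the multiset of conjugates of $f^{(k)}(\alpha)$'', only gives a permutation of at most $D$ points. Without the Galois-equivariance above, such a permutation can have order much larger than $D$. The bound $(l-k)D$ needs exactly the argument of Lemma~\ref{lem:AkAlpreperiodic}.

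With these two pieces the proof closes, and no multiplicity bookkeeping is needed:
\begin{itemize}
\item common root: apply Lemma~\ref{lem:AkAlpreperiodic};
\item coprime: the degree drops by a factor $d$, and you apply the induction to $f^{(k)}(\alpha)$.
\end{itemize}
The base case $\alpha\in F$ is automatically non-coprime, since a linear $A_k$ cannot be a $d$-th power.
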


In the special case $f=T^2+b$ below we use the work of Buff, Floyd,
Koch, and Parry~\cite{BFKP:Gleason}.

\begin{theorem}
  \label{thm:mainpreperdeg2}
  Let $b$ be an algebraic number and set $f=T^2+b\in F[T]$ where
  $F=\IQ(b)$.
  Suppose that $0$ is $f$-periodic of period length $n$. 
  Let $\alpha\in \overline F$ be $f$-preperiodic, then 
  \begin{equation}
    \label{eq:thmpreperbounds_deg2}
    2^{{\mathrm{preper}(\alpha)}-1} \le [F(\alpha):F] 
    \quad\text{and}\quad
    \mathrm{per}(\alpha)\le n [F(\alpha):F].
  \end{equation}  
\end{theorem}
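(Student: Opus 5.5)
The plan is to reduce Theorem~\ref{thm:mainpreperdeg2} to Theorem~\ref{thm:mainpreper}, using the admissible pair from Lemma~\ref{lem:conglemma_unicritical}.

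\emph{Hypotheses of Theorem~\ref{thm:mainpreper}.} Take $p=2$, $e=1$, $d=2$ and $F=\IQ(b)$. By Lemma~\ref{lem:critperimplieshyp}(ii), $b$ is an algebraic integer, so $f=T^2+b\in\cO_F[T]$ is monic of degree $2$. Its barycenter is $0$, which is an algebraic integer. Since the only critical point $0$ is periodic, $f$ is postcritically finite. By Lemma~\ref{lem:critperimplieshyp}(i), every embedding of $f$ into $\IC[T]$ is hyperbolic. Hyperbolicity is preserved under linear conjugation, while $\pm\Psi_2$ are not hyperbolic. Hence $f$ is not $\overline F$-linearly conjugate to a Chebyshev or negative Chebyshev polynomial. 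To apply this over $\overline F$, fix an embedding $\overline F\to\IC$: a conjugacy over $\overline F$ becomes a conjugacy over $\IC$.

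\emph{Admissible pair.} Lemma~\ref{lem:conglemma_unicritical} shows that $(k,l)=(1,1+n)$ is $(F,f)$-admissible, and $k<l$ since $n\ge1$.

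\emph{Conclusion.} Theorem~\ref{thm:mainpreper} with $k=1$ and $l-k=n$ gives, for every $f$-preperiodic $\alpha\in\overline F$,
\[
2^{\mathrm{preper}(\alpha)-1}\le[F(\alpha):F]\quad\text{and}\quad \mathrm{per}(\alpha)\le n[F(\alpha):F],
\]
which is exactly (\ref{eq:thmpreperbounds_deg2}).

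The only step needing care is the non-Chebyshev hypothesis, which the hyperbolicity argument above handles. Alternatively, $T^2-2$ is the only degree-$2$ Chebyshev polynomial up to conjugacy, and its critical point $0$ is strictly preperiodic, not periodic, so it is excluded directly. The substantive input, the Buff--Floyd--Koch--Parry residue degree computation, is already contained in Lemma~\ref{lem:conglemma_unicritical}. So the main obstacle is just checking that all hypotheses transfer, together with the fact that Theorem~\ref{thm:mainpreper} has been proved earlier in the paper.
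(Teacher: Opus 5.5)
Your proposal is correct and follows the paper's proof: take the admissible pair $(1,1+n)$ from Lemma~\ref{lem:conglemma_unicritical}, rule out conjugacy to $\pm(T^2-2)$, and apply Theorem~\ref{thm:mainpreper} with $k=1$ and $l-k=n$. For the non-Chebyshev hypothesis the paper uses your ``alternative'' argument (the critical point of $\pm(T^2-2)$ is not periodic), and it leaves implicit the integrality of $b$ that you check via Lemma~\ref{lem:critperimplieshyp}(ii).
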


Let us mention here the work of Anderson, Hamblen, Poonen, and
Walton~\cite{AHPW:18} on arboreal representations. In their Theorem
7.3, they show that, for fixed and suitable $x\in F$,  the normal closure of
$F((f^{(n)})^{-1}(a))$ over $F$ has
unbounded wild ramification above $p$ as $n$ increases.
Their $f$ has degree $p$, but is not assumed to be postcritically
finite. 
This induces a lower bound for the field degree in question.

\begin{remark}
  \begin{enumerate}
  \item[(i)] Let $f=T^2+b$ and assume that
    $0$ is $f$-periodic. Lemma~\ref{lem:critperimplieshyp}
    implies the following two statements. The polynomial $f$ is
    postcritically finite and $b$ is an algebraic integer in a number
    field $F$. Moreover,
    $\sigma(f)$ is hyperbolic for all $\sigma\colon F\rightarrow \IC$.
    
  \item [(ii)]   Neither theorem applies to $T^2-2 \in \IQ[T]$,
    the second Chebyshev polynomial $\Psi_2$ according to the
    normalization as around (\ref{eq:chebynorm}). The polynomial $T^2-2$ is
  postcritically finite and its barycenter is $0$. So
  $(k,l)=(1,3)$ is an $(\IQ,f)$-admissible pair by Remark~\ref{rmk:choicekl}.
  The critical point $\alpha=0$ is preperiodic with
  $\mathrm{preper}(0)=2$ and does not satisfy
  (\ref{eq:thmpreperbounds}). Note that $T^2-2$ is not hyperbolic as the
  critical point $0$ lies in the Julia set $[-2,2]$. So
  Theorem~\ref{thm:main} does not apply either.
  \item[(iii)] Neither theorem applies to
    $f=T^5+10T^4+45T^3+110T^2+145T+80$.
    Then $f$ is $\IC$-linearly conjugate to the
    monic Chebyshev polynomial
    $\Psi_5=T^5-5T^3+5T$ with the normalization as around
    (\ref{eq:chebynorm}). 
    Indeed, $f = (\Psi_5(aT+b)-b)/a$ with
    $a=\sqrt{-1}$ and $b=2\sqrt{-1}$.
    But $f$ is not $\IQ$-linearly conjugate to $\Psi_5$. 
  \end{enumerate}  
\end{remark}

We come to a conclusion on the canonical height. It holds for
all algebraic numbers and not just algebraic integers.

\begin{corollary}
  \label{cor:canhgtlb}
  Let $F,p,f,k,$ and $l$ be as in Theorem~\ref{thm:main}.
  Then there exists $c=c(F,f,k,l)>0$  with the following property. 
  Let $\alpha\in \overline F$,
  then 
  \begin{enumerate}
  \item [(i)] either  $\alpha$ is $f$-wandering with
    \begin{equation*}
      \hat h_f(\alpha) \ge \frac{c}{[F(\alpha):F]^{k+1}} 
    \end{equation*}
  \item[(ii)] or $\alpha$ is $f$-preperiodic with
    \begin{equation*}
      d^{\frac{\mathrm{preper}(\alpha)}{k}-1} \le [F(\alpha):F] 
      \quad\text{and}\quad
      {\mathrm{per}(\alpha)} \le      (l-k)[F(\alpha):F].
    \end{equation*}
  \end{enumerate}
\end{corollary}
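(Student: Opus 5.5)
Part (ii) is already covered by Theorem~\ref{thm:main}(ii), since that conclusion does not require $\alpha$ to be integral. Indeed, for preperiodic $\alpha$ one can invoke Theorem~\ref{thm:mainpreper}: a hyperbolic $\sigma_0(f)$ is not linearly conjugate to $\pm\Psi_d$, so $f$ is not $\overline F$-linearly conjugate to a Chebyshev polynomial or its negative. The work is therefore part (i), where $\alpha$ is $f$-wandering and possibly non-integral. I will decompose $\hat h_f(\alpha)$ into local heights via (\ref{def:canonicalhgt}) over $L=F(\alpha)$ and separate two cases according to whether $\alpha$ is integral at all finite places.

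\textbf{Case A: $\alpha$ is an algebraic integer.} Theorem~\ref{thm:main}(i) gives an archimedean place $v_0$ of $L$ with $\lambda_{f,v_0}(\alpha)\ge c/[L:F]^k$. All local heights are nonnegative, so
\begin{equation*}
\hat h_f(\alpha)\ge \frac{d_{v_0}}{[L:\IQ]}\lambda_{f,v_0}(\alpha)\ge \frac{1}{[F:\IQ][L:F]}\cdot\frac{c}{[L:F]^k}.
\end{equation*}
This is the claimed bound with exponent $k+1$, after absorbing $[F:\IQ]$ into the constant.

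\textbf{Case B: $|\alpha|_w>1$ for some nonarchimedean place $w$ of $L$.} The coefficients of $f$ lie in $\cO_F$ and $f$ is monic, so $|f(\beta)|_w=|\beta|_w^d$ whenever $|\beta|_w>1$, by the ultrametric inequality. Iterating this gives
\begin{equation*}
\lambda_{f,w}(\alpha)=\log|\alpha|_w.
\end{equation*}
Now $|\alpha|_w$ is a value of the discrete valuation on $L_w$. If $w\mid q$ with ramification index $e_w$, then $\log|\alpha|_w\ge (\log q)/e_w\ge(\log 2)/e_w$. Since $e_w\le d_w$, this yields
\begin{equation*}
\hat h_f(\alpha)\ge \frac{d_w}{[L:\IQ]}\cdot\frac{\log 2}{e_w}\ge \frac{\log 2}{[F:\IQ][L:F]},
\end{equation*}
which is stronger than required.

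The only step needing care is the nonarchimedean computation in Case~B, namely checking that $\log|\alpha|_w$ is bounded below by $\log 2/e_w$ and that $d_w/e_w\ge 1$. Both follow from $|\cdot|_w$ extending the $q$-adic absolute value normalized so that $|q|_w=1/q$. Taking $c$ to be the minimum of the two constants finishes the argument.
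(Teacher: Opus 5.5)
Your proof is correct, and part (i) follows the paper's argument. You split on whether $\alpha$ is integral at every finite place. If not, a single nonarchimedean place gives $\hat h_f(\alpha)\ge \log 2/[F(\alpha):\IQ]$ via $\lambda_{f,w}(\alpha)=\log|\alpha|_w$. If so, Theorem~\ref{thm:main}(i), weighted by $d_{v}/[F(\alpha):\IQ]$, gives the exponent $k+1$.

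The one point to correct is in part (ii). Your first sentence is false as stated: Theorem~\ref{thm:main} explicitly assumes that $\alpha$ is an algebraic integer, including for the preperiodic alternative.

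Your fallback through Theorem~\ref{thm:mainpreper} is valid. Extend $\sigma_0$ to $\overline F$; since $\pm\Psi_d\in\IZ[T]$, an $\overline F$-linear conjugacy of $f$ to $\pm\Psi_d$ would make the hyperbolic $\sigma_0(f)$ $\IC$-linearly conjugate to a non-hyperbolic polynomial. Still, that check and the heavier machinery behind Theorem~\ref{thm:mainpreper} (Milnor's theorem, Proposition~\ref{prop:cappreper}) are unnecessary.

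The paper simply observes that a preperiodic $\alpha$ is a root of the monic polynomial $f^{(n)}-f^{(m)}\in\cO_F[T]$ for some $n>m$. Hence $\alpha$ is an algebraic integer, and Theorem~\ref{thm:main}(ii) applies directly.
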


We begin with the proof of these various results. 

Recall that $\widehat\IC$ is the Riemann sphere $\IC\cup\{\infty\}$
and that
$\lambda_f\colon\IC\rightarrow[0,\infty)$ was defined in (\ref{def:lambdafcomplex}). 
The following lemma is well-known.

\begin{lemma}
  \label{lem:constructKsmalllambda}
  Let $f\in \IC[T]$ be monic of degree $d\ge 2$. Suppose that all
  critical points of $f$ lie in $K_f$. Let $t\ge 0$. The set
  \begin{equation*}
    K = \{z \in \IC : \lambda_f(z)\le t\}
  \end{equation*}
  is compact, non-empty, and satisfies $\mathsf{d}_\infty(K)= t$.
  Moreover, $\widehat\IC\ssm K$ is simply connected.
\end{lemma}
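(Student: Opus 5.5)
The plan is to use the Green's function description of $\lambda_f$. Set $G=\lambda_f$. This is the Green's function of $\widehat\IC\ssm K_f$ with pole at infinity. It is continuous, subharmonic, and harmonic on $\IC\ssm K_f$. It vanishes exactly on $K_f$ and satisfies $G(z)=\log|z|+O(1)$ as $z\to\infty$; for monic $f$ the constant term at infinity is $0$. These facts follow from Theorem~3.27 in Silverman and the standard theory in Ransford, Theorem~6.5.1 (the argument used in Proposition~\ref{prop:cappreper}).

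\emph{Compactness and non-emptiness.} $K$ is closed because $\lambda_f$ is continuous. It is bounded because $\lambda_f(z)\ge \log^+|z|-O_f(1)$. It is non-empty because $K\supset K_f\neq\emptyset$.

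\emph{The case $t=0$.} Here $K=K_f$. The critical points lie in $K_f$, so $K_f$ is connected (Milnor, Theorem~9.5) and $\widehat\IC\ssm K_f$ is simply connected. Since $f$ is monic, $\mathsf{d}_\infty(K_f)=0$ by Ransford, Theorem~6.5.1.

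\emph{The case $t>0$.} The set $\Omega=\{z:\lambda_f(z)>t\}\cup\{\infty\}$ is the complement of $K$. The function $G-t$ is harmonic on $\Omega\ssm\{\infty\}$, tends to $0$ at $\partial\Omega$ (a level set of the continuous function $G$), and equals $\log|z|-t+o(1)$ at infinity. Hence $G-t$ is the Green's function of $\Omega$, and the Robin constant is $-t$. The capacity relation (logarithmic capacity equals transfinite diameter, Fekete--Szeg\H{o}) gives $\mathsf{d}_\infty(K)=t$. Regularity of the boundary needs a short comment, because a genuine Green's function requires $G-t\to0$ at every boundary point. This holds here since $G$ is continuous and $\partial\Omega\subset\{G=t\}$.

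\emph{Simple connectivity of $\widehat\IC\ssm K$ for $t>0$.} I would use the Böttcher coordinate. All critical points lie in $K_f$, so the Böttcher map $\phi$ extends to a conformal isomorphism $\widehat\IC\ssm K_f\to\widehat\IC\ssm\overline{\Delta}$ satisfying $\log|\phi|=G$ (Milnor, Theorem~9.5). Under $\phi$ the set $\Omega$ corresponds to $\{|w|>e^t\}\cup\{\infty\}$, which is a disk in the sphere and therefore simply connected.

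The main obstacle is the justification that $G-t$ is exactly the Green's function of $\Omega$, in particular pinning down the Robin constant. The Böttcher picture makes this transparent: $K$ is the complement of $\phi^{-1}(\{|w|>e^t\})$, and $\phi(z)=z+O(1)$ at infinity. The map $w\mapsto e^{-t}\phi$ is then a Riemann map onto the exterior disk with derivative $e^{-t}$ at infinity. By the same Fekete--Szeg\H{o} normalization used in Lemma~\ref{lem:capacity}, the capacity is $e^t$, so $\mathsf{d}_\infty(K)=t$.
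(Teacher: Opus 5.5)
Your proof is correct and follows essentially the same route as the paper. The Böttcher coordinate $\phi$ with $\log|\phi|=\lambda_f$ identifies $\widehat\IC\setminus K$ with $\{|w|>e^t\}\cup\{\infty\}$, and the normalization $e^{-t}\phi(z)=e^{-t}z+O(1)$ at infinity together with Fekete--Szeg\H{o} gives $\mathsf{d}_\infty(K)=t$. The paper phrases this via the conformal radius of $\psi_t(w)=\psi(e^tw)$ and handles $t=0$ uniformly with $t>0$.

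Your Green's-function/Robin-constant phrasing is an equivalent reformulation. It does need $\Omega$ to be connected, which you only obtain from the Böttcher paragraph, so that paragraph should logically come first.
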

\begin{proof}
  By Theorem 9.5~\cite{Milnor}, $J_f$ and $K_f$ are connected. 
  Moreover, there is a biholomorphic mapping $\phi\colon \IC\ssm
  K_f\rightarrow \IC\ssm\overline\Delta$; where $\overline\Delta$ is
  the closed unit disk.  Let
  $\psi=\phi^{-1}\colon \IC\ssm\overline\Delta\rightarrow\IC\ssm K_f$.
  The asymptotic expansion of
  $\psi(w)$ for $w$ near $\infty$ is of the form $w+O(1)$, see the
  discussion on pages 96 -  98 \textit{loc.cit.}
  Then  the discussion in Definition 9.6~\textit{loc.cit.} yields
  $\log|\phi(z)| = \lambda_f(z)$ for all $z\in\IC\ssm K_f$.
  Therefore, $K$ from the hypothesis equals
  $K_f\cup \{z\in \IC\ssm K_f : \log|\phi(z)|\le t \}$. Its complement
  $\IC\ssm K$
  equals $\psi(\{w\in\IC: |w|>e^{t}\})$.
  This is the same as $\psi_t(\IC\ssm\overline\Delta)$ where
  $\psi_t(w) = \psi(e^t w)$.
  Asymptotically, $\psi_t$ grows like $e^tw+O(1)$. 
  So the conformal radius of $K$ with respect to $\infty$ equals $e^t$.  
  Theorem~5.2.3~\cite{Ransford} combined with the
  Fekete--Szeg\H{o} Theorem, Theorem 5.5.2~\textit{loc.cit.}, yields  $\mathsf{d}_\infty(K)=t$.
  Finally, $\widehat\IC\ssm K$
  is simply connected as $\widehat\IC\ssm K$ is biholomorphic to 
  $\{w\in \IC : |w|>t \}\cup\{\infty\}$. 
\end{proof}

\begin{remark}
  Let $f$ be as in Lemma~\ref{lem:constructKsmalllambda} and
  $\overline f$ the polynomial after applying complex conjugation of
  $f$. Then $\lambda_{f}(z) = \lambda_{\overline f}(\overline z)$ for
  all $z\in\IC$. So
  $\{z\in \IC : \lambda_f(z)\le t\}$ is the image under complex
  conjugation of $\{z\in \IC : \lambda_{\overline f}(z)\le t\}$. These
  two sets have equal transfinite diameter.
\end{remark}

For the next lemma we recall the notation $A_k$ introduced
in (\ref{def:Ak}) for a monic polynomial $A$, it also depends on fixed
$f$. 

\begin{lemma}
  \label{lem:AkAlpreperiodic}
  Let $f\in F[T]$ be monic of degree at least $2$. Let
  $A\in F[X]$ be an irreducible and monic polynomial. Suppose
  $k,l\in\IN_0$ such that $k\le l$ and such that
  $A_k$ and $A_l$ are not coprime.
  \begin{enumerate}
  \item [(i)]  There exists $r\in
    \{1,\ldots,\deg A\}$ such that all roots $\alpha\in\overline F$ of
    $A$ satisfy $f^{(k)}(\alpha)=f^{((l-k)r+k)}(\alpha)$. 
  \item[(ii)] If $k<l$, then $\alpha$ is $f$-preperiodic. Moreover,
    $\mathrm{preper}(\alpha)\le k$ and
    $\mathrm{per}(\alpha)\le (l-k)[F(\alpha):F]$.
  \end{enumerate}
\end{lemma}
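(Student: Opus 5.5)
Since $A_k$ and $A_l$ are not coprime, they share a root. Write $A=\prod_{i=1}^D(X-x_i)$ with $x_1,\ldots,x_D$ the roots of $A$ in $\overline F$; since $A$ is irreducible, they are pairwise distinct and Galois conjugate over $F$. By the definition (\ref{def:Ak}), a common root means there are indices $i,j$ with $f^{(k)}(x_i)=f^{(l)}(x_j)$. We obtain a self-map of the finite set of roots as follows. Because $f$ has coefficients in $F$, the relation $f^{(k)}(\alpha)=f^{(l)}(\beta)$ between roots of $A$ is transported by $\mathrm{Gal}(\overline F/F)$, which acts transitively on the roots. Hence every root $\alpha$ of $A$ has some root $\beta=:\tau(\alpha)$ with $f^{(k)}(\alpha)=f^{(l)}(\beta)$. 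Here $\tau$ is any choice function from the roots to the roots; it need not be a bijection.

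For part (i), we iterate $\tau$ starting at a root $\alpha$. Since there are $D=\deg A$ roots, the sequence $\alpha,\tau(\alpha),\tau^2(\alpha),\ldots$ eventually cycles. The key identity, proved by induction on $m$, is
\[
f^{(k)}(\alpha)=f^{((l-k)m+k)}(\tau^m(\alpha)).
\]
The inductive step applies $f^{((l-k)m)}$ to $f^{(k)}(\tau^m\alpha)=f^{(l)}(\tau^{m+1}\alpha)$. Now take a root $\gamma$ on a cycle of $\tau$, with cycle length $r\le D$, so $\tau^r(\gamma)=\gamma$. Then $f^{(k)}(\gamma)=f^{((l-k)r+k)}(\gamma)$. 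Transitivity of the Galois action transports this identity to all roots. The delicate point is that the cycle length must be the same number $r$ for every root, and this holds because we transport the single identity for $\gamma$ by Galois automorphisms rather than by $\tau$. This gives (i) with $r\in\{1,\ldots,D\}$.

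For part (ii), assume $k<l$. Then $(l-k)r\ge 1$, so the identity in (i) shows that $f^{(k)}(\alpha)$ is periodic with period dividing $(l-k)r$. Hence $\alpha$ is preperiodic with $\mathrm{preper}(\alpha)\le k$. Moreover, $\mathrm{per}(\alpha)$, which equals the period of $f^{(k)}(\alpha)$, is at most $(l-k)r\le (l-k)D=(l-k)[F(\alpha):F]$, using $D=[F(\alpha):F]$ because $A$ is irreducible. The main obstacle is arranging the uniform $r$ in (i), which the Galois-transport step handles; everything else is bookkeeping.
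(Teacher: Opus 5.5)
Your proof is correct and follows essentially the same route as the paper: iterate the relation $f^{(k)}(\cdot)=f^{(l)}(\cdot)$ along Galois conjugates, close a loop using finiteness of the set of roots, and then transport the resulting identity to all roots by Galois transitivity. The only difference is in how the loop is produced. The paper iterates a single automorphism $\sigma$ with $f^{(k)}(\sigma(\alpha))=f^{(l)}(\alpha)$, proves $\sigma^r(f^{(k)}(\alpha))=f^{((l-k)r+k)}(\alpha)$, and takes $r\le[F(\alpha):F]$ with $\sigma^r(\alpha)=\alpha$. You instead iterate a possibly non-injective choice map $\tau$ on the roots and pass to a point on one of its cycles before transporting.
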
 
\begin{proof}
  We follow the argument in Proposition 4.3~\cite{hs:2021lower}.
  
  By hypothesis and (\ref{def:Ak}), there exist roots 
  $\alpha,\alpha'\in\overline F$ of $A$ with
  $f^{(k)}(\alpha')=f^{(l)}(\alpha)$.
  As $A$ is irreducible in $F[X]$ there exists $\sigma\in
  \mathrm{Gal}(\overline F/F)$ with $\sigma(\alpha)=\alpha'$.
  Therefore,
  \begin{equation}
    \label{eq:fklsigma}
    f^{(k)}(\sigma(\alpha)) = f^{(l)}(\alpha).
  \end{equation}

  We claim that $\sigma^r (f^{(k)}(\alpha)) = f^{((l-k)r+k)}(\alpha)$
  for all $r\in\IN_0$.

  The proof is by induction on $r$, the case $r=0$ being trivial. We
  assume $r\ge
  1$. By the induction hypothesis we find
  \begin{equation*}
    \sigma^r (f^{(k)}(\alpha)) =
    \sigma\bigl(\sigma^{r-1}(f^{(k)}(\alpha))\bigr)
    = \sigma\bigl(f^{((l-k)(r-1)+k)}(\alpha)\bigr)
    =\sigma\bigl(f^{((l-k)(r-1))}(f^{(k)}(\alpha))\bigr). 
  \end{equation*}
  Note that $\sigma$ acts trivially on the coefficients of $f$.
  Therefore, 
  \begin{equation*}
    \sigma^r (f^{(k)}(\alpha)) 
    =f^{((l-k)(r-1))}(f^{(k)}(\sigma(\alpha))).
  \end{equation*}
  We use (\ref{eq:fklsigma}) and find 
  \begin{equation*}
    \sigma^r (f^{(k)}(\alpha)) 
    =f^{((l-k)(r-1))}(f^{(l)}(\alpha))=f^{((l-k)(r-1)+l)}(\alpha).
  \end{equation*}
  Our claim follows from $(l-k)(r-1)+l = (l-k)r+k$. 
  
  As we let $i\in\IN_0$ vary, there are at most $[F(\alpha):F]$
  possible values for $\sigma^i(\alpha)$. By the Pigeonhole Principle
  there exist
  $i,j\in \{0,\ldots,[F(\alpha):F]\}$ with $i>j$ and
  $\sigma^i(\alpha)=\sigma^j(\alpha)$. We find
  $\sigma^r(\alpha)=\alpha$ with $r=i-j\in \{1,\ldots,[F(\alpha):F]\}$.
  
  With this choice for $r$ our claim yields
  $f^{(k)}(\alpha)=f^{(k)}(\sigma^r (\alpha))=
  \sigma^r (f^{(k)}(\alpha))= f^{((l-k)r+k)}(\alpha)$.
  This equality must hold for all roots of the
  irreducible $A$ since the Galois group acts transitively on such
  roots.
  Therefore, part (i) holds.

  Part (ii) follows from part (i).
\end{proof}

\begin{remark}
  \label{rem:AkAlpreperiodic}
  Let us keep the notation of Lemma~\ref{lem:AkAlpreperiodic}.
  Let $\alpha\in\overline F$ be a root of $A$.

  The proof of Lemma~\ref{lem:AkAlpreperiodic}
    shows that we can take $r\in \{1,\ldots,r_0\}$ where
    \begin{equation*}
      r_0 =
      \max_{  \sigma\in \mathrm{Gal}(\overline F/F)}
      \min \{r\ge 1 : \sigma^r \in \mathrm{Gal}(\overline
      F/F(\alpha))\}.
    \end{equation*}    
    We note that $r_0\le [F(\alpha):F]$ by the argument given in the
    proof.
    In some situations we can expect a better bound. For
    example, if $F(\alpha)/F$ is a Galois extension we can take $r_0$ to
    be the exponent of $\mathrm{Gal}(F(\alpha)/F)$. In particular, if
    the  Galois group of $F(\alpha)/F$ is
    isomorphic to a 
    power of $\IZ/2\IZ$, then we may take $r_0=2$.
\end{remark}

We now prove Theorem~\ref{thm:main} by induction on $D=[F(\alpha):F]$, where
 $\alpha\in\overline F$ is an algebraic integer. We do the
induction step and the base case $\alpha\in F$ simultaneously.
Let $(k,l)$ be an $(F,f)$-admissible pair with $k<l$. 

Suppose $v$ is an archimedean place of $F(\alpha)$.
We first relate $\lambda_{f,v}(\alpha)$, per the definition
(\ref{def:lambdafcomplex}), to the local canonical height on the
complex numbers as in (\ref{def:lambdafv}).
Let $\sigma\colon
F(\alpha)\rightarrow \IC$ be one of the at most 2 homomorphisms 
attached to $v$. By definition we have 
\begin{equation*}  
  \lambda_{f,v}(\alpha) = \lim_{n\rightarrow\infty}
  \frac{\log^+ |f^{(n)}(\alpha)|_v}{d^n}
  =  \lim_{n\rightarrow\infty}
  \frac{\log^+|\sigma(f)^{(n)}(\sigma(\alpha))|}{d^n}
  =\lambda_{\sigma(f)}(\sigma(\alpha)).
\end{equation*}
Therefore,
\begin{equation}
  \label{eq:twodynamicalhouses}
  \max_{v\in M_{F(\alpha)}^\infty} \lambda_{f,v}(\alpha)=
  \max_{\sigma\colon F(\alpha)\rightarrow \IC}
  \lambda_{\sigma(f)}(\sigma(\alpha)). 
\end{equation}
And we will continue to work with the right-hand side of
(\ref{eq:twodynamicalhouses}) as opposed to the left-hand side which
appears in (\ref{eq:thmmainbound}).

We write $c_1$ and $c_2$ for positive constants that depend on the
given data $F,f,k,$ and $l$,
but not on $\alpha$. We will fix $c>0$ to be sufficiently small   in terms of
$c_1$ and $c_2$.
It is important to keep in mind that $k$ and $l$ are part of the fixed
data. 
They do not depend on our algebraic integer $\alpha$.

If the inequality (\ref{eq:thmmainbound}) holds, then $\alpha$ is $f$-wandering. 
So to prove the theorem if suffices to assume 
\begin{equation}
  \label{eq:lambdasigmafsmallworkinghyp}
  \max_{\sigma \colon  F(\alpha)\rightarrow \IC}
  \lambda_{\sigma(f)}(\sigma(\alpha))< \frac{c}{[F(\alpha):F]^{k}}=\frac{c}{D^k}
\end{equation}
and show that $\alpha$ satisfies conclusion (ii) of Theorem~\ref{thm:main}.

As $f$ is postcritically finite, it is postcritically bounded above
$p$. 
Moreover, by hypothesis $f$  has coefficients and barycenter in $\cO_F$.
We take $S$ to be the set of
archimedean places of $F$, so $\cO_{F,S}=\cO_F$.
All assumptions of
Proposition~\ref{prop:maincongruence} are satisfied.

Consider the
$F$-minimal polynomial $A\in F[T]$ of $\alpha$. As $\alpha$ is an
algebraic integer, we have $A\in \cO_F[T]$. 
Let $\phi \in 1+ X^{-1}\cO_F[[X^{-1}]]$ be the power series as in
Proposition~\ref{prop:maincongruence}. We recall that
\begin{equation*}
  \phi^d = \frac{A_l}{A_k}. 
\end{equation*}

Let $\sigma\colon F\rightarrow\IC$ be a field embedding.
Then $\sigma(\phi)$ is a power series in
$\IC[[1/X]]$ with
\begin{equation}
  \label{eq:sigmaprodfinalproof}
  \sigma(\phi)^d = 
  \prod_{i=1}^{D}
  \frac{1-\sigma(f)^{(l)}(z_{\sigma,i})/X}{1-\sigma(f)^{(k)}(z_{\sigma,i})/X}
\end{equation}
where the  $z_{\sigma,1},\ldots,z_{\sigma,D}$ are the roots of $\sigma(A)\in \IC[X]$. 

Our task is to attach a compact and non-empty $\cT_\sigma\subset\IC$
to each $\sigma\in\mathrm{Hom}(F,\IC)$. In fact, we will establish
that each 
$\widehat\IC\ssm\cT_\sigma$ is simply connected and that
$\sigma(\phi)$ extends to a holomorphic map on $\widehat\IC\ssm
\cT_\sigma$.

We begin by treating the case where $\sigma$ is $\sigma_0$, the
embedding provided by the hypothesis of Theorem~\ref{thm:main}.
So $\sigma_0(f)$ is hyperbolic. 
We want to apply Corollary~\ref{cor:growtree} to $\sigma_0(f)$ and the
$2D$ points 
$\sigma_0(f^{(k)})(z_{\sigma_0,i}),\sigma_0(f^{(l)})(z_{\sigma_0,i})$
for
$i\in \{1,\ldots,D\}$. We let $c_{1}=c_1(f)>0$
denote the constant
from the corollary. 

Let us now check the  inequality in (\ref{eq:growtreeshyp}) (with $2D$
instead of $D$).
In other words, we need to verify
\begin{equation}
  \label{eq:lambdafklzsigma}
  \max_{1\le i\le D}
  \lambda_{\sigma_0(f)}\bigl(\sigma_0(f^{(m)})(z_{\sigma_0,i})\bigr)
  < \frac{c_1(f)}{2D}
  \quad\text{for both}\quad m=k,l. 
\end{equation}
If $i\in \{1,\ldots,D\}$, then
\begin{equation}
  \label{eq:lambdaflzibound}
  \lambda_{\sigma_0(f)}(\sigma_0(f^{(m)})(z_{\sigma_0,i})) = d^m
  \lambda_{\sigma_0(f)}(z_{\sigma_0,i})
  < \frac{d^m c}{D^{k}} \le
  \frac{d^m c}{D}
\end{equation}
by (\ref{eq:lambdasigmafsmallworkinghyp}). 
So (\ref{eq:lambdafklzsigma}) is a consequence of
(\ref{eq:lambdaflzibound}) as we may assume
$c\le c_1(f) /(2d^m)$. 

Corollary~\ref{cor:growtree} provides a finite topological tree
$\cT_{\sigma_0}\subset \IC$ containing all
$\sigma_0(f)^{(k)}(z_{\sigma_0,i}),\sigma_0(f)^{(l)}(z_{\sigma_0,i})$
and such that
\begin{equation}
  \label{eq:dinfsigma0bound}
   \mathsf{d}_{\infty}(\cT_{\sigma_0}) \le -\frac{c_1(f)}{2D}. 
\end{equation}

By $\phi = \Phi_d(dpB/A_k^d)$ with $\deg (dpB)<\deg(A_k^d)$ as in
Proposition~\ref{prop:maincongruence}, the power series $\sigma_0(\phi)(z)$
converges for all $z\in\IC$ of sufficiently large absolute value. The
right-hand side of (\ref{eq:sigmaprodfinalproof}) determines a
rational function on $\widehat \IC$ that is regular and non-zero
outside the points
$\sigma_0(f^{(k)})(z_{\sigma_0,i}),\sigma_0(f^{(l)})(z_{\sigma_0,i})$.
Its value at $\infty$ is $1$. In particular, the right-hand side of
(\ref{eq:sigmaprodfinalproof}) is a
holomorphic function on $\widehat\IC\ssm \cT_{\sigma_0}$. The
complement $\widehat\IC\ssm \cT_{\sigma_0}$ is simply connected as
$\cT_{\sigma_0}$ is a finite topological tree. By the Monodromy
Theorem, $\sigma_0(\phi)$ extends to a holomorphic function on all of
$\widehat \IC\ssm \cT_{\sigma_0}$.

For $\sigma=\overline{\sigma_0}$ and if $\sigma\not=\sigma_0$ we let
$\cT_{\overline{\sigma_0}}$ take the image of 
$\cT_{{\sigma_0}}$ under complex conjugation.
Thus $\overline\sigma_0(\phi)$ is holomorphic on
the simply connected complement of $\cT_{\overline{\sigma_0}}$.
By definition around (\ref{def:dinfty}), complex conjugation leaves
 the transfinite diameter of a set invariant. 
Therefore,
\begin{equation}
  \label{eq:dinfsigma0boundconj}
   \mathsf{d}_{\infty}(\cT_{\overline{\sigma_0}})
  \le -\frac{c_1(f)}{2D}. 
\end{equation}

Suppose now $\sigma\not\in\{\sigma_0,\overline{\sigma_0}\}$.
In this case, we use Lemma~\ref{lem:constructKsmalllambda}, applied to
$\sigma(f)$.
Indeed, as $f$ is postcritically finite, all critical points of
$\sigma(f)$ lie in $K_{\sigma(f)}$. 
We take $t = c/D$. 
The same argument as in (\ref{eq:lambdaflzibound})
with $m=k,l$ implies that all
$\sigma(f^{(k)})(z_{\sigma,i}),\sigma_0(f^{(l)})(z_{\sigma,i})$ 
lie in 
\begin{equation}
  \label{def:Ksigmanotsigma}
  \cT_\sigma = \left\{z\in \IC : \lambda_{\sigma(f)}(z)\le 
    \frac{c}{D} \right\}. 
\end{equation}
By Lemma~\ref{lem:constructKsmalllambda}, $\cT_\sigma\subset\IC$ is compact,
non-empty, $\widehat \IC\ssm \cT_\sigma$ is simply connected and 
\begin{equation}
  \label{eq:dinfsigmabound}
  \mathsf{d}_{\infty}(\cT_\sigma)  = \frac{c}{D}. 
\end{equation}

As with $\sigma_0$, we find that $\sigma(\phi)$, as in
(\ref{eq:sigmaprodfinalproof}), extends to a holomorphic map on
$\widehat\IC\ssm \cT_\sigma$.

This construction is compatible with complex conjugation.
Indeed,
we have $\lambda_{\overline{\sigma}(f)}(\overline z)  =
\lambda_{\sigma(f)}( z)$ for all $z\in\IC$. 
So  $\cT_{\overline\sigma}$ is the image
of $\cT_\sigma$ under complex conjugation.

We now sum over all $\sigma$ in preparation for
the P\'olya--Bertrandias Theorem.
Let $\sigma \in \mathrm{Hom}(F,\IC)$.
By (\ref{eq:dinfsigma0bound}) and (\ref{eq:dinfsigma0boundconj}) for $\sigma\in
\{\sigma_0,\overline{\sigma_0}\}$ and
(\ref{eq:dinfsigmabound}) for the remaining embeddings we obtain
\begin{equation*}
  \sum_{\sigma\in \mathrm{Hom}(F,\IC)} \mathsf{d}_\infty(\cT_\sigma) \le
  -\frac{c_1(f)}{2D} + \frac{c[F:\IQ]}{D}. 
\end{equation*}
We may assume  that $c[F:\IQ] \le c_1(f)/4$ by fixing $c>0$ small enough. Therefore,
\begin{equation}
  \label{eq:dinftsum}
  \sum_{\sigma\in \mathrm{Hom}(F,\IC)} \mathsf{d}_\infty(\cT_\sigma) \le
  -\frac{c_1(f)}{4D}<0.
\end{equation}

Recall that the coefficients of the series $\phi$ are algebraic
integers. So we may apply the P\'olya--Bertrandias Theorem,
Th\'eor\`eme~5.4.6,~\cite{Amice}, to $\phi$ while working only with
the archimedean places of $F$.
To formally apply this theorem we need to work with $\phi-1$, which
has no constant term. But this does not change the analytic properties
of the series.
We also use the language of field
embeddings instead of the language of archimedean places of $F$.
The strict inequality (\ref{eq:dinftsum})
implies that $\phi$ represents a rational function. Since $\phi^d =
A_l/A_k$ we conclude that $A_l/A_k$ is a $d$-th power in the field
$F(X)$.

We split up into two cases depending on the polynomials $A_k$ and $A_l$.
Recall that the roots of $A_m$ are of the form $f^{(m)}(\alpha')$ with
$\alpha'\in\overline F$ a root of the $F$-minimal polynomial $A$ of $\alpha$.

\medskip
\noindent\textbf{Case 1.}
The polynomials $A_l$ and $A_k$ are not coprime.

In this case we apply Lemma~\ref{lem:AkAlpreperiodic}(ii) and $k<l$.
Thus 
the point $\alpha$ is
$f$-preperiodic. Moreover, we find $\mathrm{preper}(\alpha)\le
k$ and $\mathrm{per}(\alpha)\le (l-k)D$. 
So  conclusion (ii) of this theorem  is satisfied, the bound for
$\mathrm{perper}(\alpha)$ with ample margin.

Note that in this case we did not use
the induction hypothesis. This case incorporates
the base case $\alpha\in F$, as we shall see in case 2 below.

\medskip
\noindent\textbf{Case 2.}
The polynomials $A_l$ and $A_k$ are coprime. 

As $A(\alpha)=0$, we find $A_k(f^{(k)}(\alpha))=0$. Let $\widetilde A\in F[X]$ be an
irreducible factor  of $A_k$ that vanishes at
$f^{(k)}(\alpha)$.
As $A_l/A_k$ is a $d$-th power in $F(X)$ and as $A_l,A_k$ are coprime, 
we find  $\widetilde A^d \mid A_k$. So 
\begin{equation}
  \label{eq:case2degreedrop}
  [F(f^{(k)}(\alpha)):F] = \deg \widetilde A \le \frac{\deg A_k}{d}
= \frac{\deg A}{d}=\frac{D}{d}< D. 
\end{equation}
In particular, $D\ge d\ge 2$ and so we are not in the base case of
our induction.

The inequality (\ref{eq:lambdasigmafsmallworkinghyp}) and $d=p^e$ imply
\begin{equation*}
  \max_{\sigma\colon F(\alpha)\rightarrow\IC}
  \lambda_{\sigma(f)}(\sigma(f^{(k)}(\alpha))) <
  \frac{c d^k}{D^{k}}             
  \le \frac{c}{[F(f^{(k)}(\alpha)):F]^{k}}  
\end{equation*}
where we used (\ref{eq:case2degreedrop}).

By the induction hypothesis applied to $f^{(k)}(\alpha)$ we are in
conclusion (ii) of the theorem. Therefore, $f^{(k)}(\alpha)$ is
$f$-preperiodic and so is $\alpha$. Moreover, 
\begin{equation}
  \label{eq:inductionpreperper}
  d^{\frac{\mathrm{preper}(f^{(k)}(\alpha))}{k}-1} \le 
  [F(f^{(k)}(\alpha)):F] 
  \quad\text{and}\quad
  {\mathrm{per}(f^{(k)}(\alpha))} \le
  (l-k)[F(f^{(k)}(\alpha)):F]. 
\end{equation}

Elementary  considerations imply
$\mathrm{preper}(\alpha)-k\le \mathrm{preper}(f^{(k)}(\alpha))$
and $ \mathrm{per}(\alpha)=\mathrm{per}(f^{(k)}(\alpha))$. Thus
\begin{equation*}
  d^{\frac{\mathrm{preper}(\alpha)}{k}-1} \le
  d^{\frac{\mathrm{preper}(f^{(k)}(\alpha))}{k}} \le
  d[F(f^{(k)}(\alpha)):F] \le D=[F(\alpha):F]
\end{equation*}
by (\ref{eq:inductionpreperper}) and (\ref{eq:case2degreedrop}).
Moreover,
\begin{equation*}
  {\mathrm{per}(\alpha)}
  =  {\mathrm{per}(f^{(k)}(\alpha))}\le   
  (l-k)[F(f^{(k)}(\alpha)):F]\le   (l-k)[F(\alpha):F]
\end{equation*}
by (\ref{eq:inductionpreperper}), the final inequality is trivial.
Therefore, both conclusions of part (ii) hold and our proof of
Theorem~\ref{thm:main} is complete. \qed

We now prove Theorem~\ref{thm:mainpreper}. 
The argument is largely similar to the proof of Theorem~\ref{thm:main}.

We do an induction on $[F(\alpha):F]$.

Let $\alpha\in \overline F$ be $f$-preperiodic.

We claim that $\alpha$ is an algebraic integer. Indeed, 
let $n>m\ge 0$
be integers with $f^{(n)}(\alpha) = f^{(m)}(\alpha)$. So $\alpha$ is a
root of the monic polynomial $f^{(n)}-f^{(m)}\in \cO_F[T]$. Thus
$\alpha$ is an algebraic integer, as claimed.

So the $F$-minimal polynomial $A\in F[T]$ of $\alpha$ lies in
$\cO_F[T]$. 

All assumptions are satisfied in
Proposition~\ref{prop:maincongruence} with $S$ the set of archimedean
places of $F$.

Let 
$\phi\in \cO_F[[1/X]]$
be the power series provided by this proposition. 
Thus $\phi^d =A_l/A_k$.  
Let $\sigma\colon F\rightarrow\IC$ be a field embedding. Then
\begin{equation*}
  \sigma(\phi)^d = 
  \prod_{i=1}^{\deg A} \frac{1- \sigma(f)^{(l)}(z_{\sigma,i})/X}{1- \sigma(f)^{(k)}(z_{\sigma,i})/X}
\end{equation*}
where $z_{\sigma,i}$ are the roots of $\sigma(A)$. 

To each $\sigma$ as above we now attach a finite topological tree
$\cT_\sigma$. This choice is different from the choice in the proof of
Theorem~\ref{thm:main}:  we will use Proposition~\ref{prop:cappreper}
instead of the Hubbard tree construction.
We require our hypothesis  that $f$ is not $\overline F$-linearly
conjugate to a Chebyshev polynomial up-to sign.
It is not difficult to check that  $\sigma(f)$ is not $\IC$-linearly conjugate to a Chebshev
polynomial up-to sign.
We  also need that
$f$ is postcritically finite: this entails that $K_{\sigma(f)}$ is
arcwise connected for all $\sigma\colon F\rightarrow\IC$, see
Lemma~\ref{lem:postcriticalprops}(iii).

Recall that $\alpha$ is $f$-preperiodic and hence all
conjugates $z_{\sigma,i}$ are $\sigma(f)$-preperiodic. In particular,
these complex numbers lie in the filled Julia set $K_{\sigma(f)}$.
Furthermore, 
$\sigma(f)^{(l)}(z_{\sigma,i}),
\sigma(f)^{(k)}(z_{\sigma,i})$,
for all $i\in \{1,\ldots,\deg A\}$, are also contained in the filled Julia
set $K_{\sigma(f)}$.
So by Proposition~\ref{prop:cappreper} we obtain a
finite topological tree $\cT_\sigma \subset
K_{\sigma(f)}$ that contains $f^{(l)}(z_{\sigma,i}),
f^{(k)}(z_{\sigma,i})$
for all $i\in \{1,\ldots,\deg A\}$. 
We can arrange that $\cT_{\overline \sigma}$ is
the complex conjugate of $\cT_\sigma$. 
The corollary provides
\begin{equation}
  \label{eq:trandianegative}
  \mathsf{d}_\infty(\cT_\sigma)<0
  \quad\text{for all}
  \quad \sigma\colon
  F\rightarrow \IC.
\end{equation}

Note that $\widehat \IC\ssm \cT_{\sigma}$ is simply connected. All
poles and zeros of the rational function $\sigma(\phi)^d$ are
contained in $\cT_{\sigma}$. So the Monodromy Theorem implies that
$\sigma(\phi)$ extends to a holomorphic map $\widehat \IC\ssm
\cT_{\sigma}\rightarrow\IC$.

Since the coefficients of $\phi$ are algebraic integers and
$\sum_{\sigma\in \mathrm{Hom}(F,\IC)} \mathsf{d}_\infty(\cT_{\sigma})
< 0$, the P\'olya--Bertrandias Theorem, already invoked in the Proof of
Theorem~\ref{thm:main}, implies that $\phi$ is represented by a
rational function. We again split up into two cases.

\medskip
\noindent\textbf{Case 1.} The polynomials $A_l$ and $A_k$ are not coprime.

In this case we again apply Lemma~\ref{lem:AkAlpreperiodic} and $k<l$.
The element $\alpha$ is $f$-preperiodic by hypothesis. But the lemma
provides the bounds $\mathrm{preper}(\alpha)\le k$ and
$\mathrm{per}(\alpha)\le (l-k)[F(\alpha):F]$. So
(\ref{eq:thmpreperbounds}) and (\ref{eq:thmpreperbounds_deg2}) hold
true.

Note that in this case we did not use
the induction hypothesis. This case incorporates
the base case $\alpha\in F$, as we shall see in case 2 below.

\medskip
\noindent\textbf{Case 2.}
The polynomials $A_l$ and $A_k$ are coprime. 

As $A(\alpha)=0$, we find $A_k(f^{(k)}(\alpha))=0$. Let $\widetilde A$ be an
irreducible factor in $F[X]$ of $A_k$ that vanishes at
$f^{(k)}(\alpha)$.
Recall that $A_l/A_k=\phi^d$ and $\phi$ is a rational function. 
Since $A_l,A_k$ are coprime, we find $\widetilde A^d \mid A_k$. So 
\begin{equation}
  \label{eq:case2degreedrop2}
  [F(f^{(k)}(\alpha)):F] = \deg \widetilde A \le \frac{\deg A}{d}
= \frac{\deg A}{d}=\frac{[F(\alpha):F]}{d}<[F(\alpha):F].
\end{equation}
In particular, $\alpha\not\in F$ and so we are not in the base case of
our induction.

Now  $f^{(k)}(\alpha)$ is $f$-preperiodic since $\alpha$ is
$f$-preperiodic.
By this theorem applied by
induction to $f^{(k)}(\alpha)$ we find 
\begin{equation}
  \label{eq:inductionpreperper2}
  d^{\frac{\mathrm{preper}(f^{(k)}(\alpha))}{k}-1} \le 
  [F(f^{(k)}(\alpha)):F] 
  \text{ and }
{\mathrm{per}(f^{(k)}(\alpha))} \le
(l-k)[F(f^{(k)}(\alpha)):F].
\end{equation}

Just as in the proof of Theorem~\ref{thm:main} we use the general bounds
$\mathrm{preper}(\alpha)-k\le \mathrm{preper}(f^{(k)}(\alpha))$
and $ \mathrm{per}(\alpha)=\mathrm{per}(f^{(k)}(\alpha))$. Thus
\begin{equation*}
  d^{\frac{\mathrm{preper}(\alpha)}{k}-1} \le 
  d^{\frac{\mathrm{preper}(f^{(k)}(\alpha))}{k}} \le 
  d[F(f^{(k)}(\alpha)):F] \le [F(\alpha):F] 
\end{equation*}
by (\ref{eq:inductionpreperper2}) and (\ref{eq:case2degreedrop2}).
Moreover,
\begin{equation*}
  {\mathrm{per}(\alpha)}
  =  {\mathrm{per}(f^{(k)}(\alpha))}
  \le (l-k) [F(\alpha):F]
\end{equation*}
by (\ref{eq:inductionpreperper2}). This completes the proof of
Theorem~\ref{thm:mainpreper}. \qed

\begin{proof}[Proof of Theorem~\ref{thm:mainpreperdeg2}]
  We have $f=T^2+b$ and $F=\IQ(b)$. By hypothesis, the critical
  point $0$ is periodic of period $n$. We use
  Lemma~\ref{lem:conglemma_unicritical}.
  Thus $(k,l)=(1,1+n)$ is an $(F,f)$-admissible pair.
  The  critical point of $\pm
  (T^2-2)$ is not 
  periodic. Therefore, our $T^2+b$ is not $\overline F$-linearly
  conjugate to $T^2-2$ or $-(T^2-2)$.  Theorem~\ref{thm:mainpreperdeg2}
  now follows from Theorem~\ref{thm:mainpreper}.
\end{proof}

\begin{proof}[Proof of Corollary~\ref{cor:canhgtlb}]
  If $\alpha\in \overline F$ is $f$-preperiodic then, as in the proof
  of Theorem~\ref{thm:mainpreper},  $\alpha$ is
  an algebraic integer. We are in alternative (ii) of
  Theorem~\ref{thm:main}. Thus the property in alternative (ii) of
  this  corollary follows.
  
  Now suppose  that $\alpha\in \overline F$ is $f$-wandering.
  Thus
  $\hat h_f(\alpha)>0$.
  
  Suppose that $\lambda_{f,v}(\alpha)>0$ for some finite place
  $v$ of $F(\alpha)$. Let $\ell$ be the  prime
  number with $v\mid \ell$. 
  Recall that $f\in\cO_F[T]$. Therefore, the
  local canonical height is given by
  \begin{equation}
    \label{eq:localheightgoodred}
    \lambda_{f,v}(\alpha) = \log\max\{1,|\alpha|_v\},
  \end{equation}
  see Exercise 3.30~\cite{Silverman:gtm241}. Thus we must have $|\alpha|_v>1$. Our
  normalization implies $|\alpha|_v = \ell^{k/e_v}$ for some integer $k>0$
  and where $e_v\ge 1$ is the ramification index at $v$.
  So $|\alpha|_v^{d_v}\ge \ell^{f_v}\ge 2$ with $f_v$ the residue
  degree at $v$.
  All local contributions to the canonical height
  (\ref{def:canonicalhgt})
  are non-negative.
  Therefore, 
  $\hat h_f(\alpha)\ge \frac{d_v}{[F(\alpha):\IQ]}\lambda_{f,v}(\alpha)\ge \frac{\log
    2}{[F(\alpha):\IQ]}$. The conclusion in (i) of this  corollary follows easily. 

  We have reduced the corollary to the case $\lambda_{f,v}(\alpha)=0$ for
  all finite places $v$. So only the archimedean places of $F(\alpha)$
  contribute to the canonical height
  \begin{equation}
    \label{eq:canheightonlyinfinite}
    \hat h_f(\alpha) = \sum_{v\mid\infty} \frac{d_v}{[F(\alpha):\IQ]}\lambda_{f,v}(\alpha).
  \end{equation}
  Moreover, by (\ref{eq:localheightgoodred}) we find that $|\alpha|_v\le 1$ for all finite $v$.
  Thus $\alpha$ is an algebraic integer. We will apply the bound in
  Theorem~\ref{thm:main}(i) to $\alpha$. 
  
  Let $v$ be an archimedean place of $F(\alpha)$ with
  $\lambda_{f,v}(\alpha)$ maximal. 
  So
  $\lambda_{f,v}(\alpha)\ge c(F,f,k,l)/[F(\alpha):F]^{k}$  
  and 
  \begin{equation*}
    \hat h_{f}(\alpha) \ge
    \frac{d_v }{[F(\alpha):\IQ]}\lambda_{f,v}(\alpha)
    \ge \frac{c(F,f,k,l)}{[F:\IQ]}\frac{1}{[F(\alpha):F]^{k+1}} 
  \end{equation*}
  by (\ref{eq:canheightonlyinfinite}).
  Again the conclusion of (i) in this corollary follows. 
\end{proof}

\begin{proof}[Proof of Theorem~\ref{thm:mainintro}]
  By hypothesis,
  the extension $F/\IQ$ is unramified above $p$.
  Let $\mathfrak f$ denote the least common multiple of all residue
  degrees of all prime ideals of $\cO_F$ containing $p$. 
  According to Lemma~\ref{lem:conglemma_new_cor}, see
  Remark~\ref{rmk:choicekl}, the pair $(1,1+p\mathfrak f)$ is 
  $(F,f)$-admissible. Theorem~\ref{thm:mainintro} now follows from
  Corollary~\ref{cor:canhgtlb}.
\end{proof}

\begin{proof}[Proof of Theorem~\ref{thm:main2intro}]
  The argument is similar as for the proof of
  Theorem~\ref{thm:mainintro}. However, we refer to
  Theorem~\ref{thm:main}. 
\end{proof}

\begin{proof}[Proof of Theorem~\ref{thm:intropreper}]
  We argue as in the proof of Theorem~\ref{thm:mainintro}.
  By Lemma~\ref{lem:conglemma_new_cor},
  the pair $(k,k+p\mathfrak f)$ is
  $(F,f)$-admissible for $k=1+\lceil \frac{\log\mathfrak{e}}{\log d}\rceil$.
  Now we apply
  Theorem~\ref{thm:mainpreper}.
  The period length of $\alpha$ is at most $p\mathfrak f
  [F(\alpha):F]$.
  The preperiod length $\mathrm{preper}(\alpha)$ is at most
  \begin{equation*}
    k \frac{\log(d[F(\alpha):F])}{\log
      d} = \left(1+\left\lceil \frac{\log\mathfrak{e}}{\log d}\right\rceil\right)\frac{\log(d[F(\alpha):F])}{\log
      d},
  \end{equation*}
  as desired.
\end{proof}

\begin{proof}[Proof of Theorem~\ref{thm:unicriticalwandering}]
  This theorem follows from Theorem~\ref{thm:mainintro} with
  $F=\IQ(b)$ and
  Lemma~\ref{lem:critperimplieshyp} from the introduction.
\end{proof}

\begin{proof}[Proof of Corollary~\ref{cor:unicritialpreperiodic}]
  This corollary follows from Theorem~\ref{thm:intropreper} with
  $F=\IQ(b)$ and
  Lemma~\ref{lem:critperimplieshyp} from the introduction.
  Recall that neither a Chebyshev polynomial nor its negative is
  hyperbolic. 
\end{proof}

\begin{proof}[Proof of Corollary~\ref{cor:unicritialdeg2preperiodic}]
  This corollary follows from Theorem~\ref{thm:mainpreperdeg2}.
\end{proof}


\section{Examples}
\label{sec:examples}

We exhibit some polynomials for which are method applies.
There is a well-discussed analogy between postcritically finite maps
and elliptic curves with complex multiplication. However, this analogy
also has its limits. An
elliptic curve with complex multiplication that is defined over a
number field has potentially good everywhere. This is not  the case
for postcritically finite polynomials. We refer to work of Morton and
Silverman~\cite{MortonSilverman} for a definition of good reduction.
We are mainly interested in the case of polynomial mappings. 

Let $f\in F[T]$ be a postcritically finite and  monic polynomial of degree
$d\ge 2$ with $f(0)=0$. Let $v$ be a finite place of $F$ whose residue
characteristic is greater than $d$. Then all coefficients of $f$ are
$v$-integral, which follows from Anderson's Theorem 4.1~\cite{Anderson:radius}.

If the residue characteristic is less than $d$, then we may have
bad reduction in the postcritically finite case. In fact, bad
reduction seems difficult to avoid.

Indeed, 
Ingram~\cite{Ingram:PCF} found the postcritically finite cubic
polynomial $f= T^3-\frac 32 T^2\in\IQ[T]$
with a coefficient that is not integral at $2$.
Moreover, $-1/2$ is a fixed point of $f$ with $f'(-1/2)=9/4$. So
$f$ has a 
 $2$-adically repelling fixed point. By Morton and Silverman's Corollary
5.3(a)~\cite{MortonSilverman},
$f$ does not have good reduction above $2$ in any
finite extension of $\IQ$.
The two critical points of $f$ are $0$ and $1$. The first is a
superattracting fixed point. The second maps to the fixed point $-1/2$ which is 
also repelling at the archimedean place.
Therefore,  $-1/2$ and $1$ lie on the Julia set of $f$. 
In particular, $f$ is not hyperbolic, see
Lemma~\ref{lem:hyperboliccharacterization}. 

Anderson, Manes, and Tobin~\cite{AndersonManesTobin} classified
all cubic postcritically finite polynomials with rational coefficients
up-to $\IQbar$-linear conjugacy. Their list contains $15$ polynomials,
among them is Ingram's $T^3-\frac 32 T^2$.
In his 2023 Master's Thesis, Maximilian Reber considered
the postcritically finite $ T^3 + \frac{3\sqrt{-2}}{2} T^2$ with non-integral
coefficients above $2$ and two distinct fixed critical points.

We exhibit a postcritical polynomial with multiple critical points and
with integral coefficients. For this we move to degree $4=2^2$ and
consider $f=(T^2-1)^2=T^4-2T^2+1\in \IZ[T]$. Here $f'=4(T-1)T(T+1)$.
The critical points are $0,\pm 1$ and $f(-1)=0,f(0)=1,f(1)=0$. In
other words $\pco_{\ge 1}(f) = \{0, 1\}$. The cycle $\{0,1\}$ is
superattracting and so lies in the Fatou set of $f$. By Lemma
\ref{lem:hyperboliccharacterization} we conclude that $f$ is
hyperbolic. The polynomial $f$ has integral coefficients, so
Theorem~\ref{thm:main} applies to $f$ with $F=\IQ$ and $(k,l)=(1,3)$.

Slightly more generally, consider $f = (T^2+b)^2$ with $b$ non-zero
and algebraic. The $3$ critical points are $\pm \sqrt{-b},$ and $0$.
We have $f(\pm \sqrt{-b}) = 0$. For $n\in\IN$, the expression
$f^{(n)}(0)$ is an integral, monic polynomial in $b$. For example, for
$n=1,2,3$ they are
$$b^2, b^2(b+1)^2(b^2-b+1)^2,
b^2(b^{15}+4b^{12}+6b^9+4b^6+b^3+1)^2,$$ respectively.

Let $b$ be a complex root of one of these polynomials. Then $0$ is
periodic of period length $m\le n$ and $f$ is postcritically finite.
Moreover, $f$ has integral coefficients with no $T^3$ term. Finally,
${f^{(m)}}'(0)$ is a multiple of $f'(0)=0$ by the chain rule. So $0$
is a superattracting periodic point. It follows that the postcritical
orbit of $f$ lies in $K_f\ssm J_f$ and thus $f$ is hyperbolic. Thus
Theorem~\ref{thm:mainintro} applies to $f$.


\appendix

\section{Some Facts about Polynomial Dynamical Systems}
\label{app:hyperpolys}

In this appendix we collect several fundamental results concerning the
dynamics of complex polynomials. It serves as a reference for the
body of the paper. Our sources are the books of
Milnor~\cite{Milnor} and Carleson--Gamelin~\cite{CarlesonGamelin}.

\subsection{Hyperbolic Polynomials and Postcritically Finite Polynomials}

Recall that for a complex polynomial $f$ of degree at least $2$, we
let $J_f\subset\IC$ denote its Julia set and $K_f\subset\IC$ its
filled Julia set. Both sets are fully invariant under $f$.

\begin{lemma}
  \label{lem:fatoucomponents}
  Let $f\in\IC[T]$ be of degree at least  $2$.
  \begin{enumerate}
  \item[(i)] Both $J_f$ and $K_f$ are compact and non-empty subsets
    of $\IC$. 
  \item[(ii)] The topological boundary $\partial K_f$ of $K_f$ equals
    $J_f$. 
  \item [(iii)]   Let $U$ be a connected component of $K_f\ssm J_f$.
    Then $f(U)$ is a connected component of $K_f\ssm J_f$.    
  \item[(iv)] Let $U$ be as in part (i). The forward orbit of $U$
    under $f$  is
    finite.   
  \item[(v)] The complement $\IC\ssm K_f$ is a connected component
    of $\IC\ssm J_f$. 
  \item[(vi)] Each connected component of $K_f\ssm J_f$ is simply
    connected.  
  \end{enumerate}
\end{lemma}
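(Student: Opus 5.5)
This lemma collects classical facts, so the plan is to derive each item from standard results in Milnor's book and Carleson--Gamelin, using the escape-rate description of $K_f$. I will work with $K_f=\{z : (f^{(n)}(z))_n \text{ bounded}\}$ and with $J_f$ as the complement of the Fatou set. First, an escape radius $R=R(f)$ exists: $|z|>R$ implies $|f(z)|\ge 2|z|$. Hence $K_f=\bigcap_n (f^{(n)})^{-1}(\overline{B_R(0)})$. This is an intersection of compact sets, so $K_f$ is compact. It is non-empty because it contains the fixed points of $f$.

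For (ii), I argue in two directions.
\begin{itemize}
\item On $\IC\ssm K_f$ the iterates converge locally uniformly to $\infty$, so that set lies in the Fatou set.
\item On the interior of $K_f$ the iterates are uniformly bounded by $R$, so Montel's theorem gives normality there.
\item At a boundary point of $K_f$, every neighborhood contains points that escape and points that stay bounded. So no subsequence of iterates converges locally uniformly there, and the point lies in $J_f$.
\end{itemize}
Together these give $J_f=\partial K_f$. In particular $J_f$ is closed and contained in the compact set $K_f$, so it is compact. Non-emptiness of $J_f$ follows because $K_f\ne\IC$ (it is bounded), $K_f\ne\emptyset$, and $\IC$ is connected, so $\partial K_f\neq\emptyset$. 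This settles (i).

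For (iii), I use full invariance of $K_f$ and $J_f$. Since $f$ is continuous, $f(U)$ is connected and contained in $K_f\ssm J_f$, so it lies in some component $U'$. To see $f(U)=U'$, note that $f|_{f^{-1}(U')}\colon f^{-1}(U')\to U'$ is proper. The component $U$ of $f^{-1}(U')$ containing $U$'s points is therefore mapped properly, and a proper open map onto a connected set is surjective. One checks that $U$ really is a full component of $f^{-1}(U')$, using $f^{-1}(K_f\ssm J_f)=K_f\ssm J_f$. The same properness yields that components map onto components.

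Item (iv) is Sullivan's non-wandering theorem (Milnor, Appendix F; Carleson--Gamelin IV.1), which I cite directly. This is the only genuinely deep input, and I will not reprove it.

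For (v), $\IC\ssm K_f$ is open, and it is disjoint from $J_f$ by (ii). It is connected: if it had a bounded component $W$, then $\partial W\subset K_f$. By the maximum principle applied to $f^{(n)}$ on $W$, we get $|f^{(n)}|\le R$ on $W$, contradicting escape. The unbounded part is connected because it contains $\{|z|>R\}$, and every escaping point maps into that region. Finally, $\IC\ssm K_f$ is a full component of $\IC\ssm J_f$: any connected set in $\IC\ssm J_f$ meeting both $K_f$ and its complement would have to cross $\partial K_f=J_f$.

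Item (vi) follows from (v) and the same maximum-principle argument. Let $\gamma$ be a Jordan curve in a component $U$ of $K_f\ssm J_f$. The interior of $\gamma$ has its boundary in $K_f$, so the maximum principle keeps the interior inside $K_f$. The interior is connected and contains no point of $J_f$, since $J_f\subset\partial K_f$ cannot lie in an open subset of $K_f$. Hence the interior lies in $U$, which shows that $U$ is simply connected.

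I expect the main obstacle to be (iv), which I handle by citation. Among the parts proved here, the delicate bookkeeping is the "full component" claim in (iii).
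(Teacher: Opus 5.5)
Your proposal is correct. The only difference from the paper is that you prove what the paper cites. The paper's proof is a list of references:
\begin{itemize}
\item $J_f\neq\emptyset$ from Milnor's Lemma~4.8;
\item parts (i), (ii), (v), (vi) from Milnor's Lemma~9.4;
\item part (iii) from Milnor's Problem~4-i;
\item part (iv) from Sullivan's Nonwandering Theorem.
\end{itemize}
You replace the Milnor citations with the standard tools: an escape radius, Montel's theorem, the maximum modulus principle, and properness of polynomial maps. These are essentially the arguments behind Milnor's Lemma~9.4 and Problem~4-i. Like the paper, you cite Sullivan for (iv).

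Your route gains self-containedness. It is also more economical in one place: non-emptiness of $J_f$ follows from $J_f=\partial K_f$ with $K_f$ non-empty and bounded, rather than from the general rational-map statement in Milnor's Lemma~4.8. The paper's route gains brevity.

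A few places to tidy:
\begin{itemize}
\item In (iii), the conclusion is that $f(U)$ is open (open mapping theorem) and closed (properness of $f|_U\colon U\to U'$) in the connected set $U'$. So the correct phrasing is that a proper open map \emph{into} $U'$ is onto.
\item In (v), the remark that escaping points map into $\{|z|>R\}$ is unnecessary. Since $K_f\subset\overline{B_R(0)}$, every component of $\IC\ssm K_f$ other than the one containing $\{|z|>R\}$ is bounded, and you have already excluded bounded components.
\item In (vi), the last step uses the planar-topology fact that a domain is simply connected once the inside of every Jordan curve in it lies in the domain. Either cite this, or run the same maximum-principle argument on every bounded component of $\IC\ssm\gamma$ for an arbitrary loop $\gamma\subset U$. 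That shows $\gamma$ has winding number $0$ about every point outside $U$.
\end{itemize}
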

\begin{proof}
  The Julia set $J_f$ is non-empty by Lemma~4.8~\cite{Milnor}.
  Parts (i), (ii), (v), and (vi) follow from  Lemma~9.4~\cite{Milnor}.

  Part (iii) is Problem 4-i~\cite{Milnor}.

  Part (iv) is Sullivan's Nonwandering Theorem, see Theorem
  16.4~\cite{Milnor} and Theorem~IV.1.3~\cite{CarlesonGamelin}. The
  former reference only contains the outline of a proof.   
\end{proof}

The second statement above is Sullivan's deep Nonwandering Theorem.
However, we will only every invoke this result when $f$ is
postcritically finite. In the hyperbolic case,
see Definition~\ref{def:hyperbolic}, one can refer to
Theorem 19.1~\cite{Milnor}.

If $f$ is in addition postcritically finite we can say more. 
\begin{lemma}
  \label{lem:postcriticalprops}
  Let $f\in\IC[T]$ be of degree at least $2$. Suppose that $f$ is
  postcritically finite. Then the following properties hold true.
  \begin{enumerate}
  \item [(i)] Both $J_f$ and $K_f$ are connected.
  \item[(ii)] Both  $J_f$ and $K_f$ are locally connected.
  \item[(iii)] Both  $J_f$ and $K_f$ are arcwise connected.
  \end{enumerate}
\end{lemma}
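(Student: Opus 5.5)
This lemma collects classical facts, so I would prove it by citing the standard results in Milnor's book, with one short topological argument for part (iii). The order is (i), then (ii), then (iii).

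For (i), postcritical finiteness implies that every critical point $c$ has a finite forward orbit. In particular that orbit is bounded, so $c\in K_f$. By the fundamental dichotomy (Theorem~9.5 in Milnor), $K_f$ is connected exactly when all critical points lie in $K_f$. Hence $K_f$ is connected. For $J_f$ I would use that $\widehat\IC\ssm K_f$ is then simply connected (conformally a punctured disk via the Böttcher map). Since $J_f=\partial K_f$ by Lemma~\ref{lem:fatoucomponents}(ii), $J_f$ is also the boundary of this simply connected domain in the sphere. The boundary of a simply connected domain in $\widehat\IC$ is connected, so $J_f$ is connected.

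For (ii), I would cite the theorem that if $f$ is postcritically finite, or more generally subhyperbolic, then $J_f$ is locally connected; see Milnor, Theorem~19.7 and the discussion of subhyperbolic maps. Indeed, a postcritically finite polynomial has no Siegel disks and no parabolic cycles, and every critical point in $J_f$ is preperiodic, so it is subhyperbolic. Local connectivity of $K_f$ then follows by a standard argument:
\begin{itemize}
\item $K_f$ is $J_f$ together with the bounded Fatou components.
\item Each bounded Fatou component $U$ is a Jordan domain, because the Riemann map extends to a homeomorphism of the closures by the Carathéodory-type statement already used in Section~\ref{sec:hubbard}.
\item Only finitely many components have diameter larger than any given $\epsilon>0$, since their areas are summable and the Riemann maps are uniformly controlled.
\end{itemize}
Combining these, $K_f$ is locally connected. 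Alternatively, I would simply quote Douady--Hubbard, Exposé~II, for both statements.

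For (iii), a compact, connected, locally connected metric space is a Peano continuum, and by the Hahn--Mazurkiewicz--Moore theorem it is arcwise connected. Applying this to $J_f$ and $K_f$, using (i) and (ii), gives (iii).

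The main obstacle is (ii), the local connectivity of $J_f$. I would not reprove it. I would rely on the expanding orbifold metric argument in Milnor's Theorem~19.7, checking only that its hypotheses, namely that critical points in $J_f$ have finite orbits and there are no parabolic points, hold in the postcritically finite case. The postcritically finite hypothesis gives exactly this.
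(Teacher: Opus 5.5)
Your route matches the paper's for most of the lemma. The paper gets (i) from Milnor's Theorem~9.5, the local connectivity of $J_f$ from the orbifold-metric results in Milnor \S19 (it cites Theorems~19.6 and~19.7), and (iii) from the fact that a compact, connected, locally connected metric space is arcwise connected (Milnor's Lemmas~17.17 and~17.18). Your extra argument that $J_f$ is connected, as the boundary of the simply connected domain $\widehat\IC\ssm K_f$, is correct but not needed: Theorem~9.5 already states that both $K_f$ and $J_f$ are connected.

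You diverge in passing from local connectivity of $J_f$ to that of $K_f$, and one justification there does not hold. Summable areas of the bounded Fatou components say nothing about their diameters, since a long thin component has small area. The phrase ``the Riemann maps are uniformly controlled'' is not an argument either. The claim you need is true, but for a different reason. The complementary components of a locally connected continuum in the sphere form a null sequence: for each $\epsilon>0$, only finitely many have diameter greater than $\epsilon$. This is a classical fact of plane topology, and it uses the local connectivity of $J_f$ you have just established. With it, your outline works: near a point of $J_f$, glue a small connected neighborhood in $J_f$ to small pieces of the closed Jordan disks $\overline U$ for the finitely many large components touching that point, and to the whole closures of the small components that come close. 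This proves $K_f$ is locally connected.

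The paper skips this step by citing Milnor's Theorem~18.3: for a polynomial with connected Julia set, $J_f$ is locally connected if and only if $K_f$ is. That is the cleanest fix, and it is equivalent to the fallback you mention of quoting Douady--Hubbard directly.
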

\begin{proof}
  Recall that $\partial K_f = J_f$. Part (i) follows from
  Theorem~9.5~\cite{Milnor}. By Theorems~19.6 and 19.7~\cite{Milnor}, the
  Julia set $J_f$ is locally connected. By the characterization given by
  Theorem~18.3~\cite{Milnor}, $K_f$ is also locally connected.  So (ii) holds.
  By a general fact from topology, a connected, locally
  connected,  compact metric Hausdorff space is arcwise connected, see
  Lemmas~17.17 and 17.18~\cite{Milnor}. We conclude part (iii).
\end{proof}

\begin{defi}
  \label{def:hyperbolic}
  Let $f\in\IC[T]$ be of degree at least $2$. We call $f$ hyperbolic
  if there exist $m\in\IN$ and $\lambda >1$ such that
  $|(f^{(m)})'(z)|\ge\lambda$ for all $z\in J_f$. 
\end{defi}

By Lemma~V.2.1~\cite{CarlesonGamelin} and since $J_f$ is compact, this
definition of hyperbolic is equivalent to the one stated in the book
of Carleson--Gamelin.
Note that hyperbolic is called \textit{dynamically hyperbolic} in Milnor's
book~\cite{Milnor}.

For $m\in\IN_0$ we define
\begin{equation}
  \label{def:pcom}
  \pco_{\ge m}(f) = \{f^{(n)}(c) : n\in\IN_0, n\ge m, \text{ and
  }c\in\mathrm{Crit}(f)\}. 
\end{equation}

\begin{lemma}
  \label{lem:hyperboliccharacterization}
  Let $f\in\IC[T]$ be of degree at least $2$.  Then the following are
  equivalent.
  \begin{enumerate}
  \item [(i)] The polynomial $f$ is hyperbolic.
  \item[(ii)] The closure of $\pco_{\ge 0}(f)$ in $\IC$ is disjoint from $J_f$.
  \item[(iii)] There exists an integer $m\ge 0$ such that
    closure of $\pco_{\ge m}(f)$ in $\IC$ is disjoint from $J_f$.
  \end{enumerate}
\end{lemma}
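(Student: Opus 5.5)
The plan is to prove the cycle (i)$\Rightarrow$(ii)$\Rightarrow$(iii)$\Rightarrow$(i). The implication (ii)$\Rightarrow$(iii) is trivial: take $m=0$. For (i)$\Rightarrow$(ii), suppose $f$ is hyperbolic in the sense of Definition~\ref{def:hyperbolic}. First, no critical point lies on $J_f$: if $c\in J_f$ were critical, then $(f^{(m)})'(c)=0$ by the chain rule, contradicting $|(f^{(m)})'|\ge\lambda>1$ on $J_f$. More generally, the expansion gives a neighbourhood $N$ of $J_f$ on which $f^{(m)}$ is uniformly expanding, say $|(f^{(m)})'|\ge\lambda'>1$ on $N$. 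It remains to exclude accumulation of postcritical points on $J_f$. Here I would invoke the classical fact (Carleson--Gamelin, Theorem~V.2.2, or Milnor, Theorem~19.1) that expansion on $J_f$ forces every critical point to lie in the basin of an attracting cycle or of $\infty$. Granting this, each critical orbit converges to an attracting cycle or to $\infty$. Its closure is then the orbit together with finitely many attracting periodic points (in the bounded case), all lying in the Fatou set. Since there are finitely many critical points, the closure of $\pco_{\ge 0}(f)$ is disjoint from $J_f$.

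For (iii)$\Rightarrow$(i), let $P$ be the closure of $\pco_{\ge m}(f)$. It is forward invariant, closed, and disjoint from the compact set $J_f$. Points of $\pco_{\ge m}(f)$ escaping to $\infty$ cause no issue near $J_f$, so $\mathrm{dist}(P,J_f)>0$. Set $\Omega=\IC\ssm P$; this domain contains $J_f$. Since $J_f$ is infinite, $\Omega$ omits at least two points, so it carries a hyperbolic metric (each component is hyperbolic). Moreover, $f^{-1}(P)\supset P$ fails in general, so I would instead use $\Omega'=f^{-1}(\Omega)\subset\Omega$. The inclusion holds because $f(P)\subset P$; and $f\colon\Omega'\to\Omega$ is a covering, because the critical values of $f$ lying over $\Omega$ must come from $\pco_{\ge1}(f)$. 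This step needs care: $\pco_{\ge1}(f)$ may not be contained in $P$ when $m>1$. To repair it I would replace $f$ by the iterate $g=f^{(m)}$, or handle finitely many critical values separately. Their finitely many iterates $f^{(j)}(c)$ with $j<m$ avoid a neighbourhood of $J_f$ anyway, as $J_f$ is backward invariant and the later orbit avoids $J_f$. So I would shrink to $\Omega=\IC\ssm\overline{\pco_{\ge1}(f)}$ after checking that this closure still misses $J_f$, which holds for exactly that reason.

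Given the covering $f\colon\Omega'\to\Omega$, it is a local isometry for the hyperbolic metrics, while the inclusion $\Omega'\hookrightarrow\Omega$ is strictly contracting on the compact set $J_f$ by Schwarz--Pick, since $\Omega'\neq\Omega$. Consequently $f$ expands the hyperbolic metric of $\Omega$ on $J_f$ by a uniform factor $\mu>1$. Comparing the hyperbolic density with the Euclidean one, which are boundedly comparable on the compact $J_f$, gives $|(f^{(n)})'|\ge C\mu^n$ there. Choosing $n$ with $C\mu^n>1$ yields Definition~\ref{def:hyperbolic}. The main obstacle is the (i)$\Rightarrow$(ii) direction's reliance on the attracting-basin classification; I would cite it from Milnor rather than reprove it.
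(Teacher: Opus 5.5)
Your (i)$\Rightarrow$(ii) by citation and your (ii)$\Rightarrow$(iii) are fine. The observation in your repair step is exactly the paper's proof of (iii)$\Rightarrow$(ii): no $f^{(j)}(c)$ with $j<m$ can lie in $J_f$, since forward invariance would then put $f^{(m)}(c)$ in $J_f$. The paper then cites Carleson--Gamelin, Theorem~V.2.2, for (ii)$\Rightarrow$(i). You instead reprove that direction with the Poincar\'e metric, and as written this has a genuine gap.

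Put $P=\overline{\pco_{\ge 1}(f)}$ and $\Omega=\IC\ssm P$. Your reason for $\Omega$ carrying a hyperbolic metric is ``$J_f$ is infinite, so $\Omega$ omits at least two points''. This is a non sequitur, since $J_f\subset\Omega$. What is needed is that $\widehat\IC\ssm\Omega=P\cup\{\infty\}$ has at least three points. This fails exactly when $P$ is a single point, i.e.\ when $f$ is linearly conjugate to $T^d$. For $f=T^2$ you get $\Omega=\IC\ssm\{0\}$, which carries no hyperbolic metric, although $T^2$ satisfies (iii) and has to come out hyperbolic. That case must be split off and checked directly ($|f'|=d$ on $J_f$).

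Second, the step ``$\Omega'\neq\Omega$, so the inclusion $\Omega'\hookrightarrow\Omega$ is strictly contracting on $J_f$'' does not follow once $\Omega$ may be disconnected, as you allow. Schwarz--Pick gives strict contraction at $z$ only if the component of $\Omega'$ through $z$ is a proper subset of the component $W$ of $\Omega$ through $z$. A priori $W$ could be disjoint from $f^{-1}(P)$; then $W$ is also a component of $\Omega'$ and the inclusion is an isometry there.

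One repair, assuming $\#P\ge 2$:
\begin{itemize}
\item Choose a non-exceptional $q\in P$. Its iterated preimages accumulate on every point of $J_f$.
\item Only finitely many components of $\Omega$ meet $J_f$, and $f(P)\subset P$. Hence for $n$ large each such component meets $(f^{(n)})^{-1}(P)$.
\item Your argument then goes through for the covering $f^{(n)}\colon (f^{(n)})^{-1}(\Omega)\to\Omega$ in place of $f$.
\end{itemize}
Also, contrary to what you wrote, $P\subset f^{-1}(P)$ does hold; it is equivalent to the forward invariance $f(P)\subset P$ that you use. With these fixes you have a self-contained proof of the direction the paper cites.

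Finally, you have the difficulty reversed: (i)$\Rightarrow$(ii) is the elementary direction. Critical points avoid $J_f$ by the chain rule, and Lemma~\ref{lem:hyperbolic2} gives $\mathrm{dist}(f^{(s)}(c),J_f)\ge\epsilon\min\{1,\mathrm{dist}(c,J_f)\}^r$ for all $s\in\IN_0$. So no basin classification is needed there; the real work is in (ii)$\Rightarrow$(i).
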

\begin{proof}
  The equivalence ``(i)$\Leftrightarrow$(ii)'' follows from
  Theorem~V.2.2~\cite{CarlesonGamelin}. Note that
  Carleson--Gamelin call $\pco_{\ge 0}(f)$ the postcritical set
  of $f$, \textit{cf.} Section V.1~\textit{loc.cit.}
  The implication
  ``(ii)$\Rightarrow$(iii)'' is trivial. Suppose (iii) holds for $m$.
  The closure of $\pco_{\ge 0}(f)$ is the union of the closure of
  $\pco_{\ge m}(f)$ and a finite subset of $\pco_{\ge 0}(f)$.    
  If the conclusion of (ii) is false, then $f^{(n)}(c)\in J_f$ for
  some integer $n\ge 0$ and $c\in \crit{f}$. As $J_f$ is fully
  invariant under $f$ we conclude first $c\in J_f$ and second $f^{(m)}(c)\in
  J_f$. This contradicts the hypothesis of (iii). 
\end{proof}

\begin{lemma}
  \label{lem:KfJfdenseinKf}
  Let $f\in\IC[T]$ be of degree at least $2$. Suppose that $f$ is
  postcritically finite and hyperbolic. Then the following properties hold true.
  \begin{enumerate}
  \item[(i)]  
    All critical points of $f$ lie in $K_f\ssm J_f$.
    In particular,   $J_f\subsetneq K_f$.
  \item [(ii)]
    The complement $K_f\ssm J_f$ lies dense in $K_f$.  
   \end{enumerate}
\end{lemma}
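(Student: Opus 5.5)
Part (i) I would deduce from the characterization of hyperbolicity in Lemma~\ref{lem:hyperboliccharacterization}. Take a critical point $c$. Since $f$ is postcritically finite, the orbit of $c$ is finite and hence bounded, so $c\in K_f$. Since $f$ is hyperbolic, Lemma~\ref{lem:hyperboliccharacterization}(ii) says the closure of $\pco_{\ge 0}(f)$ is disjoint from $J_f$, and $c\in \pco_{\ge 0}(f)$, so $c\notin J_f$. Hence $c\in K_f\ssm J_f$. A polynomial of degree at least $2$ has at least one critical point, so $K_f\ssm J_f\neq\emptyset$, which gives $J_f\subsetneq K_f$.

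For part (ii), let $z\in K_f$ and let $W$ be an open neighbourhood of $z$. I must show that $W$ meets $K_f\ssm J_f$. If $z\notin J_f$ this is immediate, so I assume $z\in J_f$. By part (i) there is a point $w\in K_f\ssm J_f$, lying in some bounded Fatou component $U$. I then use the standard blow-up property of the Julia set (Milnor, Corollary~4.13 / Theorem~4.10): for a neighbourhood $W$ of a point of $J_f$, the union $\bigcup_{n} f^{(n)}(W)$ covers all of $\IC$ except at most one exceptional point. For a polynomial that exceptional point can only be a totally invariant point, and such a point is superattracting, hence lies in the Fatou set.

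The step I expect to need the most care is producing a point of $W$ that lands in $K_f\ssm J_f$. The open set $U$ contains infinitely many points, so at least one of them, say $u\in U$, is not exceptional. Then there are $n\ge 0$ and $y\in W$ with $f^{(n)}(y)=u$. Since $u\in K_f\ssm J_f$ and both $K_f$ and $J_f$ are fully invariant, $y\in K_f$ and $y\notin J_f$. Thus $y\in W\cap(K_f\ssm J_f)$, so every neighbourhood of $z$ meets $K_f\ssm J_f$, and $K_f\ssm J_f$ is dense in $K_f$. This avoids any question about whether $f^{(n)}(W)$ is itself contained in $K_f$.

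A second route to (ii) would use $J_f=\partial K_f$ from Lemma~\ref{lem:fatoucomponents}(ii). That only says that points of $\IC\ssm K_f$ accumulate at $z$, not points of the interior of $K_f$, so the blow-up argument is the one I would use.
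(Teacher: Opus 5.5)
Your proof is correct and is essentially the paper's argument. Part (i) is identical. For part (ii) the paper simply cites Carleson--Gamelin, Theorem III.1.7: the nonempty fully invariant set $K_f\ssm J_f$ has boundary $J_f$. The content of that theorem is exactly the Montel blow-up argument, combined with full invariance of $K_f$ and $J_f$, that you carry out by hand.
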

\begin{proof}
  Let $c$ be a critical point of $f$. Then $c\not\in
  J_f$ as $f$ is hyperbolic. The forward orbit of $c$ 
  under $f$ is finite and so in particular 
  $c\in K_f$. Part (i) follows as $f$ has a critical point.
  
  The complement $K_f\ssm J_f\not=\emptyset$ is
  fully invariant under $f$.
  So its boundary equals $J_f$ by  \cite[Theorem III.1.7]{CarlesonGamelin}.
  Thus $K_f\ssm J_f$ lies dense in $K_f$ and the lemma follows.
\end{proof}

\subsection{Basic Metric Estimates}

Let $f\in \IC[T]$ be  of degree $d\ge 2$ with leading term
$f_0\not=0$. 
We prove several elementary metric estimates.

\begin{lemma}
  \label{lem:elementaryest}
  Let $w,z\in\IC$. There exists $x\in\IC$ with $f(x)=w$ and
  $|f(z)-w|\ge |f_0||z-x|^d$. 
\end{lemma}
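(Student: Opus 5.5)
We factor $f(T)-w$ over $\IC$ and choose $x$ to be the root nearest to $z$. Since $f$ has degree $d$ and leading coefficient $f_0$, the fundamental theorem of algebra gives
\begin{equation*}
  f(T)-w = f_0\prod_{i=1}^{d}(T-x_i)
\end{equation*}
with $x_1,\ldots,x_d\in\IC$ the roots of $f-w$, listed with multiplicity. Each $x_i$ satisfies $f(x_i)=w$.

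Substituting $T=z$ gives $|f(z)-w| = |f_0|\prod_{i=1}^d |z-x_i|$. Fix $x=x_{i_0}$ with $|z-x_{i_0}| = \min_i |z-x_i|$. Then every factor obeys $|z-x_i|\ge |z-x|$, so the product is at least $|z-x|^d$. Hence $|f(z)-w|\ge |f_0||z-x|^d$, and $f(x)=w$ by construction.

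There is no real obstacle here. The only point to check is that $f-w$ has degree exactly $d$ with leading coefficient $f_0$, so that it has exactly $d$ roots counted with multiplicity. This holds because $d\ge 2>0$, so subtracting the constant $w$ does not affect the leading term.
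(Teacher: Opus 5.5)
Your proof is correct and matches the paper's argument exactly: factor $f-w=f_0\prod_{i=1}^d(T-x_i)$ over $\IC$, take $x$ to be the root nearest to $z$, and bound each factor $|z-x_i|$ from below by $|z-x|$.
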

\begin{proof}
  We
  factor $f-w = f_0 \prod_{i=1}^{d} (T-x_i)$ with complex $x_i$. 
  Note that $f(x_i) = w$ for all $i$.
  Now fix $i_0$ such that $|z-x_{i_0}|$ is minimal. 
  Thus
  $|f(z)-w| \ge |f_0|\prod_{i=1}^d |z-x_i|\ge |f_0||z-x_{i_0}|^d$. 
  The lemma follows with $x=x_{i_0}$.
\end{proof}

\begin{lemma}
  \label{lem:hyperbolic}  
  Suppose $\alpha>0$ and $\lambda\in (0,\alpha)$ with $|f'(z)|\ge
  \alpha$ for all $z\in J_f$. There exists $\epsilon\in (0,1]$ that depends on
  $f,\alpha,$ and $\lambda$ such that the following two properties hold.
  \begin{enumerate}
  \item[(i)]
    For all $z\in\IC$ we have 
    $\mathrm{dist}(f(z),J_f) \ge
    \min\{\epsilon,\lambda\,
    \mathrm{dist}(z,J_f)\}$.
  \item[(ii)]
    Suppose $\lambda \ge 1$.
    For all $z\in\IC$ and all $s\in\IN_0$ we have 
    $\mathrm{dist}(f^{(s)}(z),J_f) \ge
    \min\{\epsilon,\lambda^s\,
    \mathrm{dist}(z,J_f)\}$.
  \end{enumerate}
\end{lemma}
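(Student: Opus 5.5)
The plan is to exploit that $J_f$ is compact and the derivative bound is an open condition. Since $f'$ is continuous and $|f'|\ge\alpha>\lambda$ on $J_f$, I fix $\eta>0$ (depending on $f,\alpha,\lambda$) such that $|f'(z)|\ge \lambda'$ for all $z$ with $\mathrm{dist}(z,J_f)\le\eta$, where $\lambda'=(\alpha+\lambda)/2$. Uniform continuity of $f'$ on a compact neighborhood of $J_f$ then provides a radius $\eta$ on which $f$ is close to its linearization. That is, for $\hat z\in J_f$ and $|z-\hat z|\le\eta$,
\[
|f(z)-f(\hat z)-f'(\hat z)(z-\hat z)|\le \tfrac{\alpha-\lambda}{2}|z-\hat z|.
\]
In particular $f$ is injective on $B_\eta(\hat z)$, with $|f(z)-f(z')|\ge \lambda'' |z-z'|$ there for some $\lambda''\in(\lambda,\alpha)$.

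For (i), take $z$ and write $t=\mathrm{dist}(z,J_f)$. If $t\ge\eta$, then $z$ lies in the compact set $\{w\in\IC : \mathrm{dist}(w,J_f)\ge \eta\}$ intersected with a large disk, together with the region near infinity. Here $f(z)\notin J_f$ because $J_f$ is fully invariant. By compactness and continuity (near $\infty$ the value $|f(z)|$ is large, so the distance is large), $\mathrm{dist}(f(z),J_f)$ is bounded below by a positive $\epsilon_1$ on this set. If $t<\eta$, pick $w\in J_f$ with $|f(z)-w|=\mathrm{dist}(f(z),J_f)$. The key step is to show this distance is at least $\lambda t$ unless it is at least some $\epsilon_2$. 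Since $f$ is an open map near the nearest point $\hat z\in J_f$ to $z$, it maps $B_\eta(\hat z)$ injectively onto a region containing $B_{\lambda''\eta}(f(\hat z))$ (Koebe-type/degree argument using the near-linearity). If $|f(z)-w|<\epsilon_2:=\lambda''\eta/3$, and also $|f(z)-f(\hat z)|\le 2\alpha' t$ is small, then $w$ has a preimage $x\in B_\eta(\hat z)$. This $x$ lies in $J_f$ by full invariance, so $\lambda''|z-x|\le|f(z)-w|$ gives $|f(z)-w|\ge\lambda'' t\ge \lambda t$. Taking $\epsilon=\min\{1,\epsilon_1,\epsilon_2\}$ (shrinking $\eta$ as needed) yields (i). The main obstacle is making the preimage argument clean. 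I would instead use Lemma~\ref{lem:elementaryest} style reasoning or simply a Rouché argument on $f-w$ on $\partial B_\eta(\hat z)$ to produce $x$.

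For (ii), I argue by induction on $s$ using (i). Write $t_s=\mathrm{dist}(f^{(s)}(z),J_f)$. If $t_s\ge\epsilon$, then (i) gives $t_{s+1}\ge\min\{\epsilon,\lambda\epsilon\}=\epsilon$ since $\lambda\ge1$. Otherwise $t_s\ge\lambda^s t_0$, and (i) gives $t_{s+1}\ge\min\{\epsilon,\lambda^{s+1}t_0\}$. In both cases $t_{s+1}\ge\min\{\epsilon,\lambda^{s+1}t_0\}$, which is the claim.
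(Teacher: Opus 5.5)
Your proof is correct, and part (ii) is the same induction as in the paper. For part (i) you find the preimage $x\in J_f$ of the nearest point $w$ differently. The paper uses exactly the alternative you mention only in passing.

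By Lemma~\ref{lem:elementaryest}, the root of $f-w$ closest to $z$ is a point $x$ with $f(x)=w$ and $|z-x|\le |f_0|^{-1/d}\mathrm{dist}(f(z),J_f)^{1/d}$, for every $z\in\IC$. So once $\mathrm{dist}(f(z),J_f)\le\epsilon$ (otherwise there is nothing to prove), the point $x\in J_f$ is automatically close to $z$. A Taylor expansion at $x$, where $|f'(x)|\ge\alpha$ and the higher derivatives are bounded on the bounded set $J_f$, then gives $|f(z)-w|\ge\lambda|z-x|\ge\lambda\,\mathrm{dist}(z,J_f)$.

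This buys a single unified argument. It needs no injectivity, no Rouch\'e step, and no separate compactness case for $z$ far from $J_f$. The constant $\epsilon$ is explicit in terms of $|f_0|$, $d$ and derivative bounds on $J_f$. Your route is local: uniform continuity of $f'$ gives a bi-Lipschitz bound on small balls about points of $J_f$, Rouch\'e produces $x$, and compactness handles $\mathrm{dist}(z,J_f)$ bounded below. It would adapt to maps without a global root factorization, but costs more case analysis and gives an inexplicit $\epsilon_1$.

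Two small repairs are needed in your write-up.

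\emph{The bi-Lipschitz bound.} Your displayed estimate compares $f$ only with its linearization at the centre $\hat z$. That alone does not give $|f(z)-f(z')|\ge\lambda''|z-z'|$ for pairs. You need $|f'(\xi)-f'(\hat z)|\le(\alpha-\lambda)/2$ on the whole closed ball and integration along segments, which uniform continuity of $f'$ does supply.

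\emph{The case split.} You split at $t=\eta$ but use a Rouch\'e ball of the same radius $\eta$. Then $|w-f(\hat z)|<\epsilon_2+Mt$ need not be $<\lambda''\eta$ when $t$ is close to $\eta$, where $M$ bounds $|f'|$ on the $\eta$-neighbourhood of $J_f$. Shrinking $\eta$ does not change this ratio. Either split instead at $t=c\eta$ with $c=2\lambda''/(3M)$, since your compactness argument still gives $\epsilon_1>0$ for $t\ge c\eta$. Or centre the Rouch\'e ball at $z$, taking the estimates on a $2\eta$-neighbourhood of $J_f$.
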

\begin{proof}
  We fix $\epsilon\in (0,1]$ during the proof.
  Let $w\in J_f$ satisfy $|f(z)-w| = \mathrm{dist}(f(z),J_f)$.
  By Lemma~\ref{lem:elementaryest} there exists $x\in \IC$ with
  $f(x)=w$ and 
  $|z-x|\le |f_0|^{-1/d}|f(z)-w|^{1/d} = |f_0|^{-1/d}\mathrm{dist}(f(z),J_f)^{1/d}$.
  As $x\in f^{-1}(J_f)=J_f$ we find $|f'(x)|\ge\alpha$ by hypothesis.
  We develop $z\mapsto f(z)-f(x)$ around $x$   to  estimate
  \begin{alignat*}1
    |f(z)-f(x)| &= \left|\sum_{k=1}^{d}
      \frac{1}{k!}\frac{\mathrm{d}^k f}{\mathrm{d}T^k}(x) (z-x)^k \right|
    \ge  
    \left( \alpha - \sum_{k=2}^{d}
    \frac{1}{k!}\left|\frac{\mathrm{d}^k f}{\mathrm{d}T^k}(x)
    (z-x)^{k-1}\right|\right)|z-x|.
  \end{alignat*}

  Recall that $x$ lies in the bounded set $J_f$. So
  $|\frac{\mathrm{d}^k f}{\mathrm{d}T^k}(x)|$ is bounded from above in
  terms of $f$ only. 
  For the proof of (i) we may assume
  $\mathrm{dist}(f(z),J_f)\le
  \epsilon$. So $|z-x|\le |f_0|^{-1/d}\epsilon^{1/d}$.
  If $\epsilon$ is small enough in terms of $f,\alpha,$ and $\lambda$, then
  $\sum_{k=2}^{d}
  \frac{1}{k!}|\frac{\mathrm{d}^k f}{\mathrm{d}T^k}(x)
  (z-x)^{k-1}| \le \alpha-\lambda$.

  Part (i) follows from
  $$\mathrm{dist}(f(z),J_f) =|f(z)-w|= |f(z)-f(x)|\ge \lambda |z-x| \ge
  \lambda \mathrm{dist}(z,J_f).$$

  Part (ii) holds trivially for $s=0$. For $s\ge 1$ we  apply part (i) to
  $f^{(s-1)}(z)$ and find
  \begin{equation*}
    \mathrm{dist}(f^{(s)}(z),J_f) \ge
    \min\{\epsilon,\lambda\,\mathrm{dist}(f^{(s-1)}(z),J_f)\}
  \end{equation*}
  for all $z\in\IC$. By
  induction on $s$ and
  since $\lambda\ge 1$ we find 
  \begin{alignat*}1
    \mathrm{dist}(f^{(s)}(z),J_f) &\ge
    \min\bigl\{\epsilon,\lambda \min\{\epsilon, \lambda^{s-1}
    \mathrm{dist}(z,J_f)\}\bigr\}    
    \ge 
    \min\{\epsilon,\lambda^s \mathrm{dist}(z,J_f)\}.\qedhere
  \end{alignat*}
\end{proof}

\begin{lemma}
  \label{lem:hyperbolic2}
  Suppose $f$ is as above and hyperbolic.  
  There exist $\epsilon>0,r\ge 1,$ and $\lambda>1$ that depend
  only on $f$ with the following property. For all $z\in\IC$ and all
  $s\in\IN_0$ we have $\mathrm{dist}(f^{(s)}(z),J_f) \ge \epsilon
  \min\{1,\lambda^s\, \mathrm{dist}(z,J_f)\}^r$.
\end{lemma}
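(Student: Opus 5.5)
We reduce to Lemma~\ref{lem:hyperbolic}(ii) by passing to an iterate, and then split according to whether $z$ is close to or far from $J_f$.

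\textbf{Choosing the iterate.} Since $f$ is hyperbolic, there are $m\in\IN$ and $\Lambda>1$ with $|(f^{(m)})'(z)|\ge\Lambda$ on $J_f$. Put $g=f^{(m)}$. This polynomial has degree $d^m\ge 2$ and $J_g=J_f$. Apply Lemma~\ref{lem:hyperbolic}(ii) to $g$ with $\alpha=\Lambda$ and some $\lambda_0\in(1,\Lambda)$, say $\lambda_0=\sqrt\Lambda$. This gives $\epsilon_0\in(0,1]$ such that
\begin{equation*}
  \mathrm{dist}(g^{(t)}(z),J_f)\ge \min\{\epsilon_0,\lambda_0^t\,\mathrm{dist}(z,J_f)\}
\end{equation*}
for all $t\in\IN_0$ and $z\in\IC$.

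\textbf{Handling the remainder $j<m$.} Write $s=tm+j$ with $0\le j<m$, so $f^{(s)}(z)=f^{(j)}(g^{(t)}(z))$. It remains to bound $\mathrm{dist}(f^{(j)}(w),J_f)$ from below in terms of $\mathrm{dist}(w,J_f)$, for the finitely many $j\in\{0,\ldots,m-1\}$.

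Lemma~\ref{lem:elementaryest}, applied to $f^{(j)}$, gives such a bound. Take $w'\in J_f$ nearest to $f^{(j)}(w)$. The lemma yields $x$ with $f^{(j)}(x)=w'$ and
\begin{equation*}
  |f^{(j)}(w)-w'|\ge |f_0|^{(d^j-1)/(d-1)}\,|w-x|^{d^j}.
\end{equation*}
Since $J_f$ is fully invariant, $x\in J_f$, so $|w-x|\ge \mathrm{dist}(w,J_f)$. Hence
\begin{equation*}
  \mathrm{dist}(f^{(j)}(w),J_f)\ge c_j\,\mathrm{dist}(w,J_f)^{d^j}.
\end{equation*}

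\textbf{Combining.} Set $r=d^{m-1}$. Let $\delta=\mathrm{dist}(w,J_f)$ and use $\min\{1,\delta\}^{d^j}\ge \min\{1,\delta\}^{r}$.

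If $\delta\ge 1$, the bound above gives a constant lower bound that is at least a fixed multiple of $\min\{1,\cdot\}^r$. Feeding in $\delta\ge \min\{\epsilon_0,\lambda_0^t\mathrm{dist}(z,J_f)\}\ge \epsilon_0\min\{1,\lambda_0^t\mathrm{dist}(z,J_f)\}$ yields
\begin{equation*}
  \mathrm{dist}(f^{(s)}(z),J_f)\ge \epsilon\,\min\{1,\lambda_0^{t}\mathrm{dist}(z,J_f)\}^r.
\end{equation*}

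Finally, set $\lambda=\lambda_0^{1/m}$. Then $\lambda_0^t=\lambda^{s-j}\ge \lambda^{s}\lambda_0^{-1}$, and the factor $\lambda_0^{-1}$ is absorbed into $\epsilon$ via $\min\{1,a\lambda_0^{-1}\}\ge\lambda_0^{-1}\min\{1,a\}$. The constants $\epsilon,r,\lambda$ depend only on $f$.

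The main obstacle is the remainder step for $j<m$: near a critical point of $f^{(j)}$ on a preimage, distances can shrink by a power. This is exactly why the exponent $r$ is needed, and Lemma~\ref{lem:elementaryest} handles it cleanly.
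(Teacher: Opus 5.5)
Your proposal is correct and follows the paper's proof essentially step for step. Both pass to the iterate $f^{(m)}$, apply Lemma~\ref{lem:hyperbolic}(ii) to it, handle the remainder $j<m$ via Lemma~\ref{lem:elementaryest} applied to $f^{(j)}$ together with full invariance of $J_f$, and then absorb constants. Your explicit bookkeeping ($r=d^{m-1}$, $\lambda=\lambda_0^{1/m}$, and absorbing $\lambda_0^{-r}$ and $\min_j c_j$ into $\epsilon$) just spells out the paper's ``adjusting $\epsilon,\lambda,$ and $r$''. The dangling case ``$\delta\ge 1$'' in your combining step can be dropped, since $\delta^{d^j}\ge\min\{1,\delta\}^{d^j}\ge\min\{1,\delta\}^{r}$ holds for every $\delta\ge 0$.
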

\begin{proof}
  By hypothesis  there exists $m\in\IN$ and $\alpha>1$ such that
  $|{f^{(m)}}'(z)|\ge
  \alpha$ for all $z\in J_f$.
  We write 
  $s=mk+r$ for a non-negative integer $k$
  and some $r\in \{0,\ldots,m-1\}$.
  All iterates of $f$ have  Julia set $J_f$.
  We will apply Lemma~\ref{lem:hyperbolic}(ii) to
  $f^{(m)},\alpha,$ and $\lambda=(1+\alpha)/2$ and find
  $\mathrm{dist}(f^{(mk)}(z),J_f) \ge
  \min\{\epsilon,\lambda^{k}\mathrm{dist}(z,J_f)\}$ for some small
  enough $\epsilon>0$.
  We fix $w\in J_f$ with $|f^{(s)}(z)-w| =
  \mathrm{dist}(f^{(s)}(z),J_f)$. 
  Lemma~\ref{lem:elementaryest}  applied to the degree $d^r$
  polynomial
  $f^{(r)}$ with leading term $f_{0,r}$  yields
  $x\in (f^{(r)})^{-1}(w)\subset J_f$ with 
  $
  |f^{(mk+r)}(z)-w|
  \ge
  |f_{0,r}| |f^{(mk)}(z)-x|^{d^r}$.  We conclude
  $$
  \mathrm{dist}(f^{(s)}(z),J_f)
  \ge |f_{0,r}|\mathrm{dist}(f^{(mk)}(z),J_f)^{d^r}
  \ge |f_{0,r}|\min\{\epsilon,\lambda^{k}
  \mathrm{dist}(z,J_f)\}^{d^r}.$$ 
  The lemma follows on using $k>\frac sm -1$ and adjusting
  $\epsilon, \lambda,$ and $r$.
\end{proof}

\subsection{Behavior of Hyperbolic Polynomials near the Julia Set}

\newcommand{\varz}{w}
\newcommand{\varw}{z}
Recall that the postcritical orbit
$\pco_{\ge 1}(f)$
was defined in (\ref{def:pcoge1}).

\begin{lemma}
  \label{lem:hyperboliccovering0}
  Suppose $f\in\IC[T]$ has degree at least $2$. Let $w\in\IC$ and $\epsilon>0$
  such that $\pco_{\ge 1}(f)$ does not meet
  the open disk $B_\epsilon(w)$ of radius $\epsilon$ centered at $w$. Let $s\in\IN_0$. 
  \begin{enumerate}
  \item [(i)]
    The restriction of $f^{(s)}$ to $(f^{(s)})^{-1}(B_\epsilon(w))$ is a
    covering map onto $B_\epsilon(w)$. 
  \item[(ii)] Let $V$ be a connected component of
    $(f^{(s)})^{-1}(B_\epsilon(w))$. Then $f^{(s)}|_V\colon
    V\rightarrow B_\epsilon(w)$ is a biholomorphism.
  \end{enumerate}
\end{lemma}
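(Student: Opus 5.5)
Part (i) is the standard statement that $f^{(s)}$ is a covering away from critical values. By the chain rule, every critical value of $f^{(s)}$ has the form $f^{(j)}(c)$ with $c\in\crit{f}$ and $1\le j\le s$, so it lies in $\pco_{\ge 1}(f)$. Thus $B_\epsilon(w)$ contains no critical values of $f^{(s)}$.

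Put $U=(f^{(s)})^{-1}(B_\epsilon(w))$. The restriction $f^{(s)}|_U\colon U\rightarrow B_\epsilon(w)$ is a proper map, being the restriction of a polynomial to a full preimage. It is also a local homeomorphism, since ${f^{(s)}}'$ does not vanish on $U$. A proper local homeomorphism between locally compact Hausdorff spaces is a finite covering map. Concretely, over each point of the disk there are exactly $d^s$ preimages, and around each of them one can choose a small neighborhood via the inverse function theorem; properness then gives evenly covered neighborhoods. The case $s=0$ is trivial.

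For part (ii), let $V$ be a connected component of $U$. Since $U$ is open, $V$ is open, and the restriction of the covering $f^{(s)}|_U$ to $V$ is again a covering map $V\rightarrow B_\epsilon(w)$. To see this, note that over an evenly covered connected neighborhood, each sheet is connected, so it lies entirely in one component of $U$. Surjectivity follows because the image of $V$ is open (open mapping) and closed in $B_\epsilon(w)$ (properness of the restriction, or the covering property). The disk $B_\epsilon(w)$ is simply connected and $V$ is connected, so a connected covering of a simply connected space is a homeomorphism. Hence $f^{(s)}|_V$ is a bijective holomorphic map, and its inverse is holomorphic, so it is a biholomorphism.

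The only mildly delicate point is justifying that the restriction to a single component is still a covering and is surjective. I would handle it by the evenly covered neighborhood argument above, using that $B_\epsilon(w)$ is locally path connected.
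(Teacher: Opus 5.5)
Your proposal is correct and matches the paper's proof step for step. The steps are: the chain rule places the critical values of $f^{(s)}$ in $\pco_{\ge 1}(f)$; a proper local homeomorphism is a covering map; the restriction to a connected component $V$ is a surjective covering; and simple connectivity of $B_\epsilon(w)$ then gives a homeomorphism, hence a biholomorphism. The only difference is that you sketch the two topological facts directly, whereas the paper cites Ho's Lemma~2 and V\"olklein's Corollary~4.13 for them.
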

\begin{proof}
  The set of  critical values of $f^{(s)}$ is contained in
  $\pco_{\ge 1}(f)$ by the chain rule.

  For brevity, we write $F = f^{(s)}$ and $U=F^{-1}(B_\epsilon(w))$. As
  $\pco_{\ge 1}(f)\cap B_\epsilon(w)=\emptyset$, we find that $F|_U\colon U
  \rightarrow B_\epsilon(w)$ is surjective with derivative that is never
  zero. So $F'|_U$ is local homeomorphism whose fibers have $(\deg
  f)^s$ elements. 
  The non-constant polynomial $F$ taken
  as an entire map is proper and so is $F|_U\colon U\rightarrow B_\epsilon(w)$.
  By Lemma~2~\cite{Ho:propermaps}, 
  $F|_U\colon U \rightarrow B_\epsilon(w)$ is a covering map. This implies
  (i).

  Let $V$ be a connected component of $U$. Then $F|_V\colon
  V\rightarrow B_{\epsilon}(w)$ is a covering map, see Corollary
  4.13~\cite{Voelklein}, and thus
  surjective per the reference's terminology. But $B_{\epsilon}(w)$
  is simply connected, so $F|_V\colon V\rightarrow B_\epsilon(w)$ is a
  homeomorphism. Part (ii) follows as a bijective holomorphic map
  between two domains in $\IC$ is a biholomorphism.
\end{proof}

\begin{lemma}
  \label{lem:hyperboliccovering}  
  Suppose $f\in\IC[T]$ has degree at least $2$ and is hyperbolic.
  For all sufficiently small $\epsilon > 0$ the following holds.
  Let $w\in J_f$ and $s\in\IN_0$.
  \begin{enumerate}
  \item [(i)]
    The restriction of $f^{(s)}$ to $(f^{(s)})^{-1}(B_\epsilon(w))$ is a
    covering map onto $B_\epsilon(w)$. 
  \item[(ii)] Let $V$ be a connected component of
    $(f^{(s)})^{-1}(B_\epsilon(w))$. Then $f^{(s)}|_V\colon
    V\rightarrow B_\epsilon(w)$ is a biholomorphism.
  \end{enumerate}
\end{lemma}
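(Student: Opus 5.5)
The plan is to reduce everything to Lemma~\ref{lem:hyperboliccovering0}, so the task is to find $\epsilon>0$ such that $B_\epsilon(w)\cap\pco_{\ge 1}(f)=\emptyset$ for every $w\in J_f$. Once this holds, parts (i) and (ii) are exactly the conclusions of Lemma~\ref{lem:hyperboliccovering0}, applied to $w$, $\epsilon$ and each $s\in\IN_0$. Neither statement depends on $s$ beyond that lemma, so no uniformity in $s$ is needed.

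To find $\epsilon$, I use the characterization of hyperbolicity in Lemma~\ref{lem:hyperboliccharacterization}. Since $f$ is hyperbolic, the closure $P$ of $\pco_{\ge 0}(f)$ is disjoint from $J_f$. The set $P$ is closed. It is also bounded, because every critical point lies in $K_f$ by hyperbolicity: an unbounded critical orbit would accumulate at $\infty$, whereas hyperbolicity forces the critical points into attracting basins of bounded Fatou components. Simpler still, one can use that $P$ is compact in the Riemann sphere and avoid the point at infinity, since we only need the distance from $J_f$. Because $J_f$ is compact, non-empty, and disjoint from the closed set $P$, the distance
\[
\kappa=\inf\{|c-w| : c\in P,\ w\in J_f\}
\]
is positive. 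Even if $P$ were unbounded this holds, since a compact set and a disjoint closed set in $\IC$ are at positive distance. Because $\pco_{\ge 1}(f)\subset P$, any $\epsilon\in(0,\kappa]$ gives $B_\epsilon(w)\cap\pco_{\ge 1}(f)=\emptyset$ for all $w\in J_f$.

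It remains to invoke Lemma~\ref{lem:hyperboliccovering0} for each $w\in J_f$, $s\in\IN_0$ and any $0<\epsilon\le\kappa$. This gives the covering property in (i) and the biholomorphism of each component onto $B_\epsilon(w)$ in (ii). The only real point is the positivity of $\kappa$, and that is routine once we use the compact–closed separation. If $\pco_{\ge1}(f)$ is empty, which cannot happen for degree $\ge2$ but costs nothing to note, any $\epsilon$ works.
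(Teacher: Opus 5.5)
Your proof is correct and matches the paper's: Lemma~\ref{lem:hyperboliccharacterization} separates the compact set $J_f$ from the closed set $\overline{\pco_{\ge 1}(f)}$, you take $\epsilon$ below their distance, and you apply Lemma~\ref{lem:hyperboliccovering0}. Your aside that hyperbolicity forces all critical points into $K_f$ is false; for example, $T^2+c$ with $|c|>2$ is hyperbolic and its critical orbit escapes, as the paper itself remarks. The aside is harmless, though, because the compact--closed separation you fall back on needs no boundedness.
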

\begin{proof}  
  As $f$ is hyperbolic, the closure $\overline{\pco_{\ge 1}(f)}$ in $\IC$ is disjoint from
  the Julia set $J_f$, see Lemma~\ref{lem:hyperboliccharacterization}.  
  The Julia set $J_f$ is compact. So for all sufficiently small  $\epsilon > 0$
  we have
  $\overline{\pco_{\ge 1}(f)} \cap \{z\in \IC :
  \mathrm{dist}(z,J_f)<\epsilon\}=\emptyset$. 
  So if  $w\in J_{f}$, then
   $ \pco_{\ge 1}(f)\cap B_\epsilon(w)=\emptyset$.
  The lemma follows from Lemma~\ref{lem:hyperboliccovering0}.
\end{proof}

We come to an estimate based on the famous Koebe $1/4$-Theorem.
\begin{lemma}
  \label{lem:koebequarterapp}
  Let $U\subset\IC$ be a domain and let $g\colon U\rightarrow\IC$ be
  an injective and holomorphic map. Suppose $w,w'\in U$  such that
  $\overline{B_{|w-w'|}(w)}\subset U$. Then
  \begin{equation*}
    |g(w)-g(w')|\ge \frac{|g'(w)|}{4}|w-w'|.
  \end{equation*}
\end{lemma}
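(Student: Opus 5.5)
The plan is to reduce the estimate to the classical Koebe $1/4$-Theorem through an affine normalization. Set $\rho=|w-w'|$. If $\rho=0$ there is nothing to show, so assume $\rho>0$. The closed disk $\overline{B_\rho(w)}$ is compact and lies in the open set $U$, so there is $\rho'>\rho$ with $B_{\rho'}(w)\subset U$; in fact the open disk $B_\rho(w)$ together with its boundary is all we need.

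Next consider the normalized map
\begin{equation*}
  h(\zeta) = \frac{g(w+\rho\zeta)-g(w)}{\rho\, g'(w)}, \qquad |\zeta|<1.
\end{equation*}
Since $g$ is injective and holomorphic, $g'(w)\neq 0$, so $h$ is well defined. It is injective and holomorphic on the unit disk, with $h(0)=0$ and $h'(0)=1$. Koebe's $1/4$-Theorem then says that $h(\{|\zeta|<1\})$ contains the disk $B_{1/4}(0)$.

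It remains to locate $h$ at the boundary point $\zeta_0=(w'-w)/\rho$, which satisfies $|\zeta_0|=1$. I would extend $h$ to $B_{\rho'/\rho}(0)$, where it stays injective because $g$ is injective on all of $U$. Now suppose $|h(\zeta_0)|<1/4$. Then $h(\zeta_0)=h(\zeta_1)$ for some $\zeta_1$ with $|\zeta_1|<1$, because the image of the open unit disk contains $B_{1/4}(0)$. Since $\zeta_1\neq\zeta_0$, this contradicts injectivity on the larger disk. Hence $|h(\zeta_0)|\ge 1/4$, which is the same as $|g(w')-g(w)|\ge \tfrac14|g'(w)|\,\rho$.

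The only delicate point is the boundary case $|w-w'|=\rho$. The closed-disk hypothesis is there exactly so that $h$ is defined and injective slightly beyond the unit disk. Once $h$ is known to be injective there, the contradiction above closes the argument.
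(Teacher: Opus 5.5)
Your argument is correct and is essentially the paper's proof: both apply Koebe's $1/4$-Theorem to $g$ on the disk $B_{|w-w'|}(w)$ and use injectivity of $g$ at the boundary point $w'$ to see that $g(w')$ lies outside the image of that open disk. The paper phrases this as $g(w')\in\partial g(B_{|w-w'|}(w))$ and uses the distance-to-boundary form of Koebe, whereas you normalize and use the disk-containment form; your extension to a slightly larger disk is unnecessary, since injectivity of $g$ on all of $U$ already gives the contradiction.
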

\begin{proof}
  We may assume $w\not=w'$. Let $D$ denote the open disk
  $B_{|w-w'|}(w)$ with closure $\overline D$.

  We claim that $g(w')$ lies in the topological boundary $\partial g(D)$
  of $g(D)$.

  As $g$ is continuous on $\overline D$ and since $w'\in \overline D$,
  we have $g(w') \in
  \overline{g(D)}$. If $g(w')\in g(D)$, then $g(w')=g(w'')$ for some $w''\in
  D$. But $g$ is injective on $U$ by hypothesis. So
  $w'=w''\in D$, contradicting $w'\in \partial D$.
  So $g(w')\in \overline {g(D)}\ssm g(D)$.
  As $g$ is holomorphic and non-constant, the image $g(D)$ is open in $\IC$.
  Thus $g(w')\in \overline {g(D)}\ssm g(D) = \partial g(D)$ proves our
  claim.

  Theorem I.1.4~\cite{CarlesonGamelin} applied to $g|_D$ and $w\in D$ implies
  \begin{equation*}
    \frac{|g'(w)|}{4} \mathrm{dist}(w,\partial D)
    \le \mathrm{dist}(g(w),\partial g(D)).
  \end{equation*}
  The lemma follows as $\mathrm{dist}(w,\partial D)=|w-w'|$ and
  $\mathrm{dist}(g(w),\partial g(D))\le |g(w)-g(w')|$.  
\end{proof}

We come to an elementary estimate that will prove useful later on. 
\begin{lemma}
  \label{lem:elementary}
  Let $z_1,\ldots,z_s\in\IC$ and $\delta_1,\ldots,\delta_s \in (0,1/2)$
  with $||z_i|-1|\le \delta_i$. Set
  $\delta=\sum_{i=1}^s\delta_i$, then 
  \begin{equation*}
    \bigl||z_1\cdots z_s|-1\bigr| \le e^{2\delta}-1.
  \end{equation*}  
\end{lemma}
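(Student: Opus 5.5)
We take logarithms and reduce to the elementary bounds $-x-x^2\le\log(1-x)$ and $\log(1+x)\le x$. Write $|z_i| = 1+\eta_i$ with $|\eta_i|\le\delta_i<1/2$, so that $|z_1\cdots z_s| = \prod_{i=1}^s(1+\eta_i)$ and
\begin{equation*}
  \log|z_1\cdots z_s| = \sum_{i=1}^s \log(1+\eta_i).
\end{equation*}
The goal is to show that this sum lies in $[-2\delta,\delta]$. Exponentiating then gives $e^{-2\delta}-1\le |z_1\cdots z_s|-1\le e^{\delta}-1$. Since $1-e^{-2\delta}\le 2\delta\le e^{2\delta}-1$ and $e^{\delta}-1\le e^{2\delta}-1$, both sides are at most $e^{2\delta}-1$ in absolute value, which is the claimed bound.

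For the upper bound on the logarithm, $\log(1+\eta_i)\le \eta_i\le\delta_i$, by concavity of $\log$.

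For the lower bound, we use that $\log(1+x)\ge -2|x|$ whenever $|x|\le 1/2$. If $x\ge 0$ this is trivial. If $x=-y$ with $y\in[0,1/2]$, note that $g(y)=\log(1-y)+2y$ satisfies $g(0)=0$ and $g'(y)=2-1/(1-y)\ge 0$ on $[0,1/2]$. Hence $g\ge 0$ there, which gives $\log(1-y)\ge -2y$. Summing over $i$ yields $\log|z_1\cdots z_s|\ge -2\delta$.

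The only step needing any care is this inequality $\log(1-y)\ge -2y$, and it is exactly where the hypothesis $\delta_i<1/2$ enters; everything else is routine.
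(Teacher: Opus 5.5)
Your proof is correct and differs from the paper's in presentation rather than substance. The paper stays multiplicative. It reduces to positive reals $z_i=1+\epsilon_i$ and uses AM--GM to get $P=\prod z_i\le (1+\sum_i\epsilon_i/s)^s\le e^{\delta}$. When $P<1$, it applies the same bound to the reciprocals $z_i^{-1}=1+\epsilon_i'$ with $\epsilon_i'=-\epsilon_i/(1+\epsilon_i)$. Here $|\epsilon_i'|\le 2|\epsilon_i|$, which is where $\delta_i<1/2$ is used. This gives $P^{-1}\le e^{2\delta}$, hence $1-P\le P^{-1}-1\le e^{2\delta}-1$.

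Your termwise inequalities are the additive counterparts of these two steps. The bound $\log(1+x)\le x$ corresponds to AM--GM, which is concavity of $\log$. The bound $\log(1-y)\ge -2y$ on $[0,1/2]$ corresponds to the inversion trick, since $-\log(1+\epsilon_i)=\log(1+\epsilon_i')\le 2|\epsilon_i|$. Both arguments therefore reach the same two-sided bound $e^{-2\delta}\le P\le e^{\delta}$.

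Your version avoids the case split on $P\ge 1$ versus $P<1$ and the reduction to positive reals, at the cost of a small derivative check. The paper's version needs no calculus beyond the standard inequality $(1+x/s)^s\le e^{x}$.
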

\begin{proof}
  Note that $z_i$ are all non-zero.
  It suffices to prove the lemma when $s\ge 1$ and
  $z_1,\ldots,z_s$ are positive real numbers.
  For each $i$ we write $z_i = 1+\epsilon_i$ with
  $|\epsilon_i|\le\delta_i$. We begin by assuming the weaker
  hypothesis $|\epsilon_i|<1$ for all $i$. 
  By the arithmetic-geometric mean inequality we estimate
  \begin{equation}
  \label{eq:boundP}
  P=\prod_{i=1}^s z_i\le \left(\sum_{i=1}^s \frac{z_i}{s}\right)^s
  =\left(1+\sum_{i=1}^s \frac{\epsilon_i}{s}\right)^s\le
  \left(1+\sum_{i=1}^s \frac{|\epsilon_i|}{s}\right)^s
  \le
  e^{\sum_{i=1}^s|\epsilon_i|}  
   \le   e^{\delta}.
  \end{equation}

  The lemma follows if $P\ge 1$.
  Let us now assume $P<1$ and also $\delta_i \in (0,1/2)$ for all $i$.
  Let $\epsilon'_i =
  -\epsilon_i/(1+\epsilon_i)$, then $|\epsilon'_i|\le 2|\epsilon_i|\le
  2\delta_i<1$.
  By (\ref{eq:boundP}) with $z_i$ replaced by 
  $z_i^{-1} = 1+\epsilon'_i$ we get $P^{-1}\le
  e^{\sum_{i=1}^s|\epsilon'_i|}\le e^{2\delta}$.  As $P<1$ we find $0<1-P = P(P^{-1}-1)\le
  P^{-1}-1\le e^{2\delta}-1$ and the lemma holds too.    
\end{proof}

In the following lemma we exhibit a well-known uniformity property of
hyperbolic maps. We closely follow the argument in the proof of
Theorem V.2.3~\cite{CarlesonGamelin}.

\begin{lemma}
  \label{lem:fninvuniform_new1}
  Suppose $f\in\IC[T]$ is of degree at least $2$ and
  hyperbolic. The following holds for all $\epsilon > 0$  sufficiently small in terms of
  $f$. Let
  $s\in\IN_0$, $\widehat w\in J_{f}$, and let
  $V$ be a connected component of $(f^{(s)})^{-1}(B_{\epsilon}(\widehat{w}))$.
  Then $f^{(s)}|_V\colon V\rightarrow B_{\epsilon}(\widehat{w})$ is
  a biholomorphism.
  Let $g\colon
  B_{\epsilon}(\widehat{\varz})\rightarrow V$ denote the inverse map and
  $\widehat z = g(\widehat w)$. The following hold true.
  \begin{enumerate}
  \item[(i)] 
    We have $\frac 12 |g'(\widehat w)|\le |g'(w)|\le 2|g'(\widehat w)|$ for all $w\in
    B_{\epsilon}(\widehat w)$. 
  \item[(ii)]  
    We have ${f^{(s)}}'(\widehat{\varw})\not=0$ and
    \begin{equation*}
      \frac 12 |z-z'| \le \frac{|{f^{(s)}}(z)-{f^{(s)}}(z')|}{|{f^{(s)}}'(\widehat{\varw})|} \le 8 |z-z'|
    \end{equation*}
    for all $z,z'\in V$. 
  \end{enumerate}
\end{lemma}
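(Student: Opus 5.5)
The plan is to follow the standard Koebe-distortion argument for hyperbolic maps, as in the proof of Theorem V.2.3~\cite{CarlesonGamelin}. First fix $\epsilon_0>0$ small enough in terms of $f$ so that Lemma~\ref{lem:hyperboliccovering} applies with $\epsilon_0$. For every $\widehat w\in J_f$, the disk $B_{\epsilon_0}(\widehat w)$ then avoids $\pco_{\ge 1}(f)$. Hence every connected component $V_0$ of $(f^{(s)})^{-1}(B_{\epsilon_0}(\widehat w))$ maps biholomorphically onto $B_{\epsilon_0}(\widehat w)$ by Lemma~\ref{lem:hyperboliccovering0}(ii). We then take $\epsilon\le\epsilon_0/C$ for a suitable absolute constant $C$ (something like $C=16$ should do). Any component $V$ of $(f^{(s)})^{-1}(B_\epsilon(\widehat w))$ lies inside such a $V_0$, so $f^{(s)}|_V$ is the restriction of a biholomorphism. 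Its inverse $g$ is therefore the restriction of an injective holomorphic map $G\colon B_{\epsilon_0}(\widehat w)\to V_0$. This gives the first assertion.

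For (i), apply the Koebe distortion theorem to the univalent map $\zeta\mapsto (G(\widehat w+\epsilon_0\zeta)-G(\widehat w))/(\epsilon_0 G'(\widehat w))$ on the unit disk. For $|\zeta|\le\rho=\epsilon/\epsilon_0\le 1/C$, the distortion bounds $(1-\rho)/(1+\rho)^3\le |G'|/|G'(\widehat w)|\le (1+\rho)/(1-\rho)^3$ lie inside $[1/2,2]$ once $\rho$ is small. This yields (i) with $g=G|_{B_\epsilon(\widehat w)}$. Note that none of these constants depends on $s$ or $\widehat w$. That uniformity is the whole point, and it comes from normalizing by $G'(\widehat w)$.

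For (ii), ${f^{(s)}}'(\widehat z)=1/g'(\widehat w)\neq0$ because $g$ is biholomorphic. Take $z,z'\in V$ and set $w=f^{(s)}(z)$, $w'=f^{(s)}(z')$ in $B_\epsilon(\widehat w)$.

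\textbf{Upper bound.} Integrate $G'$ along the segment $[w,w']$, which lies in the convex disk $B_\epsilon(\widehat w)$. Part (i) gives $|z-z'|\le 2|g'(\widehat w)||w-w'|$. Rearranged, this is the left inequality $\frac12|z-z'|\le |w-w'|/|{f^{(s)}}'(\widehat z)|$.

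\textbf{Lower bound.} Apply Lemma~\ref{lem:koebequarterapp} to $G$ on $U=B_{\epsilon_0}(\widehat w)$ at the point $w$. Since $|w-w'|<2\epsilon$ and $|w-\widehat w|<\epsilon$, the closed disk $\overline{B_{|w-w'|}(w)}$ lies in $B_{3\epsilon}(\widehat w)\subset U$. Together with $|G'(w)|\ge \frac12|g'(\widehat w)|$, this gives $|z-z'|\ge \frac18|g'(\widehat w)||w-w'|$, which is exactly the right inequality with constant $8$. For this step we need (i) for $G$ on the slightly larger disk $B_{3\epsilon}(\widehat w)$; this still holds since $3\epsilon/\epsilon_0$ is small.

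The main obstacle is bookkeeping rather than substance. One must make sure the biholomorphic extension $G$ exists on a disk several times larger than $B_\epsilon(\widehat w)$, uniformly in $s$ and $\widehat w$, so that both the Koebe distortion bound and the hypothesis $\overline{B_{|w-w'|}(w)}\subset U$ of Lemma~\ref{lem:koebequarterapp} hold. Hyperbolicity gives the uniform $\epsilon_0$ via Lemma~\ref{lem:hyperboliccovering}, and shrinking $\epsilon$ by a fixed factor handles the rest.
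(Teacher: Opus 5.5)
Your proof is correct, but for part (i) it takes a genuinely different route from the paper. The paper follows Theorem~V.2.3 of~\cite{CarlesonGamelin} and proves the distortion bound dynamically. Via Lemma~\ref{lem:hyperbolic2} it shows that the partial branches $g_k=f^{(s-k)}\circ g$ stay near $J_f$ and contract exponentially, $|g_k'|\le c_4\mu^{-k/l}$. It then writes $g'(w)/g'(\widehat w)=\prod_{k=1}^{s} f'(g_k(\widehat w))/f'(g_k(w))$; the deviations of these factors from $1$ form a convergent geometric series, and Lemma~\ref{lem:elementary} finishes. In the paper, Koebe enters only through the $1/4$-theorem (Lemma~\ref{lem:koebequarterapp}), for the right-hand inequality in (ii), applied to $z,z'\in g(B_{\epsilon/3}(\widehat w))$ and followed by a final shrinking of $\epsilon$.

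You use hyperbolicity only once, to get a uniform $\epsilon_0$ such that every inverse branch over $B_{\epsilon_0}(\widehat w)$, $\widehat w\in J_f$, is univalent. The Koebe distortion theorem then does all the work, with constants depending only on $\epsilon/\epsilon_0$. The same larger disk also gives the room Lemma~\ref{lem:koebequarterapp} needs, so no extra shrinking is required.

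Your route is shorter and bypasses Lemmas~\ref{lem:hyperbolic}, \ref{lem:hyperbolic2}, and \ref{lem:elementary} entirely, at the price of citing the full distortion theorem rather than only the $1/4$-theorem. The paper's route is more hands-on, and as a by-product gives the exponential contraction estimates (\ref{eq:fninvuniform}), which the statement itself does not require.

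One small point about your bookkeeping: in the last step you do not need (i) on $B_{3\epsilon}(\widehat w)$. Lemma~\ref{lem:koebequarterapp} only involves $G'$ at the center $w\in B_\epsilon(\widehat w)$; on $B_{3\epsilon}(\widehat w)$ you only need univalence of $G$, which you have on all of $B_{\epsilon_0}(\widehat w)$. This matters for your suggested $C=16$. At radius ratio $3/16$ the Koebe lower bound is $(13/16)/(19/16)^3\approx 0.485<1/2$, while at ratio $1/16$ the bounds are roughly $0.78$ and $1.29$. So $C=16$ works once that superfluous requirement is dropped; otherwise you would need a larger $C$.
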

\begin{proof}
  If $s=0$, then $f^{(s)}$ is the identity and so is $g$. In this
  case, the lemma is trivial. So we assume $s\ge 1$. 
  We let $\epsilon \le 1$ be small enough so that 
  Lemma~\ref{lem:hyperboliccovering} holds.
  The first claim, on $f^{(s)}|_V$, holds by the said lemma.
  
  In the proof let
  $g\colon B_{\epsilon}(\widehat w)\rightarrow V$ be the inverse of
  $f^{(s)}|_V\colon V\rightarrow B_{\epsilon}(\widehat w)$.

  For brevity, we write $U = B_{\epsilon}(\widehat w)$. Let us define
  $g_k = f^{(s-k)}\circ g\colon B_{\epsilon}(\widehat w)\rightarrow \IC$
  for all $k\in \{1,\ldots,s\}$. In particular, $g_s=g$. It is
  convenient to define $g_0$ to be the identity.

  We will shrink $\epsilon\in (0,1]$ several times. The values of
  $\lambda,c,l,c_1,c_2,\ldots$ are positive and fixed in the proof. They
  will depend only on $f$ but not on $\widehat w,s,V$ or the branch
  $g$ of $(f^{(s)})^{-1}$.

  We first prove that there exists $\lambda > 1$
  that depends only on $f$
  such that
  \begin{equation}
    \label{eq:fninvuniform}
    \mathrm{dist}(g(\varz),J_f) \le c \lambda^{-s}
    \quad\text{and}\quad
    |g'(\varz)|\le c\lambda^{-s}
    \quad\text{for all}\quad w\in U. 
  \end{equation}

  Say $\varz\in U$ and $k\in \{0,\ldots,s\}$. By Lemma~\ref{lem:hyperbolic2}
  applied to $g_k(\varz)$ and $k$ we find
  \begin{equation*}
    \mathrm{dist}(\varz,J_f) \ge c_1^{-1}
    \min\{1,\lambda^k \mathrm{dist}(g_k(\varz),J_f)\}^r;   
  \end{equation*}
  here $c_1>0, \lambda > 1,$
  and $r\ge 1$ depend only on $f$.
  We may assume $\epsilon \le 1/c_1$. 
  As $\mathrm{dist}(\varz,J_f)<\epsilon\le 1/c_1$ we get
  $\epsilon >  c_1^{-1}
  \lambda^{rk} \mathrm{dist}(g_k(\varz),J_f)^r$.
  So 
  \begin{equation}
    \label{eq:gkapproaching}
    \mathrm{dist}(g_k(\varz),J_f) \le (c_1\epsilon)^{1/r} \lambda^{-k} < (c_1\epsilon)^{1/r}
    \quad\text{for all } \varz\in U\text{ and all }k\in
    \{0,\ldots,s\}. 
  \end{equation}
  Recall that $g_s=g$.
  The first inequality in 
  (\ref{eq:fninvuniform}) follows with
  $c = (c_1\epsilon)^{1/r}$.

  As $f$ is hyperbolic, there exist $l\in\IN$ and $\mu > 1$ with
  $|F'(\varw)|\ge \mu$ for all $\varw$ sufficiently close to $J_f$ and where
  $F = f^{(l)}$. 
  This holds in particular for all  $z\in g_k(U)$ and all $k\in
  \{0,\ldots,s\}$ by
  (\ref{eq:gkapproaching}) as we may assume $\epsilon$ is small in
  terms of $f$.

  We continue to assume $k\in\{0,\ldots,s\}$ and 
  write $k=ml+t$ for some $m\in\IN_0$
  and $t\in \{0,\ldots,l-1\}$. Thus
  $F^{(m)}\circ f^{(t)}\circ g_k$  is the identity on $U$.
  Let $\varz\in U$ be arbitrary, the chain rule yields
  \begin{equation}
    \label{eq:chainrulehyperbolic}
    1 =    {f^{(t)}}'(g_k(\varz)) g'_k(\varz)\prod_{j=1}^m
      F'\bigl(F^{(m-j)}(f^{(t)}(g_k(\varz)))\bigr).
  \end{equation}
  If $j\in \{1,\ldots,m\}$, then
  $F^{(m-j)}(f^{(t)}(g_k(\varz))) = f^{(k-jl)}(g_{k}(\varz)) =
  g_{jl}(\varz)$. So 
  \begin{alignat}1
    \label{eq:chainrulehyperbolic2}
    1 &=
    \left|      {f^{(t)}}'(g_k(\varz)) g'_k(\varz)\prod_{j=1}^m F'\bigl(g_{jl}(w)\bigr)
\right|
    \ge    |      {f^{(t)}}'(g_k(\varz))| |g_k'(\varz)|\mu^m.
  \end{alignat}

  The derivative ${f^{(t)}}'$ does not vanish on $J_f$ for any $t\in
  \{0,\ldots,l\}$.
  Indeed, if
  ${f^{(t)}}'(w_0)=0$ then $f^{(t)}(w_0) \in \pco_{\ge 1}(f)$ by the
  chain rule.
  So $w_0\not\in J_f$ by Lemma~\ref{lem:hyperboliccharacterization}
  and since $f$ is hyperbolic. 
  So $|{f^{(t)}}'|$ is bounded from below
  by a positive value on some neighborhood of the compact set $J_f$. For
  $\epsilon>0$ small enough in terms of $f$, (\ref{eq:gkapproaching}) guarantees
  that $g_k(w)$ is sufficiently close to $J_f$. Thus
  \begin{equation*}
    |{f^{(t)}}'(g_k(w))|\ge c_2
    \text{ for all } t\in
    \{0,\ldots,l\},\text{ all }k\in\{0,\ldots,s\},\text{ and all }w\in U
  \end{equation*}
  where $c_2>0$ depends only on $f$ and $l$. But $l$ was fixed in
  terms of $f$, so $c_2$ depends only on $f$. 
  In particular, and this will be useful later on, $f'(g_k(w))\not=0$ for all $w\in U$.
  From (\ref{eq:chainrulehyperbolic2}) 
  we deduce
  \begin{equation*}
    |g_k'(w)|\le c_3 \mu^{-m}\quad\text{for all}\quad w\in U
    \text{ and all } k\in \{0,\ldots,s\}
  \end{equation*}
  where $c_3>0$ depends only on $f$. 
  As $k< ml+l$ we conclude $m> k/l-1$ and
  \begin{equation}
    \label{eq:gkprime}
    |g_k'(\varz)|\le c_4  \mu^{-k/l}\text{ for all } \varz\in U
    \text{ and all } k\in \{0,\ldots,s\}.
  \end{equation}
  Again $c_4>0$ and also $\mu>1$ and $l$ 
  are independent of $s,k,$ and the
  choice of branch $g$ of $(f^{(s)})^{-1}$.
  
  So the second bound in the claim (\ref{eq:fninvuniform}) holds after
  adjusting $\lambda$ in the case $k=s$.

  The claim is now proved. We now turn to conclusions (i) and (ii). 

  Let $w,w'$ lie in the disk $U$. 
  Then $|g_k(w)-g_k(w')| \le \sup_{w''\in L} |g_k'(w'')|
  |w- w'|$ where $L\subset U$ is the line segment connecting
  $w$ to $w'$; just integrate $g'_k$ along $L$. 

  So $|g_k(\varz)-g_k(\widehat{\varz})|\le 2c_4
  \mu^{-k/l} |\varz- \widehat{\varz}|$ for all $\varz\in U$
  using (\ref{eq:gkprime}). 
  By (\ref{eq:gkapproaching}) and compactness this implies
  $|f'(g_k(\varz))-f'(g_k( \widehat{\varz}))| \le c_5 \mu^{-k/l}|\varz- \widehat{\varz}|$.
  We saw above that $|f'(g_k(w))|\ge c_2$. Taking $c_6=c_5/c_2$ yields
  \begin{equation}
    \label{eq:1minusfprimef}
    \left|1-\frac{f'(g_k( \widehat{\varz}))}{f'(g_k(\varz))}\right|\le c_6
    \mu^{-k/l}|\varz- \widehat{\varz}|\quad\text{for all}\quad k\in
    \{0,\ldots,s\}\quad\text{and all}\quad w\in U.
  \end{equation}
  The chain rule implies $1 = g'(w) {f^{(s)}}'(g(w)) = g'(w)
  f'(f^{(s-1)}(g(w)))\cdots f'(g(w))$. We use the
  definition of $g_k$ and find
  $1 = g'(w)f'(g_1(w)) \cdots f'(g_s(w))$ on $U$.
  This holds in particular for $\widehat w$.
  We take the
  quotient at $\varz$ and $ \widehat{\varz}$ and find  
  \begin{equation*}
    \frac{g'(\varz)}{g'( \widehat{\varz})} = \frac{f'(g_1( \widehat{\varz}))}{f'(g_1(\varz))} \cdots \frac{f'(g_s( \widehat{\varz}))}{f'(g_s(\varz))}.
  \end{equation*}

  All factors on the right are near $1$ by   (\ref{eq:1minusfprimef})
  and for $\epsilon$ small.
  Indeed, as $|w-\widehat w |<\epsilon$ we may assume
  $c_6\mu^{-k/l}|w-\widehat w|< 1/2$.
  Thus $\sum_{k=1}^{s} c_6\mu^{-k/l}|w-\widehat w|
  \le {c_6}(1-\mu^{-1/l})^{-1}|w-\widehat w|\le c_7 |w-\widehat w|$. We
  refer to Lemma~\ref{lem:elementary} and obtain
  \begin{equation*}
    \left|\left|\frac{g'(\varz)}{g'( \widehat{\varz})}\right|-1\right|
    \le e^{2c_7|w-\widehat w|}-1\le c_8|\varz- \widehat{\varz}|.
  \end{equation*}
  We may assume $c_8\epsilon \le 1/2$.
  So 
  $$\frac 12 \le 1-c_8|\varz-\widehat{\varz}|\le
  \left|\frac{g'(\varz)}{g'(\widehat{\varz})}\right| \le
  1+c_8|\varz-\widehat{\varz}|\le 2$$
  for all $\varz\in U$.
  So $|g'(\varz)|\le 2  |g'(\widehat{\varz})|$ and
  $|g'(\widehat\varz)|\le 2 |g'({\varz})|$.
  This implies part (i).

    Let $z,z'\in V$ be as in (ii).
  We set $w=f^{(s)}(z)$ and $w'=f^{(s)}(z')$, both elements of $U$. 
  Recall that 
  $|g(w)-g(w')|\le  \sup_{w''\in L} |g'(w'')| |w-w'|$.
  So 
  $|g(w)-g(w')|\le 2|g'(\widehat{w})||w-w'|$ by part (i). 
  Now $g'(\widehat{w}){f^{(s)}}'(\widehat z)=1$ by the chain rule. So
  $|g(w)-g(w')|
  \le 2|{f^{(s)}}'(\widehat{z})|^{-1}|w-w'|$. 
  This completes the first inequality in (ii).

  For the second inequality we assume $z,z' \in
  g(B_{\epsilon/3}(\widehat w))$ and set
  $w=f^{(s)}(z)$ and $w'=f^{(s)}(z')$. So $|w-w'|<2\epsilon/3$ and the
  closed disk $\overline{B_{2\epsilon/3}(w)}$ lies entirely inside
  $B_\epsilon(\widehat w)$.
  By Lemma~\ref{lem:koebequarterapp} and part (i) we find
  $|g(w)-g(w')|\ge |g'(w)||w-w'|/4 \ge |g'(\widehat w)| |w-w'|/8$.
  As we have seen above, $g'(\widehat w) =1/{f^{(s)}}'(\widehat
  z)$.  Moreover, $g(w)=z$ and $g(w')=z'$. So
  we conclude the second bound in (ii) for all pairs in
  $g(B_{\epsilon/3}(\widehat w))$. We shrink $\epsilon$ a
  final time to get the second inequality of part (ii)
  on $g(B_\epsilon(\widehat w))$. The
  already proven
  estimates in (i) and (ii) remain true.
\end{proof}










\bibliographystyle{alpha}
\bibliography{literature}

@STRING{IMRN           = "Internat. Math. Res. Notices"}

@STRING{INVMATH        = "Invent. Math."}

@STRING{SPRINGER = "Springer"}

@STRING{PREPRINT = "Preprint."}

@article{Poonen,
author = "B.~Poonen",
title = "Mordell-{L}ang plus {B}ogomolov",
year = "1999",
pages = "413--425",
journal = INVMATH,
volume = 137,
number = 2
}

@preamble{
   "\def\cprime{$'$} "
}

@book {Voelklein,
    AUTHOR = {H.~V{\"o}lklein},
     TITLE = {Groups as {G}alois groups},
    SERIES = {Cambridge Studies in Advanced Mathematics},
    VOLUME = {53},
      NOTE = {An introduction},
 PUBLISHER = {Cambridge University Press, Cambridge},
      YEAR = {1996},
     PAGES = {xviii+248},
      ISBN = {0-521-56280-5},
   MRCLASS = {12F12}
}

@article {dimitrov:SZ,
    AUTHOR = {V.~Dimitrov},
     TITLE = {A proof of the {S}chinzel-{Z}assenhaus conjecture on polynomials},
   JOURNAL = {Preprint arXiv:1912.12545}
}

@article {Dubinin,
    AUTHOR = {Dubinin, V. N.},
     TITLE = {Change of harmonic measure in symmetrization},
   JOURNAL = {Mat. Sb. (N.S.)},
  FJOURNAL = {Matematicheski\u{\i} Sbornik. Novaya Seriya},
    VOLUME = {124(166)},
      YEAR = {1984},
    NUMBER = {2},
     PAGES = {272--279}
}

@book {Milnor,
    AUTHOR = {Milnor, J.},
     TITLE = {Dynamics in one complex variable},
    SERIES = {Annals of Mathematics Studies},
    VOLUME = {160},
   EDITION = {Third},
 PUBLISHER = {Princeton University Press, Princeton, NJ},
      YEAR = {2006},
     PAGES = {viii+304}
}

@article {Cook:DD,
    AUTHOR = {Cook, Jr., E.},
     TITLE = {Divided differences in complex function theory},
   JOURNAL = {Amer. Math. Monthly},
  FJOURNAL = {American Mathematical Monthly},
    VOLUME = {65},
      YEAR = {1958},
     PAGES = {17--24},
      ISSN = {0002-9890},
   MRCLASS = {30.00 (39.00)},
  MRNUMBER = {98169},
MRREVIEWER = {R. M. Redheffer},
       DOI = {10.2307/2310296},
       URL = {https://doi.org/10.2307/2310296},
}

@book {Ransford,
    AUTHOR = {Ransford, T.},
     TITLE = {Potential theory in the complex plane},
    SERIES = {London Mathematical Society Student Texts},
    VOLUME = {28},
 PUBLISHER = {Cambridge University Press, Cambridge},
      YEAR = {1995},
     PAGES = {x+232}
}

@book {CarlesonGamelin,
    AUTHOR = {Carleson, L. and Gamelin, T.~W.},
     TITLE = {Complex dynamics},
    SERIES = {Universitext: Tracts in Mathematics},
 PUBLISHER = {Springer-Verlag, New York},
      YEAR = {1993},
     PAGES = {x+175}
}

@article {Milnor:pasting,
    AUTHOR = {Milnor, J.},
     TITLE = {Pasting together {J}ulia sets: a worked out example of mating},
   JOURNAL = {Experiment. Math.},
  FJOURNAL = {Experimental Mathematics},
    VOLUME = {13},
      YEAR = {2004},
    NUMBER = {1},
     PAGES = {55--92}
}

@article {AlsedaFagella,
    AUTHOR = {Alsed\`a, L. and Fagella, N.},
     TITLE = {Dynamics on {H}ubbard trees},
   JOURNAL = {Fund. Math.},
  FJOURNAL = {Fundamenta Mathematicae},
    VOLUME = {164},
      YEAR = {2000},
    NUMBER = {2},
     PAGES = {115--141}
}

@article {BR:06,
    AUTHOR = {Baker, M.~H. and Rumely, R.},
     TITLE = {Equidistribution of small points, rational dynamics, and
              potential theory},
   JOURNAL = {Ann. Inst. Fourier (Grenoble)},
  FJOURNAL = {Universit\'{e} de Grenoble. Annales de l'Institut Fourier},
    VOLUME = {56},
      YEAR = {2006},
    NUMBER = {3},
     PAGES = {625--688}
}

@book {Mirsky:IntroLA,
    AUTHOR = {Mirsky, L.},
     TITLE = {An introduction to linear algebra},
 PUBLISHER = {Oxford, at the Clarendon Press},
      YEAR = {1955},
     PAGES = {xi+433}
}

@book {Silverman:gtm241,
    AUTHOR = {Silverman, J. H.},
     TITLE = {The arithmetic of dynamical systems},
    SERIES = {Graduate Texts in Mathematics},
    VOLUME = {241},
 PUBLISHER = {Springer, New York},
      YEAR = {2007},
     PAGES = {x+511}
}

@misc{hs:2021lower,
      title={Lower Bounds for the Canonical Height of a Unicritical Polynomial and Capacity}, 
      author={P. Habegger and H. Schmidt},
      year={2021},
      eprint={2111.01870},
      archivePrefix={arXiv},
      primaryClass={math.NT}
}

@article {Baker:06,
    AUTHOR = {Baker, M.},
     TITLE = {A lower bound for average values of dynamical {G}reen's
              functions},
   JOURNAL = {Math. Res. Lett.},
  FJOURNAL = {Mathematical Research Letters},
    VOLUME = {13},
      YEAR = {2006},
    NUMBER = {2-3},
     PAGES = {245--257}
}

@incollection {krattenthaler:adc,
    AUTHOR = {Krattenthaler, C.},
     TITLE = {Advanced determinant calculus},
      NOTE = {The Andrews Festschrift (Maratea, 1998)},
   JOURNAL = {S\'{e}m. Lothar. Combin.},
  FJOURNAL = {S\'{e}minaire Lotharingien de Combinatoire},
    VOLUME = {42},
      YEAR = {1999},
     PAGES = {Art. B42q, 67}
}

@article {Anderson:radius,
    AUTHOR = {Anderson, J.},
     TITLE = {Bounds on the radius of the {$p$}-adic {M}andelbrot set},
   JOURNAL = {Acta Arith.},
    VOLUME = {158},
      YEAR = {2013},
    NUMBER = {3},
     PAGES = {253--269}
}

@article {Ingram:PCF,
    AUTHOR = {Ingram, P.},
     TITLE = {A finiteness result for post-critically finite polynomials},
   JOURNAL = {Int. Math. Res. Not. IMRN},
  FJOURNAL = {International Mathematics Research Notices. IMRN},
      YEAR = {2012},
    NUMBER = {3},
     PAGES = {524--543},
      ISSN = {1073-7928}
}

@article {BIJMST:currenttrends,
    AUTHOR = {Benedetto, R. and Ingram, P. and Jones, R. and
              Manes, M. and Silverman, J.~H. and Tucker, T.
              J.},
     TITLE = {Current trends and open problems in arithmetic dynamics},
   JOURNAL = {Bull. Amer. Math. Soc. (N.S.)},
  FJOURNAL = {American Mathematical Society. Bulletin. New Series},
    VOLUME = {56},
      YEAR = {2019},
    NUMBER = {4},
     PAGES = {611--685}
}

@book {DH:etudedynI,
    AUTHOR = {Douady, A. and Hubbard, J. H.},
     TITLE = {\'{E}tude dynamique des polyn\^{o}mes complexes. {P}artie {I}},
    SERIES = {Publications Math\'{e}matiques d'Orsay [Mathematical Publications
              of Orsay]},
    VOLUME = {84},
 PUBLISHER = {Universit\'{e} de Paris-Sud, D\'{e}partement de Math\'{e}matiques, Orsay},
      YEAR = {1984},
     PAGES = {75}
}

@article {poirier:hubbardtrees,
    AUTHOR = {Poirier, A.},
     TITLE = {Hubbard trees},
   JOURNAL = {Fund. Math.},
    VOLUME = {208},
      YEAR = {2010},
    NUMBER = {3},
     PAGES = {193--248}
}

@book {Amice,
    AUTHOR = {Amice, Y.},
     TITLE = {Les nombres {$p$}-adiques},
    SERIES = {Collection SUP: ``Le Math\'{e}maticien''},
    VOLUME = {14},
      NOTE = {Pr\'{e}face de Ch. Pisot},
 PUBLISHER = {Presses Universitaires de France, Paris},
      YEAR = {1975}
}

@article {epstein:12,
    AUTHOR = {Epstein, A.},
     TITLE = {Integrality and rigidity for postcritically finite
              polynomials},
      NOTE = {With an appendix by Epstein and Bjorn Poonen},
   JOURNAL = {Bull. Lond. Math. Soc.},
  FJOURNAL = {Bulletin of the London Mathematical Society},
    VOLUME = {44},
      YEAR = {2012},
    NUMBER = {1},
     PAGES = {39--46}
}

@article {hab:dubinin,
    AUTHOR = {Habegger, P.},
     TITLE = {Separating roots of polynomials and the transfinite diameter},
   JOURNAL = {Riv. Math. Univ. Parma (N.S.)},
    VOLUME = {13},
      YEAR = {2022},
    NUMBER = {1},
     PAGES = {111--136}
}

@incollection {thurstonetal:degreed,
    AUTHOR = {Thurston, W.~P. and Baik, H. and Yan, G. and
              Hubbard, J.~H. and Lindsey, K.~A. and Tan, L. and
              Thurston, D.~P.},
     TITLE = {Degree-{$d$}-invariant laminations},
 BOOKTITLE = {What's next?---the mathematical legacy of {W}illiam {P}.
              {T}hurston},
    SERIES = {Ann. of Math. Stud.},
    VOLUME = {205},
     PAGES = {259--325},
 PUBLISHER = {Princeton Univ. Press, Princeton, NJ},
      YEAR = {2020},
       DOI = {10.2307/j.ctvthhdvv.15},
 }

@article {BourdonClark:Torsion,
    AUTHOR = {Bourdon, Abbey and Clark, Pete L.},
     TITLE = {Torsion points and {G}alois representations on {CM} elliptic
              curves},
   JOURNAL = {Pacific J. Math.},
  FJOURNAL = {Pacific Journal of Mathematics},
    VOLUME = {305},
      YEAR = {2020},
    NUMBER = {1},
     PAGES = {43--88},
      ISSN = {0030-8730,1945-5844},
   MRCLASS = {11G05 (11G15)},
  MRNUMBER = {4077686},
MRREVIEWER = {Jordi\ Gu\`ardia},
       DOI = {10.2140/pjm.2020.305.43},
       URL = {https://doi.org/10.2140/pjm.2020.305.43},
}

@article {AHPW:18,
    AUTHOR = {Anderson, J. and Hamblen, S. and Poonen, B.
              and Walton, L.},
     TITLE = {Local arboreal representations},
   JOURNAL = {Int. Math. Res. Not. IMRN},
  FJOURNAL = {International Mathematics Research Notices. IMRN},
      YEAR = {2018},
    NUMBER = {19},
     PAGES = {5974--5994},
      ISSN = {1073-7928,1687-0247}
}

@article {BFKP:Gleason,
    AUTHOR = {Buff, X. and Floyd, W. and Koch, S. and Parry,
              W.},
     TITLE = {Factoring {G}leason polynomials modulo 2},
   JOURNAL = {J. Th\'eor. Nombres Bordeaux},
  FJOURNAL = {Journal de Th\'eorie des Nombres de Bordeaux},
    VOLUME = {34},
      YEAR = {2022},
    NUMBER = {3},
     PAGES = {787--812}
}

@article {smyth:coloringproof,
    AUTHOR = {Smyth, C. J.},
     TITLE = {A coloring proof of a generalisation of {F}ermat's little
              theorem},
   JOURNAL = {Amer. Math. Monthly},
    VOLUME = {93},
      YEAR = {1986},
    NUMBER = {6},
     PAGES = {469--471}
}

@article {arnold:eulerfermat,
    AUTHOR = {Arnol{\cprime}d, V. I.},
     TITLE = {The matrix {E}uler-{F}ermat theorem},
   JOURNAL = {Izv. Ross. Akad. Nauk Ser. Mat.},
    VOLUME = {68},
      YEAR = {2004},
    NUMBER = {6},
     PAGES = {61--70}
}

@article {Lehmer:33,
    AUTHOR = {Lehmer, D. H.},
     TITLE = {Factorization of certain cyclotomic functions},
   JOURNAL = {Ann. of Math. (2)},
  FJOURNAL = {Annals of Mathematics. Second Series},
    VOLUME = {34},
      YEAR = {1933},
    NUMBER = {3},
     PAGES = {461--479}
}

@article {MortonSilverman,
    AUTHOR = {Morton, P. and Silverman, J. H.},
     TITLE = {Periodic points, multiplicities, and dynamical units},
   JOURNAL = {J. Reine Angew. Math.},
    VOLUME = {461},
      YEAR = {1995},
     PAGES = {81--122}
}

@inproceedings {AndersonManesTobin,
    AUTHOR = {Anderson, J. and Manes, M. and Tobin, B.},
     TITLE = {Cubic post-critically finite polynomials defined over {$\Bbb
              Q$}},
 BOOKTITLE = {A{NTS} {XIV}---{P}roceedings of the {F}ourteenth {A}lgorithmic
              {N}umber {T}heory {S}ymposium},
    SERIES = {Open Book Ser.},
    VOLUME = {4},
     PAGES = {23--38},
 PUBLISHER = {Math. Sci. Publ., Berkeley, CA},
      YEAR = {2020}
}

@article {Ho:propermaps,
    AUTHOR = {Ho, C.-W.},
     TITLE = {A note on proper maps},
   JOURNAL = {Proc. Amer. Math. Soc.},
    VOLUME = {51},
      YEAR = {1975},
     PAGES = {237--241}
}

@incollection {Douady:compact,
    AUTHOR = {Douady, A.},
     TITLE = {Descriptions of compact sets in {${\bf C}$}},
 BOOKTITLE = {Topological methods in modern mathematics ({S}tony {B}rook,
              {NY}, 1991)},
     PAGES = {429--465},
 PUBLISHER = {Publish or Perish, Houston, TX},
      YEAR = {1993}
}

\end{document}